\newcommandx{\unsure}[2][1=]{\todo[linecolor=red,backgroundcolor=red!25,bordercolor=red,#1]{#2}}
\newcommandx{\change}[2][1=]{\todo[linecolor=blue,backgroundcolor=blue!25,bordercolor=blue,#1]{#2}}
\newcommandx{\info}[2][1=]{\todo[linecolor=OliveGreen,backgroundcolor=OliveGreen!25,bordercolor=OliveGreen,#1]{#2}}
\newcommandx{\improvement}[2][1=]{\todo[linecolor=Plum,backgroundcolor=Plum!25,bordercolor=Plum,#1]{#2}}
\newenvironment{rotatepage}%
{\pagebreak[4]\global\pdfpageattr\expandafter{\the\pdfpageattr/Rotate 90}}%
{\pagebreak[4]\global\pdfpageattr\expandafter{\the\pdfpageattr/Rotate 0}}%
\renewcommand{\algocf@captiontext}[2]{\quad #1\algocf@typo. \AlCapFnt{}#2} % text of caption
\def\@algocf@capt@plain{top}
\renewcommand{\algocf@makecaption}[2]{%
	\addtolength{\hsize}{\algomargin}%
	\sbox\@tempboxa{\algocf@captiontext{#1}{#2}}%
	\ifdim\wd\@tempboxa >\hsize%     % if caption is longer than a line
	\hskip .5\algomargin%
	\parbox[t]{\hsize}{\algocf@captiontext{#1}{#2}}% then caption is not centered
	\else%
	\global\@minipagefalse%
	\hbox to\hsize{\box\@tempboxa}% else caption is centered
	\fi%
	\addtolength{\hsize}{-\algomargin}%
}
\def\T{{ \mathrm{\scriptscriptstyle T} }}
\newcommand{\red}{\color{black}}
\newcommand{\LD}{\langle}
\newcommand{\RD}{\rangle}
\newcommand{\bLD}{\big\langle}
\newcommand{\bRD}{\big\rangle}
\newcommand{\mQ}{\mathbb{Q}}
\newcommand{\mR}{\mathbb{R}}
\newcommand{\mE}{\mathbb{E}}
\newcommand{\mN}{\mathcal{N}}
\newcommand{\mT}{\mathcal{T}}
\newcommand{\mJ}{\mathcal{J}}
\newcommand{\mS}{\mathcal{S}}
\newcommand{\mU}{\mathcal{U}}
\newcommand{\mF}{\mathcal{F}}
\newcommand{\tB}{\bB^\star}
\newcommand{\tA}{\bA^\star}
\newcommand{\hB}{\hat{\bB}}
\newcommand{\hA}{\hat{\bA}}
\newcommand{\hmu}{\hat{\mu}}
\newcommand{\hU}{\hat{U}}
\newcommand{\hS}{\hat{S}}
\newcommand{\tV}{{V^\star}}
\newcommand{\barU}{\bar{U}}
\newcommand{\barOmega}{\bar{\Omega}}
\def\tA{A^\star}
\def\tB{B^\star}
\def\tAT{{A^{\star \T}}}
\def\tVT{{V^{\star \T}}}
\def\tZT{{Z^{\star \T}}}
\newcommand{\tmuy}{{\mu^\star_Y}}
\newcommand{\tmux}{{\mu^\star_X}}
\newcommand{\ttSigma}{\tilde{\Sigma}}
\newcommand{\ttmu}{{\mu^\star}}
\newcommand{\baralpha}{\bar{\alpha}}
\newcommand{\bars}{\bar{s}}
\newcommand{\eig}{\text{eig}}
\newcommand{\TR}{\text{tr}}
\newcommand{\SUPP}{\text{supp}}
\def\Pr{\text{pr}}
\newcommand{\rank}{\text{rank}}
\def\mQ{\mathcal{Q}}
\def\Q{\mathbb{Q}}
\def\I{\mathcal{I}}
\def\S{\mathbb{S}}
\def\P{\mathcal{P}}
\def\mL{\mathcal{L}}
\def\T{{ \mathrm{\scriptscriptstyle T} }}
\def\mbL{\bar{\mathcal{L}}}
\def\mC{\mathcal{C}}
\def\N{\mathfrak{N}}
\newcommand{\barS}{\bar{S}}
\def\b1{\pmb{1}}
\newcommand{\ttSigmax}{\tilde{\Sigma}_X}
\newcommand{\ttSigmay}{\tilde{\Sigma}_Y}
\newcommand{\tSigmax}{{\Sigma^\star_X}}
\newcommand{\tSigmay}{{\Sigma^\star_Y}}
\newcommand{\tOmegax}{{\Omega^\star_X}}
\newcommand{\tOmegay}{{\Omega^\star_Y}}
\newcommand{\tDelta}{\Delta^\star}
\newcommand{\hSigmax}{{\hat{\Sigma}_X}}
\newcommand{\hSigmay}{{\hat{\Sigma}_Y}}
\newcommand{\tS}{S^\star}
\newcommand{\ttR}{R^\star}
\newcommand{\tR}{{R^\star}}
\newcommand{\hmux}{{\hat{\mu}_X}}
\newcommand{\tLambda}{{\Lambda^\star}}
\newcommand{\tU}{U^\star}
\newcommand{\tUT}{{U^{\star \T}}}
\newcommand{\hSigma}{{\hat{\Sigma}}}
\newcommand{\tL}{{L^\star}}
\newcommand{\tLT}{{L^{\star \T}}}
\newcommand{\hDelta}{{\hat{\Delta}}}
\newcommand{\tXi}{\Xi^{\star}}
\def\hA{\hat{A}}
\def\htA{\hat{A}^{\star}}
\def\hB{\hat{B}}
\def\htB{\hat{B}^{\star}}
\def\htBT{\hat{B}^{\star \T}}
\def\htAT{\hat{A}^{\star \T}}
\newcommand{\halpha}{{\hat{\alpha}}}
\def\hs{{\hat{s}}}
\newcommand{\hr}{{\hat{r}}}
\def\hR{{\hat{R}}}
\newcommand{\tlambda}{\lambda^\star}
\newcommand{\barj}{{\bar{j}}}
\newcommand{\ttj}{{\tilde{j}}}
\newcommand{\barZ}{{\bar{Z}}}
\newcommand{\tZ}{{Z^{\star}}}
\newcommand{\CI}{\mathrel{\perp\mspace{-10mu}\perp}}
\begin{document}
\begin{bibunit}[my-plainnat]

\title{Estimating Differential Latent Variable Graphical Models \\ with Applications to Brain Connectivity}
\author[1]{Sen Na}
\author[2]{Mladen Kolar}
\author[3]{Oluwasanmi Koyejo}
\affil[1]{Department of Statistics, University of Chicago}
\affil[2]{Booth School of Business, University of Chicago}
\affil[3]{Department of Computer Science and Beckman Institute for Advanced Science and Technology\\ University of Illinois at Urbana-Champaign}
\date{}
\maketitle

\begin{abstract}

Differential graphical models are designed to represent the difference between the conditional dependence structures of two groups, thus are of particular interest for scientific investigation. Motivated by modern applications, this manuscript considers an extended setting where each group is generated by a latent variable Gaussian graphical model. Due to the existence of latent factors, the differential network is decomposed into sparse and low-rank components, both of which are symmetric indefinite matrices. We estimate these two components simultaneously using a two-stage procedure: (i) an initialization stage, which computes a simple, consistent estimator, and (ii) a convergence stage, implemented using a projected alternating gradient descent algorithm applied to a nonconvex objective, initialized using the output of the first stage. We prove that given the initialization, the estimator converges linearly with a nontrivial, minimax optimal statistical error. Experiments on synthetic and real data illustrate that the proposed nonconvex procedure outperforms existing methods.

\end{abstract}

{\bf Keywords:} Alternating projected gradient descent; Differential network; Functional connectivity; Latent variable Gaussian graphical model.

\section{Introduction}\label{sec:1}

Gaussian graphical models \citep{Lauritzen1996Graphical} are used to capture complex relationships among observed variables in a variety of fields, ranging from computational biology \citep{Friedman2004Inferring}, genetics \citep{Lauritzen2003Graphical}, to neuroscience \citep{Smith2011Network}. Each node in a graphical model represents an observed variable and the (undirected) edge between two nodes is present if the nodes are conditionally dependent given all the other variables; thus (sparse) graphical models are highly interpretable and have been adopted for a wide variety of applications.

Of particular interest in this manuscript are applications to cognitive neuroscience, specifically functional connectivity; the study of functional interactions between brain regions, thought to be necessary for cognition~\citep{Bullmore2009Complex}. Importantly, functional connectivity is a promising biomarker for mental disorders~\citep{Castellanos2013Clinical}, where the primary object of study is the differential network, that is the differences in connectivity between healthy individuals and patients. See \citet{Bielza2014Bayesian} for a detailed review. In genetics, scientists are interested in understanding differences in gene networks between experimental conditions (that is, the case-control study), to elucidate potential mechanisms underlying genetic functions. The differential network between two groups provides important signals for detecting differences. The interested reader can find more details on estimating genetic network differences in \citet{Hudson2009differential}, \citet{DelaFuente2010differential}, and \citet{Ideker2012Differential}.

In many applications, it is clear that relationships between the observed variables are confounded by the presence of unobserved, latent factors. For example, physiological and demographic factors may have confounding effects on graphical model estimates in neuroscience and genetics \citep{Gaggiotti2009Disentangling, Willi2009Demographic, Durkee2012Prevalence}. The standard approach of estimating sparse Gaussian graphical models is of limited use here as, due to the confounding, the marginal precision matrix is not sparse. Instead of sparsity of the marginal graph, latent variable Gaussian graphical models exploit the observation that the marginal graph of the observed variables can be decomposed into a superposition of a sparse matrix and a low-rank matrix~\citep{Chandrasekaran2012Latent, Meng2014Learning}.

This manuscript addresses the estimation of differential networks with latent factors. Suppose two groups of observed variables are drawn from latent variable Gaussian graphical models and one is interested in differences in the conditional dependence structure between the two groups, which can be reduced to estimating the difference of their respective precision matrices. For this task, we develop a novel estimation procedure that does not require separate estimation for each group, which allows for robust estimation even if each group contains hub nodes. We propose a two-stage algorithm to optimize a nonconvex objective. In the first stage, we derive a simple, consistent estimator, which then serves as initialization for the next stage. In the second stage, we employ projected alternating gradient descent with a constant step size. The iterates are proven to linearly converge to a region around the ground truth, whose radius is characterized by the statistical error. Compared with {\red potential} convex approaches, our nonconvex approach would enjoy lower computation costs and hence be more time efficient. Extensive experiments validate our conceptual and theoretical claims. Our code is available at \url{https://github.com/senna1128/Differential-Network-Estimation-via-Nonconvex-Approach}.

\section{Background}\label{sec:2}

\subsection{Notations}

Throughout the paper, we use $\S^{d\times d}$, $\Q^{d\times d}$, $I_d$ to denote the set of $d\times d$ symmetric, orthogonal matrices, and {\red identity} matrices respectively. Given an integer $d$, we let $[d] = \{1, 2, \ldots, d\}$ be the index set. For any two scalars $a$ and $b$, we denote $a\lesssim b$ if $a\leq cb$ for some constant $c$. Similarly, $a \gtrsim b$ if {\red $b\leq ca$} for some constant $c$.  We write $a\asymp b$ if $a\lesssim b$ and $b\lesssim a$. We use $a\wedge b = \min(a, b)$ and $a\vee b = \max(a, b)$. For matrices $A, B\in\S^{d\times d}$, we write $A\prec B$ if $B-A$ is positive definite and $A\preceq B$ if $B-A$ is positive semidefinite. We use $\LD A, B\RD = \TR(A^\T B)$. For a matrix $A$, $\sigma_{\min}(A)$ and $\sigma_{\max}(A)$ denote the minimum and maximum singular values, respectively. For a vector $a$, $\|a\|_p$ denotes its $\ell_p$ norm, $p\geq 1$, and $\|a\|_0 = |\SUPP(a)|$ denotes the number of nonzero entries of $a$. For a matrix $A$, $\|A\|_p$ denotes the matrix induced $p$-norm, $\|A\|_F$ denotes the Frobenius norm, $\|A\|_*$ denotes the nuclear norm, and $\|A\|_{p, q} = \{\sum_{j}(\sum_{i} |A_{ij}|^p)^{q/p}\}^{1/q}$, {\red which is calculated by computing $\ell_q$ norm of the vector whose each entry corresponds to the $\ell_p$ norm of a column of $A$. For example, $\|A\|_{0, q} =  \|a\|_q$ where $j$-th entry of $a$ is $a_j = \|A_{\cdot,j}\|_0$, and similarly $\|A\|_{p, \infty} = \|a\|_{\infty}$ with $a_j = \|A_{\cdot, j}\|_p$.} Given a set $\mC\subseteq \mR^{d\times r}$, the projection operator $\P_{\mC}(\cdot)$ is defined as $\P_{\mC}(U) = \arg\min_{V\in\mC}\|V - U\|_F$.

\subsection{Preliminaries and related work}\label{sec:2.2}

A Gaussian graphical model \citep{Lauritzen1996Graphical} consists of a graph $G = (V, E)$, where $V = \{1, \ldots, d\}$ is the set of vertices and $E$ is the set of edges, and a $d$-dimensional random vector $X = (X_1, \ldots, X_d)^\T \sim N(\tmux, \tSigmax)$ that is Markov with respect to $G$. The precision matrix of $X$, $\tOmegax = (\tSigmax)^{-1}$, encodes the conditional independence relationships underlying $X$ and the graph structure $G$ where
\begin{align*}
X_i \CI X_j \mid \{X_k: k \in V \backslash \{i, j\}\} \Longleftrightarrow (i, j)\notin E\Longleftrightarrow(\tOmegax)_{i, j} = 0.
\end{align*}
See \citet{Drton2017Structure} for a recent overview of literature on structure learning of Gaussian graphical models with applications.

Latent variable Gaussian graphical models extend the applicability of Gaussian graphical models by assuming the existence of latent factors, $X_H\in\mR^r$, that confound the observed conditional independence structure of the observed variables $X_O\in\mR^d$. In particular, the observed and hidden components are assumed to be jointly normally distributed as $(X_O^\T, X_{H}^\T)^\T \sim N(\ttmu, \Sigma^\star)$ with
\begin{align}
\label{lvggm}
\ttmu =
\begin{pmatrix}
\mu_{X_O}^\star\\
\mu_{X_H}^\star
\end{pmatrix},\quad
\Sigma^{\star} =
\begin{pmatrix}
\Sigma^\star_{OO} & \Sigma^\star_{OH}\\
\Sigma^\star_{HO} & \Sigma^\star_{HH}
\end{pmatrix},
\quad
\Omega^\star = (\Sigma^\star)^{-1} = \begin{pmatrix}
\Omega_{OO}^\star & \Omega_{OH}^\star \\
\Omega_{HO}^\star & \Omega_{HH}^\star
\end{pmatrix}.
\end{align}
While the joint precision matrix $\Omega^\star$ is commonly assumed sparse, the marginal precision matrix of the observed component $X_O\sim N(\mu_{X_O}^\star, \Sigma^\star_{OO})$ is given as
\begin{align}\label{dec:1}
(\Sigma^\star_{OO})^{-1} = \Omega_{OO}^\star - \Omega_{OH}^\star(\Omega_{HH}^{\star})^{-1}\Omega_{HO}^\star,
\end{align}
and in general is not sparse. The marginal precision matrix of observed variables $X_O$ has a sparse plus low-rank structure, since the precision matrix of the conditional distribution of $X_O$ given $X_H$, $\Omega_{OO}^\star = \{\Sigma^\star_{OO} - \Sigma^\star_{OH}(\Sigma^\star_{HH})^{-1}\Sigma^\star_{HO}\}^{-1}$, is sparse and positive definite, while the second term in \eqref{dec:1} is a rank-$r$ positive semidefinite matrix, which in general is not sparse.

We study the problem of estimating the differential network, which is characterized by the difference between two precision matrices, from two groups of samples distributed according to latent variable Gaussian graphical models. More specifically, suppose that we have independent observations of $d$ variables from two groups of subjects: $X_i = (X_{i1}, \ldots, X_{id})^\T\sim N(\tmux, \tSigmax)$ for $i=1,\ldots, n_X$ from one group and $Y_i = (Y_{i1},\ldots, Y_{id})^\T\sim N(\tmuy, \tSigmay)$ for $i = 1, \ldots, n_Y$ from the other. The differential network is defined as the difference between two precision matrices, denoted as $\tDelta = \tOmegax - \tOmegay$, where $\tOmegax = (\tSigmax)^{-1}$ and  $\tOmegay = (\tSigmay)^{-1}$. We assume that the differential network can be decomposed as
\begin{align}\label{dec:3}
\tDelta = \tS + \tR,
\end{align}
where $\tS$ is sparse and $\tR$ is low-rank and they are both symmetric, but indefinite matrices. Such structure arises under the assumption that the group specific precision matrices have the sparse plus low rank structure as in \eqref{dec:1}. {\red Here, $\tR$ corresponds to the difference of two low-rank matrices, whose rank is upper bounded by the sum of their ranks, hence it's natural for $\tR$ to be low-rank}. However, imposing the sparse plus low rank structure on the differential networks puts fewer restrictions on the data generating process. {\red For example, \eqref{dec:3} also appears if one group is from the latent model while the other is from regular graphical model.}

Estimating the differential network $\tDelta$ can be na\"ively achieved by estimating group-specific precision matrices first and then taking their difference. A related approach is to learn the group-specific precision matrices by maximizing the penalized joint likelihood of samples from both groups with a penalty that encourages the estimated precision matrices to have the same support. Both of these approaches require imposing strong assumptions on the individual precision matrices and are not robust in practice \citep{Shojaie2020Differential}. For example, when hub nodes are present in a group-specific network \citep{Barabasi2004Network}, estimation of an individual precision matrix is challenging as the sparsity assumption is violated, while direct estimation of the differential network is possible without imposing overly restrictive assumptions. \citet{Zhao2014Direct} directly estimated the differential network $\tDelta$ by minimizing $\|\Delta\|_{1,1}$ subject to the constraint $\|\hSigmax\Delta\hSigmay - (\hSigmay - \hSigmax)\|_{\infty,\infty}\leq \lambda$. Under suitable conditions, the truncated and symmetrized estimator satisfies $\|\hDelta - \tDelta\|_F\lesssim \{(n_X\wedge n_Y)^{-1}\|\tDelta\|_{0,1} \log d\}^{1/2}$. \citet{Liu2014Direct} and \citet{Kim2019Two} developed procedures for estimation and inference of differential networks when $X$ and $Y$ follow a general exponential family distribution.  See \citet{Shojaie2020Differential} for a recent review.  In the presence of latent factors, the differential network is not guaranteed to be sparse and, therefore, the aforementioned methods are not applicable. We develop methodology to learn the differential network from latent variable Gaussian graphical models.

\citet{Chandrasekaran2012Latent} estimated a precision matrix under a latent variable Gaussian graphical model by minimizing the penalized negative Gaussian log-likelihood
\begin{equation}\label{convex:1}
\begin{aligned}
(\hS_X, \hR_X) = \arg\min_{S, R}\text{\ \ \ } &\TR\{\rbr{S+R}\hSigmax\} - \log\det(S+R) + \lambda_n\big(\gamma\|S\|_{1,1} + \|R\|_*\big),\\
\text{subject to\ \ } & S+R\succ 0, \text{\ \ \ } -R\succeq 0,
\end{aligned}
\end{equation}
where $\hSigmax$ is a sample covariance based on $n_X$ samples. Under suitable identifiability and regularity conditions, $\gamma^{-1}\|\hS_X - \tS_X\|_{\infty, \infty} \vee \|\hR_X - \ttR_X\|_2\lesssim \rbr{{d}/{n_X}}^{1/2}$ when $\lambda_n\asymp \rbr{{d}/{n_X}}^{1/2}$. \citet{Meng2014Learning} developed an alternating direction method of multipliers for more efficient minimization of \eqref{convex:1} and showed that $\|\hat{\Omega}_X - \tOmegax\|_F\lesssim (s\log d/n_X)^{1/2} + (rd/n_X)^{1/2}$ {\red with $s = \|\tS_X\|_{0, 1}$ being the overall sparsity of $\tS_X$}. The main drawback of minimizing \eqref{convex:1} arises from the fact that in each iteration of the algorithm, the matrix $R$ is updated without taking its {\red low-rank} structure into account. \citet{Xu2017Speeding} explicitly represented the low-rank matrix as $R = -UU^\T$ for $U\in\mR^{d\times r}$ and minimized the resulting nonconvex objective using the alternating gradient descent. Our alternating gradient descent procedure is closely related to this work, but more challenging in several aspects. First, the log-likelihood is not readily available for differential networks. We hence rely on a quasi-likelihood, which reaches its minimum at $\tDelta$. Second, the low-rank matrix $\tR$ in our setup is indefinite, so we have to estimate the positive index of inertia for $\tR$ as well. Third, in order to establish theoretical properties of our estimator, we avoid relying on the concentration of $\|\hSigmax\|_1$ that requires $n_X \asymp d^2$. By a more careful analysis, we improve the sample complexity to $n_X \asymp d\log d$.

Finally, our work is related to a growing literature on robust estimation where parameter matrices have the sparse plus low-rank structure. Example applications include robust principal component analysis \citep{Candes2011Robust,Chandrasekaran2011Rank}, robust matrix sensing \citep{Fazel2008Compressed}, and robust multi-task learning \citep{Chen2011Integrating}. \citet{Zhang2018Unified} proposed a unified framework to analyze convergence of alternating gradient descent when applied on sparse plus low-rank recovery. However, our problem is more challenging and does not satisfy conditions required by their framework. In particular, we use noisy covariance matrices to recover the difference of their true inverses via a quadratic loss. The Hessian matrix in our problem is $(\hSigmay \otimes \hSigmax + \hSigmax \otimes \hSigmay)/2$ with $\otimes$ denoting the Kronecker product, which is different compared to examples in robust estimation where the expectation of the Hessian is identity. As a result, the Condition 4.4 in \citet{Zhang2018Unified} fails to hold and hence we need a problem-oriented analysis. {\red We address three main technical challenges. First, the low-rank matrix $\tR$ is indefinite, while the existing procedures only handle positive semidefinite low-rank matrices.  We develop an estimator that consistently recovers the positive index of inertia of $\tR$. Second, the analysis of the estimator is challenging as the incoherence condition is naturally imposed on $\tU$, but in the analysis, e.g., when bounding the error in the gradient of the loss, the low-rank component $\tU$ is always multiplied by a sample covariance matrix. Finally, we use properties of the Wishart distribution to provide finer analysis, avoid any concentration of sample covariance matrices in $\|\cdot\|_1$ norm, and improve the sample complexity established in \cite{Xu2017Speeding}.}

\section{Methodology}\label{sec:3}

\subsection{Empirical loss}

We introduce the estimator of the differential network $\tDelta$ based on observations from latent variable Gaussian graphical models described in \textsection\ref{sec:2.2}. Since $\tDelta$ satisfies $\rbr{\tSigmax\tDelta\tSigmay + \tSigmay\tDelta\tSigmax}/2 - (\tSigmay - \tSigmax) = 0$, one can minimize the quadratic loss $\mL(\Delta) = \TR\cbr{\Delta \tSigmax\Delta \tSigmay / 2  -\Delta\rbr{\tSigmay - \tSigmax}}$. This loss has been used in \cite{Xu2016Semiparametric} and \cite{Yuan2017Differential} to learn sparse differential networks. Using the decomposition in \eqref{dec:3} and substituting the true covariance matrices with sample estimates, we arrive at the following empirical loss
\begin{align}\label{loss:popu:SR}
\mL_n(S, R) = \TR\cbr{(S + R) \hSigmax(S + R) \hSigmay / 2  - (S + R)(\hSigmay - \hSigmax)},
\end{align}
where $S\in\S^{d\times d}$ denotes the sparse component, $R\in\S^{d\times d}$ denotes the low-rank component with rank $r$, and $\hSigmax = n_X^{-1}\sum_{i=1}^{n_X}(X_i - \hmux)(X_i - \hmux)^\T$ with $\hmux = n_X^{-1}\sum_{i=1}^{n_X}X_i$ and $\hSigmay$ is similarly defined. The empirical loss $\mL_n(S, R)$ in \eqref{loss:popu:SR} is convex with respect to the pair $(S, R)$ and strongly convex if either of the two components is fixed.

Directly minimizing $\mL_n(S, R)$ over a suitable constraint set would be computationally challenging as in each iteration $R$ would need to be updated in $\mR^{d\times d}$, without utilizing its low-rank structure. To that end, we explicitly factorize $R$ as $R = U\Lambda U^\T$, where columns of $U\in\mR^{d\times r}$ are aligned with eigenvectors that correspond to nonzero eigenvalues, and $\Lambda\in\mR^{r\times r}$ is the diagonal sign matrix with diagonal elements being the sign of each eigenvalue. Without loss of generality, we assume $\Lambda$ has $+1$ entries on the diagonal first, followed by $-1$ entries. This factorization implicitly imposes the constraints that $\rank(R)= r$ and $R = R^\T$. Different from estimating the single latent variable Gaussian graphical model in \eqref{dec:1}, where the low-rank component is positive semidefinite and can be factorized as $R = UU^\T$, $\tR$ in our model \eqref{dec:3} is only symmetric as it corresponds to the difference of two low-rank positive semidefinite matrices. Thus, $\tR =\tU\tLambda\tUT$ and we need to estimate $\tLambda$ as well. Plugging the factorization into \eqref{loss:popu:SR}, we aim to minimize the following empirical nonconvex objective
\begin{multline}\label{loss:emp:SULam}
\mbL_n(S, U, \Lambda)
= \mL_n(S, U\Lambda U^\T)
=  \TR\big\{(S + U\Lambda U^\T)\hSigmax(S+U\Lambda U^\T)\hSigmay / 2 \\
\quad - (S+U\Lambda U^\T)(\hSigmay - \hSigmax)\big\},
\end{multline}
over a suitable constraint set that we discuss next.

We assume $\tS\in\S^{d\times d}$ has at most $s$ nonzero entries overall and each column (row) has at most a certain fraction of nonzero entries. In particular, we assume
\begin{align*}
\tS\in \mS(\alpha, s) = \big\{ S\in \S^{d\times d}: \|S\|_{0, 1}\leq s, \|S\|_{0, \infty}\leq \alpha d\big\}
\end{align*}
for some integer $s$ and fraction $\alpha\in(0, 1)$. Furthermore, to make the low-rank component separable from the sum $\tS + \tR$, we require $\tR$ to be not too sparse. One way to ensure identifiability is to impose the incoherence condition \citep{Candes2007Sparsity}, {\red which prevents the information in column (or row) spaces of $\tR$ from being concentrated in few columns. The incoherence condition guarantees that the elements of $\tR$ are roughly of the same magnitude and are not spiky. It is commonly used in the literature on low-rank matrix recovery \citep{Chen2014Coherent, Chen2015Incoherence, Yi2016Fast}.}  Specifically, suppose $\tR = \tL\tXi\tLT$ is the reduced eigenvalue decomposition, where $\tL\in \mR^{d\times r}$ satisfies $\tLT\tL = I_r$ and $\tXi = \diag(\lambda^\tR_1, \ldots, \lambda^\tR_r)$. Then, we assume $\tL$ satisfies $\beta$-incoherence condition, that is,
\begin{align*}
\tL\in \mU(\beta) = \cbr{L\in\mR^{d\times r} \mid  \|L^\T\|_{2, \infty} \leq \rbr{{\beta r}/{d}}^{1/2} }.
\end{align*}
Without loss of generality, the eigenvalues are ordered so that, for some integer $r_1 \in \{0, \ldots, r\}$, $\sign(\lambda_i^\tR) = 1$ for $1\leq i\leq r_1$ and $\sign(\lambda_i^\tR) = -1$ for $r_1+1\leq i\leq r$. Here, $r_1$, so called the positive index of inertia of $\tR$, is unique by Sylvester's law of inertia \citep[cf. Theorem 4.5.8 in][]{Horn2013Matrix}, although eigenvalue decomposition is not.

\subsection{Two-stage algorithm}

We develop a two-stage algorithm to estimate the tuple $(\tS, \tU, \tLambda)$. We start from introducing the second stage. Given a suitably chosen initial point $(S^0, U^0, \Lambda^0)$, obtained by the first stage that we introduce later, we use the projected alternating gradient descent procedure to minimize the following nonconvex optimization problem
\begin{equation}\label{pro:3}
\begin{aligned}
\min_{S,\; U} \text{\ \ } & \mbL_n(S, U, \Lambda^0) + \frac{1}{2}\|U_1^\T U_2\|_F^2,\\
\text{subject to\ \ } & S\in \mS({\red \baralpha, \bars}), \quad U\in\mU(4\beta\|U^0\|_2^2),
\end{aligned}
\end{equation}
where $U = (U_1, U_2)$ with $U_1\in\mR^{d\times \hr_1}$, $U_2\in\mR^{d\times (r-\hr_1)}$, $\hr_1$ is the number of $+1$ entries of $\Lambda^0$, used as an estimate of $r_1$, and {\red $\baralpha$, $\bars$} are user-defined tuning parameters. The quadratic penalty in \eqref{pro:3} biases the components $U_1$, $U_2$ of the matrix $U$ to be orthogonal and can also be written as $\|U^\T U - \Lambda^0U^\T U\Lambda^0\|_F^2/16$.

Before we detail steps of the algorithm, we define two truncation operators that correspond to two different sparsity structures. For any integer $s$ and $A\in\mR^{d\times d}$, the hard-truncation operator $\mJ_s(\cdot) : \mR^{d\times d} \mapsto \mR^{d\times d}$ is defined as
\begin{align*}
[\mJ_s(A)]_{i, j} = \begin{cases}
A_{i, j} & \text{if\ } |A_{i, j}| \text{\ is one of the largest\ } s \text{\ elements of\ } A,\\
0 & \text{otherwise}.
\end{cases}
\end{align*}
For any $\alpha\in(0, 1)$, the dispersed-truncation operator $\mT_{\alpha}(\cdot) : \mR^{d\times d} \mapsto \mR^{d\times d}$ is defined as
\begin{align*}
[\mT_\alpha(A)]_{i, j} = \begin{cases}
A_{i, j} & \text{if\ } |A_{i, j}| \text{\ is one of the largest\ } \alpha d \text{\ elements for both\ } A_{i, \cdot} \text{\ and\ } A_{\cdot, j},\\
0 & \text{otherwise}.
\end{cases}
\end{align*}
In the above definitions, $\mJ_s(A)$ keeps the largest $s$ entries of $A$, while $\mT_\alpha(A)$ keeps the largest $\alpha$ fraction of entries in each row and column. Therefore, the operator $\mJ_s(\cdot)$ projects iterates to the constraint set $\|S\|_{0, 1}\leq s$, while $\mT_\alpha(\cdot)$ projects to the set $\|S\|_{0, \infty}\leq \alpha d$.

We summarize the projected alternating gradient descend procedure in Algorithm \ref{alg:2}. Both the sparse and low-rank components are updated, with the other component being fixed, by the gradient descent step with a constant step size, followed by a projection step. Explicit formulas for $\nabla_S\mbL_n$ and $\nabla_U\mbL_n$ are provided in \textsection \ref{appen:main:lem} in the Supplementary Material. The sign matrix $\Lambda^0$ is not updated in the algorithm. We will show later that, under suitable conditions, the first stage estimate consistently recovers $\tLambda$, that is, $\Lambda^0 = \tLambda$. Computationally, the update of the low-rank matrix in each iteration requires only updating the factor $U$, which can be done efficiently.

\begin{algorithm}[!tp]
	\vspace*{-6pt}
	\caption{Stage II: projected alternating gradient descent for solving \eqref{pro:3}.} \label{alg:2}
	\vspace*{-15pt}
	\begin{tabbing}
		\qquad\enspace Input: Sample covariance matrices $\hSigmax$, $\hSigmay$; Initial point tuple $(S^0, U^0, \Lambda^0)$; Step sizes $\eta_1$,\\
		\qquad\enspace \phantom{Input: } $\eta_2$; Tuning parameters $\baralpha$, $\bars$, $\beta$. \\
		\qquad\enspace For $k=0$ to $k=K-1$\\
		\qquad\qquad $S^{k + 1/2} = S^k - \eta_1\nabla_S\mbL_n(S^k, U^k, \Lambda^0)$;\\
		\qquad\qquad $S^{k + 1} = \mT_{\baralpha} \cbr{ \mJ_{\bars}(S^{k + 1/2}) }$;\\
		\qquad\qquad Let $\mC^k = \mU(4\beta\|U^k\|_2^2)$;\\
		\qquad\qquad $U^{k+1/2} = U^k - \eta_2\nabla_U\mbL_n(S^k, U^k, \Lambda^0) - \frac{\eta_2}{2}U^k(U^{k \T}U^k - \Lambda^0U^{k \T} U^k\Lambda^0)$;\\
		\qquad\qquad $U^{k+1} = \P_{\mC^k}(U^{k+1/2})$;\\
		\qquad\enspace Output $S^K$, $U^K$.
	\end{tabbing}
	
\end{algorithm}

{\red The projection operator $\P_{\mU(\beta)}(\cdot)$ can be computed in a closed form as}
\begin{align*}
\red
[\P_{\mU(\beta)}(U)]_{i, \cdot} = \begin{cases}
U_{i, \cdot} & \text{if\ } \|U_{i, \cdot}\|_2\leq (\beta r/d)^{1/2},\\
(\beta r/d)^{1/2}/\|U_{i, \cdot}\|_2\cdot U_{i, \cdot} & \text{otherwise}.
\end{cases}
\end{align*}
Next, we describe how to get a \textit{good} initial point, $(S^0, U^0, \Lambda^0)$, needed for Algorithm \ref{alg:2}.  The requirements on the initial point are presented in Theorem \ref{thm:1}. Our initial point is obtained from a rough estimator of $\tDelta$. Let $\hat \Delta^0 = (\ttSigmax)^{-1} - (\ttSigmay)^{-1}$, where $\ttSigmax = {n_X}/\rbr{n_X - d - 2}\hSigmax$ (similarly for $\ttSigmay$) is the scaled sample covariance matrix. The scaled covariance matrix, so called Kaufman-Hartlap correction \citep{Paz2015Improving}, is used for the initialization step so to have $\mE(\ttSigmax^{-1}) = \tOmegax$. By rescaling the sample covariance, we are able to show that $\|(\ttSigmax)^{-1} - \tOmegax\|_{\infty, \infty}\asymp (\log d/n_X)^{1/2}$ with high probability, leading to a better sample size compared to $\|(\hSigmax)^{-1} - \tOmegax\|_{\infty, \infty}\asymp d/n_X + (\log d/n_X)^{1/2}$. We obtain $S^0$ by truncating $\hat\Delta^0$. Next, we extract $r$ eigenvectors, corresponding to the top $r$ eigenvalues in magnitude of the residual matrix $R^0 = \hat \Delta^0-S^0$.  $U^0$ and $\Lambda^0$ are further derived from the reduced matrix. See Algorithm \ref{alg:1} for details. Theorem \ref{thm:2} shows that the positive index of inertia is correctly recovered by the initial step, $\Lambda^0 = \tLambda$, and $(S^0, U^0)$ lies in a sufficiently small neighborhood of $(\tS, \tU)$.

\begin{algorithm}[!tp]
	\vspace*{-6pt}
	\caption{Stage I: initialization.} \label{alg:1}
	\vspace*{-15pt}
	\begin{tabbing}
		\qquad\enspace Input: Scaled sample covariance matrices $\ttSigmax$, $\ttSigmay$; Tuning parameters $\halpha$, $\hs$, $r$, $\beta$.\\
		\qquad\enspace Let $\hDelta^0 = (\ttSigmax)^{-1} - (\ttSigmay)^{-1}$,  $S^0 = \mT_{\halpha}\{\mJ_\hs(\hDelta^0)\}$, and $R^0 = \hDelta^0 - S^0$;\\
		\qquad\enspace Compute $R^0 = L^0\Xi^0L^{0 \T}$ the eigenvalue decomposition of $R^0$, let $\Xi^0_r\in\mR^{r\times r}$ be the \\
		\qquad\enspace \text{\ \ \ \ } {\red diagonal matrix with} largest $r$ eigenvalues in magnitude and $L^0_r\in\mR^{d\times r}$ be the \\
		\qquad\enspace \text{\ \ \ \ } corresponding eigenvectors; \\
		\qquad\enspace Let $\hr_1 = |\{i\in[r]: [\Xi^0_r]_{i, i}>0\}|$, $\Lambda^0 = \diag(I_{\hr_1}, -I_{r - \hr_1})$, and $P^0$ be the permutation \\
		\qquad\enspace \text{\ \ \ \ } matrix such that $\sign(\Xi_r^0) = P^0\Lambda^0P^{0 \T}$;\\
		\qquad\enspace Let $\barU^0 = L_r^0|\Xi^0_r|^{1/2}P^0$ where $|\Xi^0_r|$ is computed elementwise; \\
		\qquad\enspace Let $U^0 = \P_{\mC}\rbr{\barU^0}$ with $\mC = \mU(4\beta\|\barU^0\|_2^2)$;\\
		\qquad\enspace Output $S^0$, $U^0$, $\Lambda^0$.
	\end{tabbing}
\end{algorithm}

Throughout the two-stage algorithm, we only compute the (reduced) eigenvalue decomposition once in the first stage. Therefore, it is computationally efficient compared to related convex approaches, mentioned in \textsection \ref{disc} in the Supplementary Material, where in each iteration one needs to compute an eigenvalue decomposition to update $R$.

In our experiments, we set $\baralpha = \halpha$ and $\bars = \hs$ and use cross-validation to select them together with $r$ and $\beta$. Our theory requires more stringent conditions on $\baralpha, \bars$ in Algorithm~\ref{alg:2} than on $\halpha, \hs$ in Algorithm~\ref{alg:1}, where we only require $\halpha\geq \alpha$ and $\hs\geq s$. See Theorems~\ref{thm:1} and \ref{thm:2}.

\section{Theoretical analysis}\label{sec:4}

We establish the convergence rate of iterates generated by Algorithm \ref{alg:2} by first assuming that the initial point $(S^0, U^0, \Lambda^0)$ lies in a suitable neighborhood around $(\tS, \tU, \tLambda)$. Next, we prove that the output of Algorithm~\ref{alg:1} satisfies requirements on the initial point with high probability. The convergence rate of Algorithm \ref{alg:2} consists of two parts: the statistical rate and algorithmic rate. The statistical rate appears due to the approximation of population loss by the empirical loss, and it depends on the sample size, dimension, and the problem parameters including the condition numbers of covariance matrices. The algorithmic rate characterizes the linear rate of convergence of the projected gradient descent iterates to a point that is within statistical error from the true parameters.

The convergence rate is established under the following two assumptions.

\begin{assumption}[Constraint sets]\label{ass:2}
	
	Let $\tDelta = \tS + \tR$ be the differential network and $\tR = \tL\tXi\tLT$ be the reduced eigenvalue decomposition of the rank-$r$ matrix $\tR$.  There exist $\alpha$, $\beta$ and $s$ such that $\tS\in\mS(\alpha, s)$ and $\tL\in\mU(\beta)$.
\end{assumption}

\begin{assumption}\label{ass:1}
	There exist $0 < \sigma^X_d \leq \sigma^X_1 < \infty$ and $0 < \sigma^Y_d \leq \sigma^Y_1 < \infty$ such that $\sigma^X_d I_d\preceq\tSigmax\preceq \sigma^X_1 I_d$ and $\sigma^Y_d I_d\preceq \tSigmay\preceq \sigma^Y_1 I_d$.
\end{assumption}

We start by defining the distance function that will be used to measure the convergence rate of the low-rank component. From the reduced eigenvalue decomposition of $\tR$, $\tR = \tL\tXi\tLT = \tU\tLambda\tUT$ with $\tLambda = \sign(\tXi) = \diag(I_{r_1}, -I_{r-r_1})$ and $\tU = \tL(\tXi\tLambda)^{1/2}$. While $\tLambda$ is uniquely characterized by the positive index of inertia $r_1$, $\tU$ is not unique in the sense that it is possible to have $\tU\tLambda \tUT = U\tLambda U^\T$ but $U\neq \tU$. We deal with this non-uniqueness issue by using the following distance function.

\begin{definition}[Distance function]\label{def:1}
	
	Given two matrices $U_1, U_2\in\mR^{d\times r}$ and an integer $r'\in \{0, \ldots, r\}$, we define ${\red \Pi_{r'}(U_1, U_2)} = \inf_{Q\in \mQ_{r'}^{r\times r}} \|U_1 - U_2 Q\|_F$, where
	\begin{align*}
	\mQ_{r'}^{r\times r} &= \cbr{Q\in\Q^{r\times r}: Q\Lambda Q^\T = \Lambda \text{ with } \Lambda = \diag(I_{r'}, -I_{r-r'}) } \\
	& = \cbr{Q\in\Q^{r\times r}: Q = \diag(Q_1, Q_2) \text{ with } Q_1\in\Q^{r'\times r'}, Q_2\in\Q^{r-r'\times r-r'}}.
	\end{align*}
\end{definition}
In the following, we will simply use $\Pi(\cdot, \cdot)$ to represent $\Pi_{r_1}(\cdot, \cdot)$ with $r_1$ being the positive inertia of $\tR$. Based on the following lemma, we see that ${\red \Pi(U, \tU)}$ measures $\|U\tLambda U^\T - \tU\tLambda\tUT\|_F$.

\begin{lemma}[Properties of $\Pi(\cdot, \cdot)$]\label{prop:1}
	
	Suppose $\tU\in\mR^{d\times r}$ has orthogonal columns and $\tLambda = \diag(I_{r_1}, I_{r-r_1})$. Let $\sigma_1$ ($\sigma_r$) be the largest (smallest) singular value of $\tU$ and let $U\in\mR^{d\times r}$.
	\begin{enumerate}[label=(\alph*),topsep=2pt]
		\setlength\itemsep{-0em}
		\item If $\Pi(U, \tU)\leq \sigma_1$, then $\|U\tLambda U^\T - \tU\tLambda\tUT\|_F\leq 3\sigma_1\Pi(U, \tU)$.
		\item If $\|U\tLambda U^\T - \tU\tLambda\tUT\|_2\leq \sigma_r^2/2$, then $\Pi(U, \tU)\leq \{\rbr{\surd{2}-1}^{1/2}\sigma_r\}^{-1}\|U\tLambda U^\T - \tU\tLambda\tUT\|_F$.
	\end{enumerate}
	
\end{lemma}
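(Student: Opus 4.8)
The plan is to derive both inequalities from a single algebraic identity obtained after optimally aligning $\tU$ to $U$. The key preliminary observation is that for every $Q\in\mQ_{r_1}^{r\times r}$ we have $Q\tLambda Q^\T=\tLambda$, so that $V:=\tU Q$ satisfies $V\tLambda V^\T=\tU\tLambda\tUT$ and, since $Q$ is orthogonal, $V$ inherits the singular values of $\tU$; in particular $\|V\|_2=\sigma_1$ and $\sigma_{\min}(V)=\sigma_r$. Choosing $Q$ to attain the infimum in $\Pi(U,\tU)$ (the set $\mQ_{r_1}^{r\times r}$ is compact, so a minimizer exists) and setting $\Delta:=U-V$, so that $\|\Delta\|_F=\Pi(U,\tU)$, a direct expansion gives
\begin{equation}
U\tLambda U^\T-\tU\tLambda\tUT \;=\; V\tLambda\Delta^\T+\Delta\tLambda V^\T+\Delta\tLambda\Delta^\T .
\end{equation}
Both parts are read off from this identity by estimating the three terms on the right.

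For part (a) I would apply the triangle inequality. Using $\|\tLambda\|_2=1$ and $\|V\|_2=\sigma_1$, each of the two bilinear terms is at most $\sigma_1\|\Delta\|_F$, while the quadratic term obeys $\|\Delta\tLambda\Delta^\T\|_F\le\|\Delta\|_2\|\Delta\|_F\le\|\Delta\|_F^2$. Hence $\|U\tLambda U^\T-\tU\tLambda\tUT\|_F\le 2\sigma_1\Pi(U,\tU)+\Pi(U,\tU)^2$, and the hypothesis $\Pi(U,\tU)\le\sigma_1$ bounds the last term by $\sigma_1\Pi(U,\tU)$, yielding the factor $3\sigma_1$.

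Part (b) is the genuine work and uses the first-order optimality of the aligning $Q$. Because $\mQ_{r_1}^{r\times r}=\Q^{r_1\times r_1}\times\Q^{(r-r_1)\times(r-r_1)}$, the alignment decouples into two ordinary orthogonal Procrustes problems on the index blocks, and stationarity forces the two diagonal blocks $W_{11},W_{22}$ of $W:=V^\T\Delta$ to be symmetric (in the partition $V=(V_1,V_2)$, $\Delta=(\Delta_1,\Delta_2)$), while $V_1^\T V_2=\tU_1^\T\tU_2=0$. I would then square the identity. Writing $L:=V\tLambda\Delta^\T+\Delta\tLambda V^\T$, the block-diagonal structure of $V^\T V$ and its commuting with $\tLambda$ give $\TR(\Delta\tLambda V^\T V\tLambda\Delta^\T)=\TR(\Delta V^\T V\Delta^\T)\ge\sigma_r^2\|\Delta\|_F^2$, so that
\begin{equation}
\|L\|_F^2=2\TR(\Delta V^\T V\Delta^\T)+2\TR\{(\tLambda W)^2\}\ge 2\sigma_r^2\|\Delta\|_F^2+2\TR\{(\tLambda W)^2\},
\end{equation}
where $\TR\{(\tLambda W)^2\}=\|W_{11}\|_F^2+\|W_{22}\|_F^2-2\TR(W_{12}W_{21})$ follows from the sign pattern of $\tLambda$. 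Combining this with the cross term $2\langle L,\Delta\tLambda\Delta^\T\rangle$ and the quartic term $\|\Delta\tLambda\Delta^\T\|_F^2$, both controlled through $\|\Delta\|_2$, and balancing the resulting quadratic in $\|\Delta\|_F$ as in the classical positive-semidefinite Procrustes bound, produces $\|U\tLambda U^\T-\tU\tLambda\tUT\|_F^2\ge(\surd2-1)\sigma_r^2\,\Pi(U,\tU)^2$, which is the claim after taking square roots.

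The main obstacle, and the reason the indefiniteness of $\tLambda$ makes this strictly harder than the usual positive-semidefinite statement, is the off-diagonal blocks $W_{12}=V_1^\T\Delta_2$ and $W_{21}=V_2^\T\Delta_1$ of $W$. These mix the positive and negative eigenspaces, are not symmetric, and enter $\TR\{(\tLambda W)^2\}$ through the indefinite term $-2\TR(W_{12}W_{21})$, which is absent when $\tLambda=I$ and can a priori be as large as $\sigma_1^2\|\Delta\|_F^2$, overwhelming the good term $\sigma_r^2\|\Delta\|_F^2$. Controlling them is precisely what the spectral hypothesis $\|U\tLambda U^\T-\tU\tLambda\tUT\|_2\le\sigma_r^2/2$ buys: by Weyl's inequality $U\tLambda U^\T$ then has the same inertia as $\tU\tLambda\tUT$ with every nonzero eigenvalue of magnitude at least $\sigma_r^2/2$, and the eigengap of at least $2\sigma_r^2$ separating its positive and negative parts lets a Sylvester/Davis--Kahan argument bound $\|W_{12}\|_F$ and $\|W_{21}\|_F$, equivalently forcing the positive and negative invariant subspaces of the two matrices to be nearly orthogonal. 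Once these mixing terms are absorbed, the remaining steps are the routine quadratic balancing described above.
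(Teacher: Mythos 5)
Your part (a) is correct and is essentially the paper's own argument: expand $U\tLambda U^\T-\tU Q\tLambda Q^\T\tUT$ into two bilinear terms plus a quadratic term, apply the triangle inequality, and absorb $\Pi^2(U,\tU)$ into $\sigma_1\Pi(U,\tU)$ using the hypothesis. Part (b), however, has a genuine gap at exactly the step that makes this lemma harder than the positive semidefinite Procrustes bound. You correctly isolate the indefinite cross term $-2\TR(W_{12}W_{21})$ with $W_{12}=V_1^\T U_2$ and $W_{21}=V_2^\T U_1$, but the proposed resolution does not go through as described: a Davis--Kahan/Sylvester argument compares invariant subspaces of $U\tLambda U^\T$ and $\tU\tLambda\tUT$, whereas the column spaces of $U_1$ and $U_2$ are \emph{not} the positive and negative eigenspaces of $U\tLambda U^\T$ for a generic $U$ in this lemma --- they need not even be mutually orthogonal, since $U_1^\T U_2=0$ is only penalized, never enforced. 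So a subspace-angle bound does not directly control $\|V_1^\T U_2\|_F$ or $\|V_2^\T U_1\|_F$. Even granting some bound on these blocks, you would still have to show that $|\TR(W_{12}W_{21})|$ is dominated by a fixed fraction of $\sigma_r^2\|\Delta\|_F^2$ \emph{and} that the ``quadratic balancing'' still yields the constant $(\surd{2}-1)$; neither is carried out, so the proposal reduces the claim to an unproved assertion precisely where the indefiniteness of $\tLambda$ bites.

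For contrast, the paper's proof of (b) sidesteps the expansion entirely. It views $U\tLambda U^\T=U\cdot(U\tLambda)^\T$ as an asymmetric factorization, stacks the factors into $(U^\T,(U\tLambda)^\T)^\T$, and invokes the factor-distance bound of Tu et al.\ (Lemma \ref{aux:lem:9}) to obtain $\inf_{Q\in\Q^{r\times r}}\{\|U-\tU Q\|_F^2+\|U\tLambda-\tU\tLambda Q\|_F^2\}\leq \frac{2}{(\surd{2}-1)\sigma_r^2}\|U\tLambda U^\T-\tU\tLambda\tUT\|_F^2$ under the spectral hypothesis; it then uses the block SVD of $U^\T\tU+\tLambda U^\T\tU\tLambda=2\,\diag(U_1^\T\tU_1,\,U_2^\T\tU_2)$ to show the unrestricted optimal rotation is automatically block-diagonal, i.e., lies in $\mQ_{r_1}^{r\times r}$, so the left-hand side equals $2\Pi^2(U,\tU)$. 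The cross term you are fighting simply never appears there, because $\tUT\tU$ is block-diagonal and commutes with $\tLambda$. If you want a self-contained direct proof, I would recommend running your expansion on the stacked factors rather than on $U$ itself.
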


By Lemma \ref{prop:1}, $U\tLambda U^\T = \tU\tLambda\tUT \Longleftrightarrow \Pi^2(U, \tU) =0$. Thus, once we can correctly recover $\tLambda$, that is $\hat{\Lambda} = \tLambda$, the distance function in Definition \ref{def:1} is a reasonable surrogate for $\|\hR - \tR\|_F$, since $\|\hR - \tR\|_F = \|\hU\hat{\Lambda}\hU^\T - \tU\tLambda\tUT\|_F = {\red \|\hU\tLambda\hU^\T - \tU\tLambda\tUT\|_F}\asymp \Pi(\hU, \tU)$.

Let $\sigma_1^{\tR} = \sigma_{\max}(\tR)$, $\sigma_r^{\tR} = \sigma_{\min}(\tR)$ and define the condition numbers $\kappa_X = \sigma^X_1/\sigma^X_d$, $\kappa_Y = \sigma^Y_1/\sigma^Y_d$, and $\kappa_\tR = \sigma_1^\tR/\sigma_r^\tR$. We further define the following quantities that depend only on the covariance matrices
\begin{align*}
T_1 = & \cbr{\frac{\kappa_X\kappa_Y\rbr{\|\tOmegay\|_1\|\tSigmax\|_1 + \|\tOmegax\|_1\|\tSigmay\|_1}}{\sigma_d^X\sigma_d^Y}}^2, T_2 = \rbr{\frac{1}{\sigma_d^X} + \frac{1}{\sigma_d^X}}^2, T_3 = \rbr{\frac{\|\tSigmax\|_1}{\sigma_1^X}}^2 + \rbr{\frac{\|\tSigmay\|_1}{\sigma_1^Y}}^2,\\ T_4=&\frac{(\sigma_d^X\sigma_d^Y)^2}{\kappa_X^4\kappa_Y^4\cbr{(\sigma_1^Y\|\tSigmax\|_1)^2 + (\sigma_1^X\|\tSigmay\|)^2}}, T_5 = \cbr{\|(\tOmegax)^{1/2}\|_1^2 + \|(\tOmegay)^{1/2}\|_1^2}^2, T_6 =  \rbr{\frac{\kappa_X}{\sigma_d^X} + \frac{\kappa_Y}{\sigma_d^Y}}^2.
\end{align*}
Finally, for $S\in\S^{d\times d}$ and $U\in\mR^{d\times r}$, we define the total error distance to be
\begin{align*}
TD(S, U) = {\|S - \tS\|_F^2}/{\sigma_1^\tR} + \Pi^2(U, \tU).
\end{align*}
The error for the sparse component is scaled by $\sigma_1^\tR$ in order to have the two error terms on the same scale, based on the first part of Lemma \ref{prop:1}.  With this, we have the following result on the convergence of iterates obtained by Algorithm \ref{alg:2}.

\begin{theorem}[Convergence of Algorithm \ref{alg:2}]\label{thm:1}
	
	Suppose Assumptions \ref{ass:2} and \ref{ass:1} hold. Furthermore, suppose the following conditions hold: (a) sample size
	\begin{equation}\label{cond:sample1}
	\rbr{n_X\wedge n_Y} \geq C_1\cbr{\frac{d\log d}{T_3\beta}\vee\frac{\rbr{\kappa_X\kappa_Y}^4 \rbr{T_1\cdot s\log d + T_2\cdot rd}}{(\sigma_r^\tR)^2}},
	\end{equation}
	and sparsity proportion $\alpha \leq {c_1T_4}/\rbr{\beta r \kappa_\tR}$; (b) step sizes $\eta_1 \leq c_2/\rbr{\sigma_1^X\sigma_1^Y\kappa_X\kappa_Y}$, $\eta_2 = {c_3\eta_1}/{\sigma_1^\tR}$, and tuning parameters ${\red 2(\bars/s) - 1\geq \baralpha/\alpha}\geq C_2\rbr{\kappa_X\kappa_Y}^4$; (c) initialization point $\Lambda^0 = \tLambda$, $S^0 \in\S^{d\times d}$, $U^0\in\mU(9\beta\sigma_1^\tR)$ with $TD(S^0, U^0)\leq {c_4\sigma_r^\tR}/{\rbr{\kappa_X\kappa_Y}^2}$; then the iterates $(S^k, U^k)$ of Algorithm~\ref{alg:2} satisfy $S^k\in\S^{d\times d}$ and
	\begin{align}\label{result:1}
	TD(S^k, U^k)\leq \rbr{1 - \frac{c_5}{\kappa_X^2\kappa_Y^2\kappa_\tR}}^kTD(S^0, U^0) +
	\frac{C_3\kappa_X^2\kappa_Y^2}{\sigma_r^\tR}\cdot \frac{T_1\cdot s\log d + T_2\cdot rd}{n_X\wedge n_Y},
	\end{align}
	with probability at least $1 - C_4/d^2$ for some fixed constants $\rbr{C_i}_{i = 1}^4$ sufficiently large and $\rbr{c_i}_{i=1}^5$ sufficiently small.
	
\end{theorem}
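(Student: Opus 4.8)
The plan is to reduce the theorem to a single per-iteration contraction inequality and then unroll it. Concretely, I would work on the high-probability event where the stochastic gradient errors are controlled and aim to prove that, whenever the current iterate lies in the basin $TD(S^k, U^k)\leq c_4\sigma_r^\tR/\rbr{\kappa_X\kappa_Y}^2$ with $U^k\in\mU(9\beta\sigma_1^\tR)$, one step of Algorithm~\ref{alg:2} satisfies
\[
TD(S^{k+1}, U^{k+1}) \leq \rbr{1 - \frac{c_5}{\kappa_X^2\kappa_Y^2\kappa_\tR}} TD(S^k, U^k) + \frac{C_3\kappa_X^2\kappa_Y^2}{\sigma_r^\tR}\cdot \frac{T_1 s\log d + T_2 rd}{n_X\wedge n_Y}.
\]
Given this estimate together with an invariance argument showing the hypotheses persist for all $k$, iterating the geometric recursion and summing the fixed statistical term yields \eqref{result:1}. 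Thus the substance of the proof is the one-step bound and the invariance scaffolding.

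The engine of the one-step estimate is a pair of restricted curvature properties of $\mbL_n$ inherited from the convexity of $\mL_n(S,R)$. Since the Hessian of $\mL_n$ in the $(S,R)$ variables equals $\rbr{\hSigmay\otimes\hSigmax + \hSigmax\otimes\hSigmay}/2$, Assumption~\ref{ass:1} together with concentration of $\hSigmax-\tSigmax$ and $\hSigmay-\tSigmay$ restricted to the sparse and low-rank directions gives restricted strong convexity and smoothness with moduli comparable to $\sigma_d^X\sigma_d^Y$ and $\sigma_1^X\sigma_1^Y$; this is what produces the condition-number factors in the rate and what forces the sample-size condition \eqref{cond:sample1}. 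Separately, I would bound the empirical gradient at the truth, $\nabla_S\mbL_n(\tS,\tU,\tLambda)$ and its low-rank analogue, by expanding $\hSigmax\tDelta\hSigmay-\rbr{\hSigmay-\hSigmax}$ into centered fluctuations of the two sample covariances. Here I would exploit the Wishart structure of $\hSigmax,\hSigmay$ rather than crudely bounding $\|\hSigmax\|_1$, so as to obtain the sharp statistical error $T_1 s\log d + T_2 rd$ over $n_X\wedge n_Y$, which is the irreducible second term of \eqref{result:1} and what lowers the sample complexity from $d^2$ to $d\log d$.

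With curvature and gradient-error control in hand I would treat the two blocks in turn. For the sparse update, fixing $U=U^k$, the gradient step $S^{k+1/2}=S^k-\eta_1\nabla_S\mbL_n$ followed by the double truncation $\mT_{\baralpha}\{\mJ_{\bars}(\cdot)\}$ is handled by a standard projected-gradient contraction for sparse sets: restricted strong convexity gives a one-step decrease of $\|S-\tS\|_F^2$, while the relation $2(\bars/s)-1\geq\baralpha/\alpha\geq C_2\rbr{\kappa_X\kappa_Y}^4$ ensures the truncation onto $\mS(\baralpha,\bars)$ inflates the error only by a factor compatible with contraction, leaving a cross term in $\Pi^2(U^k,\tU)$ from the coupling of the two components. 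For the low-rank update, fixing $S=S^k$, I would work directly with $\Pi_{r_1}(\cdot,\tU)$: selecting the optimal rotation $Q\in\mQ_{r_1}^{r\times r}$ from Definition~\ref{def:1}, expanding $\|U^{k+1/2}-\tU Q\|_F^2$, relating $\nabla_U\mbL_n$ to $\nabla_R\mL_n$ through $R=U\Lambda^0U^\T$, and invoking a factored restricted strong convexity adapted to the indefinite parametrization. Crucially I would use $\Lambda^0=\tLambda$ from hypothesis (c) so the inertia matches and Lemma~\ref{prop:1} applies, while the penalty gradient $\frac{\eta_2}{2}U^k(U^{k\T}U^k-\Lambda^0U^{k\T}U^k\Lambda^0)$ keeps $U_1,U_2$ balanced and removes the directions in which $\Pi$ would otherwise fail to contract. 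The incoherence projection $\P_{\mC^k}$ can only decrease $\Pi$, since $\mU(\cdot)$ is convex, $\tU Q$ is feasible for every $Q$ because rotations preserve row norms, and the basin guarantees $\tU$ lies within the radius $4\beta\|U^k\|_2^2$.

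Adding the two block estimates, with the sparse error weighted by $1/\sigma_1^\tR$ as in the definition of $TD$, collapses the cross terms and delivers the per-step contraction above. The remaining and, in my view, hardest piece is the invariance argument: I must show inductively that each iterate stays in the basin and remains incoherent, i.e.\ $TD(S^k,U^k)$ small and $U^k\in\mU(9\beta\sigma_1^\tR)$, so the curvature and projection estimates remain valid at the next step. The main obstacle throughout is the low-rank update under the two features highlighted in the introduction: the indefiniteness of $\tR$, which forces the entire geometry through $\Pi_{r_1}$ and the constrained rotation set $\mQ_{r_1}^{r\times r}$ rather than the full orthogonal group, and the fact that incoherence (Assumption~\ref{ass:2}) lives on $\tU$ while in the gradient error the factor $U$ always appears multiplied by a sample covariance. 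Decoupling these, namely controlling terms such as $\rbr{\hSigmax-\tSigmax}\tU$ without assuming incoherence of $\hSigmax\tU$, is exactly where the Wishart-based estimates are indispensable, and making them interlock with the factored strong-convexity constant so that $1-c_5/\rbr{\kappa_X^2\kappa_Y^2\kappa_\tR}$ stays strictly below one is the crux of the argument.
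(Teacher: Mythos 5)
Your proposal follows essentially the same route as the paper: a one-step contraction of the total error distance obtained by combining restricted strong convexity/smoothness of the quadratic loss (moduli $\sigma_d^X\sigma_d^Y$ and $\sigma_1^X\sigma_1^Y$) with the truncation-distortion bounds governed by $2(\bars/s)-1\geq\baralpha/\alpha$, the optimal-rotation distance $\Pi_{r_1}$ with the balancing penalty for the indefinite factor, Wishart-based concentration to transfer incoherence from $\tU$ to $\hSigmax\tU$, a separate bound on the gradient at the truth giving the $T_1 s\log d + T_2 rd$ statistical term, and an inductive invariance argument keeping the iterates in the basin and in $\mU(9\beta\sigma_1^\tR)$ — exactly the structure of the paper's Lemmas \ref{lem:2}--\ref{lem:5} and the proof of Theorem \ref{thm:1}. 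The plan is correct in its decomposition and identifies the genuinely delicate points (the constrained rotation set $\mQ_{r_1}^{r\times r}$ and the $\hSigmax\tU$ incoherence transfer), so no substantive gap to flag.
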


The two terms in \eqref{result:1} correspond to the algorithmic and the statistical rate of convergence, respectively. The statistical error is of the order $O\rbr{\rbr{s \log d + rd}/\rbr{n_X\wedge n_Y}}$, which matches the minimax optimal rate \citep{Chandrasekaran2012Latent}. In particular, the term $O\rbr{{s\log d}/\rbr{n_X\wedge n_Y}}$ corresponds to the statistical error of estimating $\tS$, while $O\rbr{{rd}/\rbr{n_X\wedge n_Y}}$ corresponds to the statistical error of estimating $\tR$. We stress that the condition on $\alpha$ is common in related literature. For example, \cite{Yi2016Fast} requires $\alpha\lesssim 1/\beta r (\kappa_{\tR})^2$, which is stronger than our condition in terms of the power of $\kappa_{\tR}$; \cite{Zhang2018Unified} requires $\alpha\lesssim 1/\beta r \kappa_{\tR}$, which is comparable with ours. {\red Under the condition on $\alpha$, we have $\baralpha<1$.} The sample complexity requirement in \eqref{cond:sample1} has an extra $d\log d/\beta$ term compared to typical results in robust estimation (see Corollary 4.11 and Corollary 4.13 in \cite{Zhang2018Unified} for results in robust matrix sensing and robust principal component analysis). This increased sample complexity is common in estimation of latent variable Gaussian graphical models. For example, \cite{Xu2017Speeding} requires $n_X\gtrsim d^2$ to show convergence of $\|\hSigmax\|_1$. Theorem~\ref{thm:1} improves the sample size requirement to $d\log d$. In \eqref{loss:emp:SULam}, we need to control the low-rank components $\hSigmax\tU$ (and $\hSigmay\tU$) and the large sample size guarantees that the incoherence condition can transfer from $\tU$ to $\hSigmax\tU$. Furthermore, the covariance matrices $\hSigmax$ and $\hSigmay$ work as design matrices in \eqref{loss:emp:SULam} and bring additional challenges compared to robust estimation problems. The design matrix in robust principal component analysis is identity, while in robust matrix sensing its expectation is also identity. Thus, their loss functions all satisfy Condition 4.4 in \cite{Zhang2018Unified}, which is not the case for \eqref{loss:emp:SULam}. {\red Without Condition 4.4, their proof strategy fails to show the convergence of alternating gradient descent. By direct analysis, we first establish what conditions we need on $\hSigmax\tU$ and $\hSigmay\tU$, and then show that these conditions hold under incoherence condition on $\tU$.} Finally, we observe that the algorithmic error decreases exponentially and, after $O\rbr{\log\cbr{{n_X\wedge n_Y}/\rbr{s\log d + rd}}}$ iterations, the statistical error is the dominant term.

Next, we show that the output $(S^0, U^0, \Lambda^0)$ of Algorithm \ref{alg:1} satisfies requirements on the initialization point of Algorithm \ref{alg:2} presented in condition (c) in Theorem \ref{thm:1}. The requirement that $S^0\in\S^{d\times d}$ is easy to achieve.  The following lemma suggests that $\Lambda^0 = \tLambda$ is implied by an upper bound on $\|R^0 - \tR\|_2$, which further connects to the upper bound on $TD(S^0, U^0)$ by Lemma~\ref{prop:1}.

\begin{lemma}\label{lem:1}
	
	For any $R\in\S^{d\times d}$, let $R = L\Xi L^\T$ be the eigenvalue decomposition. Let $\Xi_r \in\mR^{r\times r}$ be the diagonal matrix with $r$ largest entries of $\Xi$ in magnitude, and let $\hr_1$ be the number of positive entries of $\Xi_r$.  If $\|R - \tR\|_2\leq \sigma_r^\tR/3$, then $\hr_1 = r_1$ and $\Lambda_r = \diag(I_{\hr_1}, -I_{r - \hr_1}) = \tLambda$.
	
\end{lemma}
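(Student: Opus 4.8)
The plan is to prove this as an eigenvalue perturbation statement driven by Weyl's inequality. The underlying intuition is that all nonzero eigenvalues of $\tR$ have magnitude at least $\sigma_r^\tR$, so a perturbation of size at most $\sigma_r^\tR/3$ cannot move them across zero nor let the (perturbed) zero eigenvalues catch up in magnitude; the hypothesis $\|R-\tR\|_2\leq\sigma_r^\tR/3$ is precisely calibrated to keep the ``signal'' eigenvalues of $R$ cleanly separated in magnitude from the ``noise'' ones, which is what makes the top-$r$ selection and the sign count unambiguous.

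First I would record the eigenvalue structure of $\tR$. Writing $\lambda_1(\tR)\geq\cdots\geq\lambda_d(\tR)$ for its eigenvalues in decreasing order, the rank-$r$ matrix $\tR$ with positive index of inertia $r_1$ has exactly $r_1$ positive eigenvalues, each at least $\sigma_r^\tR$; exactly $r-r_1$ negative eigenvalues, each at most $-\sigma_r^\tR$ (here $\sigma_r^\tR=\min_i|\lambda_i^\tR|$ is the smallest nonzero singular value of $\tR$); and $d-r$ zero eigenvalues. Let $\lambda_1(R)\geq\cdots\geq\lambda_d(R)$ denote the eigenvalues of $R$. Weyl's inequality for symmetric matrices then yields $|\lambda_i(R)-\lambda_i(\tR)|\leq\|R-\tR\|_2\leq\sigma_r^\tR/3$ for every $i$.

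Next I would read off the magnitude separation directly from this bound. On the $r_1$ indices with $\lambda_i(\tR)>0$ we get $\lambda_i(R)\geq\sigma_r^\tR-\sigma_r^\tR/3=2\sigma_r^\tR/3>0$; on the $r-r_1$ indices with $\lambda_i(\tR)<0$ we get $\lambda_i(R)\leq-2\sigma_r^\tR/3<0$; and on the $d-r$ indices with $\lambda_i(\tR)=0$ we get $|\lambda_i(R)|\leq\sigma_r^\tR/3$. Hence exactly $r$ eigenvalues of $R$ have magnitude at least $2\sigma_r^\tR/3$ while the remaining $d-r$ have magnitude at most $\sigma_r^\tR/3$. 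Since $2\sigma_r^\tR/3>\sigma_r^\tR/3$, the $r$ largest-in-magnitude eigenvalues collected in $\Xi_r$ are unambiguously these $r$ signal eigenvalues, of which the $r_1$ coming from the positive block are strictly positive and the $r-r_1$ coming from the negative block are strictly negative. Therefore $\hr_1=r_1$, and consequently $\Lambda_r=\diag(I_{\hr_1},-I_{r-\hr_1})=\diag(I_{r_1},-I_{r-r_1})=\tLambda$.

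The argument is short, so I do not expect a genuine obstacle; the only point requiring care is the clean top-$r$ selection, namely verifying that the factor $1/3$ produces the strict gap $2\sigma_r^\tR/3>\sigma_r^\tR/3$. This is exactly where the hypothesis is used (any bound strictly below $\sigma_r^\tR/2$ would suffice), and it is the crux that guarantees both the correct number of selected positive eigenvalues and the recovery of the sign pattern.
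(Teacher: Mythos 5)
Your proof is correct and uses the same key tool as the paper, namely Weyl's inequality applied to the decreasingly ordered eigenvalues together with the gap between $2\sigma_r^\tR/3$ and $\sigma_r^\tR/3$. The only difference is organizational: the paper argues by contradiction on $\hr_1\neq r_1$, whereas you give a direct counting argument showing exactly $r_1$ eigenvalues of $R$ exceed $2\sigma_r^\tR/3$, exactly $r-r_1$ fall below $-2\sigma_r^\tR/3$, and the remaining $d-r$ have magnitude at most $\sigma_r^\tR/3$; this is arguably cleaner but not a genuinely different route.
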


The next theorem shows the sample complexity under which the conditions on the initial point are satisfied and $\|R^0 - \tR\|_2 \leq \sigma_r^\tR/3$, which implies $\Lambda^0 = \tLambda$, using Lemma \ref{lem:1}.

\begin{theorem}[Initialization]\label{thm:2}
	
	Suppose Assumptions \ref{ass:2} and \ref{ass:1} hold. If $\halpha\geq \alpha$, $\hs\geq s$, the sample sizes and dimension satisfy
	\begin{align}\label{cond:sample2}
	(n_X\wedge n_Y) \geq \frac{C_1\rbr{T_5\hs\log d + T_6 d}}{(\sigma_r^\tR)^2}, \quad d \geq C_2\beta{\hs}^{1/2} r \kappa_\tR,
	\end{align}
	then $S^0\in\S^{d\times d}$, $\|R^0 - \tR\|_2\leq \sigma_r^\tR/4$, $U^0 \in \mU(9\beta\sigma_1^\tR)$, and
	\begin{align*}
	TD(S^0, U^0)\leq C_3\cbr{\frac{r \rbr{T_5\cdot \hs\log d + T_6\cdot d}}{\sigma_r^\tR(n_X\wedge n_Y)} + \frac{\hs\beta^2r^3\kappa_\tR\sigma_1^\tR}{d^2}}
	\end{align*}
	with probability $1 - C_4/d^2$ for some fixed constants $\rbr{C_1}_{i=1}^4$ sufficiently large. Furthermore, if $\hs\asymp s$,
	\begin{align}\label{cond:sample3}
	(n_X\wedge n_Y)\gtrsim \frac{r\kappa_X^2\kappa_Y^2}{(\sigma_r^\tR)^2}\big(T_5\cdot s\log d + T_6\cdot d\big), \text{\ \ \ } d\gtrsim \beta s^{1/2} r^{3/2}\kappa_\tR\kappa_X\kappa_Y,
	\end{align}
	then $TD(S^0, U^0)\lesssim{\sigma_r^\tR}/\rbr{\kappa_X\kappa_Y}^2$.
	
\end{theorem}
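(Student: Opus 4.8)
The plan is to control the rough estimator $\hDelta^0 = (\ttSigmax)^{-1} - (\ttSigmay)^{-1}$ and then propagate its error through the truncation and eigen-decomposition steps of Algorithm~\ref{alg:1}. First I would write $\hDelta^0 - \tDelta = \{(\ttSigmax)^{-1} - \tOmegax\} - \{(\ttSigmay)^{-1} - \tOmegay\}$ and establish two concentration bounds, each valid with probability $1 - C/d^2$: an entrywise bound $\|\hDelta^0 - \tDelta\|_{\infty,\infty}^2 \lesssim T_5\log d/(n_X\wedge n_Y)$ and a spectral bound $\|\hDelta^0 - \tDelta\|_2^2 \lesssim T_6 d/(n_X\wedge n_Y)$. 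Both rest on concentration for the scaled inverse sample covariance; the Kaufman--Hartlap debiasing makes $\mE(\ttSigmax^{-1}) = \tOmegax$ and, crucially, lets one avoid any concentration of $\|\hSigmax\|_1$, which is what delivers the $d\log d$ (rather than $d^2$) sample complexity and the constants $T_5$, $T_6$.

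Next I would analyze the sparse component. Since $\hDelta^0 = \tS + \tR + (\hDelta^0 - \tDelta)$ with $\tS\in\mS(\alpha,s)$ and $\hs\ge s$, $\halpha\ge\alpha$, a standard hard-thresholding argument for the composition $\mT_{\halpha}\{\mJ_{\hs}(\cdot)\}$ gives $\|S^0 - \tS\|_F^2 \lesssim \hs\,\|S^0-\tS\|_{\infty,\infty}^2$, where $S^0-\tS$ has $O(\hs)$ nonzeros and the entrywise error splits into noise $\|\hDelta^0-\tDelta\|_{\infty,\infty}$ and low-rank leakage: by incoherence $\|\tR\|_{\infty,\infty}\le \sigma_1^\tR\beta r/d$, using $\tR=\tL\tXi\tLT$ with $\tL\in\mU(\beta)$. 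Writing $R^0 - \tR = (\tS - S^0) + (\hDelta^0-\tDelta)$, I bound both summands in spectral norm, $\|S^0-\tS\|_2 \le \|S^0-\tS\|_F \lesssim \hs^{1/2}(\|\hDelta^0-\tDelta\|_{\infty,\infty} + \sigma_1^\tR\beta r/d)$ and $\|\hDelta^0-\tDelta\|_2\lesssim (T_6 d/(n_X\wedge n_Y))^{1/2}$. The lower bounds in \eqref{cond:sample2}, namely the $T_5\hs\log d + T_6 d$ term together with $d\gtrsim\beta\hs^{1/2}r\kappa_\tR$, then force $\|R^0-\tR\|_2\le\sigma_r^\tR/4$; being below $\sigma_r^\tR/3$, this triggers Lemma~\ref{lem:1} and yields $\hr_1 = r_1$ and $\Lambda^0 = \tLambda$.

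The crux is converting this spectral control of $R^0 - \tR$ into a Frobenius bound on $\Pi(U^0,\tU)$ without paying a factor $d^{1/2}$. I would use that $\barU^0\Lambda^0\barU^{0\T} = R^0_{(r)} := L^0_r\Xi^0_r L^{0\T}_r$, the best rank-$r$ approximation of $R^0$, so $R^0_{(r)} - \tR$ has rank at most $2r$; combined with $\|R^0_{(r)} - R^0\|_2 \le \|R^0 - \tR\|_2$ (Weyl, since $\rank(\tR)=r$) this gives $\|R^0_{(r)} - \tR\|_F \le (2r)^{1/2}\|R^0_{(r)}-\tR\|_2 \le 2(2r)^{1/2}\|R^0-\tR\|_2$. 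Since $\Lambda^0 = \tLambda$ and $\|R^0_{(r)}-\tR\|_2 \le \sigma_r^\tR/2$, part (b) of Lemma~\ref{prop:1} (smallest singular value of $\tU$ equal to $(\sigma_r^\tR)^{1/2}$) gives $\Pi^2(\barU^0,\tU) \lesssim \|R^0_{(r)}-\tR\|_F^2/\sigma_r^\tR \lesssim (r/\sigma_r^\tR)\|R^0-\tR\|_2^2$. Inserting the two spectral bounds reproduces exactly the claimed $TD$ terms: the noise part yields $r(T_5\hs\log d + T_6 d)/(\sigma_r^\tR(n_X\wedge n_Y))$ and the leakage part $(r/\sigma_r^\tR)\hs(\sigma_1^\tR\beta r/d)^2 = \hs\beta^2 r^3\kappa_\tR\sigma_1^\tR/d^2$, while the $\|S^0-\tS\|_F^2/\sigma_1^\tR$ contribution is smaller by a factor $1/(r\kappa_\tR)$ and is absorbed. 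The passage from $\barU^0$ to $U^0 = \P_{\mC}(\barU^0)$ uses that $\mC = \mU(4\beta\|\barU^0\|_2^2)$ is convex and invariant under right multiplication by $Q\in\mQ_{r_1}^{r\times r}$, so the projection is nonexpansive toward $\tU$ once $\tU\in\mC$; checking $\sigma_1^\tR \le 4\|\barU^0\|_2^2$ via $\|\barU^0\|_2^2 = |\lambda_1(R^0)|\ge \tfrac34\sigma_1^\tR$ (Weyl) confirms $\tU\in\mC$ and, through $\|\barU^0\|_2^2 \le \tfrac54\sigma_1^\tR$, also $U^0\in\mU(9\beta\sigma_1^\tR)$. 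The final claim follows by substituting \eqref{cond:sample3} into the main bound.

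The main obstacle is the rank-reduction step that keeps the low-rank statistical error at the optimal $rd/(n_X\wedge n_Y)$ scale: bounding $\|R^0-\tR\|_F$ directly would be loose by a factor $d^{1/2}$, since the full noise $\hDelta^0-\tDelta$ has Frobenius norm of order $d/(n_X\wedge n_Y)^{1/2}$, so it is essential to project to rank $r$ first and exploit that $R^0_{(r)}-\tR$ lives in a $2r$-dimensional space, trading $d^{1/2}$ for $r^{1/2}$. Closely tied to this are the two Wishart concentration inequalities with the $T_5$ and $T_6$ constants at $d\log d$ sample complexity, and the careful tracking of the indefinite factorization (the permutation $P^0$ and sign matrix $\Lambda^0$) so that Lemma~\ref{lem:1} certifies exact recovery of the positive index of inertia.
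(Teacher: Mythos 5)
Your proposal is correct and follows essentially the same route as the paper's proof: the same decomposition $R^0-\tR=(\tS-S^0)+(\hDelta^0-\tDelta)$ with the entrywise/spectral concentration bounds giving $T_5$ and $T_6$, the same truncation analysis for $S^0$ with the incoherence-based leakage term $\beta r\sigma_1^\tR/d$, Lemma~\ref{lem:1} for recovering $\tLambda$, and the key rank-$2r$ step converting $\|R^0-\tR\|_2$ into a Frobenius bound on the best rank-$r$ approximation before invoking Lemma~\ref{prop:1}(b) and the nonexpansiveness of $\P_{\mC}$. No gaps.
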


From Theorem~\ref{thm:2}, the requirement for the initial point is satisfied under \eqref{cond:sample3}. The sample complexity required for initialization, $O((rs\log d + rd)/(\sigma_r^\tR)^2)$, is smaller than the one for convergence, $O(d\log d/\beta + (s\log d + rd)/(\sigma_r^\tR)^2)$.  When $d\gtrsim \beta s^{1/2}r^{3/2}\kappa_\tR\kappa_X\kappa_Y$ and $\alpha\lesssim T_4/(\beta r\kappa_\tR)$, combining Theorems~\ref{thm:1} and \ref{thm:2} shows that the iterates generated by two-stage algorithm converge linearly to a point with an unavoidable minimax optimal statistical error.

{\red We briefly discuss exact recovery of the support of $\tS$ and the rank of $\tR$. Throughout the paper, we assume that the rank $r$ of $\tR$ is known. This assumption is commonly used in the literature on alternating gradient descent for low-rank matrix recovery \citep[e.g.,][]{Yi2016Fast, Xu2017Speeding, Zhang2018Unified}. The rank $r$ is used to truncate the eigenvalues of $R^0$ and to choose the number of columns of the iterates $U$ in Algorithm~\ref{alg:2}. However, we note that the rank $r$ can be exactly recovered under a suitable assumption on the signal strength, $\sigma_r^\tR$. After dropping higher order terms, Theorem \ref{thm:2} shows that $\|R^0 - \tR\|_2\lesssim \{d/(n_X\wedge n_Y)\}^{1/2}$.  Therefore, if $\sigma_r^\tR \gtrsim 2\{d/(n_X\wedge n_Y)\}^{1/2}$, one can recover $r$ by thresholding small eigenvalues of $R^0$. From the proof of Lemma~\ref{lem:6} in the Supplementary Material, $\|S^0 - \tS\|_{\infty, \infty}\leq \{\log d/(n_x\wedge n_Y)\}^{1/2}$, which allows us to recover the support of $\tS$ by  thresholding elements of $S^0$ that are smaller in magnitude than $\{\log d/(n_x\wedge n_Y)\}^{1/2}$, if the nonzero elements of $\tS$ are bigger  than $2\{\log d/(n_x\wedge n_Y)\}^{1/2}$ in magnitude. Finally, if the support set of $\tS$ is consistently estimated, then $\alpha$ and $s$ can be estimated as well.  Therefore, under suitable assumptions on the signal strength, $\alpha$, $s$, and $r$, are all consistently estimable. Similar signal strength assumptions are also needed even for convex approaches to exactly recover the sparsity and rank \citep{Chandrasekaran2012Latent, Zhao2014Direct}.
}

\section{Simulations}\label{sec:5}

\subsection{Data generation and implementation details}\label{sec:5.1}

We compare the performance of our estimator with two procedures that directly learn the differential network under the sparsity assumption, the $\ell_1$-minimization \citep{Zhao2014Direct} and $\ell_1$-penalized quadratic loss \citep{Yuan2017Differential}, and two procedures that separately learn latent variable Gaussian graphical models, sparse plus low-rank penalized Gaussian likelihood \eqref{convex:1} \citep{Chandrasekaran2012Latent} and constrained Gaussian likelihood \citep{Xu2017Speeding}. Table~\ref{tab:1} summarizes the procedures. {\red In the Supplementary Material, we provide additional simulation results, including comparison with alternative convex approaches.}

\begin{table}[!h]
	\caption{Competing methods}\label{tab:1}
	\small
	\begin{tabular}{cccc}
		%\hline
		Abbr. & Reference & Type & Setup \\
		M1 & \cite{Zhao2014Direct} & joint, convex & differential network is sparse\\
		M2 & \cite{Yuan2017Differential}& joint, convex & differential network is sparse\\
		M3 & \cite{Chandrasekaran2012Latent} & separate, convex & single network is sparse $+$ low-rank\\
		M4 & \cite{Xu2017Speeding} & separate, nonconvex & single network is sparse $+$ low-rank\\
		M* & present paper & joint, nonconvex & differential network is sparse $+$ low-rank\\
	\end{tabular}
\end{table}

Data are generated from the latent variable Gaussian graphical model \eqref{lvggm} described in \textsection\ref{sec:2.2}. We set $\mu_{X_O}^\star = \mu_{X_H}^\star=\mu_{Y_O}^\star=\mu_{Y_H}^\star = 0$. The blocks of $\Omega^\star$ are generated separately. For $\Omega_{OO}^\star \in\mR^{d\times d}$, we set diagonal entries to be one and, following \citet{Xia2015Testing}, off-diagonal entries to be generated according to one of the following four models.
\begin{enumerate}[label={ Model \arabic*:}, leftmargin=45pt, topsep = 2pt]
	\setlength\itemsep{-0.1em}
	\item $(\Omega_{OO}^{\star(1)})_{i, i+1} =(\Omega_{OO}^{\star(1)})_{i+1, i} = 0.6$, $(\Omega_{OO}^{\star(1)})_{i, i+2} =(\Omega_{OO}^{\star(1)})_{i+2, i} = 0.3$;
	\item $(\Omega_{OO}^{\star(2)})_{i, j} =(\Omega_{OO}^{\star(2)})_{j, i} = 0.5$ for $i = 10k-9$, $10k-6\leq j\leq 10k$, $1\leq k\leq d/10$;
	\item $(\Omega_{OO}^{\star(3)})_{i, j} =(\Omega_{OO}^{\star(3)})_{j, i} \sim 0.8\cdot \text{Bernoulli}(0.1)$ for $i+1\leq j\leq i+3$;
	\item $(\Omega_{OO}^{\star(4)})_{i, j} =(\Omega_{OO}^{\star(4)})_{j, i} \sim 0.5\cdot \text{Bernoulli}(0.5)$ for $i = 2k-1$, $2k\leq j\leq (2k+2)\wedge d$, $1\leq k\leq d/2$.
\end{enumerate}
The blocks $\Omega_{OH}^\star, \Omega_{HO}^\star$ are generated entrywise from the following mixture distribution
\begin{align*}
(\Omega_{HO}^\star)_{j, i} = (\Omega_{OH}^\star)_{i, j}\sim 0.1\cdot\delta_0 + 0.9\cdot\text{Uniform}(0.5, 1),
\quad  i=1,\ldots,d,\ j=1,\ldots,r,
\end{align*}
and $\Omega_{HH}^\star = I_r$. Combining the blocks, we get the following four models
\begin{align*}
\Omega^{\star(i)} = \begin{pmatrix}
\Omega_{OO}^{\star(i)} & \Omega^\star_{OH}\\
\Omega_{HO}^\star &\Omega_{HH}^\star
\end{pmatrix}, \text{\ \ } i = 1,2,3,4.
\end{align*}
Last, we let $\Sigma^\star_i = [D^{1/2}\{\Omega^{\star(i)} + ( \iota_i + 1)I_{d+r}\}D^{1/2}]^{-1}$, where $\iota_i = \abr{\min\cbr{\eig(\Omega^{\star(i)})}}$ and $D\in\mR^{(d+r)\times (d+r)}$ is a diagonal scaling matrix with $D_{i,i}\sim \text{Uniform}(0.5, 2.5)$. In our models, each latent variable is connected to roughly $90\%$ of observed covariates and hence the effect of latent variables is spread-out and {\red the corresponding low-rank matrix is incoherent \citep{Chandrasekaran2012Latent}.} We generate $X$ using $\Sigma^\star_1$ and denote it as the control group, while generate $Y$ using $\Sigma^\star_i$, $i = 2,3,4$, and denote it as the test $i-1$ group. Under this generation process, both $X_O$ and $Y_O$ have precision matrices with sparse plus low-rank structure.

Throughout the simulations, we set the sample size equal for both groups, $n_X = n_Y = n$.  For each combination of the tuple $(n, d, r)$, we generate a training and a validation set with sample size $n$. For each method, we choose the corresponding tuning parameters that minimize the empirical loss $\mL_n(\hS,\hR)$ on the validation set {\red (alternative loss functions are discussed in the Supplementary Material, see \textsection \ref{sec:alter:loss})}. We measure the performance by $\|\hS - \tS\|_F$ and $\|\hDelta - \Delta^\star\|_F/\surd{\sigma_{\max}(\tR)}$, where the latter is used as a surrogate for the total error distance $TD(\hS, \hU)$. Errors are computed on test sets with the same sample size based on 40 independent runs. For our method, the step sizes are set as $\eta_1 = 0.5$, $\eta_2 = \eta_1/\sigma_{\max}^2(U^0)$, where $U^0$ is the output of the initialization step; the sparsity proportion $\baralpha$ ($=\halpha$) is chosen from $\{0.01, 0.03, 0.05, 0.1, 0.3, 0.5, 0.8\}$ and $\bars$ ($=\hs$) from $\{2d, 4d, 6d, 15d, 25d, 30d\}$; the rank used in Algorithm \ref{alg:2} and \ref{alg:1} is chosen from $\{0,1,2,3,4\}$; and the incoherence parameter $\beta$ is chosen from $\{1,3\}$. For methods of \cite{Zhao2014Direct} and \cite{Yuan2017Differential}, we {\red use the loss function \eqref{loss:popu:SR} to choose among $5$ different $\lambda$ values, which denote} tuning parameters in their papers and are generated automatically by their packages. For the method of \citet{Chandrasekaran2012Latent}, we use the implementation in \citet{Ma2013Alternating}, where we greedily choose the tuning parameters $\alpha\in\{0.01, 0.05, 0.1\}$ and $\beta\in\{0.15, 0.25, 0.35\}$ (see (2.1) in \citet{Ma2013Alternating}), {\red while other parameters including the step size, augmented Lagrange multiplier, and initialization are kept as in their implementation.} For the method of \citet{Xu2017Speeding}, we select the rank and sparsity in the same way as for our method, while the other parameters are kept as in \citet{Xu2017Speeding} as well.

\afterpage{\begin{rotatepage}
		\begin{landscape}
			\begin{table}[p]
				\centering
				\caption{Simulation results for five algorithms. The estimation errors of the differential network and its sparse component are averaged over 40 independent runs, with standard error given in parentheses. The control group is generated by covariance $\Sigma_1^\star$ while the test $i$ group is generated by $\Sigma_{i+1}^\star$ for $i = 1,2,3$. Throughout the table, the smallest error under the same setup is highlighted.}\label{tab:3}
				\begin{tabular}{*{7}{c}}
					\Xhline{4\arrayrulewidth}
					\hline
					& \multicolumn{2}{c}{Control - Test 1} & \multicolumn{2}{c}{Control - Test 2} & \multicolumn{2}{c}{Control - Test 3}\\
					\Xhline{4\arrayrulewidth}
					\hline
					\\[-0.9em]
					Method & $\|\hS - S^\star\|_F$& $\frac{1}{\surd{\sigma_{\max}(R^\star)}}\|\hDelta - \Delta^\star\|_F$ & $\|\hS - S^\star\|_F$&  $\frac{1}{\surd{\sigma_{\max}(R^\star)}}\|\hDelta - \Delta^\star\|_F$ & $\|\hS - S^\star\|_F$ &  $\frac{1}{\surd{\sigma_{\max}(R^\star)}}\|\hDelta - \Delta^\star\|_F$\\
					\hline
					\Xhline{4\arrayrulewidth}
					& \multicolumn{6}{c}{$n = 1000$, $d = 100$, $r = 1$}\\
					\hline
					M* & \textbf{20.02}(0.56) & 10.35(0.45) & \textbf{18.73}(0.71) & 9.33(0.53) & \textbf{18.59}(0.59) &  \textbf{7.94}(0.20)\\
					\hline
					M1 & 26.40(0.67) & 11.18(0.27) & 27.58(0.93)  & 11.26(0.37) & 30.22(0.67) & 12.07(0.26) \\
					\hline
					M2 & 30.05(0.32) & 12.48(0.14) & 31.04(0.53) & 12.41(0.21) & 32.77(0.46) & 12.75(0.18)\\
					\hline
					M3 & 22.49(0.35) & \textbf{9.52}(0.14) & 22.54(0.44) & \textbf{9.23}(0.17) & 22.91(0.47) & 9.18(0.18) \\
					\hline
					M4 & 33.72(0.61) & 14.16(0.26) & 33.62(0.63) &  13.61(0.26) & 34.45(0.58) & 13.62(0.23)\\
					\hline
					\Xhline{4\arrayrulewidth}
					& \multicolumn{6}{c}{$n = 10000$, $d = 100$, $r = 2$}\\
					\hline
					M* & \textbf{12.55}(0.35) &  \textbf{4.87}(0.13) & \textbf{11.10}(0.38) &  \textbf{4.52}(0.14)  & \textbf{10.61}(0.38) & \textbf{4.37}(0.15) \\
					\hline
					M1 & 39.50(0.87) & 14.91(0.33) & 50.09(0.36) &  19.49(0.14) & 37.64(0.56) & 14.82(0.22)  \\
					\hline
					M2 & 27.86(0.25) & 10.41(0.09) &  32.99(0.22) & 12.77(0.09) & 29.58(0.22) & 11.56(0.09) \\
					\hline
					M3 &  30.54(0.17) &  11.51(0.06)  & 34.11(0.20) & 13.27(0.08) & 31.80(0.14) & 12.47(0.06) \\
					\hline
					M4 & 18.88(0.19) & 6.58(0.07) & 17.44(0.21) &  6.44(0.09) & 14.63(0.30) & 5.60(0.11)\\
					\hline
					\Xhline{4\arrayrulewidth}
					& \multicolumn{6}{c}{$n = 200$, $d = 50$, $r = 0$}\\
					\hline
					M* & 11.40(0.41) & 11.40(0.41) & 11.73(0.24) & 11.73(0.24) & \textbf{9.86}(0.44) & \textbf{9.86}(0.44) \\
					\hline
					M1 & \textbf{10.88}(0.33) & 10.88(0.33) &  11.92(0.27) &  11.92(0.27) & 10.64(0.25) & 10.64(0.25) \\
					\hline
					M2 & 11.37(0.54) & 11.37(0.54) & \textbf{10.81}(0.40) & \textbf{10.81}(0.40) & 10.37(0.38) & 10.37(0.38) \\
					\hline
					M3 & 11.04(0.16) & \textbf{10.85}(0.17) & 11.23(0.18) &  10.86(0.17) & 10.48(0.20) & 10.37(0.21)\\
					\hline
					M4 & 13.51(0.60) & 13.51(0.60) & 14.79(0.65) &  14.79(0.65) &  12.71(0.67) & 12.71(0.67)\\
					\Xhline{4\arrayrulewidth}
					\hline
					\multicolumn{7}{l}{
						\multirow{3}{*}{
							\parbox{47pc}{
								M*, the proposed method; M1, $\ell_1$-minimization in \cite{Zhao2014Direct}; M2, $\ell_1$-penalized quadratic loss in \cite{Yuan2017Differential};
								M3, penalized Gaussian likelihood in \cite{Chandrasekaran2012Latent}; M4, constrained Gaussian likelihood in \cite{Xu2017Speeding};
								detailed descriptions of each method are given in Table \ref{tab:1} and the choice of tuning parameters is discussed
								in \textsection\ref{sec:5.1}.
							}
						}
					}
				\end{tabular}
				
			\end{table}
		\end{landscape}
		
\end{rotatepage}}

\subsection{Results}

Simulation results are summarized in Table~\ref{tab:3}.  We see that our method outperforms other methods when $r = 2$, corresponding to the case where $\rank(\tR) = 4$ as $\tR$ is the difference of two low-rank components. When $r= 1$, \cite{Chandrasekaran2012Latent} is comparable with our method on the first two data generating models, while our method compares favorably in the third case. When $r=0$, there are no latent variable and our method is comparable to methods of \cite{Zhao2014Direct} and \cite{Yuan2017Differential} that are specifically designed for sparse differential network estimation without considering latent variables. In comparison, the approach of \cite{Chandrasekaran2012Latent} misestimates the low-rank component. Overall, the proposed nonconvex method accurately estimates both the low-rank and sparse components at a low computational cost. {\red In the Supplementary Material, we show that when the differential network has the sparse plus low-rank structure, while the group specific precision matrices do not have any structure, our method outperforms all the competitors significantly.}

Figure~\ref{ComS} and \ref{ComL} illustrate the statistical rate of convergence by plotting  $\|\hS - \tS\|_F$ versus $(d\log d/n)^{1/2}$ and  $\|\hR - \tR\|_F$ versus $(rd/n)^{1/2}$, respectively.  {\red We set $(d, r) = (50, 0), (100, 1), (150, 2)$ for each case and vary $n$ only.} Although the estimation errors for $\tS$ and $\tR$ are combined in Theorem~\ref{thm:1}, we expect a linear increasing trend in both figures since ${d\log d/n}\asymp {rd/n}$. In the Supplementary Material, we illustrate that the rank and the positive index of inertia are consistently selected by cross-validation.

\begin{figure}[t]
	\centering
	
	\subfigure[Statistical rate of convergence of estimating $\tS$. From	left to right, $(d, r) = (50, 0), (100, 1), (150, 2)$.]
	{\label{ComS}\includegraphics[width=150mm]{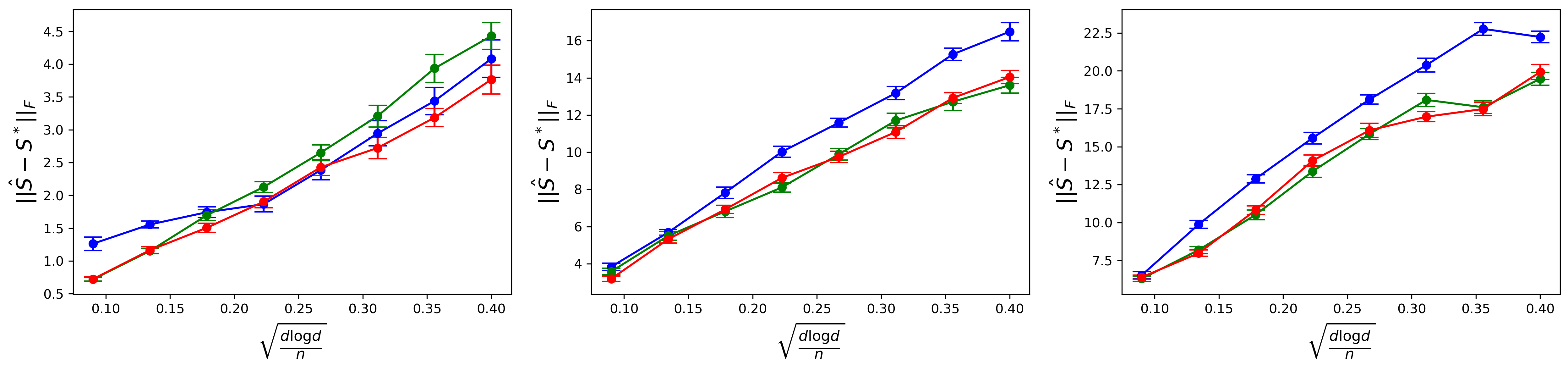}}
	\subfigure[Statistical rate of convergence of estimating $\tR$. The left panel corresponds to $(d, r) = (100, 1)$, while the right panel corresponds to $(d,r) = (150, 2)$.]
	{\label{ComL}\includegraphics[width=100mm]{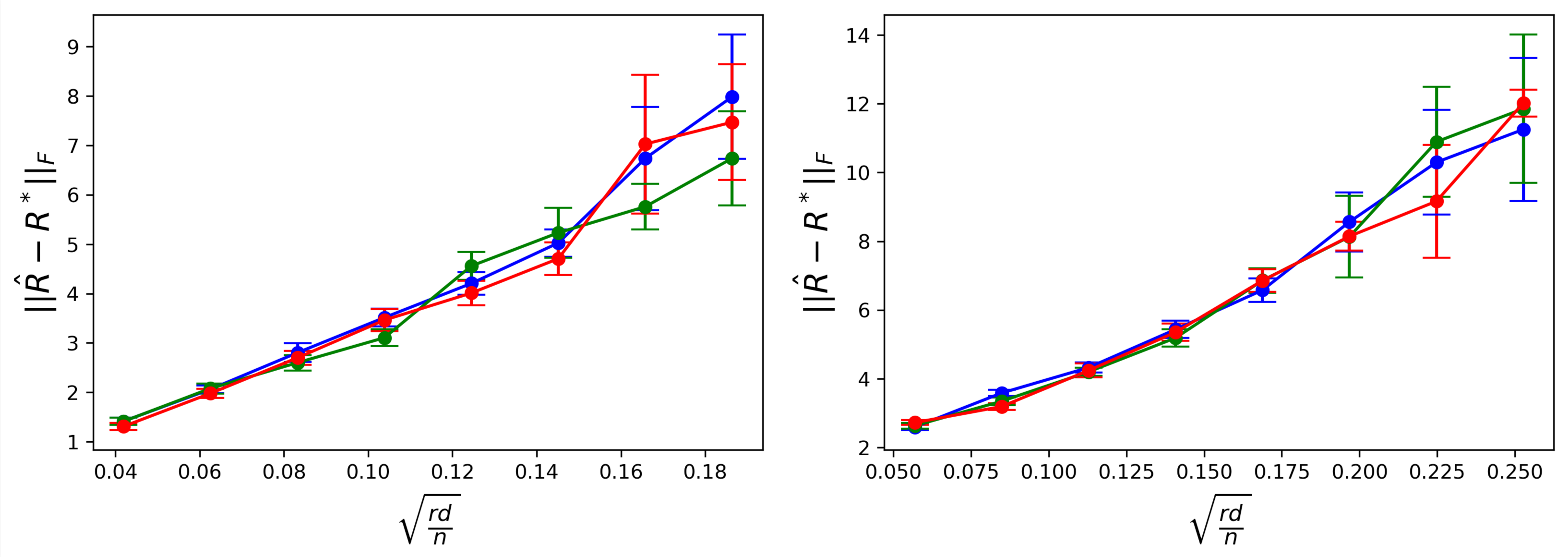}}	

	\includegraphics[width=5.5cm,height=0.35cm]{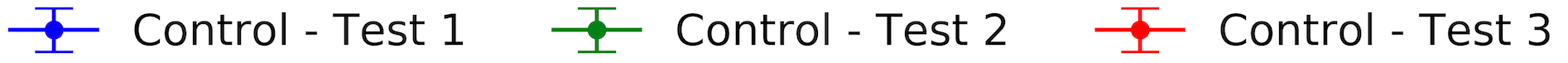}
	
	\caption{Statistical rate of convergence. All trends in figures increase linearly, which validates the results in Theorem~\ref{thm:1}.}\label{fig:1}
\end{figure}

\section{Application to fMRI functional connectivity}

We apply our method to the task of estimating differential brain functional connectivity from functional Magnetic Resonance Imaging (fMRI). In particular, we analyze the Center for Biomedical Research Excellence (COBRE) dataset, which is publicly available in \texttt{nilearn} package in Python \citep{Abraham2014Machine}. This dataset includes fMRI data from 146 subjects across two groups: 74 subjects are healthy controls and 72 subjects are diagnosed with schizophrenia. Each subject data includes resting-state fMRI time series with 150 samples. We remove time points with excessive motion as recommended by standard analyses, and apply Harvard-Oxford Atlas to automatically generate 48 regions of interests. This dataset has been carefully analyzed using the NeuroImaging Analysis Kit\footnote{\url{https://github.com/SIMEXP/niak}}.

We first estimate the differential network between the schizophrenia and control groups. We collect all fMRI series in one group across all subjects, thus assuming individuals in the same group share the same brain functional connectivity. Equivalently, we simply stack all time series from the subjects together to obtain one dataset for each group. The proposed approach and all the baseline methods remain the same as described in~\textsection\ref{sec:5}. The sparse plus low-rank decomposition is reasonable for estimating the differential network as it considers most pressing demographic confounders such as age and gender, {\red and the number of confounders is assumed to be small compared with the number of nodes in a brain, which is a conventional setup in fMRI study \citep{Greve2013survey, Geng2019Partially}.} The sparse component of the differential network is the parameter of scientific interest.

The estimated sparse component is reported in Figure~\ref{fig:2}, where each region corresponds to a vertex, each edge corresponds to an entry of the precision matrix, and the color corresponds to the magnitude of entries. Since Harvard-Oxford Atlas is a 3D parcellation atlas with lateralized labels, we show the detected connectomes in the left hemisphere only. We see that \cite{Zhao2014Direct} approach fails to recover a clear pattern; \cite{Chandrasekaran2012Latent} recovers one negative edge in \textit{Central Opercular Cortex} and one negative edge in \textit{Middle Frontal Gyrus}; our method, together with methods in \cite{Yuan2017Differential} and \cite{Xu2017Speeding}, show that the sparse network has two obvious edges, one positive and one negative, in \textit{Central Opercular Cortex} area, which is also consistent with some recent analysis that also discovered \textit{Central Opercular Cortex} is one of regions differs the most for the schizophrenia (\cite{Sheffield2015Fronto,	Geng2019Partially}). Upon closer analysis, we find that the network estimated by the proposed method has {\red smaller quadratic loss \eqref{loss:popu:SR} on the test sets. We let $\mL_n(\hDelta)$ denote the empirical test loss, where $\hDelta$ is the estimator and the covariance matrices are calculated on the test set. Our estimator has the test loss $\mL_n(\hDelta) = -3.64$, with $\|\nabla\mL_n(\hDelta)\|_{\infty,\infty} = 0.11$ and $\|\nabla\mL_n(\hDelta)\|_F = 1.56$. All three quantities are smaller than other methods, though they are not designed for minimizing \eqref{loss:popu:SR}.}

\begin{figure}[t]
	\centering     %%% not \center
	\setcounter{subfigure}{0}
	\subfigure[M1]{\label{Zhao}\includegraphics[width=73mm]{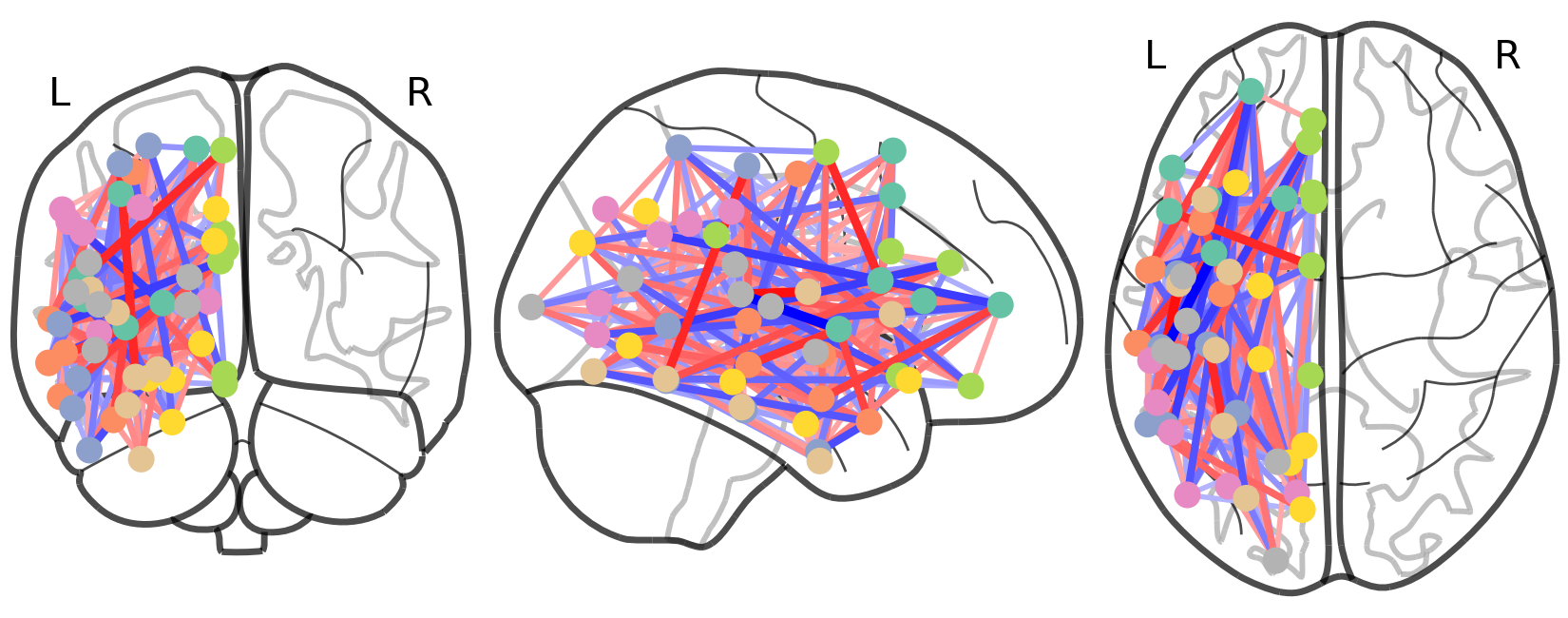}}
	\subfigure[M2]{\label{Yuan}\includegraphics[width=73mm]{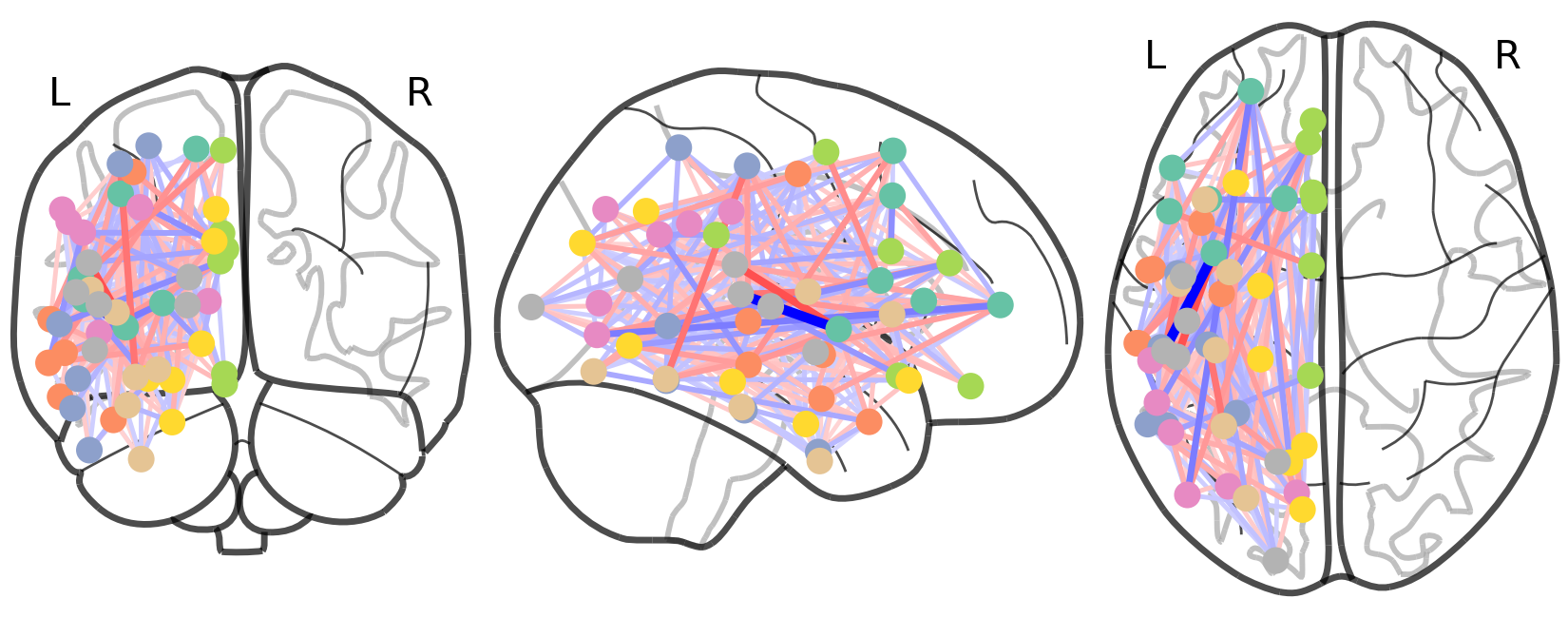}}
	
	\subfigure[M3]{\label{Venket}\includegraphics[width=73mm]{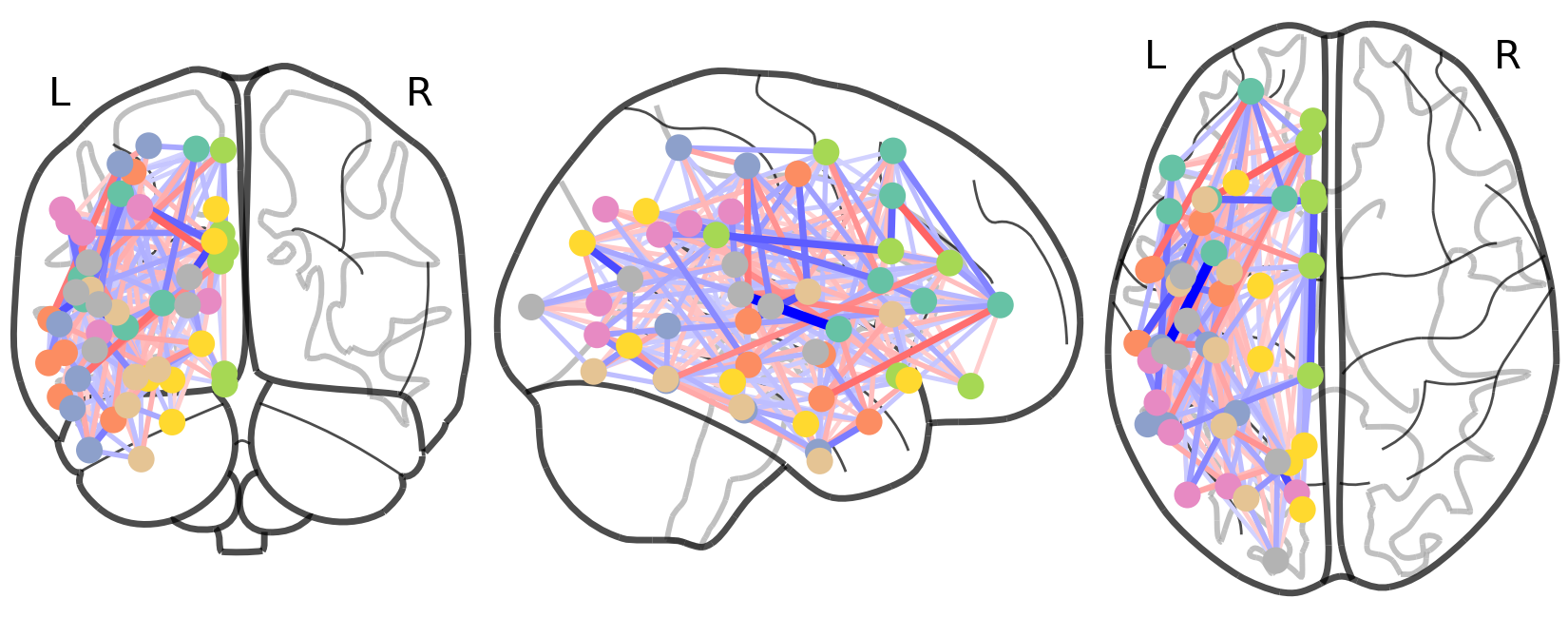}}
	\subfigure[M4]{\label{Gu}\includegraphics[width=73mm]{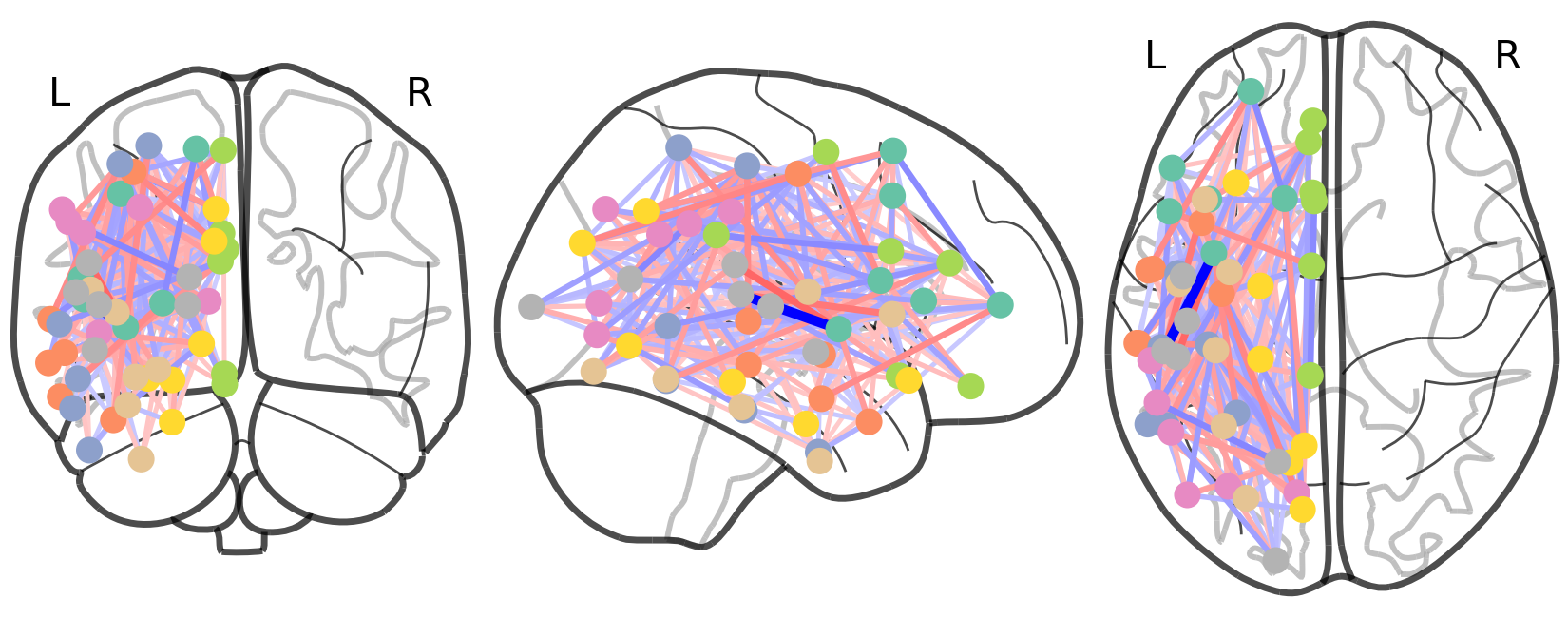}}
	
	\subfigure[M*]{\label{my}\includegraphics[width=95mm]{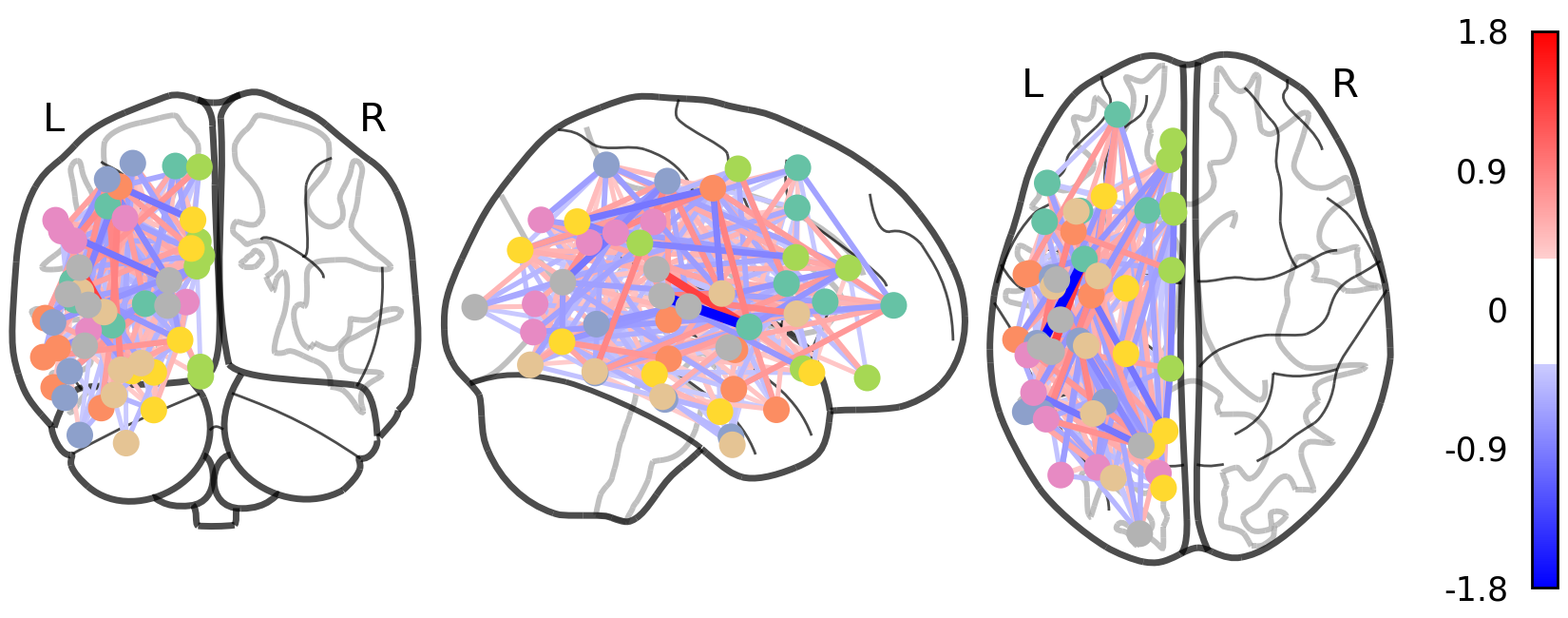}}
	
	\caption{Glass brains for the estimated sparse component of the differential network. }
	\label{fig:2}
\end{figure}

To further quantitatively validate our claims, we consider an individual-level analysis. In particular, we select 10 subjects from each group and consider the 190 possible pairs among them -- 100 out of 190 pairs are across-group while the remaining 90 pairs are within-group. We estimate the differential network for each pair and calculate $\|\hS\|_F$. Based on the group differences, one expects the sparse differential network for within-group pairs to have smaller norms than across-group pairs. Applying an unpaired two-sample $t$ test, the p-value for the proposed method is $0.09$ while greater than $0.16$ for M1 to M4. This further validates that our method also outperforms other methods at the individual level.

\section{Discussion}\label{sec:6}

We study the estimation of differential networks in the setting where the effects of the latent variables are diffused across all the observed variables leading to a low-rank component, and where the low-rank component of the difference satisfies an incoherence condition. In this setting, we are able to estimate the subspace spanned by the unobserved variables. {\red Extending our approach to identify the difference in the complete connectivity of the graph, which includes latent variables, is of additional interest. \citet{Vinyes2018Learning} studied the problem of identification and estimation of the complete connectivity of the graph in the presence of latent variables using a carefully designed convex penalty. In the limit of an infinite amount of data, under suitable assumptions, their procedure is able to identify the complete graph structure. However, finite sample properties of this procedure are not known. High-dimensional setting presents several challenges. First, additional assumptions are needed on the low rank component, $\tR$, such as a sparsity assumption on the effect of latent variables on observed variables. However, such an assumption makes identification of parameters more difficult, since we need to be able to distinguish the low-rank component from the sparse component. This is an identification problem and it is a challenge for both convex and nonconvex approaches. Second, when estimating a differential network, the matrix $\tR$ is indefinite and, as a result, development of a new penalty is required. Third, the initialization step in our algorithm requires us to compute the inverse of the sample covariance matrix, which is rank deficient in a high-dimensional setting. Therefore, a suitable and computationally efficient initialization strategy needs to be developed in a high-dimensional setting. Finally, the gradient error $\|\nabla_S\bar{\mathcal{L}}_n(\tS, U^k, \tLambda) - \nabla_S\bar{\mathcal{L}}_n(\tS, \tU, \tLambda)\|_F$,  which is a key ingredient in the proof,  is well controlled only if $\hSigmax \tU$ (and $\hSigmay \tU$) satisfies the incoherence condition. Since the incoherence condition is imposed on $\tU$, we need the sample size to satisfy $(n_X\wedge n_Y)\gtrsim d\log d$. One possible approach to developing a nonconvex estimation procedure for high-dimensional differential network estimation could be based on a thresholding step for the low-rank component \citep{Yu2018Recovery}. }

Recent work on differential networks have focused on statistical inference, including developing statistical tests for the global null $H_0 : \tDelta = 0$ \citep{Xia2015Testing, Cai2019Differential} and development of confidence intervals for elements of the differential network \citep{Kim2019Two}. The regression approach of \cite{Ren2015Asymptotic} can be used to construct asymptotically normal estimators of the elements of the differential network in the presence of latent variables. Such an approach would require both the individual precision matrices to be sparse and the correlation between latent and observed variables to be weak. How to develop an inference procedure that requires only week conditions on the differential network remains an open problem.

In our simulation and real data application, we propose to choose the tuning parameters using cross-validation. \citet{Zhao2014Direct} proposed to tune the parameters by optimizing approximate Akaike information criterion in the context of sparse differential network estimation, however, there are no theoretical guarantees associated with the chosen parameters. Extending ideas of \citet{Foygel2010Extended} in the context of sparse plus low-rank estimation and showing that Akaike or Bayesian information criterion can be used for consistent recovery is of both practical and theoretical interest, as it would allow for faster parameter tuning compared to cross-validation.

\section*{Acknowledgment}

We are grateful to the editor, the associate editor and two referees for their insightful comments, which have led to significant improvement of our paper. We thank Huili Yuan and Ruibin Xi for sharing their R codes.  This work is partially supported by the William S. Fishman Faculty Research Fund at the University of Chicago Booth School of Business. This work was completed in part with resources supported by the University of Chicago Research Computing Center.

\putbib[paper]
\end{bibunit}

%\bibliographystyle{my-plainnat}
%\bibliography{paper}

\appendix
\pagebreak
\begin{bibunit}[my-plainnat]

\numberwithin{equation}{section}
\numberwithin{lemma}{section}

\section*{Supplementary material: Estimating Differential Latent Variable Graphical Models with Applications to Brain Connectivity}

We proposed a directed procedure to estimate the differential networks under the existence of latent variables. In the present setup, the differential network can be decomposed into sparse component and low-rank component. Our method minimizes a nonconvex objective function via a two-stage procedure. In the first stage, we obtain a good initialization by truncating the sample precision matrices directly, while in the second stage, we conduct projected gradient descent to obtain a sequence of iterates, which can converge linearly to a neighborhood of the ground truth. The radius of the neighborhood is characterized by the optimal statistical error rate. In this Supplementary Material, we provide extended proofs, and then show additional simulation results including the comparison with potential convex approaches.

\setcounter{page}{1}

\section{Main Lemmas}\label{appen:main:lem}

In this section, we state lemmas needed to prove Theorems \ref{thm:1} and \ref{thm:2}. Their proofs are presented in Appendix \ref{appen:pf:main:lem}. We first introduce additional notations. Partial derivatives of the empirical loss functions in \eqref{loss:popu:SR} and \eqref{loss:emp:SULam} are given as
\begin{equation}\label{a:1}
\begin{aligned}
\nabla_S\mbL_n(S, U, \Lambda) & =  \frac{1}{2}\hSigmax(S + U\Lambda U^\T)\hSigmay + \frac{1}{2}\hSigmay(S + U\Lambda U^\T)\hSigmax - (\hSigmay - \hSigmax),\\
\nabla_U\mbL_n(S, U, \Lambda) & = \hSigmax(S + U\Lambda U^\T)\hSigmay U\Lambda + \hSigmay(S + U\Lambda U^\T)\hSigmax U\Lambda - 2(\hSigmay - \hSigmax)U\Lambda,\\
\nabla_S\mL_n(S, R) & = \nabla_R\mL_n(S, R) = \frac{1}{2}\hSigmax(S + R)\hSigmay + \frac{1}{2}\hSigmay(S + R)\hSigmax - (\hSigmay - \hSigmax).
\end{aligned}
\end{equation}
Partial derivatives of the population loss are similarly obtained by replacing $\hSigmax, \hSigmay$ with $\tSigmax, \tSigmay$ in \eqref{a:1}. We further define $\gamma_1 = \baralpha/\alpha$, $\gamma_2 = \bars/s$, and the following quantities
\begin{equation}\label{a:25}
\begin{aligned}
&\Upsilon_1= (\sigma_1^X\sigma_1^Y)^2\frac{d\log d}{n_X\wedge n_Y} + \cbr{(\sigma_1^Y\|\tSigmax\|_1)^2 + \big(\sigma_1^X\|\tSigmay\|_1\big)^2}\beta, \quad \Upsilon_2 = \sigma_d^X\sigma_d^Y + 9\sigma_1^X\sigma_1^Y,\\  &\Upsilon_3= \sigma_1^X\sigma_1^Y\sigma_d^X\sigma_d^Y, \quad\quad  \Upsilon_4 = (\sigma_1^Y\|\tSigmax\|_1)^2 + \big(\sigma_1^X\|\tSigmay\|_1\big)^2, \quad\quad \Psi^2 = \frac{\sigma_r^\tR}{\kappa_X^2\kappa_Y^2},\\
& C(\gamma_1, \gamma_2) =  \cbr{1 + \rbr{\frac{2}{\gamma_1 - 1}}^{1/2}}^2\cbr{1 + \frac{2}{\rbr{\gamma_2 - 1}^{1/2}}}.
\end{aligned}
\end{equation}
For ease of presentation, we generically use $C_i$ to denote constants and their values may vary for each appearance. The following two lemmas characterize the error based on one-step iteration of Algorithm \ref{alg:2}.

\begin{lemma}[One-step iteration for sparse component]\label{lem:2}

Suppose Assumptions \ref{ass:2} and \ref{ass:1}, $n_X\gtrsim \kappa_X^2d$, $n_Y\gtrsim \kappa_Y^2d$, and following conditions hold
\begin{align*}
\Lambda^0 = \tLambda, \quad S^k \in\S^{d\times d}, \quad U^k \in \mU(9\beta\sigma_1^\tR) \cap \{U\in\mR^{d\times r}: \Pi(U, \tU)\leq \surd{\sigma_1^\tR}/2\}.
\end{align*}
If $\eta_1\leq 8/(3\Upsilon_2)$, then $S^{k+1}\in\S^{d\times d}$ and
\begin{align*}
\frac{\|S^{k+1} - \tS\|_F^2}{C(\gamma_1, \gamma_2)}
&\leq \rbr{1  - \frac{9\Upsilon_3}{4\Upsilon_2}\eta_1} \|S^k - \tS\|_F^2 + 3(1 + \gamma_2)s\rbr{\frac{\Upsilon_2\eta_1}{\Upsilon_3} + \eta_1^2}\|\nabla_S\mL_n(\tS, \tR)\|_{\infty, \infty}^2\\
& \quad + \frac{C_1\Upsilon_2\Upsilon_1}{\Upsilon_3}(1 + \gamma_1)\alpha r\sigma_1^\tR \eta_1\Pi^2(U^k, \tU) + 7(\sigma_1^X\sigma_1^Y)^2\eta_1^2\|U^k\Lambda^0U^{k \T} - \tU\tLambda \tUT\|_F^2,
\end{align*}
with probability $1 - C_2/d^2$, where $\rbr{C_i}_{i=1}^2$ are fixed constants.

\end{lemma}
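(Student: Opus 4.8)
The plan is to treat one sparse update as an inexact projected gradient step, combining a near-nonexpansiveness property of the composite truncation $\mT_{\baralpha}\{\mJ_{\bars}(\cdot)\}$ with a restricted strong convexity argument for the quadratic loss $\mbL_n(\cdot, U^k, \Lambda^0)$. First I would record that $S^{k+1}\in\S^{d\times d}$: since $\nabla_S\mbL_n(S, U, \Lambda)=\tfrac12\hSigmax(S+U\Lambda U^\T)\hSigmay+\tfrac12\hSigmay(S+U\Lambda U^\T)\hSigmax-(\hSigmay-\hSigmax)$ is symmetric and $S^k$ is symmetric by induction, the half-step $S^{k+1/2}=S^k-\eta_1\nabla_S\mbL_n(S^k,U^k,\Lambda^0)$ is symmetric, and both truncations retain entries according to the symmetric criterion $|A_{ij}|$, so symmetry is preserved. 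Then, because $\tS\in\mS(\alpha,s)$ while $S^{k+1}\in\mS(\baralpha,\bars)$ with $\gamma_1=\baralpha/\alpha>1$ and $\gamma_2=\bars/s>1$, a two-stage approximate-projection estimate (handling $\mJ_{\bars}$ and $\mT_{\baralpha}$ in turn) gives $\|S^{k+1}-\tS\|_F^2\leq C(\gamma_1,\gamma_2)\|S^{k+1/2}-\tS\|_F^2$, which is the source of the prefactor $C(\gamma_1,\gamma_2)$ and reduces the task to bounding the half-step error.

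Next I would expand $\|S^{k+1/2}-\tS\|_F^2=\|S^k-\tS\|_F^2-2\eta_1\LD \nabla_S\mbL_n(S^k,U^k,\Lambda^0),\, S^k-\tS\RD+\eta_1^2\|\nabla_S\mbL_n(S^k,U^k,\Lambda^0)\|_F^2$ and, using $\Lambda^0=\tLambda$, split the gradient as $\nabla_S\mbL_n(S^k,U^k,\tLambda)=G_{\mathrm{conv}}+G_{\mathrm{stat}}+G_{\mathrm{coup}}$, where $G_{\mathrm{conv}}=\tfrac12\hSigmax(S^k-\tS)\hSigmay+\tfrac12\hSigmay(S^k-\tS)\hSigmax$ is the Hessian applied to $S^k-\tS$, $G_{\mathrm{stat}}=\nabla_S\mL_n(\tS,\tR)$ is the empirical gradient at the truth (the estimating equation forces its population analogue to vanish, so it is pure statistical noise), and $G_{\mathrm{coup}}=\tfrac12\hSigmax(U^k\tLambda U^{k\T}-\tR)\hSigmay+\tfrac12\hSigmay(U^k\tLambda U^{k\T}-\tR)\hSigmax$ measures the low-rank mismatch. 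For the inner product, the $G_{\mathrm{conv}}$ contribution equals $\|\hSigmax^{1/2}(S^k-\tS)\hSigmay^{1/2}\|_F^2\geq\sigma_d^X\sigma_d^Y\|S^k-\tS\|_F^2$, the restricted strong convexity that produces the contraction; here one needs the smallest eigenvalues of $\hSigmax,\hSigmay$ bounded below, which holds under $n_X\gtrsim\kappa_X^2 d$, $n_Y\gtrsim\kappa_Y^2 d$ by Wishart eigenvalue concentration. Symmetrically $\|G_{\mathrm{conv}}\|_F\leq\sigma_1^X\sigma_1^Y\|S^k-\tS\|_F$ controls its quadratic contribution, and the appearance of $\Upsilon_2=\sigma_d^X\sigma_d^Y+9\sigma_1^X\sigma_1^Y$ together with the cap $\eta_1\leq 8/(3\Upsilon_2)$ reflects balancing these against the cross terms to land the contraction coefficient $1-\tfrac{9\Upsilon_3}{4\Upsilon_2}\eta_1$.

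For the noise term I would exploit that $S^k-\tS$ is supported on at most $(1+\gamma_2)s$ entries, so $|\LD G_{\mathrm{stat}},\, S^k-\tS\RD|\leq\{(1+\gamma_2)s\}^{1/2}\|\nabla_S\mL_n(\tS,\tR)\|_{\infty,\infty}\|S^k-\tS\|_F$ and the corresponding restricted square is at most $(1+\gamma_2)s\|\nabla_S\mL_n(\tS,\tR)\|_{\infty,\infty}^2$; a Young's inequality that splits the cross term between the contraction budget and the noise then yields the $3(1+\gamma_2)s(\Upsilon_2\eta_1/\Upsilon_3+\eta_1^2)$ coefficient, the $\Upsilon_3^{-1}$ arising from the Young parameter tied to the strong-convexity rate. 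The coupling is handled in two norms: its quadratic part gives $\|G_{\mathrm{coup}}\|_F\leq\sigma_1^X\sigma_1^Y\|U^k\tLambda U^{k\T}-\tR\|_F$, producing the $7(\sigma_1^X\sigma_1^Y)^2\eta_1^2\|U^k\Lambda^0U^{k\T}-\tU\tLambda\tUT\|_F^2$ term, while the cross term $\LD G_{\mathrm{coup}},\, S^k-\tS\RD$ again uses the row/column sparsity of $S^k-\tS$ (now introducing the factor $\alpha$) paired with an $\ell_{\infty,\infty}$ bound on $G_{\mathrm{coup}}$, finally converted into $\Pi^2(U^k,\tU)$ via Lemma~\ref{prop:1} and the membership $U^k\in\mU(9\beta\sigma_1^\tR)$.

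The main obstacle is exactly this $\ell_{\infty,\infty}$ control of $G_{\mathrm{coup}}$: bounding $\tfrac12\hSigmax(U^k\tLambda U^{k\T}-\tR)\hSigmay+\text{sym}$ by $\Pi(U^k,\tU)$ with a sharp sample dependence requires transferring the incoherence of $\tU$ through the sample covariances, i.e.\ controlling the rows of $\hSigmax\tU$ and of $\hSigmax(U^k\tLambda U^{k\T}-\tR)$. This is where $\Upsilon_1=(\sigma_1^X\sigma_1^Y)^2\,d\log d/(n_X\wedge n_Y)+\{(\sigma_1^Y\|\tSigmax\|_1)^2+(\sigma_1^X\|\tSigmay\|_1)^2\}\beta$ enters: the first summand is the price of concentrating $\hSigmax\tU$ around $\tSigmax\tU$ row-wise, valid once $(n_X\wedge n_Y)\gtrsim d\log d/\beta$, and the second is the population incoherence level. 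This incoherence-transfer step, rather than the bookkeeping, is the crux, and it is what forces the sparsity-proportion condition on $\alpha$ so that the coupling coefficient $C_1\Upsilon_2\Upsilon_1\Upsilon_3^{-1}(1+\gamma_1)\alpha r\sigma_1^\tR\eta_1$ remains dominated by the contraction $\tfrac{9\Upsilon_3}{4\Upsilon_2}\eta_1$.
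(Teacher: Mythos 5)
Your architecture matches the paper's: the $C(\gamma_1,\gamma_2)$ prefactor from the two truncation lemmas, the three-way split of the gradient into convergence, statistical, and coupling pieces, the support-restricted treatment of the noise, and the incoherence-transfer step for the coupling term (the paper does this via a $\|\cdot\|_{2,\infty}$ bound on the rows of $\hSigmax\tU$ and $\hSigmax(U^k-\tU Q^k)$, summed over the at most $(1+\gamma_1)\alpha d$ active positions per row and column of $\barOmega^k$, rather than via an entrywise $\ell_{\infty,\infty}$ bound on $G_{\mathrm{coup}}$ -- your pairing would bring in $s$ instead of $\alpha$, so the norm choice matters there). However, there is one genuine gap in the convergence piece. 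You lower-bound the cross term by $\sigma_d^X\sigma_d^Y\|S^k-\tS\|_F^2$ and separately upper-bound $\|G_{\mathrm{conv}}\|_F$ by $\sigma_1^X\sigma_1^Y\|S^k-\tS\|_F$. With only these two facts, the one-step recursion contains $-2\eta_1\sigma_d^X\sigma_d^Y\|S^k-\tS\|_F^2+3\eta_1^2(\sigma_1^X\sigma_1^Y)^2\|S^k-\tS\|_F^2$, and obtaining a net contraction forces $\eta_1\lesssim\sigma_d^X\sigma_d^Y/(\sigma_1^X\sigma_1^Y)^2$, which is a factor $\kappa_X\kappa_Y$ smaller than the stated condition $\eta_1\leq 8/(3\Upsilon_2)$. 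Moreover, the claimed bound has no residual $\eta_1^2\|S^k-\tS\|_F^2$ term at all. The paper closes this by using the coercivity inequality for a function that is simultaneously $\sigma_1^X\sigma_1^Y$-smooth and $\sigma_d^X\sigma_d^Y$-strongly convex (Lemma~\ref{aux:lem:8}): the inner product is bounded below by $\frac{9\Upsilon_3}{4\Upsilon_2}\|S^k-\tS\|_F^2$ \emph{plus} $\frac{4}{\Upsilon_2}\|\nabla_S\mbL_n(S^k,U^k,\Lambda^0)-\nabla_S\mbL_n(\tS,U^k,\Lambda^0)\|_F^2$, and this second term exactly absorbs the $3\eta_1^2\|\cdot\|_F^2$ contribution from the squared gradient whenever $\eta_1\leq 8/(3\Upsilon_2)$. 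That absorption is the entire reason the stated step-size condition suffices; without it your argument proves only a weaker lemma.

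A secondary issue: you expand $\|S^{k+1/2}-\tS\|_F^2$ with the full gradient, so the quadratic term is $\eta_1^2\|\nabla_S\mbL_n(S^k,U^k,\Lambda^0)\|_F^2$ over all $d^2$ entries, yet you later claim its noise contribution is $(1+\gamma_2)s\|\nabla_S\mL_n(\tS,\tR)\|_{\infty,\infty}^2$. To justify that restriction one must first observe that $\mJ_{\bars}(S^{k+1/2})=\mJ_{\bars}\{\P_{\Omega^k}(S^{k+1/2})\}$ for $\Omega^k=\SUPP(\tS)\cup\SUPP(S^k)\cup\SUPP(\mJ_{\bars}(S^{k+1/2}))$, so that the gradient can be replaced throughout by its projection onto this set of cardinality at most $(1+2\gamma_2)s$. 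You have the right instinct but the step needs to be set up before the expansion, not after.
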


\begin{lemma}[One-step iteration for low-rank component]\label{lem:3}

Suppose conditions of Lemma \ref{lem:2} hold. If $\eta_2 \leq 1/(18\Upsilon_2\sigma_1^\tR)$, then
\begin{align*}
\Pi^2(U^{k + 1}, \tU) &\leq \cbr{1  - \frac{3\sigma_r^\tR \Upsilon_3\eta_2}{2\Upsilon_2} + \frac{C_1\Upsilon_2\Upsilon_1}{\Upsilon_3}(1 + \gamma_1)\alpha r\sigma_1^\tR\eta_2}\Pi^2(U^k, \tU) - \frac{27\Upsilon_3\eta_2}{16\Upsilon_2}\|R^k - \tR\|_F^2 \\
&\quad  + 5\kappa_X\kappa_Y\Upsilon_2\eta_2\Pi^4(U^k, \tU) + \eta_2\cbr{\frac{9\Upsilon_3}{4\Upsilon_2} + C_2\sigma_1^\tR(\sigma_1^X\sigma_1^Y)^2\eta_2}\|S^k - \tS\|_F^2\\
& \quad + \eta_2r\bigg(\frac{16\Upsilon_2}{9\Upsilon_3} + \frac{4}{\Upsilon_2} + 54\sigma_1^\tR\eta_2\bigg)\|\nabla_R\mL_n(\tS, \tR)\|_2^2,
\end{align*}
with probability $1 - C_3/d^2$, where $\rbr{C_i}_{i=1}^3$ are fixed constants.

\end{lemma}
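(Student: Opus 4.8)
The plan is to track the squared distance $\Pi^2(U^{k+1},\tU)$ across one gradient-plus-projection step and reduce it to the loss geometry at $(S^k,R^k)$. Writing $\tilde{U}=\tU Q^k$ for the orthogonal $Q^k\in\mQ_{r_1}^{r\times r}$ attaining $\Pi(U^k,\tU)$, I would first dispose of the projection. Since $\tL\in\mU(\beta)$ forces $\tU\in\mU(\beta\sigma_1^\tR)$ and right multiplication by an orthogonal matrix preserves row norms, $\tilde{U}\in\mU(\beta\sigma_1^\tR)$; the hypotheses $U^k\in\mU(9\beta\sigma_1^\tR)$ and $\Pi(U^k,\tU)\le\surd{\sigma_1^\tR}/2$ give $\|U^k\|_2\ge\surd{\sigma_1^\tR}/2$, hence $\mU(\beta\sigma_1^\tR)\subseteq\mU(4\beta\|U^k\|_2^2)=\mC^k$, so $\tilde{U}\in\mC^k$. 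As $\mU(\cdot)$ is convex and $U^{k+1}=\P_{\mC^k}(U^{k+1/2})$, non-expansiveness of the projection yields $\Pi(U^{k+1},\tU)\le\|U^{k+1}-\tilde{U}\|_F\le\|U^{k+1/2}-\tilde{U}\|_F$. Expanding with $G^k$ the full gradient in the update gives
\[ \|U^{k+1/2}-\tilde{U}\|_F^2 = \Pi^2(U^k,\tU) - 2\eta_2\LD G^k,\, U^k-\tilde{U}\RD + \eta_2^2\|G^k\|_F^2, \]
leaving the cross term and the gradient-norm term to control.

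The heart of the argument is the cross term. Using $\nabla_U\mbL_n(S^k,U^k,\tLambda)=2M^kU^k\tLambda$ with $M^k:=\nabla_R\mL_n(S^k,R^k)$ symmetric, and $\Lambda^0=\tLambda$, I would establish the identity
\[ \LD \nabla_U\mbL_n(S^k,U^k,\tLambda),\, U^k-\tilde{U}\RD = \LD M^k,\, R^k-\tR\RD + \LD M^k,\, (U^k-\tilde{U})\tLambda(U^k-\tilde{U})^\T\RD, \]
and then decompose $M^k=\mathcal{H}_n(\Delta^k-\tDelta)+\nabla_R\mL_n(\tS,\tR)$, where $\mathcal{H}_n(\cdot)=\{\hSigmax(\cdot)\hSigmay+\hSigmay(\cdot)\hSigmax\}/2$ and $\Delta^k-\tDelta=(S^k-\tS)+(R^k-\tR)$. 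The leading piece $\LD\mathcal{H}_n(R^k-\tR),R^k-\tR\RD$ is the restricted-strong-convexity term: under $n_X\gtrsim\kappa_X^2 d$, $n_Y\gtrsim\kappa_Y^2 d$ the sample covariances concentrate so that it is $\gtrsim\sigma_d^X\sigma_d^Y\|R^k-\tR\|_F^2\asymp(\Upsilon_3/\Upsilon_2)\|R^k-\tR\|_F^2$. I would split this budget: Lemma~\ref{prop:1}(b) gives $\|R^k-\tR\|_F^2\gtrsim\sigma_r^\tR\,\Pi^2(U^k,\tU)$, converting part of it into the dominant contraction $-\tfrac{3}{2}(\sigma_r^\tR\Upsilon_3/\Upsilon_2)\eta_2\,\Pi^2$ and retaining the residual $-\tfrac{27}{16}(\Upsilon_3/\Upsilon_2)\eta_2\|R^k-\tR\|_F^2$.

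The remaining pieces are error terms. The coupling $\LD\mathcal{H}_n(S^k-\tS),R^k-\tR\RD$ is handled by Cauchy–Schwarz and Young's inequality, producing the $\|S^k-\tS\|_F^2$ term; the statistical piece $\LD\nabla_R\mL_n(\tS,\tR),R^k-\tR\RD$ is bounded using $\rank(R^k-\tR)\le 2r$ and the operator norm $\|\nabla_R\mL_n(\tS,\tR)\|_2$, giving the $r\|\nabla_R\mL_n(\tS,\tR)\|_2^2$ term (the spare $\|R^k-\tR\|_F^2$ from Young's being absorbed into the retained curvature); the higher-order term $\LD M^k,(U^k-\tilde{U})\tLambda(U^k-\tilde{U})^\T\RD\lesssim\|M^k\|_2\,\Pi^2$, together with the cubic penalty gradient inside $G^k$, yields the $\Pi^4$ term. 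Finally $\eta_2^2\|G^k\|_F^2$ is bounded by smoothness and absorbed using $\eta_2\le 1/(18\Upsilon_2\sigma_1^\tR)$, so every $\eta_2^2$ contribution is dominated by its $\eta_2$ counterpart. Collecting terms and taking the infimum over $Q^k$ on the left gives the stated recursion.

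The hard part is the term carrying $\alpha r$. It arises from those pieces of the cross term in which $\hSigmax$ or $\hSigmay$ multiplies the low-rank factor and the result is paired against $S^k-\tS$, whose support has bounded column and row density $\baralpha=\gamma_1\alpha$. Because $\mE[\mathcal{H}_n]\neq I$, the restricted-isometry machinery of \cite{Zhang2018Unified} is unavailable, so I must instead show directly that the $\beta$-incoherence of $\tU$ transfers to $\hSigmax\tU$ and $\hSigmay\tU$; this is what forces the sample-size floor $n\gtrsim d\log d/\beta$ and produces the factor $\Upsilon_1$, while controlling $\|\nabla_R\mL_n(\tS,\tR)\|_2$ requires Wishart concentration rather than an $\|\cdot\|_1$ bound. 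These design-times-low-rank estimates, which I would isolate as separate concentration lemmas, are the genuine obstacle; granted them, Lemma~\ref{lem:3} is a careful but essentially mechanical assembly.
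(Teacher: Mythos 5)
Your architecture tracks the paper's proof closely: non-expansiveness of $\P_{\mC^k}$ after checking $\tU Q^k\in\mC^k$ via the lower bound $\|U^k\|_2\geq \surd{\sigma_1^\tR}/2$; the identity $\LD \nabla_U\mbL_n(S^k,U^k,\Lambda^0),\,U^k-\tU Q^k\RD=\LD \nabla_R\mL_n(S^k,R^k),\,R^k-\tR+\Theta^k\tLambda\Theta^{k\T}\RD$ with $\Theta^k=U^k-\tU Q^k$; the three-way split of $\nabla_R\mL_n(S^k,R^k)$; the curvature bound for $\LD \hSigmax(R^k-\tR)\hSigmay,R^k-\tR\RD$; the incoherence-transfer argument for the piece paired against $\SUPP(S^k-\tS)$ (which is indeed what produces $\Upsilon_1$ and forces $n\gtrsim d\log d/\beta$); and the $(2r)^{1/2}$ nuclear-norm bound for the statistical term. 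All of that is how the paper proceeds.

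The gap is at the single most important step: the origin of the contraction $-\tfrac{3\sigma_r^\tR\Upsilon_3\eta_2}{2\Upsilon_2}\Pi^2(U^k,\tU)$. You extract it from the retained curvature via Lemma \ref{prop:1}(b), i.e., from the claim $\|R^k-\tR\|_F^2\gtrsim\sigma_r^\tR\,\Pi^2(U^k,\tU)$. This fails twice over. First, the precondition of Lemma \ref{prop:1}(b) is $\|R^k-\tR\|_2\leq\sigma_r^\tR/2$, and the hypotheses of Lemma \ref{lem:3} only give $\Pi(U^k,\tU)\leq\surd{\sigma_1^\tR}/2$, hence (via Lemma \ref{prop:1}(a)) $\|R^k-\tR\|_F\leq\tfrac{3}{2}\sigma_1^\tR$ --- off by a factor of $\kappa_\tR$, so the precondition cannot be verified. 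Second, and more fundamentally, no inequality of the form $\|R-\tR\|_F^2\gtrsim\sigma_r^\tR\Pi^2(U,\tU)$ can hold when $\tLambda$ is indefinite: the stabilizer $\{M:M\tLambda M^\T=\tLambda\}$ is the non-compact group $O(r_1,r-r_1)$, so along a hyperbolic ``boost'' direction $U_t=\tU M_t$ one has $U_t\tLambda U_t^\T\equiv\tR$ while $\Pi(U_t,\tU)$ grows from zero; $\|R-\tR\|_F$ is blind to motion of $U$ inside the fiber of $R$. This is precisely why the objective \eqref{pro:3} carries the balancing penalty $\tfrac{1}{2}\|U_1^\T U_2\|_F^2$, whose gradient contribution you relegate to the $\Pi^4$ error budget. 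In the paper that term is load-bearing: Lemma \ref{aux:lem:14} gives $\I_5\geq\tfrac{1}{8}\|U^{k\T}U^k-\Lambda^0U^{k\T}U^k\Lambda^0\|_F^2-\tfrac{1}{2}\Pi^4(U^k,\tU)$ together with the unconditional bound $\|U^{k\T}U^k-\Lambda^0U^{k\T}U^k\Lambda^0\|_F^2\geq 8(\surd{2}-1)\sigma_r^\tR\Pi^2(U^k,\tU)-4\|R^k-\tR\|_F^2$, and the $\sigma_r^\tR\Pi^2$ contraction arises only from combining half of the retained curvature in $\|R^k-\tR\|_F^2$ with this penalty term. Your assembly contracts only the component of $U^k-\tU Q^k$ transverse to the fiber of $R^k$ and therefore does not close the recursion in $\Pi^2$; you need the penalty term to control the tangential (boost) component.
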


Combining the above two lemmas, we obtain the decrease of the total error in one iteration.

\begin{lemma}\label{lem:4}

Suppose Assumptions \ref{ass:2} and \ref{ass:1} hold. Furthermore, suppose the following conditions hold: (a) sample sizes and sparsity proportion
\begin{align*}
\rbr{n_X\wedge n_Y} \geq  \sbr{\frac{\rbr{\sigma_1^X\sigma_1^Y}^2}{\Upsilon_4}\frac{d\log d}{\beta} \vee\cbr{ \rbr{\kappa_X\vee \kappa_Y}^2d}}, \quad \alpha \leq \frac{\Upsilon_3^2}{C_1\Upsilon_2^2\Upsilon_4\gamma_2}\cdot \frac{1}{\beta r \kappa_\tR},
\end{align*}
(b) step sizes $\eta_1 \leq 1/(C_2\kappa_X\kappa_Y\Upsilon_2)$, $\eta_2 = {\eta_1}/\rbr{36\sigma_1^\tR}$, tuning parameters $\gamma_1 \geq 1 + {8\Upsilon_2^2}/\rbr{\Upsilon_3^2\eta_1^2}$, $\gamma_2 \geq \rbr{1+\gamma_1}/{2}$; (c) the $k$-th iterate satisfies
\begin{align*}
\Lambda^0 = \tLambda, \quad S^k \in \S^{d\times d}, \quad U^k\in \mU(9\beta\sigma_1^\tR) \cap \{U\in\mR^{d\times r}: \Pi^2(U, \tU)\leq \Psi^2/C_3\}.
\end{align*}
Then, with probability at least $1 - C_4/d^2$,
\begin{multline*}
TD(S^{k+1}, U^{k+1})\leq  \rbr{1  - \frac{\sigma_r^\tR\Upsilon_3\eta_2}{2\Upsilon_2}}TD(S^k, U^k) + \frac{1}{\kappa_X\kappa_Y\Upsilon_3\sigma_1^\tR}\cdot\bigr\{\gamma_2s\|\nabla_S\mL_n(\tS, \tR)\|_{\infty, \infty}^2\\
+ r\|\nabla_R\mL_n(\tS, \tR)\|_2^2\bigl\},
\end{multline*}
where $\rbr{C_i}_{i=1}^4$ are fixed constants.

\end{lemma}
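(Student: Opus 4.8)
The proof is an algebraic synthesis of Lemmas~\ref{lem:2} and~\ref{lem:3}, so the plan is first to check that their hypotheses follow from those of Lemma~\ref{lem:4}. Taking $C_3\geq 4$, condition (c) gives $\Pi^2(U^k,\tU)\leq\Psi^2/C_3=\sigma_r^\tR/(C_3\kappa_X^2\kappa_Y^2)\leq\sigma_1^\tR/4$, so $\Pi(U^k,\tU)\leq(\sigma_1^\tR)^{1/2}/2$ and, with $U^k\in\mU(9\beta\sigma_1^\tR)$, the neighbourhood assumptions of both lemmas hold; the sample-size bound supplies $n_X\gtrsim\kappa_X^2 d$ and $n_Y\gtrsim\kappa_Y^2 d$; and $\eta_1\leq 1/(C_2\kappa_X\kappa_Y\Upsilon_2)$ with $\eta_2=\eta_1/(36\sigma_1^\tR)$ yields $\eta_1\leq 8/(3\Upsilon_2)$ and $\eta_2\leq 1/(18\Upsilon_2\sigma_1^\tR)$ once $C_2$ is large, so a union bound makes both one-step bounds hold with probability $1-C_4/d^2$. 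I would then divide the bound of Lemma~\ref{lem:2} by $\sigma_1^\tR$ (its left side becomes $\|S^{k+1}-\tS\|_F^2/\sigma_1^\tR$ and the factor $C(\gamma_1,\gamma_2)$ moves to the right), add the bound of Lemma~\ref{lem:3} on $\Pi^2(U^{k+1},\tU)$, and show the right-hand side collapses to $\{1-\sigma_r^\tR\Upsilon_3\eta_2/(2\Upsilon_2)\}\,TD(S^k,U^k)$ plus the stated noise.

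Three reductions drive the collection. First, the sample-size condition $(n_X\wedge n_Y)\geq(\sigma_1^X\sigma_1^Y)^2 d\log d/(\Upsilon_4\beta)$ gives $\Upsilon_1\leq 2\Upsilon_4\beta$, so the $d\log d$ term inside $\Upsilon_1$ is absorbed by its $\beta$ term. Second, the step-size bound and the identity $\kappa_X\kappa_Y\sigma_d^X\sigma_d^Y=\sigma_1^X\sigma_1^Y$ give $\Upsilon_3\eta_1/\Upsilon_2\lesssim\kappa_X^{-2}\kappa_Y^{-2}$ and let me trade each $\eta_1^2$ for $(\Upsilon_3/\Upsilon_2)\eta_1$. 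Third, and crucially, $\gamma_1-1\geq 8\Upsilon_2^2/(\Upsilon_3^2\eta_1^2)$ and $\gamma_2\geq(1+\gamma_1)/2$ force $(2/(\gamma_1-1))^{1/2}\leq\Upsilon_3\eta_1/(2\Upsilon_2)$ and $2/(\gamma_2-1)^{1/2}\leq\Upsilon_3\eta_1/\Upsilon_2$, whence $C(\gamma_1,\gamma_2)\leq 1+2\Upsilon_3\eta_1/\Upsilon_2+O((\Upsilon_3\eta_1/\Upsilon_2)^2)$ and therefore $C(\gamma_1,\gamma_2)\{1-9\Upsilon_3\eta_1/(4\Upsilon_2)\}\leq 1-\Upsilon_3\eta_1/(4\Upsilon_2)$. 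This keeps the sparse update a genuine contraction despite the inflation by $C(\gamma_1,\gamma_2)>1$.

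The delicate part, and the main obstacle, is routing the low-rank discrepancy. Since $\Lambda^0=\tLambda$ one has $R^k=U^k\Lambda^0 U^{k\T}$, so the term $7(\sigma_1^X\sigma_1^Y)^2\eta_1^2\|U^k\Lambda^0 U^{k\T}-\tU\tLambda\tUT\|_F^2$ of Lemma~\ref{lem:2} is exactly $7(\sigma_1^X\sigma_1^Y)^2\eta_1^2\|R^k-\tR\|_F^2$. I would resist converting it into a $\Pi^2$ term through Lemma~\ref{prop:1}(a) (which would cost a factor $\kappa_\tR$ and overrun the step-size budget) and instead cancel it against the negative term $-27\Upsilon_3\eta_2\|R^k-\tR\|_F^2/(16\Upsilon_2)$ of Lemma~\ref{lem:3}: after dividing by $\sigma_1^\tR$ and using $\eta_2=\eta_1/(36\sigma_1^\tR)$, the $\sigma_1^\tR$ cancels and the required $7C(\gamma_1,\gamma_2)(\sigma_1^X\sigma_1^Y)^2\eta_1\leq 27\Upsilon_3/(576\Upsilon_2)$ is exactly the stated $\eta_1\leq 1/(C_2\kappa_X\kappa_Y\Upsilon_2)$ (using $\Upsilon_3/\{\Upsilon_2(\sigma_1^X\sigma_1^Y)^2\}=1/(\kappa_X\kappa_Y\Upsilon_2)$), so the net $\|R^k-\tR\|_F^2$ coefficient is nonpositive and this term drops out. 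The quartic term $5\kappa_X\kappa_Y\Upsilon_2\eta_2\Pi^4(U^k,\tU)$ is handled separately: using only $\Pi^2(U^k,\tU)\leq\Psi^2/C_3$ it is at most $5\Upsilon_2\eta_2\sigma_r^\tR\Pi^2(U^k,\tU)/(C_3\kappa_X\kappa_Y)$, which, since $\Upsilon_2^2/(\kappa_X\kappa_Y\Upsilon_3)$ is an absolute constant, is a small fraction of the contraction $3\sigma_r^\tR\Upsilon_3\eta_2\Pi^2/(2\Upsilon_2)$ once $C_3$ is large. Finally, the two cross terms proportional to $\alpha r$ (one from each lemma) reduce, after $\Upsilon_1\lesssim\Upsilon_4\beta$, the sparsity bound $\alpha\leq\Upsilon_3^2/(C_1\Upsilon_2^2\Upsilon_4\gamma_2\beta r\kappa_\tR)$, and $\gamma_2\geq(1+\gamma_1)/2$, to multiples of $\Upsilon_3\eta_1/(\kappa_\tR\Upsilon_2)$ made arbitrarily small by taking $C_1$ large, hence to fractions of the same contraction.

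It remains to collect. The coefficient on $\|S^k-\tS\|_F^2/\sigma_1^\tR$ is $C(\gamma_1,\gamma_2)\{1-9\Upsilon_3\eta_1/(4\Upsilon_2)\}$ plus the sparse-coupling term $\sigma_1^\tR\eta_2\{9\Upsilon_3/(4\Upsilon_2)+C_2\sigma_1^\tR(\sigma_1^X\sigma_1^Y)^2\eta_2\}$ of Lemma~\ref{lem:3}; by $\eta_2=\eta_1/(36\sigma_1^\tR)$ the latter is $O(\Upsilon_3\eta_1/\Upsilon_2)$ with a small constant, so the sparse coefficient is at most $1-\Upsilon_3\eta_1/(8\Upsilon_2)$. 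Since $\kappa_\tR\geq 1$, this and the analogous $\Pi^2$ coefficient are both $\leq 1-\Upsilon_3\eta_1/(72\kappa_\tR\Upsilon_2)=1-\sigma_r^\tR\Upsilon_3\eta_2/(2\Upsilon_2)$, the common contraction on $TD(S^k,U^k)$. For the statistical error, the sparse noise $3C(\gamma_1,\gamma_2)(1+\gamma_2)s\{\Upsilon_2\eta_1/\Upsilon_3+\eta_1^2\}\|\nabla_S\mL_n(\tS,\tR)\|_{\infty,\infty}^2/\sigma_1^\tR$ and the low-rank noise $\eta_2 r\{16\Upsilon_2/(9\Upsilon_3)+4/\Upsilon_2+54\sigma_1^\tR\eta_2\}\|\nabla_R\mL_n(\tS,\tR)\|_2^2$ both collapse, via $\eta_1\leq 1/(C_2\kappa_X\kappa_Y\Upsilon_2)$ and $\eta_2=\eta_1/(36\sigma_1^\tR)$, to a common prefactor $1/(\kappa_X\kappa_Y\Upsilon_3\sigma_1^\tR)$ times $\gamma_2 s\|\nabla_S\mL_n(\tS,\tR)\|_{\infty,\infty}^2$ and $r\|\nabla_R\mL_n(\tS,\tR)\|_2^2$, which is the claimed bound. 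I expect the hard part to be exactly the third-paragraph accounting: the indefinite discrepancy must be cancelled as $\|R^k-\tR\|_F^2$ rather than converted through $\Pi$, and $C(\gamma_1,\gamma_2)$ must be pinned just above $1$; mishandling either loses a factor $\kappa_\tR$ and destroys the contraction.
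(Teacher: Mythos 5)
Your proposal is correct and follows essentially the same route as the paper's proof: combine the one-step bounds of Lemmas~\ref{lem:2} and \ref{lem:3}, cancel the positive $\|R^k-\tR\|_F^2$ term against the negative one from the low-rank update via the step-size condition (the paper's $M_3\leq 0$ step), use the conditions on $\gamma_1,\gamma_2$ to pin $C(\gamma_1,\gamma_2)\{1-9\Upsilon_3\eta_1/(4\Upsilon_2)\}\leq 1-\Upsilon_3\eta_1/(4\Upsilon_2)$, absorb the $\alpha r$ cross terms and the quartic term into fractions of the contraction, and finish with $\Upsilon_3\eta_1/(8\Upsilon_2)\geq\sigma_r^\tR\Upsilon_3\eta_2/(2\Upsilon_2)$ to get the common rate. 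The key algebraic identities you invoke ($\kappa_X\kappa_Y\Upsilon_3=(\sigma_1^X\sigma_1^Y)^2$, $\Upsilon_1\leq 2\Upsilon_4\beta$, $\sigma_r^\tR\eta_2=\eta_1/(36\kappa_\tR)$) are exactly the ones the paper uses.
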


From Lemma~\ref{lem:4}, we observe that the successive total error distance decreases with linear contraction rate $\rho= 1 - \rbr{\sigma_r^\tR\Upsilon_3\eta_2}/\rbr{2\Upsilon_2}<1$ up to a statistical error, which comes from the approximation of population loss $\mL(S, R)$. The statistical error bound is given in the next lemma.

\begin{lemma}[Statistical error bound]\label{lem:5}

The gradients of $\mL_n(S, R)$, defined in \eqref{a:1}, satisfy
\begin{align*}
&\Pr\cbr{\|\nabla_R\mL_n(\tS, \tR)\|_2\gtrsim (\kappa_X\sigma_1^Y + \kappa_Y\sigma_1^X)\rbr{\frac{d}{n_X\wedge n_Y}}^{1/2}} \lesssim \frac{1}{d^2},\\
&\Pr\cbr{\|\nabla_S\mL_n(\tS, \tR)\|_{\infty, \infty}\gtrsim \big(\|\tOmegay\|_1\|\tSigmax\|_1 + \|\tOmegax\|_1\|\tSigmay\|_1\big)\rbr{\frac{\log d}{n_X\wedge n_Y}}^{1/2}} \lesssim \frac{1}{d^2}.
\end{align*}

\end{lemma}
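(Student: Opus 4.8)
The plan is to exploit that the population gradient vanishes at the truth and to bound only the sampling fluctuation. Write $G := \nabla_S\mL_n(\tS,\tR) = \nabla_R\mL_n(\tS,\tR)$: by \eqref{a:1} the two displayed inequalities concern the \emph{same} matrix $G$ measured in two norms, so the whole argument is one expansion analyzed twice. First I would record the population stationarity identity. Since $\tDelta = \tS + \tR = \tOmegax - \tOmegay$ and $\tSigmax\tOmegax = \tOmegay\tSigmay = I_d$, one checks directly that $\tSigmax\tDelta\tSigmay = \tSigmay\tDelta\tSigmax = \tSigmay - \tSigmax$. Writing $E_X = \hSigmax - \tSigmax$, $E_Y = \hSigmay - \tSigmay$ and substituting $\hSigmax = \tSigmax + E_X$, $\hSigmay = \tSigmay + E_Y$ into $G$, the zeroth-order term is exactly this vanishing expression. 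Using $\tDelta\tSigmay = \tOmegax\tSigmay - I_d$ and $\tSigmax\tDelta = I_d - \tSigmax\tOmegay$ (and their transposes), the bare first-order terms in $E_X,E_Y$ cancel, leaving
\begin{align*}
G = \tfrac12\big(E_X\tOmegax\tSigmay + \tSigmay\tOmegax E_X\big) - \tfrac12\big(\tSigmax\tOmegay E_Y + E_Y\tOmegay\tSigmax\big) + \tfrac12\big(E_X\tDelta E_Y + E_Y\tDelta E_X\big).
\end{align*}
The last bracket is a second-order remainder; the first two brackets dominate, and the fact that the $E_X$-terms carry the factor $\tOmegax\tSigmay$ while the $E_Y$-terms carry $\tOmegay\tSigmax$ is precisely what produces the pairings $\|\tOmegax\|_1\|\tSigmay\|_1$, $\|\tOmegay\|_1\|\tSigmax\|_1$ and the scalings $\kappa_X\sigma_1^Y$, $\kappa_Y\sigma_1^X$.

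For the spectral-norm bound I would whiten each error. Writing $E_X = \tSigmax^{1/2}(W_X - I_d)\tSigmax^{1/2}$ with $W_X = \tSigmax^{-1/2}\hSigmax\tSigmax^{-1/2}$ (so $n_X W_X$ is a standard Wishart matrix), the identity $\tSigmax^{1/2}\tOmegax = \tSigmax^{-1/2}$ gives $E_X\tOmegax\tSigmay = \tSigmax^{1/2}(W_X - I_d)\tSigmax^{-1/2}\tSigmay$, whence $\|E_X\tOmegax\tSigmay\|_2 \leq \kappa_X^{1/2}\sigma_1^Y\|W_X - I_d\|_2$, and symmetrically for the transpose and the two $E_Y$-terms. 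The operator-norm concentration $\|W_X - I_d\|_2 \lesssim (d/n_X)^{1/2}$, valid with probability $1 - O(1/d^2)$ once $n_X\gtrsim d$, then yields the first-order contribution $\lesssim (\kappa_X\sigma_1^Y + \kappa_Y\sigma_1^X)(d/(n_X\wedge n_Y))^{1/2}$. The remainder is bounded by $\|E_X\|_2\|\tDelta\|_2\|E_Y\|_2 \lesssim \sigma_1^X\sigma_1^Y\|\tDelta\|_2(d/n_X)^{1/2}(d/n_Y)^{1/2}$, which carries an extra $(d/n)^{1/2}$ and is dominated once $n_X\wedge n_Y\gtrsim d$; absorbing $\kappa^{1/2}\le\kappa$ gives the first inequality.

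For the entrywise bound I would pass to individual entries. Each first-order entry has the form $(E_X M)_{ij} = e_i^\T E_X M e_j$ with $M\in\{\tOmegax\tSigmay,\tSigmay\tOmegax\}$, and $|(E_X M)_{ij}| \leq \|E_X\|_{\infty,\infty}\|M\|_1$ using $\max_j\|M_{\cdot,j}\|_1 = \|M\|_1$ together with submultiplicativity $\|\tOmegax\tSigmay\|_1 \le \|\tOmegax\|_1\|\tSigmay\|_1$; the $E_Y$-terms are handled identically via $\|\tOmegay\tSigmax\|_1\le\|\tOmegay\|_1\|\tSigmax\|_1$. The input is the entrywise Wishart deviation, obtained from the sub-exponential tail of each $(\hSigmax - \tSigmax)_{ij}$ and a union bound over the $d^2$ entries, which supplies the $\log d$; the second-order term is again lower order. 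Summing the four first-order pieces yields $\big(\|\tOmegay\|_1\|\tSigmax\|_1 + \|\tOmegax\|_1\|\tSigmay\|_1\big)(\log d/(n_X\wedge n_Y))^{1/2}$.

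The main obstacle is the entrywise bound, specifically obtaining the clean $\ell_1$-norm scaling at rate $(\log d/n)^{1/2}$: the naive route of concentrating $\|\hSigmax\|_1$ directly would force $n\gtrsim d^2$, so one must instead bound each scalar bilinear form $e_i^\T(\hSigmax-\tSigmax)v$ at its sharp sub-exponential scale $\{(\tSigmax)_{ii}\,v^\T\tSigmax v\}^{1/2}(\log d/n)^{1/2}$ and only then route the resulting $d\times d$ matrix through induced-norm submultiplicativity, taking care that the variance factors combine with the $\ell_1$ norms rather than being double-counted. Controlling the remainder $E_X\tDelta E_Y$ in the entrywise norm (where submultiplicativity is less forgiving than in $\|\cdot\|_2$) and checking that all failure events union-bound to $O(1/d^2)$ are the remaining bookkeeping hurdles.
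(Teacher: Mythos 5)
Your proposal is correct and follows essentially the same route as the paper: the same expansion of the gradient into first-order terms $E_X\tOmegax\tSigmay$, $E_Y\tOmegay\tSigmax$ (plus transposes) and a second-order remainder $E_X\tDelta E_Y$, the operator-norm covariance concentration for the spectral bound, and the $\|AB\|_{\infty,\infty}\le\|A\|_{\infty,\infty}\|B\|_1$ submultiplicativity combined with entrywise concentration of $\hSigma-\Sigma^\star$ at rate $(\log d/n)^{1/2}$ for the entrywise bound. The only cosmetic differences are your whitening step for the spectral bound (the paper instead bounds $\|\tOmegax\|_2\|\hSigmay\|_2\|\hSigmax-\tSigmax\|_2$ directly, keeping $\hSigmay$ unexpanded so no remainder appears there) and your worry in the final paragraph about sharp bilinear forms, which is unnecessary since the deterministic bound $|e_i^\T E_X M e_j|\le\|E_X\|_{\infty,\infty}\|M\|_1$ already suffices.
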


The proof of Theorem \ref{thm:1} combines Lemma \ref{lem:2}, \ref{lem:3}, \ref{lem:4}, \ref{lem:5} and is given in Appendix~\ref{appen:pf:main:thm}. The next lemma establishes the error bound for $S^0$ in the initialization step.

\begin{lemma}[Error bound for $S^0$]\label{lem:6}

Suppose Assumptions \ref{ass:2} and \ref{ass:1} hold. If $\halpha\geq \alpha$, $\hs\geq s$, $d\leq c(n_X\wedge n_Y)$ for $c\in(0, 1/2)$, then
\begin{align*}
\|S^0 - \tS\|_F \leq 17 \hs^{1/2}\sbr{\cbr{\|(\tOmegax)^{1/2}\|_1^2 + \|(\tOmegay)^{1/2}\|_1^2}\rbr{\frac{\log d}{n_X\wedge n_Y}}^{1/2} + \frac{\beta r\sigma_1^\tR}{d}},
\end{align*}
with probability at least $1 - 8/d^2$.

\end{lemma}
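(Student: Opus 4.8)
The plan is to reduce the Frobenius bound on $S^0 - \tS$ to an entrywise (max-norm) control of the perturbation $W := \hDelta^0 - \tS = E + \tR$, where $E := \hDelta^0 - \tDelta = \{(\ttSigmax)^{-1} - \tOmegax\} - \{(\ttSigmay)^{-1} - \tOmegay\}$ is the estimation error and $\tR$ is the low-rank bias that the truncation inadvertently absorbs. The first step is a deterministic truncation inequality: since $\halpha \geq \alpha$ and $\hs \geq s$ force $\tS \in \mS(\alpha, s) \subseteq \mS(\halpha, \hs)$, the composite operator $S^0 = \mT_\halpha\{\mJ_\hs(\hDelta^0)\}$ retains a feasible support, and I would show
$$\|S^0 - \tS\|_F \leq C\,\hs^{1/2}\,\|W\|_{\infty,\infty}$$
for an absolute constant $C$. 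Granting this, the lemma reduces to the two max-norm bounds on $\|\tR\|_{\infty,\infty}$ and $\|E\|_{\infty,\infty}$.

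For the truncation inequality I would work entrywise on $\SUPP(S^0) \cup \SUPP(\tS)$, a set of at most $\hs + s \leq 2\hs$ indices. On $\SUPP(S^0)$ the value is preserved, so $|S^0_{ij} - \tS_{ij}| = |W_{ij}| \leq \|W\|_{\infty,\infty}$. For a true-support index $(i,j)$ discarded by the truncation, I would use that $\tS$ is itself feasible for the combined constraint set to match it injectively with a retained, spurious index $(k,l)$ satisfying $\tS_{kl} = 0$ (so $|\hDelta^0_{kl}| = |W_{kl}| \leq \|W\|_{\infty,\infty}$) and $|\hDelta^0_{ij}| \leq |\hDelta^0_{kl}|$; this yields $|\tS_{ij}| \leq |\hDelta^0_{ij}| + |W_{ij}| \leq 2\|W\|_{\infty,\infty}$. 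Summing at most $2\hs$ entries, each of size $O(\|W\|_{\infty,\infty})$, gives the stated Frobenius bound, with the row/column structure of $\mT_\halpha$ affecting only the constant.

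The bias term is immediate from incoherence: writing $\tR = \tL\tXi\tLT$ with $\tL \in \mU(\beta)$ (Assumption \ref{ass:2}), each entry obeys $|\tR_{ij}| = |(\tL^\T e_i)^\T \tXi (\tL^\T e_j)| \leq \|\tL^\T e_i\|_2\,\sigma_1^\tR\,\|\tL^\T e_j\|_2 \leq (\beta r/d)\,\sigma_1^\tR$, so $\|\tR\|_{\infty,\infty} \leq \beta r \sigma_1^\tR/d$, which furnishes the second term in the bound. The main obstacle is $\|E\|_{\infty,\infty}$. I would whiten by setting $Z_i = (\tOmegax)^{1/2}(X_i - \tmux) \sim N(0, I_d)$, so that $(\ttSigmax)^{-1} = (\tOmegax)^{1/2}(\ttSigma_Z)^{-1}(\tOmegax)^{1/2}$ with $\ttSigma_Z$ the analogously scaled sample covariance of the $Z_i$; the Kaufman--Hartlap scaling is tuned exactly so that $\mE\{(\ttSigma_Z)^{-1}\} = I_d$, which removes the $d/n$ bias the unscaled inverse covariance suffers. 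Setting $u_i = (\tOmegax)^{1/2}e_i$, the $(i,j)$ entry of $(\ttSigmax)^{-1} - \tOmegax$ equals $u_i^\T\{(\ttSigma_Z)^{-1} - I_d\}u_j$, and since $\|u_i\|_1 \leq \|(\tOmegax)^{1/2}\|_1$,
$$\|(\ttSigmax)^{-1} - \tOmegax\|_{\infty,\infty} \leq \|(\tOmegax)^{1/2}\|_1^2\,\|(\ttSigma_Z)^{-1} - I_d\|_{\infty,\infty}.$$
The crux is then a sharp Wishart concentration $\|(\ttSigma_Z)^{-1} - I_d\|_{\infty,\infty} \lesssim (\log d/n_X)^{1/2}$ with probability $1 - O(d^{-2})$, for which I would invoke the exact law of the inverse-Wishart entries rather than a Neumann expansion; the hypothesis $d \leq c(n_X \wedge n_Y)$ with $c < 1/2$ keeps $n_X - d - 2$ comparable to $n_X$ and the inverse non-degenerate. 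This refined analysis, which avoids any concentration of $\|\hSigmax\|_1$, is the technical heart of the lemma.

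Finally, applying the same argument to the $Y$ group and taking a union bound over the two inverse-Wishart events (absorbed into the $8/d^2$ budget) gives $\|E\|_{\infty,\infty} \lesssim \{\|(\tOmegax)^{1/2}\|_1^2 + \|(\tOmegay)^{1/2}\|_1^2\}(\log d/(n_X\wedge n_Y))^{1/2}$. Substituting $\|W\|_{\infty,\infty} \leq \|E\|_{\infty,\infty} + \|\tR\|_{\infty,\infty}$ into the truncation inequality and collecting the absolute constants into $17$ yields the claimed bound.
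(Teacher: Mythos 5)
Your proposal follows essentially the same route as the paper's proof: an entrywise truncation argument (a discarded true-support entry must be dominated by a retained entry lying off the true support, hence is at most a constant multiple of $\|\hDelta^0-\tDelta\|_{\infty,\infty}+\|\tR\|_{\infty,\infty}$), the incoherence bound $\|\tR\|_{\infty,\infty}\leq \beta r\sigma_1^\tR/d$, the whitening identity that reduces $\|(\ttSigmax)^{-1}-\tOmegax\|_{\infty,\infty}$ to $\|(\tOmegax)^{1/2}\|_1^2$ times an isotropic inverse-Wishart deviation, and the final $\|S^0-\tS\|_F\leq (2\hs)^{1/2}\|S^0-\tS\|_{\infty,\infty}$ step. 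The only cosmetic differences are that the paper tracks the composition $\mT_{\halpha}\circ\mJ_{\hs}$ through three explicit cases (yielding the factor $3$ behind the constant $17$) and proves the inverse-Wishart entrywise concentration by reducing to $2\times 2$ Schur complements, details your sketch correctly identifies but does not carry out.
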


\section{Proofs of Main Lemmas}\label{appen:pf:main:lem}

\subsection{Proof of Lemma \ref{lem:2}}

We study the $k$-th iteration for updating sparse component in Algorithm \ref{alg:2}. Recall that $\baralpha = \gamma_1\alpha$ and $\bars = \gamma_2 s$. Define
\begin{align}\label{a:2}
\barS^{k + 1/2} = \mJ_{\gamma_2s}(S^{k+1/2}), \quad
\barOmega^k =  \SUPP(\tS) \cup \SUPP(S^k), \quad
\Omega^k =  \barOmega^k \cup \SUPP(\barS^{k+1/2}).
\end{align}
Since $\nabla_S\mbL_n(S^k, U^k, \Lambda^0)$ and $S^k$ are symmetric, so is $S^{k+1/2}$. From Lemma \ref{aux:lem:1} and \ref{aux:lem:2}, we have $S^{k+1}\in\S^{d\times d}$ and,
therefore, $\Omega^k, \barOmega^k\subseteq V \times V$ are two symmetric index sets. With some abuse of the notations, we use $\P_{\Omega}(\cdot)$ to denote the projection onto the $\Omega$. For a matrix $A\in\mR^{d\times d}$, $\P_{\Omega}(A)\in\mR^{d\times d}$ with elements $[\P_{\Omega}(A)]_{i, j} = A_{i, j}\cdot\pmb{1}_{\{(i, j)\in\Omega\}}$, where $\pmb{1}_{\{\cdot\}}$ is an indicator
function. From the updating rule in Algorithm~\ref{alg:2},
\begin{equation}\label{a:3}
\P_{\Omega^k}(S^{k + 1/2})
= \P_{\Omega^k}(S^k) - \eta_1\P_{\Omega^k}\cbr{\nabla_S\mbL_n(S^k, U^k, \Lambda^0)}
= S^k - \eta_1\P_{\Omega^k}\cbr{\nabla_S\mbL_n(S^k, U^k, \Lambda^0)}.
\end{equation}
Combining \eqref{a:2} and \eqref{a:3}, and noting that $\SUPP(\barS^{k+1/2})\subseteq \Omega^k$,
\begin{align*}
\barS^{k + 1/2}
= \mJ_{\gamma_2s}(S^{k+1/2})
= \mJ_{\gamma_2s}\cbr{\P_{\Omega^k}(S^{k+1/2})}
= \mJ_{\gamma_2s}\sbr{S^k - \eta_1\P_{\Omega^k}\cbr{\nabla_S\mbL_n(S^k, U^k, \Lambda^0)}}.
\end{align*}
Further, from Lemma \ref{aux:lem:1} and \ref{aux:lem:2},
\begin{align}\label{a:4}
\|S^{k+1} &- \tS\|_F^2
=  \|\mT_{\gamma_1\alpha}(\barS^{k+1/2}) - \tS\|_F^2 \leq  \cbr{1 + \rbr{\frac{2}{\gamma_1 - 1}}^{1/2}}^2\|\barS^{k + 1/2} - \tS\|_F^2 \nonumber\\
&= \cbr{1 + \rbr{\frac{2}{\gamma_1 - 1}}^{1/2}}^2\bigg\| \mJ_{\gamma_2s}\sbr{S^k - \eta_1\P_{\Omega^k}\cbr{\nabla_S\mbL_n(S^k, U^k, \Lambda^0)}}- \tS\bigg\|_F^2 \nonumber\\
& \leq \cbr{1 + \rbr{\frac{2}{\gamma_1 - 1}}^{1/2}}^2 \cbr{1 + \frac{2}{\rbr{\gamma_2 - 1}^{1/2}}} \bigg\| S^k - \eta_1\P_{\Omega^k}\cbr{\nabla_S\mbL_n(S^k, U^k, \Lambda^0)}- \tS\bigg\|_F^2 \nonumber\\
&= C(\gamma_1, \gamma_2) \rbr{\|S^k - \tS\|_F^2 - 2\eta_1\I_1 + \eta_1^2\I_2},
\end{align}
where $C(\gamma_1, \gamma_2)$ is defined in \eqref{a:25} and
\begin{align*}
\I_1 =\bLD S^k - \tS, \P_{\Omega^k}\cbr{\nabla_S\mbL_n(S^k, U^k, \Lambda^0)} \bRD, \quad
\I_2 = \big\|\P_{\Omega^k}\cbr{\nabla_S\mbL_n(S^k, U^k, \Lambda^0)} \big\|_F^2.
\end{align*}
Using Lemma \ref{lem:lem_2_I1} to lower bound $\I_1$ and Lemma \ref{lem:lem_2_I2} to upper bound $\I_2$,
\begin{align*}
\frac{\|S^{k+1} - \tS\|_F^2}{C(\gamma_1, \gamma_2)}
&\leq \rbr{1  - \frac{9\Upsilon_3}{4\Upsilon_2}\eta_1}\|S^k - \tS\|_F^2+ 3(1 + 2\gamma_2)s\rbr{\frac{4\Upsilon_2}{9\Upsilon_3}\eta_1 + \eta_1^2}\|\nabla_S\mL_n(\tS, \tR)\|_{\infty, \infty}^2\\
+ &\frac{C_1\Upsilon_2\Upsilon_1}{\Upsilon_3}(1 + \gamma_1)\alpha r\sigma_1^\tR \eta_1\Pi^2(U^k, \tU) + \frac{27(\sigma_1^X\sigma_1^Y)^2\eta_1^2}{4}\|U^k\Lambda^0U^{k \T} - \tU\tLambda \tUT\|_F^2\\
- & \eta_1\rbr{\frac{8}{\Upsilon_2} - 3\eta_1}\|\nabla_S\mbL_n(S^k, U^k, \Lambda^0) - \nabla_S\mbL_n(\tS, U^k, \Lambda^0)\|_F^2,
\end{align*}
with probability $1-C_2/d^2$ for some large enough constants $C_1, C_2>0$. With $\eta_1 \leq 8/(3\Upsilon_2)$,
\begin{align}\label{a:27}
\frac{\|S^{k+1} - \tS\|_F^2}{C(\gamma_1, \gamma_2)}
& \leq \rbr{1  - \frac{9\Upsilon_3}{4\Upsilon_2}\eta_1}\|S^k - \tS\|_F^2 +
3\rbr{1 + 2\gamma_2}s\rbr{\frac{4\Upsilon_2}{9\Upsilon_3}\eta_1 + \eta_1^2}\|\nabla_S\mL_n(\tS, \tR)\|_{\infty, \infty}^2 \nonumber\\
+ \frac{C_1\Upsilon_2\Upsilon_1}{\Upsilon_3}&(1 + \gamma_1)\alpha r\sigma_1^\tR \eta_1\Pi^2(U^k, \tU) + \frac{27(\sigma_1^X\sigma_1^Y)^2\eta_1^2}{4}\|U^k\Lambda^0U^{k \T} - \tU\tLambda \tUT\|_F^2,
\end{align}
which completes the proof.

\subsection{Proof of Lemma \ref{lem:3}}

Suppose $Q^k\in\mQ^{r\times r}_{r_1}$ satisfies $\Pi(U^k, \tU) = \|U^k - \tU Q^k\|_F$. Using the bound on $\|U^k\|_2$ in \eqref{a:17}, we know $\tU\in \mU(4\beta\|U^k\|_2^2)=\mC^k$, so does $\tU Q^k$. Let $\P_{\mC^k}(\cdot)$ be the projection operator onto $\mC^k$. Due to the non-expansion property of $\P_{\mC^k}(\cdot)$,
\begin{align}\label{a:28}
\Pi^2(U^{k+1}, \tU)
& \leq \|U^{k + 1} - \tU Q^k\|_F^2 = \|\P_{\mC^k}(U^{k+1/2}) - \P_{\mC^k}(\tU Q^k)\|_F^2 \leq  \|U^{k + 1/2} - \tU Q^k\|_F^2 \nonumber\\
&= \|U^k - \eta_2\nabla_U\mbL_n(S^k, U^k, \Lambda^0) - \frac{\eta_2}{2}U^k(U^{k \T}U^{k \T} - \Lambda^0U^{k \T}U^k\Lambda^0) - \tU Q^k\|_F^2 \nonumber\\
& \leq  \Pi^2(U^k, \tU) - 2\eta_2\I_3 + 2\eta_2^2\I_4 - \eta_2\I_5 + \frac{\eta_2^2}{2}\I_6,
\end{align}
where
\begin{align*}
\I_3 =& \LD U^k - \tU Q^k, \nabla_U\mbL_n(S^k, U^k, \Lambda^0)\RD, \quad\quad\quad\quad\text{\ \ \ }
\I_4 = \|\nabla_U\mbL_n(S^k, U^k, \Lambda^0)\|_F^2, \\
\I_5 =& \LD U^k - \tU Q^k, U^k(U^{k \T}U^k - \Lambda^0U^{k \T}U^k\Lambda^0)\RD,\quad
\I_6 = \|U^k(U^{k \T}U^k - \Lambda^0U^{k \T}U^k\Lambda^0)\|_F^2.
\end{align*}
Similar to \eqref{a:4}, we will lower bound $\I_3, \I_5$ and upper bound $\I_4, \I_6$. Using Lemma \ref{lem:lem_3_I3}, we bound the term $\I_3$. Using \eqref{a:1} and the triangle inequality,
\begin{align*}
\I_4 & = 4\|\nabla_R\mL_n(S^k, R^k)U^k \Lambda^0\|_F^2\\
& \leq 12\big\|\cbr{\nabla_R\mL_n(S^k, R^k) - \nabla_R\mL_n(S^k, \tR)}U^k\big\|_F^2 + 12\big\|\cbr{\nabla_R\mL_n(S^k, \tR) - \nabla_R\mL_n(\tS, \tR)}U^k\big\|_F^2\\
& \quad + 12\|\nabla_R\mL_n(\tS, \tR)U^k\|_F^2.
\end{align*}
Using H\"older's inequality and the bound in \eqref{a:17}, we further have
\begin{align}\label{a:34}
\I_4 & \leq  27\sigma_1^\tR\|\nabla_R\mL_n(S^k, R^k) - \nabla_R\mL_n(S^k, \tR)\|_F^2 + 27\sigma_1^\tR\|\hSigmax(S^k - \tS)\hSigmay\|_F^2
\nonumber\\
&\quad + 12r\|\nabla_R\mL_n(\tS, \tR)\|_2^2 \|U^k\|_2^2 \nonumber\\
& \leq 27\sigma_1^\tR\|\nabla_R\mL_n(S^k, R^k) - \nabla_R\mL_n(S^k, \tR)\|_F^2 + \frac{243}{4}\sigma_1^\tR(\sigma_1^X\sigma_1^Y)^2\|S^k - \tS\|_F^2 \nonumber\\
&\quad + 27r\sigma_1^\tR\|\nabla_R\mL_n(\tS, \tR)\|_2^2.
\end{align}
By Lemma \ref{aux:lem:14},
\begin{align}\label{a:35}
\I_5 \geq \frac{1}{8}\|U^{k \T}U^k - \Lambda^0U^{k \T}U^k\Lambda^0\|_F^2 - \frac{1}{2}\Pi^4(U^k, \tU),
\end{align}
and by \eqref{a:17},
\begin{align}\label{a:36}
\I_6 \leq \|U_k\|_2^2\cdot\|U^{k \T}U^k - \Lambda^0U^{k \T}U^k\Lambda^0\|_F^2
\leq \frac{9\sigma_1^\tR}{4}\|U^{k \T}U^k - \Lambda^0U^{k \T}U^k\Lambda^0\|_F^2.
\end{align}
Combining pieces in \eqref{a:28}, \eqref{a:34}, \eqref{a:35}, \eqref{a:36} and Lemma~\ref{lem:lem_3_I3}, there exist constants $C_1, C_2, C_3>0$, such that
\begin{equation}
\begin{aligned}\label{a:37}
\Pi^2&(U^{k+1}, \tU) \leq \rbr{1 + \frac{C_1(1 + \gamma_1)\alpha r\sigma_1^\tR\Upsilon_1\eta_2}{C_{32,1}}}\Pi^2(U^k, \tU) - \eta_2\cbr{\frac{9\Upsilon_3}{2\Upsilon_2} - \rbr{2r}^{\frac{1}{2}}C_{33,1}}\|R^k - \tR\|_F^2 \\
&- \frac{\eta_2 - 9\sigma_1^\tR\eta_2^2}{8}\|U^{k \T}U^k - \Lambda^0U^{k \T}U^k \Lambda^0\|_F^2 + \eta_2\cbr{\frac{1}{2}+ C_{31} + \rbr{2r}^{\frac{1}{2}}C_{33,2} + \frac{9\sigma_1^X\sigma_1^Y}{4C_{32,2}}}\Pi^4(U^k, \tU) \\
& + \eta_2\cbr{C_{32,1} + \frac{9\sigma_1^X\sigma_1^YC_{32,2}}{4} + C_2\sigma_1^\tR(\sigma_1^X\sigma_1^Y)^2\eta_2}\|S^k - \tS\|_F^2 \\
& -\eta_2\bigg(\frac{8}{\Upsilon_2} - \frac{1}{C_{31}} - 54\sigma_1^\tR\eta_2\bigg)\big\|\nabla_R\mL_n(S^k, R^k)  - \nabla_R\mL_n(S^k, \tR)\big\|_F^2 \\
&+ \cbr{\frac{\rbr{2r}^{\frac{1}{2}}(C_{33,1} + C_{33,2})}{C_{33,1}C_{33,2}}\eta_2 + 54r\sigma_1^\tR\eta_2^2}\|\nabla_R\mL_n(\tS, \tR)\|_2^2,
\end{aligned}
\end{equation}
with probability at least $1-C_3/d^2$ for any $C_{31}, C_{32,1}, C_{32,2}, C_{33,1}, C_{33,2}>0$. We let
\begin{align}\label{a:38}
C_{31} = & \frac{\Upsilon_2}{2}, \text{\ \ \ } C_{32,1} = \frac{9\Upsilon_3}{8\Upsilon_2}, \text{\ \ \ } C_{32,2} = \frac{\sigma_d^X\sigma_d^Y}{2\Upsilon_2}, \text{\ \ \ } C_{33,1} = \frac{9\Upsilon_3}{8\rbr{2r}^{1/2}\Upsilon_2}, \text{\ \ \ } C_{33,2} = \frac{\Upsilon_2}{2\rbr{2r}^{1/2}}.
\end{align}
With $\eta_2 \leq 1/(18\Upsilon_2\sigma_1^\tR)$, there exists a constant $C_4>0$ such that
\begin{align}\label{a:39}
\Pi^2(U^{k + 1}, \tU)\leq& \cbr{1 + \frac{C_4\Upsilon_2\Upsilon_1}{\Upsilon_3}(1 + \gamma_1)\alpha r\sigma_1^\tR\eta_2}\Pi^2(U^k, \tU) - \frac{27\Upsilon_3\eta_2}{8\Upsilon_2}\|R^k - \tR\|_F^2 \nonumber\\
& - \frac{\eta_2 - 9\sigma_1^\tR\eta_2^2}{8}\|U^{k \T}U^k - \Lambda^0U^{k \T}U^k \Lambda^0\|_F^2 + 5\eta_2\kappa_X\kappa_Y\Upsilon_2\Pi^4(U^k, \tU) \nonumber\\
& + \eta_2\bigg(\frac{16r\Upsilon_2}{9\Upsilon_3} + \frac{4r}{\Upsilon_2} + 54r\sigma_1^\tR\eta_2\bigg)\|\nabla_R\mL_n(\tS, \tR)\|_2^2 \nonumber\\
& + \eta_2\cbr{\frac{9\Upsilon_3}{4\Upsilon_2} + C_2\sigma_1^\tR(\sigma_1^X\sigma_1^Y)^2\eta_2}\|S^k - \tS\|_F^2.
\end{align}
Focusing on the second and the third term in above inequality, we write
\begin{align*}
\frac{27\Upsilon_3\eta_2}{8\Upsilon_2}&\|R^k - \tR\|_F^2
+ \rbr{\frac{\eta_2}{8}- \frac{9\sigma_1^\tR\eta_2^2}{8}}\|U^{k \T}U^k - \Lambda^0U^{k \T}U^k \Lambda^0\|_F^2 \\
=&\frac{27\Upsilon_3\eta_2}{16\Upsilon_2}\|R^k - \tR\|_F^2
+  \eta_2\rbr{\frac{27\Upsilon_3}{4\Upsilon_2}\frac{1}{4}\|R^k - \tR\|_F^2 + \frac{1}{16}\|U^{k \T}U^k - \Lambda^0U^{k \T}U^k \Lambda^0\|_F^2}\\
& + \frac{\eta_2}{8}\rbr{\frac{1}{2} - 9\sigma_1^\tR\eta_2 }\|U^{k \T}U^k - \Lambda^0U^{k \T}U^k \Lambda^0\|_F^2.
\end{align*}
Without loss of generality, $\Upsilon_2>1$ and ${27\Upsilon_3}/\rbr{4\Upsilon_2}<1$. Then, by Lemma \ref{aux:lem:14},
\begin{multline*}
\frac{27\Upsilon_3\eta_2}{8\Upsilon_2}\|R^k - \tR\|_F^2
+ \rbr{\frac{\eta_2}{8}- \frac{9\sigma_1^\tR\eta_2^2}{8}}\|U^{k \T}U^k - \Lambda^0U^{k \T}U^k \Lambda^0\|_F^2 \\
\geq  \frac{27\Upsilon_3\eta_2}{16\Upsilon_2}\|R^k - \tR\|_F^2 + \frac{3\eta_2\sigma_r^\tR \Upsilon_3}{2\Upsilon_2}\Pi^2(U^k, \tU).
\end{multline*}
Plugging into \eqref{a:39}, we obtain the error recursion for one-step iteration for the low-rank component
\begin{align}\label{a:40}
\Pi^2(&U^{k + 1}, \tU)\leq \cbr{1  - \frac{3\eta_2\sigma_r^\tR \Upsilon_3}{2\Upsilon_2} + \frac{C_4\Upsilon_2\Upsilon_1}{\Upsilon_3}(1 + \gamma_1)\alpha r\sigma_1^\tR\eta_2}\Pi^2(U^k, \tU)  \nonumber\\
&  + 5\eta_2\kappa_X\kappa_Y\Upsilon_2\Pi^4(U^k, \tU) + \eta_2\cbr{\frac{9\Upsilon_3}{4\Upsilon_2} + C_2\sigma_1^\tR(\sigma_1^X\sigma_1^Y)^2\eta_2}\|S^k - \tS\|_F^2\nonumber\\
& + \eta_2\bigg(\frac{16r\Upsilon_2}{9\Upsilon_3} + \frac{4r}{\Upsilon_2} + 54r\sigma_1^\tR\eta_2\bigg)\|\nabla_R\mL_n(\tS, \tR)\|_2^2- \frac{27\Upsilon_3\eta_2}{16\Upsilon_2}\|R^k - \tR\|_F^2,
\end{align}
which completes the proof.

\subsection{Proof of Lemma \ref{lem:4}}

Under the assumptions of the lemma, the conditions of Lemma \ref{lem:2} are satisfied.  By the definition of the total error distance, we combine \eqref{a:27} and \eqref{a:40} to get
\begin{multline}\label{a:41}
TD(S^{k+1}, U^{k+1})\leq M_1\frac{\|S^k - \tS\|_F^2}{\sigma_1^\tR}  + M_2\Pi^2(U^k, \tU) + M_3\|R^k - \tR\|_F^2 \\
+ M_4\|\nabla_S\mL_n(\tS, \tR)\|_{\infty, \infty}^2 + M_5\|\nabla_R\mL_n(\tS, \tR)\|_2^2,
\end{multline}
with probability at least $1 - C_4/d^2$, where
\begin{align}\label{a:42}
M_1 &= \rbr{1 - \frac{9\Upsilon_3}{4\Upsilon_2}\eta_1}C(\gamma_1, \gamma_2) + \sigma_1^\tR\eta_2\cbr{\frac{9\Upsilon_3}{4\Upsilon_2} + C_1\sigma_1^\tR(\sigma_1^X\sigma_1^Y)^2\eta_2}, \nonumber\\
M_2 &= 1 - \frac{3\sigma_r^\tR\Upsilon_3}{2\Upsilon_2}\eta_2 + \frac{C_2\Upsilon_2\Upsilon_1}{\Upsilon_3}(1 + \gamma_1)\alpha r \sigma_1^\tR\eta_2 + \frac{C_2\Upsilon_2\Upsilon_1}{\Upsilon_3}C(\gamma_1, \gamma_2)(1 + \gamma_1)\alpha r\eta_1 \nonumber\\
& \quad + 5\eta_2\kappa_X\kappa_Y\Upsilon_2\Pi^2(U^k, \tU), \nonumber\\
M_3&= \frac{27(\sigma_1^X\sigma_1^Y)^2\eta_1^2}{4\sigma_1^\tR}C(\gamma_1, \gamma_2) - \frac{27\Upsilon_3\eta_2}{16\Upsilon_2},\\
M_4 &= \frac{3(1 + 2\gamma_2)s}{\sigma_1^\tR}\big(\frac{4\Upsilon_2}{9\Upsilon_3}\eta_1 + \eta_1^2\big)C(\gamma_1, \gamma_2),\nonumber\\
M_5 &= \eta_2r\big( \frac{16\Upsilon_2}{9\Upsilon_3} + \frac{4}{\Upsilon_2} + 54\sigma_1^\tR\eta_2\big), \nonumber
\end{align}
for some constants $\rbr{C_i}_{i=1}^4$. We proceed to simplify $\rbr{M_i}_{i=1}^5$ under the assumptions. Under the conditions on $\gamma_1$ and $\gamma_2$,
\begin{align}\label{a:43}
\rbr{1 - \frac{9\Upsilon_3\eta_1}{4\Upsilon_2}}C(\gamma_1, \gamma_2)\leq 1 - \frac{\Upsilon_3\eta_1}{4\Upsilon_2}.
\end{align}
Furthermore, using the bounds on $\eta_1$ and $\eta_2$, we obtain
\begin{align}\label{a:44}
\sigma_1^\tR\eta_2\cbr{\frac{9\Upsilon_3}{4\Upsilon_2} + C_1\eta_2\sigma_1^\tR(\sigma_1^X\sigma_1^Y)^2} =  \frac{\Upsilon_3\eta_1}{16\Upsilon_2} + \frac{C_1\eta_1^2}{36^2}(\sigma_1^X\sigma_1^Y)^2 \leq \frac{\Upsilon_3\eta_1}{8\Upsilon_2}.
\end{align}
Combining the last two inequalities,
$M_1 \leq 1 - {\Upsilon_3\eta_1}/\rbr{8\Upsilon_2}$. Similarly,
\begin{align}\label{a:46}
M_2 \leq 1 - \frac{3\sigma_r^\tR\Upsilon_3\eta_2}{2\Upsilon_2} + \frac{C_2\Upsilon_2\Upsilon_1}{\Upsilon_3}(1 + \gamma_1)\alpha r\sigma_1^\tR\cbr{1 + 36C(\gamma_1, \gamma_2)}\eta_2 + 5\kappa_X\kappa_Y\Upsilon_2\eta_2\Pi^2(U^k, \tU).
\end{align}
Since $n_X\wedge n_Y \geq \beta^{-1}\{({\|\tSigmax\|_1}/{\sigma_1^X})^2 + ({\|\tSigmay\|_1}/{\sigma_1^Y})^2 \}^{-1} {d\log d},$
we have $\Upsilon_1 \leq 2\Upsilon_4 \beta$. Furthermore, if following conditions hold
\begin{align*}
\alpha \leq \frac{\Upsilon_3^2}{4C_2\Upsilon_2^2\Upsilon_4(1+\gamma_1)\cbr{1 + 36C(\gamma_1, \gamma_2)}}\cdot\frac{1}{\beta r \kappa_\tR},\quad\quad \Pi^2(U^k, \tU)\leq \frac{\Upsilon_3}{10\kappa_X\kappa_Y\Upsilon_2^2}\cdot \sigma_r^\tR,
\end{align*}
we can get
\begin{align}\label{a:50}
M_2 \leq 1 - \frac{3\sigma_r^\tR\Upsilon_3\eta_2}{2\Upsilon_2} + \frac{\sigma_r^\tR\Upsilon_3\eta_2}{2\Upsilon_2} +  \frac{\sigma_r^\tR\Upsilon_3\eta_2}{2\Upsilon_2} = 1 - \frac{\sigma_r^\tR\Upsilon_3\eta_2}{2\Upsilon_2}.
\end{align}
The above conditions on $\alpha$ and $\Pi^2(U^k, \tU)$ are implied by the ones in lemma, noting that $1+\gamma_1\leq 2\gamma_2$, $C(\gamma_1, \gamma_2)\leq 2$ due to the setup of $\eta_1$ and \eqref{a:43}, and $\Upsilon_3\sigma_r^\tR/(10\kappa_X\kappa_Y\Upsilon_2^2)\asymp\sigma_r^\tR/(\kappa_X^2\kappa_Y^2)\asymp\Psi^2$. For the term $M_3$, we note that $M_3\leq  0\Longleftrightarrow \eta_1\leq \rbr{144C(\gamma_1, \gamma_2)\kappa_X\kappa_Y\Upsilon_2}^{-1}$. Since $C(\gamma_1, \gamma_2)\leq 2$, by choosing the constant in the learning rate $\eta_1$ big enough, the right hand side condition holds so that $M_3\leq 0$. For the term $M_4$, we have
\begin{align}\label{a:52}
M_4 \leq 6(1 + 2\gamma_2)\cbr{\frac{4\Upsilon_2}{9\Upsilon_3}\eta_1 + \eta_1^2}\frac{s}{\sigma_1^\tR}
\leq \frac{\gamma_2 s}{\kappa_X\kappa_Y\Upsilon_3\sigma_1^\tR},
\end{align}
and for $M_5$,
\begin{align}\label{a:53}
M_5 = \frac{\eta_1}{36}\rbr{\frac{16\Upsilon_2}{9\Upsilon_3} + \frac{4}{\Upsilon_2} + \frac{3\eta_1}{2}}\frac{r}{\sigma_1^\tR}
\leq \frac{\Upsilon_2\eta_1r}{\Upsilon_3\sigma_1^\tR}\leq \frac{r}{\kappa_X\kappa_Y\Upsilon_3\sigma_1^\tR}.
\end{align}
Plugging all the bounds back into \eqref{a:41},
\begin{multline}
TD(S^{k+1}, U^{k+1})\leq \rbr{1 - \frac{\Upsilon_3\eta_1}{8\Upsilon_2}}\frac{\|S^k - \tS\|_F^2}{\sigma_1^\tR} + \rbr{1  - \frac{\sigma_r^\tR\Upsilon_3\eta_2}{2\Upsilon_2}}\Pi^2(U^k, \tU)\\
+\frac{1}{\kappa_X\kappa_Y\Upsilon_3}\cbr{\frac{\gamma_2s}{\sigma_1^\tR}\|\nabla_S\mL_n(\tS, \tR)\|_{\infty, \infty}^2 + \frac{r}{\sigma_1^\tR}\|\nabla_R\mL_n(\tS, \tR)\|_2^2}.
\end{multline}
The proof is completed by noting
$\rbr{\Upsilon_3\eta_1}/\rbr{8\Upsilon_2}\geq\rbr{\sigma_r^\tR\Upsilon_3\eta_2}/\rbr{2\Upsilon_2}$.

\subsection{Proof of Lemma \ref{lem:5}}

From \eqref{a:1}, we have
\begin{equation} \label{c:19}
\begin{aligned}
\nabla_R &\mL_n(\tS, \tR)\\
&= \frac{1}{2}\hSigmax\big(\tS + \tR\big)\hSigmay + \frac{1}{2}\hSigmay\big(\tS + \tR\big)\hSigmax - (\hSigmay - \hSigmax) \\
& = \frac{1}{2}\hSigmax\big(\tOmegax - \tOmegay\big)\hSigmay + \frac{1}{2}\hSigmay\big(\tOmegax - \tOmegay\big)\hSigmax - (\hSigmay - \hSigmax)\\
& = \frac{1}{2}\big(\hSigmax\tOmegax - I_d\big)\hSigmay + \frac{1}{2}\hSigmax\big(I_d - \tOmegay\hSigmay\big) + \frac{1}{2}\hSigmay\big(\tOmegax\hSigmax - I_d\big) + \frac{1}{2}\big(I_d - \hSigmay\tOmegay\big)\hSigmax \\
& = \frac{1}{2}\big(\hSigmax - \tSigmax\big)\tOmegax\hSigmay + \frac{1}{2}\hSigmax\tOmegay\big(\tSigmay - \hSigmay\big) + \frac{1}{2}\hSigmay\tOmegax\big(\hSigmax - \tSigmax\big) + \frac{1}{2}\big(\tSigmay - \hSigmay\big)\tOmegay\hSigmax.
\end{aligned}
\end{equation}
Using Lemma \ref{aux:lem:7} and \eqref{a:5}, there exists a constant $C_1 > 0$ such that
\begin{align*}
\|\nabla_R\mL_n(\tS, \tR)\|_2\leq & \|\tOmegax\|_2\|\hSigmay\|_2\|\hSigmax - \tSigmax\|_2 + \|\tOmegay\|_2\|\hSigmax\|_2\|\hSigmay - \tSigmay\|_2\\
\lesssim & (\kappa_X\sigma_1^Y + \kappa_Y\sigma_1^X)\rbr{\frac{d}{n_X\wedge n_Y}}^{1/2}
\end{align*}
with probability at least $1 - C_1/d^2$. Furthermore, since $\nabla_S\mL_n(\tS, \tR) = \nabla_R\mL_n(\tS, \tR)$, it follows from \eqref{c:19} that
\begin{align*}
\nabla_S&\mL_n(\tS, \tR)
=  \frac{1}{2}\big(\hSigmax - \tSigmax\big)\big(\tOmegax - \tOmegay\big)\big(\hSigmay - \tSigmay\big)
+ \frac{1}{2}\big(\hSigmay - \tSigmay\big)\big(\tOmegax - \tOmegay\big)\big(\hSigmax - \tSigmax\big) \\
&+ \frac{1}{2}\big(\hSigmax - \tSigmax\big)\tOmegax\tSigmay
+ \frac{1}{2}\tSigmax\tOmegay\big(\tSigmay - \hSigmay\big)
+ \frac{1}{2}\tSigmay\tOmegax\big(\hSigmax - \tSigmax\big)
+ \frac{1}{2}\big(\tSigmay - \hSigmay\big)\tOmegay\tSigmax.
\end{align*}
Therefore,
\begin{multline}\label{c:18}
\|\nabla_S\mL_n(\tS, \tR)\|_{\infty, \infty}\leq \|\hSigmay - \tSigmay\|_{\infty, \infty}\|\tOmegay\|_1\|\tSigmax\|_1 + \|\hSigmax - \tSigmax\|_{\infty, \infty}\|\tOmegax\|_1\|\tSigmay\|_1 \\
+ \|\hSigmax - \tSigmax\|_{\infty, \infty}\|\hSigmay - \tSigmay\|_{\infty, \infty}\|\tOmegax - \tOmegay\|_{1,1},
\end{multline}
where we use three inequalities:
$\|ABC\|_{\infty, \infty}\leq \|A\|_{\infty, \infty}\|C\|_{\infty, \infty}\|B\|_{1,1}$, $\|AB\|_{\infty, \infty}\leq \|A\|_{\infty, \infty}\|B\|_1$, and $\|AB\|_{\infty, \infty}\leq \|A\|_{\infty}\|B\|_{\infty, \infty}$. From Lemma 1 in \cite{Rothman2008Sparse}, there exists a constant $C_2>0$ such that (similar for $\hSigmay$)
\begin{align*}
\Pr\bigg(\|\hSigmax - \tSigmax\|_{\infty, \infty}\lesssim \rbr{\frac{\log d}{n_X}}^{1/2}\bigg)\geq 1 - \frac{C_2}{d^2}.
\end{align*}
Therefore, the last term in \eqref{c:18} only contributes a high-order term and
\begin{align*}
\Pr\bigg(\|\nabla_S\mL_n(\tS, \tR)\|_{\infty, \infty}\lesssim \big(\|\tOmegay\|_1\|\tSigmax\|_1 + \|\tOmegax\|_1\|\tSigmay\|_1\big)\rbr{\frac{\log d}{n_X\wedge n_Y}}^{1/2}\bigg)\geq 1 - \frac{C_3}{d^2},
\end{align*}
for a constant $C_3 > 0$.

\subsection{Proof of Lemma \ref{lem:6}}

Let $\barS^0 = \mJ_\hs(\hDelta^0)$. Then $S^0 = \mT_{\halpha}(\barS^0)$. Since $\SUPP(\tS - S^0)\subseteq\SUPP(\tS)\cup\SUPP(S^0)$, we consider
the following cases that depend on $(i, j)$ location.

\noindent{\bf Case 1.} If $(i, j)\in\SUPP(S^0)$, then
\begin{align}\label{d:1}
|\tS_{i, j} - S^0_{i, j}|
= |\tS_{i, j} - \hDelta_{i, j}^0|
= |\tDelta_{i, j} - \hDelta_{i, j}^0 - \ttR_{i, j}|
\leq \|\tDelta - \hDelta^0\|_{\infty, \infty} + \|\tR\|_{\infty, \infty}.
\end{align}
\noindent{\bf Case 2.} If $(i, j)\in \cbr{\SUPP(\tS)\backslash \SUPP(S^0)} \cap \SUPP(\barS^0)$, then
\begin{equation}
\begin{aligned}\label{d:2}
|\tS_{i, j} - S^0_{i, j}| = |\tS_{i, j}|
\leq  |\tDelta_{i, j}| + |\ttR_{i, j}|
\leq  |\hDelta_{i, j}^0| + \|\tDelta - \hDelta^0\|_{\infty, \infty} + \|\tR\|_{\infty, \infty}.
\end{aligned}
\end{equation}
We claim that
\begin{align}\label{d:3}
|\hDelta_{i, j}^0| = |\barS^0_{i, j}|\leq \|\barS^0 - \tS\|_{\infty, \infty}.
\end{align}
Consider otherwise. Since $\tS$ has an $\alpha$-fraction of nonzero entries per row and column, $\barS^0 - \tS$ differs from $\barS^0$ on at most $\alpha$-fraction positions per row and column. If $|\barS^0_{i, j}|> \|\barS^0 - \tS\|_{\infty, \infty}$, then $\barS^0_{i, j}$ is one of the largest $\alpha d$ entries in the $i$-th row and $j$-th column of $\barS^0$. Furthermore, it is one of the largest $\halpha d$ entries, since $\halpha\geq \alpha$. This contradicts the assumption that $(i, j)\notin\SUPP(S^0)$. Therefore, from \eqref{d:2} and \eqref{d:3},
\begin{align}\label{d:4}
|\tS_{i, j} - S^0_{i, j}|\leq \|\barS^0 - \tS\|_{\infty, \infty} + \|\tDelta - \hDelta^0\|_{\infty, \infty} + \|\tR\|_{\infty, \infty}.
\end{align}
Next, we bound $\|\barS^0 - \tS\|_{\infty, \infty}$. We have two subcases. For any $(k, l)\in\SUPP(\barS^0)$,
\begin{align}\label{d:5}
|\barS^0_{k, l} - \tS_{k, l}|
=  |\hDelta_{k, l}^0 - \tDelta_{k, l} + \ttR_{k, l}|
\leq \|\hDelta^0 - \tDelta\|_{\infty, \infty} + \|\tR\|_{\infty, \infty}.
\end{align}
For any $(k, l)\in\SUPP(\tS)\backslash\SUPP(\barS^0)$,
\begin{align}\label{d:6}
|\barS^0_{k, l} - \tS_{k, l}| = |\tS_{k, l}|\leq |\hDelta_{k, l}^0| + \|\tDelta - \hDelta^0\|_{\infty, \infty} + \|\tR\|_{\infty, \infty}.
\end{align}
In this case, we claim
\begin{align}
\label{d:6a}
|\hDelta_{k, l}^0| \leq \|\hDelta^0 - \tS\|_{\infty, \infty}.
\end{align}
Otherwise, assume $|\hDelta_{k, l}^0| > \|\hDelta^0 - \tS\|_{\infty, \infty}$. Then $\hDelta_{k, l}^0$ is one of the largest $s$ entries of $\hDelta^0$, since $\tS$ only has $s$ nonzero entries overall and $\hDelta^0 - \tS$ differs from $\hDelta^0$ on at most $s$ positions. Moreover, since $\hs\geq s$, $(k, l)\in\SUPP(\barS^0)$, which contradicts
the condition. By \eqref{d:6} and~\eqref{d:6a},
\begin{equation}
\begin{aligned}\label{d:7}
|\barS^0_{k, l} - \tS_{k, l}|
\leq \|\hDelta^0 - \tS\|_{\infty, \infty} + \|\tDelta - \hDelta^0\|_{\infty, \infty} + \|\tR\|_{\infty, \infty}
\leq 2\|\tDelta - \hDelta^0\|_{\infty, \infty} + 2\|\tR\|_{\infty, \infty}.
\end{aligned}
\end{equation}
Combining \eqref{d:5} and \eqref{d:7},
\begin{align}\label{d:8}
\|\barS^0 - \tS\|_{\infty, \infty}\leq 2\|\tDelta - \hDelta^0\|_{\infty, \infty} + 2\|\tR\|_{\infty, \infty}.
\end{align}
Finally, plugging \eqref{d:8} back into \eqref{d:4},
\begin{align}\label{d:9}
|\tS_{i, j} - S^0_{i, j}|\leq 3\|\tDelta - \hDelta^0\|_{\infty, \infty} + 3\|\tR\|_{\infty, \infty}.
\end{align}
\noindent{\bf Case 3.} If $(i, j)\in \SUPP(\tS)\backslash\cbr{\SUPP(S^0)\cup\SUPP(\barS^0)}$, then \eqref{d:7} gives us
\begin{align}\label{d:10}
|\tS_{i, j} - S^0_{i, j}| = |\tS_{i, j}|\leq 2\|\tDelta - \hDelta^0\|_{\infty, \infty} + 2\|\tR\|_{\infty, \infty},
\end{align}
since $\SUPP(\tS)\backslash\cbr{\SUPP(S^0)\cup\SUPP(\barS^0)}\subseteq\SUPP(\tS)\backslash\SUPP(\barS^0)$. Combining \eqref{d:1}, \eqref{d:9}, \eqref{d:10},
\begin{align}\label{d:11}
\|S^0 - \tS\|_{\infty, \infty} \leq 3\|\tDelta - \hDelta^0\|_{\infty, \infty} + 3\|\tR\|_{\infty, \infty}.
\end{align}
From Lemma \ref{aux:lem:12}, with probability at least $1 - 8/d^2$,
\begin{equation}\label{d:11a}
\begin{aligned}
\|\tDelta - \hDelta^0\|_{\infty, \infty}
& \leq \|\tOmegax - (\ttSigmax)^{-1}\|_{\infty, \infty} + \|\tOmegay - (\ttSigmay)^{-1}\|_{\infty, \infty}\\
& \leq 4\cbr{ \|(\tOmegax)^{1/2}\|_1^2 + \|(\tOmegay)^{1/2}\|_1^2 } \rbr{\frac{\log d}{n_X\wedge n_Y}}^{1/2}.
\end{aligned}
\end{equation}
Since $\tR = \tL\tXi\tLT$ with $\tL\in\mU(\beta)$, $\|\tR\|_{\infty, \infty}\leq \|\tXi\|_{\infty, \infty}\|\tLT\|_{2, \infty}^2\leq {\beta r\sigma_1^\tR}/{d}$, and further
\begin{align}\label{d:12}
\|S^0 - \tS\|_{\infty, \infty}
\leq 12 \cbr{\|(\tOmegax)^{1/2}\|_1^2 + \|(\tOmegay)^{1/2}\|_1^2} \rbr{\frac{\log d}{n_X\wedge n_Y}}^{1/2} + \frac{3\beta r\sigma_1^\tR}{d}.
\end{align}
With this and
\begin{align*}
\|S^0 - \tS\|_2\leq \|S^0 - \tS\|_F\leq& \rbr{2\hs}^{1/2}\|S^0 - \tS\|_{\infty, \infty},
\end{align*}
we complete the proof.

\section{Proofs of Main Theorems}\label{appen:pf:main:thm}

\subsection{Proof of Theorem \ref{thm:1}}

We show that, under assumptions of Theorem \ref{thm:1}, we can apply Lemma \ref{lem:4} by replacing $\rbr{\Upsilon_i}_{i=2}^4$ with its corresponding orders. First, we check the conditions on the step sizes. Since $\Upsilon_2\asymp \sigma_1^X\sigma_1^Y\Longrightarrow
\kappa_X\kappa_Y\Upsilon_2 \asymp
\kappa_X\kappa_Y\sigma_1^X\sigma_1^Y$, we immediately see that the conditions on the step sizes in Lemma \ref{lem:4} are satisfied. Furthermore, since $1 + {8\Upsilon_2^2}/\rbr{\Upsilon_3^2\eta_1^2} \asymp{\Upsilon_2^2}/\rbr{\Upsilon_3^2\eta_1^2} \asymp
\rbr{\kappa_X\kappa_Y}^4$, the conditions on $\gamma_1$ and $\gamma_2$ are also satisfied. For the sparsity proportion $\alpha$, since $\gamma_2\asymp \kappa_X^4\kappa_Y^4$, we have ${\Upsilon_3^2}/\rbr{\Upsilon_2^2\Upsilon_4\gamma_2} \asymp T_4$, which implies the condition on $\alpha$ in Lemma \ref{lem:4} is satisfied. Since ${\Upsilon_4}/{\rbr{\sigma_1^X\sigma_1^Y}^2} \asymp {T_3}$, condition (a) in Theorem \ref{thm:1} implies the sample complexity in Lemma \ref{lem:4}. Finally, we verify that the condition (c) in Lemma \ref{lem:4} holds for all iterations from $0$ to $k$. For $k = 0$ the condition is satisfied, since, for any constant $C_1>0$,
\begin{align*}
TD(S^0, U^0)\leq \Psi^2/C_1 \Longrightarrow U^0 \in \{U\in\mR^{d\times r}: \Pi^2(U, \tU)\leq \Psi^2/C_1\}.
\end{align*}
Therefore, we can apply Lemma \ref{lem:4} and \ref{lem:5} for $k = 0$. Let $\rho = 1 - \rbr{\sigma_r^\tR\Upsilon_3\eta_2}/\rbr{2\Upsilon_2}$. Since ${\gamma_2}\rbr{\kappa_X\kappa_Y\Upsilon_3}^{-1}\rbr{\|\tOmegay\|_1\|\tSigmax\|_1 + \|\tOmegax\|_1\|\tSigmay\|_1}^2 \asymp T_1$ and
$\rbr{\kappa_X\kappa_Y\Upsilon_3}^{-1}\rbr{\kappa_X\sigma_1^Y + \kappa_Y\sigma_1^X}^2\asymp T_2$,
\begin{align*}
TD(S^1, U^1)\leq \rho TD(S^0, U^0) + C_2\frac{T_1s\log d + T_2rd}{\sigma_1^\tR\rbr{n_X\wedge n_Y}},
\end{align*}
with probability at least $1 - C_3/d^2$ for constants $C_2, C_3$. Let $\tau' = T_1s\log d + T_2rd$. Since $1 - \rho\asymp 1/\rbr{\kappa_X^2\kappa_Y^2\kappa_\tR}$ and $(n_X\wedge n_Y) \gtrsim (\sigma_r^\tR)^{-2} \kappa_X^4\kappa_Y^4\tau'$, we can bound ${C_2\tau'}/\rbr{\sigma_1^\tR(n_X\wedge n_Y)} \leq { (1 - \rho)\Psi^2}/{C_1}$. Therefore, we further get $TD(S^1, U^1)\leq \Psi^2/C_1$. Moreover, since $U^1\in \mU(4\beta\|U^0\|_2^2)$ by the Algorithm \ref{alg:2} and $\|U^0\|_2$ satisfies \eqref{a:17}, $U^1\in\mU(9\beta\sigma_1^\tR)$. Therefore the condition (c) in Lemma \ref{lem:4} holds for $(S^1, U^1)$. Applying this reasoning iteratively and noting, for any iteration $k$,
\begin{align*}
TD(S^{k}, U^{k})\leq \rho TD(S^{k-1}, U^{k-1}) + \frac{C_2\tau'}{\sigma_1^\tR(n_X\wedge n_Y)},
\end{align*}
we have
\begin{align*}
TD(S^k, U^k)\leq \rho^kTD(S^0, U^0) + \frac{C_2\tau'}{(1-\rho)\sigma_1^\tR(n_X\wedge n_Y)},
\end{align*}
with probability at least $1 - C_3/d^2$. Plugging the order of $\rho$ completes the proof.

\subsection{Proof of Theorem \ref{thm:2}}

The proof follows in few steps. First, we use Lemma \ref{lem:6}
to upper bound $\|S^0 - \tS\|_F$. Next, we upper bound $\|R^0 - \tR\|_2$ and show that under the sample size assumption the bound guarantees $\|R^0 - \tR\|_2\leq \sigma_r^\tR/4$. Finally, we upper bound $\Pi^2(U^0, \tU)$ and $TD(S^0, U^0)$.

It directly follows from the algorithm that $S^0$ is symmetric.
By Lemma \ref{lem:6}, with probability at least $1 - C_1/d^2$,
\begin{equation}\label{d:15}
\|S^0 - \tS\|_F\leq C_2{\hs}^{1/2}\sbr{\cbr{\|(\tOmegax)^{1/2}\|_1^2 + \|(\tOmegay)^{1/2}\|_1^2}\rbr{\frac{\log d}{n_X\wedge n_Y}}^{1/2} + \frac{\beta r\sigma_1^\tR}{d}}.
\end{equation}
We bound $\|R^0 - \tR\|_2$ as
\begin{equation}\label{d:13}
\|R^0 - \tR\|_2
= \|\hDelta^0 - S^0 - \tDelta + \tS\|_2
\leq \|\tS - S^0\|_2 + \|(\ttSigmax)^{-1} - \tOmegax\|_2 + \|(\ttSigmay)^{-1} - \tOmegay\|_2.
\end{equation}
For the term $\|(\ttSigmax)^{-1} - \tOmegax\|_2$, we know
\begin{align*}
\|(\ttSigmax)^{-1} - \tOmegax\|_2 = & \|\frac{n_X-d-2}{n_X}(\hSigmax)^{-1} - (\tSigmax)^{-1}\|_2 \\
= &\frac{n_X - d -2}{n_X}\|(\hSigmax)^{-1}\big(\tSigmax - \frac{n_X}{n_X - d-2}\hSigmax\big)\tOmegax\|_2\\
\leq &\frac{n_X - d - 2}{n_X} \frac{2}{(\sigma_d^X)^2}\|\frac{n_X}{n_X - d - 2}\hSigmax - \tSigmax\|_2\\
\leq & \frac{n_X - d - 2}{n_X} \frac{2}{(\sigma_d^X)^2}\rbr{\|\hSigmax - \tSigmax\|_2 + \frac{d+2}{n_X-d-2}\|\hSigmax\|_2},
\end{align*}
where the first inequality follows from H\"older's inequality and \eqref{a:5}. From Lemma \ref{aux:lem:7}, with probability $1 - C_3/d^2$,
\begin{align}\label{d:14}
\|(\ttSigmax)^{-1} - \tOmegax\|_2 + \|(\ttSigmay)^{-1} - \tOmegay\|_2
\leq C_4 \rbr{\frac{\kappa_X}{\sigma_d^X} + \frac{\kappa_Y}{\sigma_d^Y}} \rbr{\frac{d}{n_X\wedge n_Y}}^{1/2},
\end{align}
Combining \eqref{d:15}, \eqref{d:13}, and \eqref{d:14},
\begin{align}\label{d:16}
\|R^0 - \tR\|_2\leq C_5 \rbr{{\hs}^{1/2}\varphi_1+ \varphi_2}
\end{align}
with probability at least $1 - C_6/d^2$, where
\begin{align*}
\varphi_1 = \cbr{\|(\tOmegax)^{1/2}\|_1^2 + \|(\tOmegay)^{1/2}\|_1^2}\rbr{\frac{\log d}{n_X\wedge n_Y}}^{1/2} + \frac{\beta r\sigma_1^\tR}{d}, \quad
\varphi_2 = \rbr{\frac{\kappa_X}{\sigma_d^X} + \frac{\kappa_Y}{\sigma_d^Y}}\rbr{\frac{d}{n_X\wedge n_Y}}^{1/2}.
\end{align*}
Under the assumptions of the theorem, $\|R^0 - \tR\|_2\leq \sigma_r^\tR/4$. From Lemma \ref{lem:1}, $\Lambda^0 = \tLambda$.
By Weyl's inequality, $\|\tR\|_2/2 \leq \|\barU\|_2^2 = \|R^0\|_2\leq {3}\|\tR\|_2/2$. Therefore, $U^0\in\mU(4\beta\|\barU\|_2^2)\subseteq\mU(9\beta\sigma_1^\tR)$ and $\tU \in\mU(4\beta\|\barU\|_2^2)  = \mC$. Moreover,
\begin{multline}
\label{d:17}
\|\barU^0\tLambda \barU^{0 \T} - \tU\tLambda\tUT\|_2
= \|\barU^0\Lambda^0 \barU^{0 \T} - \tU\tLambda\tUT\|_2 = \|L_r^0\Xi_r^0L_r^{0 \T} - \tR\|_2  \\
\leq \|L_r^0\Xi_r^0L_r^{0 \T} - R^0\|_2 + \|R^0 - \tR\|_2
\leq 2\|R^0 - \tR\|_2\leq \sigma_r^\tR/2,
\end{multline}
since $\|L_r^0\Xi_r^0L_r^{0 \T} - R^0\|_2 \leq \sigma_{r+1}(R^0) = \sigma_{r+1}(R^0) - \sigma_{r+1}(\tR)\leq \|R^0 - \tR\|_2$
with $\sigma_{r+1}(R^0)$ denoting the $(r+1)$-th singular value of $R^0$. Suppose $\Pi^2(\barU^0, \tU) = \|\barU^0 - \tU Q\|_F^2$, then
\begin{align*}
\Pi^2(U^0, \tU)\leq \|U^0 - \tU Q\|_F^2 = \|\P_\mC(\barU^0) - \P_\mC(\tU Q)\|_F^2\leq \|\barU^0 - \tU Q\|_F^2=\Pi^2(\barU^0, \tU).
\end{align*}
By Lemma \ref{prop:1} and \eqref{d:17},
\begin{multline}\label{d:18}
\Pi^2(\barU^0, \tU)
\leq \frac{1}{(\surd{2} - 1)\sigma_r^\tR}\|\barU^0\tLambda \barU^{0 \T} - \tU\tLambda\tUT\|_F^2 \\
\leq \frac{2r}{(\surd{2}-1)\sigma_r^\tR}\|\barU^0\tLambda \barU^{0 \T} - \tU\tLambda\tUT\|_2^2
\leq  \frac{8r}{(\surd{2}-1)\sigma_r^\tR}\|R^0 - \tR\|_2^2.
\end{multline}
Combining Lemma \ref{lem:6} with \eqref{d:18}, with probability at least $1 - C_7/d^2$,
\begin{align*}
TD(S^0, U^0)
= \frac{\|S^0 - \tS\|_F^2}{\sigma_1^\tR} + \Pi^2(U^0, \tU)
\leq C_8 \frac{r}{\sigma_r^\tR}\rbr{{\hs}^{1/2}\varphi_1 + \varphi_2}^2.
\end{align*}
Since
\begin{align*}
\frac{r}{\sigma_r^\tR}\rbr{{\hs}^{1/2}\varphi_1 + \varphi_2}^2
\asymp \frac{r\hs \varphi_1^2}{\sigma_r^\tR} + \frac{r\varphi_2^2}{\sigma_r^\tR}
\asymp \frac{r\rbr{T_5\hs\log d + T_6 d}}{\sigma_r^\tR(n_X\wedge n_Y)} + \frac{\hs\beta^2r^3\kappa_\tR\sigma_1^\tR}{d^2},
\end{align*}
the first part of the theorem follows. The second part follows, since $r\rbr{{\hs}^{1/2}\varphi_1 + \varphi_2}^2/\sigma_r^\tR \leq \Psi^2\asymp\sigma_r^\tR/\rbr{\kappa_X\kappa_Y}^2$ under the sample complexity in \eqref{cond:sample3}. This completes the proof.

\section{Complementary Lemmas}\label{com_lem}

\begin{lemma}\label{lem:lem_2_I1}

Under the conditions of Lemma \ref{lem:2}, we have
\begin{equation*}
\begin{aligned}
\I_1 & \geq \frac{9}{8} \frac{\Upsilon_3}{\Upsilon_2}\|S^k - \tS\|_F^2 + \frac{4}{\Upsilon_2}\|\nabla_S\mbL_n(S^k, U^k, \Lambda^0) - \nabla_S\mbL_n(\tS, U^k, \Lambda^0)\|_F^2 \\
& \quad - \frac{4\Upsilon_2}{9\Upsilon_3}\cbr{C_1\cdot(1 + \gamma_1)\alpha r \sigma_1^\tR \Upsilon_1 \Pi^2(U^k, \tU)  + (1 +\gamma_2)s\|\nabla_S\mL_n(\tS, \tR)\|_{\infty, \infty}^2}
\end{aligned}
\end{equation*}
with probability $1 - C_2/d^2$ for some fixed constants $C_1, C_2>0$ large enough.
\end{lemma}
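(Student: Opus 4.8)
The plan is to first remove the projection. Since $S^k-\tS$ is supported on $\barOmega^k=\SUPP(\tS)\cup\SUPP(S^k)\subseteq\Omega^k$, the self-adjoint operator $\P_{\Omega^k}$ acts as the identity against it, so
\begin{align*}
\I_1 = \bLD S^k-\tS,\,\P_{\Omega^k}\{\nabla_S\mbL_n(S^k,U^k,\Lambda^0)\}\bRD = \bLD S^k-\tS,\,\nabla_S\mbL_n(S^k,U^k,\Lambda^0)\bRD.
\end{align*}
Because $\mbL_n$ is quadratic in its first slot, $S\mapsto\nabla_S\mbL_n(S,U^k,\Lambda^0)$ is affine with linear part $\mathcal{H}(M):=\frac{1}{2}\hSigmax M\hSigmay+\frac{1}{2}\hSigmay M\hSigmax$. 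Writing $R^k=U^k\Lambda^0U^{k\T}$ and using $\Lambda^0=\tLambda$ (so that $\tU\tLambda\tUT=\tR$ and $\nabla_S\mbL_n(\tS,\tU,\tLambda)=\nabla_S\mL_n(\tS,\tR)$), each bracketed difference in the telescoping identity equals $\mathcal{H}$ applied to the corresponding increment, giving
\begin{align*}
\I_1 = \bLD S^k-\tS,\,\mathcal{H}(S^k-\tS)\bRD + \bLD S^k-\tS,\,\mathcal{H}(R^k-\tR)\bRD + \bLD S^k-\tS,\,\nabla_S\mL_n(\tS,\tR)\bRD,
\end{align*}
i.e.\ a quadratic term, a sparse--low-rank cross term, and a statistical term.

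For the quadratic term I would invoke spectral concentration of the sample covariances: under $n_X\gtrsim\kappa_X^2 d$ and $n_Y\gtrsim\kappa_Y^2 d$, with probability $1-C/d^2$ the eigenvalues of $\hSigmax,\hSigmay$ stay within constant factors of those of $\tSigmax,\tSigmay$ (Assumption \ref{ass:1}), so the operator $\mathcal{H}$, whose $\VEC$ representation is $(\hSigmay\otimes\hSigmax+\hSigmax\otimes\hSigmay)/2$, has spectrum in an interval $[\mu,L]$ with $\mu\asymp\sigma_d^X\sigma_d^Y$ and $L\asymp\sigma_1^X\sigma_1^Y$. Since $\mathcal{H}(S^k-\tS)=\nabla_S\mbL_n(S^k,U^k,\Lambda^0)-\nabla_S\mbL_n(\tS,U^k,\Lambda^0)$, the strong-convexity/co-coercivity inequality for $\mu$-strongly convex, $L$-smooth quadratics gives
\begin{align*}
\bLD S^k-\tS,\,\mathcal{H}(S^k-\tS)\bRD \geq \frac{\mu L}{\mu+L}\|S^k-\tS\|_F^2 + \frac{1}{\mu+L}\|\nabla_S\mbL_n(S^k,U^k,\Lambda^0)-\nabla_S\mbL_n(\tS,U^k,\Lambda^0)\|_F^2.
\end{align*}
The conservative constants in $\Upsilon_2=\sigma_d^X\sigma_d^Y+9\sigma_1^X\sigma_1^Y$ and $\Upsilon_3=\sigma_1^X\sigma_1^Y\sigma_d^X\sigma_d^Y$ are engineered precisely to absorb the concentration slack, so these two coefficients lower-bound to $\frac{9}{8}\Upsilon_3/\Upsilon_2$ and $4/\Upsilon_2$, yielding the two positive terms of the claim with some margin left over in the first coefficient.

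It remains to control the two cross terms; both I would bound by Cauchy--Schwarz against $\barOmega^k$ and then peel off a small multiple of $\|S^k-\tS\|_F^2$ via Young's inequality, where the Young weight $\asymp\Upsilon_2/\Upsilon_3$ accounts for the $4\Upsilon_2/(9\Upsilon_3)$ prefactor on the negative remainders. For the statistical term, $|\barOmega^k|\leq(1+\gamma_2)s$ gives $|\bLD S^k-\tS,\nabla_S\mL_n(\tS,\tR)\bRD|\leq\|S^k-\tS\|_F\{(1+\gamma_2)s\}^{1/2}\|\nabla_S\mL_n(\tS,\tR)\|_{\infty,\infty}$, producing the $(1+\gamma_2)s\|\nabla_S\mL_n(\tS,\tR)\|_{\infty,\infty}^2$ contribution. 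The sparse--low-rank cross term is the main obstacle. Here $S^k-\tS$ is dispersed (at most $(1+\gamma_1)\alpha d$ nonzeros per row and column, by the $\mT_{\baralpha}$ projection and Assumption \ref{ass:2}), so $|\bLD S^k-\tS,\mathcal{H}(R^k-\tR)\bRD|\leq\|S^k-\tS\|_F\,\|\P_{\barOmega^k}\{\mathcal{H}(R^k-\tR)\}\|_F$, and the crux is to show $\|\P_{\barOmega^k}\{\mathcal{H}(R^k-\tR)\}\|_F^2\lesssim(1+\gamma_1)\alpha r\sigma_1^\tR\,\Upsilon_1\,\Pi^2(U^k,\tU)$. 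Lemma \ref{prop:1}(a) controls $\|R^k-\tR\|_F\leq 3(\sigma_1^\tR)^{1/2}\Pi(U^k,\tU)$, but — exactly the difficulty the authors flag — $R^k-\tR$ appears sandwiched between sample covariances, so the dispersed restriction of $\hSigmax(R^k-\tR)\hSigmay$ can only be controlled by transferring the incoherence of $\tU$ (and of $U^k\in\mU(9\beta\sigma_1^\tR)$) through $\hSigmax,\hSigmay$. This forces $n\gtrsim d\log d$ and generates the two pieces of $\Upsilon_1$, the concentration piece $(\sigma_1^X\sigma_1^Y)^2 d\log d/(n_X\wedge n_Y)$ and the incoherence piece $\{(\sigma_1^Y\|\tSigmax\|_1)^2+(\sigma_1^X\|\tSigmay\|_1)^2\}\beta$. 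Granting this bound, folding the two Young remainders into the first coefficient (each made a fraction small enough to keep it above $\frac{9}{8}\Upsilon_3/\Upsilon_2$), and intersecting the spectral and incoherence-transfer events (each of probability $1-C/d^2$) yields the stated inequality with probability at least $1-C_2/d^2$.
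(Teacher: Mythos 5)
Your proposal follows the paper's proof almost line for line: the decomposition of $\I_1$ into a quadratic term, a sparse--low-rank cross term, and a statistical term is exactly the paper's $\I_{11}+\I_{13}+\I_{12}$; the quadratic term is handled by the same co-coercivity inequality for the $\mu$-strongly convex, $L$-smooth quadratic $S\mapsto\tfrac12\TR(S\hSigmax S\hSigmay)$ on the spectral concentration event (this is Lemma \ref{aux:lem:8}), yielding the coefficients $\tfrac{9}{4}\Upsilon_3/\Upsilon_2$ and $4/\Upsilon_2$; the statistical term is handled by H\"older over $\barOmega^k$ with $|\barOmega^k|\le(1+\gamma_2)s$ and Young's inequality; and the two Young remainders are each tuned to $\tfrac{9}{16}\Upsilon_3/\Upsilon_2$ so that $\tfrac{9}{4}-\tfrac{9}{8}=\tfrac{9}{8}$ survives as the final coefficient. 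All of that is correct and is what the paper does.

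The gap is that the one nontrivial step is asserted rather than proved: you write ``granting this bound'' for
\begin{align*}
\big\|\P_{\barOmega^k}\cbr{\hSigmax(U^k\Lambda^0U^{k\T}-\tU\tLambda\tUT)\hSigmay}\big\|_F^2 \lesssim (1+\gamma_1)\alpha r\sigma_1^\tR\,\Upsilon_1\,\Pi^2(U^k,\tU),
\end{align*}
and this estimate \emph{is} the lemma --- everything else is routine. Note in particular that Lemma \ref{prop:1}(a) alone cannot give it: bounding $\|R^k-\tR\|_F$ by $3(\sigma_1^\tR)^{1/2}\Pi(U^k,\tU)$ and then using $\|\hSigmax\|_2\|\hSigmay\|_2$ loses the crucial factor $\alpha$, without which the cross term cannot be absorbed. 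The paper's argument sets $\hA^k=\hSigmax U^k$, $\htA=\hSigmax\tU Q^k$ (similarly $\hB^k,\htB$ with $\hSigmay$), expands the Frobenius norm entrywise over $\barOmega^k$, applies Cauchy--Schwarz to each entry, and uses $|\barOmega^k_{i,\cdot}|\vee|\barOmega^k_{\cdot,j}|\le(1+\gamma_1)\alpha d$ to reduce the sum to products of a full Frobenius norm (e.g.\ $\|\hB^k-\htB\|_F^2\lesssim(\sigma_1^Y)^2\Pi^2(U^k,\tU)$) with a row-wise $\ell_{2,\infty}$ norm (e.g.\ $\|\htAT\|_{2,\infty}^2$); the latter are then controlled by splitting $\hSigmax=\tSigmax+(\hSigmax-\tSigmax)$, using $\|(AUQ)^\T\|_{2,\infty}\le\|A^\T\|_1\|U^\T\|_{2,\infty}$ together with the incoherence of $\tU$ and of $U^k\in\mU(9\beta\sigma_1^\tR)$ for the population part, and a sub-exponential concentration bound on $\|((\hSigmax-\tSigmax)U)^\T\|_{2,\infty}$ at rate $(r\log d/n_X)^{1/2}$ (Lemma \ref{aux:lem:11}) for the fluctuation part. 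Multiplying by the $(1+\gamma_1)\alpha d$ count is exactly what turns $(\log d/n)$ into $(d\log d/n)$ and $(\beta/d)$ into $\beta$, producing the two pieces of $\Upsilon_1$. You correctly predicted the shape of the answer and the mechanism, but the row--column counting argument and the $\ell_{2,\infty}$ concentration step are the content that a complete proof must supply.
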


\begin{proof}
	We start by bounding the sample covariance matrices $\hSigmax$ and $\hSigmay$. Let $\delta = 1/(2d^2)$ in Lemma \ref{aux:lem:7}. Then, for some constant $C_1>0$,
	\begin{align*}
	\Pr\cbr{\|\hSigmax - \tSigmax\|_2\leq C_1\|\tSigmax\|_2\rbr{\frac{d}{n_X}}^{1/2} \text{\ \ and\ \ } \|\hSigmay - \tSigmay\|_2\leq C_1\|\tSigmay\|_2\rbr{\frac{d}{n_Y}}^{1/2}}\geq 1-\frac{1}{d^2}.
	\end{align*}
	We will on the event
	\begin{align}\label{a:5}
	{\cal  E} = \cbr{ {\sigma^i_d}/{2}\leq \sigma_{\min}(\hSigma_i)\leq \sigma_{\max}(\hSigma_i)\leq {3\sigma^i_1}/{2}, \text{\ for\ } i\in\{X, Y\} },
	\end{align}
	which occurs with probability at least $1 - 1/d^2$ by Lemma \ref{aux:lem:7}, since $n_X\gtrsim \kappa_X^2d$ and $n_Y\gtrsim \kappa_Y^2d$.
	By definition of $\I_1$ and \eqref{a:1},
	\begin{align}\label{a:6}
	\I_1 &= \bLD S^k - \tS, \P_{\Omega^k}\cbr{\nabla_S\mbL_n(S^k, U^k, \Lambda^0)} \bRD= \LD S^k - \tS, \nabla_S\mbL_n(S^k, U^k, \Lambda^0) \RD \nonumber\\
	&= \LD S^k - \tS, \nabla_S\mbL_n(S^k, U^k, \Lambda^0) - \nabla_S\mbL_n(\tS, U^k, \Lambda^0)\RD + \LD S^k - \tS,  \nabla_S\mbL_n(\tS, \tU, \tLambda) \RD \nonumber\\
	& \qquad + \LD S^k - \tS, \nabla_S\mbL_n(\tS, U^k, \Lambda^0) - \nabla_S\mbL_n(\tS, \tU, \tLambda)\RD \nonumber\\
	&= \LD S^k - \tS, \hSigmax(S^k - \tS)\hSigmay\RD + \LD S^k - \tS,  \nabla_S\mbL_n(\tS, \tU, \tLambda) \RD \nonumber\\
	& \qquad  + \frac{1}{2}\LD S^k - \tS, \hSigmax(U^k\Lambda^0U^{k \T} - \tU\tLambda\tUT)\hSigmay  + \hSigmay(U^k\Lambda^0U^{k \T} - \tU\tLambda\tUT)\hSigmax \RD \nonumber\\
	&= \I_{11} + \I_{12} + \I_{13}.
	\end{align}
	We bound $\I_{11}$, $\I_{12}$, $\I_{13}$ from below separately. By Lemma \ref{aux:lem:8} and on the event ${\cal E}$ in \eqref{a:5},
	\begin{align}\label{a:7}
	\I_{11}
	& \geq \frac{9\Upsilon_3}{4\Upsilon_2}\|S^k - \tS\|_F^2 +
	\frac{1}{\Upsilon_2}\|\hSigmax(S^k - \tS)\hSigmay + \hSigmay (S^k - \tS)\hSigmax\|_F^2 \nonumber\\
	& = \frac{9\Upsilon_3}{4\Upsilon_2}\|S^k - \tS\|_F^2
	+ \frac{4}{\Upsilon_2}\|\nabla_S\mbL_n(S^k, U^k, \Lambda^0) - \nabla_S\mbL_n(\tS, U^k, \Lambda^0)\|_F^2.
	\end{align}
	Using H\"older's inequality, for any $C_{12} > 0$ to be determined
	later,
	\begin{align}\label{a:8}
	\I_{12}
	& \geq  - \|S^k - \tS\|_{1, 1}\|\nabla_S\mbL_n(\tS, \tU, \tLambda)\|_{\infty, \infty} \nonumber\\
	& \geq  - \cbr{(\gamma_2 + 1)s}^{1/2}\|S^k - \tS\|_F \|\nabla_S\mbL_n(\tS, \tU, \tLambda)\|_{\infty, \infty} \nonumber\\
	& \geq - \frac{\cbr{(\gamma_2 + 1)s}^{1/2}}{2C_{12}}\|\nabla_S\mL_n(\tS, \tR)\|_{\infty, \infty}^2 - \frac{C_{12}\cbr{(\gamma_2 + 1)s}^{1/2}}{2}\|S^k - \tS\|_F^2,
	\end{align}
	where the second inequality is due to the fact that
	$\SUPP(S^k - \tS)\subseteq\barOmega^k$ and $|\barOmega^k|\leq (\gamma_2+1)s$,
	and the third inequality uses the equation $\nabla_S\mbL_n(\tS, \tU, \tLambda) = \nabla_S\mL_n(\tS, \tR)$
	with $\barOmega^k$ defined in \eqref{a:2}. Similarly, for any $C_{13}>0$,
	\begin{align}\label{a:9}
	\I_{13}
	& \geq  - \frac{1}{2}\|S^k - \tS\|_F \big\|\P_{\barOmega^k}\cbr{\hSigmax(U^k\Lambda^0U^{k \T} - \tU\tLambda\tUT)\hSigmay} \nonumber\\
	& \quad + \P_{\barOmega^k}\cbr{\hSigmay(U^k\Lambda^0U^{k \T} - \tU\tLambda\tUT)\hSigmax}\big\|_F \nonumber\\
	& \geq  - \|S^k - \tS\|_F \big\|\P_{\barOmega^k}\cbr{\hSigmax(U^k\Lambda^0U^{k \T} - \tU\tLambda\tUT)\hSigmay}\big\|_F \nonumber\\
	& \geq  - \frac{C_{13}}{2}\|S^k - \tS\|_F^2 - \frac{1}{2C_{13}}\big\|\P_{\barOmega^k}\cbr{\hSigmax(U^k\Lambda^0U^{k \T} - \tU\tLambda\tUT)\hSigmay}\big\|_F^2,
	\end{align}
	where the second inequality is due to the fact that $\barOmega^k$ is symmetric and, hence, from Lemma \ref{aux:lem:6},
	\begin{align*}
	\P_{\barOmega^k}\cbr{\hSigmay(U^k\Lambda^0U^{k \T} - \tU\tLambda\tUT)\hSigmax} = \sbr{\P_{\barOmega^k}\cbr{\hSigmax(U^k\Lambda^0U^{k \T} - \tU\tLambda\tUT)\hSigmay}}^\T.
	\end{align*}
	We further upper bound the second term on the right hand side of \eqref{a:9}. Let $Q^k\in\mQ^{r\times r}_{r_1}$ be the optimal rotation of $\tU$ such that $\Pi^2(U^k, \tU) = \|U^k - \tU Q^k\|_F^2$ and define
	\begin{gather*}
	\hA^k = \hSigmax U^k, \quad \htA = \hSigmax \tU Q^k, \quad \tA = \tSigmax\tU Q^k,\\
	\hB^k = \hSigmay U^k, \quad \htB = \hSigmay \tU Q^k, \quad \tB = \tSigmay\tU Q^k.
	\end{gather*}
	Since $\Lambda^0 = \tLambda$,
	\begin{align}\label{a:11}
	\big\|& \P_{\barOmega^k}  \cbr{\hSigmax(U^k\Lambda^0U^{k \T} - \tU\tLambda\tUT)\hSigmay}\big\|_F^2 \nonumber\\
	& = \big\|\P_{\barOmega^k}\cbr{\hSigmax(U^k\Lambda^0U^{k \T} - \tU Q^k\tLambda Q^{k \T}\tUT)\hSigmay}\big\|_F^2 \nonumber\\
	& = \|\P_{\barOmega^k}\big(\hA^k\tLambda\hB^{k \T} - \htA\tLambda\htBT\big)\|_F^2 \nonumber\\
	& = \sum_{(i, j)\in \barOmega^k} |(\hA^k_{i, \cdot})^\T\tLambda\hB^k_{j, \cdot} - (\htA_{i, \cdot})^\T\tLambda\htB_{j, \cdot}|^2 \nonumber\\
	& = \sum_{(i, j)\in \barOmega^k} \big|(\hA^k_{i, \cdot} - \htA_{i, \cdot})^\T\tLambda(\hB^k_{j, \cdot} - \htB_{j, \cdot}) + (\htA_{i, \cdot})^\T\tLambda(\hB^k_{j, \cdot} - \htB_{j, \cdot}) + (\htB_{j, \cdot})^\T\tLambda(\hA^k_{i, \cdot} - \htA_{i, \cdot})\big|^2 \nonumber\\
	& \leq  3\sum_{(i, j)\in \barOmega^k}\rbr{\|\hA^k_{i, \cdot} - \htA_{i, \cdot}\|_2^2 \|\hB^k_{j, \cdot} - \htB_{j, \cdot}\|_2^2 + \|\htA_{i, \cdot}\|_2^2\|\hB^k_{j, \cdot} - \htB_{j, \cdot}\|_2^2 + \|\htB_{j, \cdot}\|_2^2\|\hA^k_{i, \cdot} - \htA_{i, \cdot}\|_2^2} \nonumber\\
	& \leq  \frac{3}{2}\max_{i\in[d]}\|\hA^k_{i, \cdot} - \htA_{i, \cdot}\|_2^2\sum_{j\in[d]}\sum_{i\in\barOmega^k_{\cdot, j}}\|\hB^k_{j, \cdot} - \htB_{j, \cdot}\|_2^2 + \frac{3}{2}\max_{j\in[d]}\|\hB^k_{j, \cdot} - \htB_{j, \cdot}\|_2^2\sum_{i\in[d]}\sum_{j\in\barOmega^k_{i, \cdot}}\|\hA^k_{i, \cdot} - \htA_{i, \cdot}\|_2^2 \nonumber\\
	& \qquad + 3\max_{i\in[d]}\|\htA_{i, \cdot}\|_2^2\sum_{j\in[d]}\sum_{i\in\barOmega^k_{\cdot, j}}\|\hB^k_{j, \cdot} - \htB_{j, \cdot}\|_2^2 + 3\max_{j\in[d]}\|\htB_{j, \cdot}\|_2^2\sum_{i\in[d]}\sum_{j\in\barOmega^k_{i, \cdot}}\|\hA^k_{i, \cdot} - \htA_{i, \cdot}\|_2^2,
	\end{align}
	where $\barOmega^k_{i, \cdot} = \{j\mid (i, j)\in\barOmega^k\}$ and
	$\barOmega^k_{\cdot, j} = \{i\mid (i, j)\in\barOmega^k\}$.
	For any $i, j\in[d]$,
	$|\barOmega^k_{i, \cdot}| \vee |\barOmega^k_{\cdot, j}|\leq (1 + \gamma_1)\alpha d$  and, therefore,
	\begin{align}\label{a:12}
	\|\P_{\barOmega^k}&\cbr{\hSigmax(U^k\Lambda^0U^{k \T} - \tU\tLambda\tUT)\hSigmay}\|_F^2 \nonumber\\
	& \leq \frac{3(1+\gamma_1)\alpha d}{2}\|(\hA^{k} - \htA)^\T\|_{2, \infty}^2\|\hB^k - \htB\|_F^2 + \frac{3(1 + \gamma_1)\alpha d}{2}\|(\hB^k - \htB)^\T\|_{2, \infty}^2\|\hA^k - \htA\|_F^2 \nonumber\\
	& \qquad  + 3(1 + \gamma_1)\alpha d\|\htAT\|_{2, \infty}^2\|\hB^k - \htB\|_F^2 + 3(1+\gamma_1)\alpha d\|\htBT\|_{2, \infty}^2\|\hA^k - \htA\|_F^2 \nonumber\\
	& = 3(1+\gamma_1)\alpha d\|\hB^k - \htB\|_F^2\cbr{\frac{1}{2}\|(\hA^{k} - \htA)^\T\|_{2, \infty}^2 + \|\htAT\|_{2, \infty}^2} \nonumber\\
	& \qquad + 3(1+\gamma_1)\alpha d\|\hA^k - \htA\|_F^2\cbr{\frac{1}{2}\|(\hB^{k} - \htB)^\T\|_{2, \infty}^2 + \|\htBT\|_{2, \infty}^2}.
	\end{align}
	We bound each term involving $\hA$ above as
	\begin{align}\label{a:13}
	\|\hA^k - \htA\|_F^2
	= \|\hSigmax\big(U^k - \tU Q^k\big)\|_F^2
	\leq \frac{9(\sigma_1^X)^2}{4} \|U^k - \tU Q^k\|_F^2
	= \frac{9(\sigma_1^X)^2}{4} \Pi^2(U^k, \tU)
	\end{align}
	and
	\begin{equation}
	\begin{aligned}\label{a:14}
	\|\htAT\|_{2, \infty}
	\leq & \|\htAT - \tAT\|_{2, \infty}^2 + \|\tAT\|_{2, \infty}  = \big\|\cbr{(\hSigmax - \tSigmax)\tU Q^k}^\T\big\|_{2, \infty} + \|(\tSigmax\tU Q^k)^\T\|_{2, \infty} \\
	\leq & \big\|\cbr{(\hSigmax - \tSigmax)\tU Q^k}^\T\big\|_{2, \infty}   + \|\tSigmax\|_1\|\tUT\|_{2, \infty} \\
	 \leq &  \big\|\cbr{(\hSigmax - \tSigmax)\tU Q^k}^\T\big\|_{2, \infty}  + \|\tSigmax\|_1\rbr{\frac{\beta r\sigma_1^\tR}{d}}^{1/2},
	\end{aligned}
	\end{equation}
	where the second inequality comes from Lemma \ref{aux:lem:10} and the last inequality is due to the incoherence condition in Assumption \ref{ass:2}. From Lemma \ref{aux:lem:11}, with probability at least $1 - {2}/{d^2}$,
	\begin{align*}
	\|\big((\hSigmax - \tSigmax)\tU Q^k\big)^\T\|_{2, \infty} \leq 11 \rbr{\|\tUT\tSigmax\tU\|_2\sigma_1^X}^{1/2}\rbr{\frac{r\log d}{n_X}}^{1/2}.
	\end{align*}
	Since $\|\tU\|_2^2 \leq {\sigma_1^\tR}$, with probability at least $1 - {2}/{d^2}$,
	\begin{align}\label{a:16}
	\|\htAT\|_{2, \infty} \leq 11\rbr{r\sigma_1^\tR }^{1/2}
	\cbr{\sigma_1^X\rbr{\frac{\log d}{n_X}}^{1/2} + \|\tSigmax\|_1\rbr{\frac{\beta }{d}}^{1/2}}.
	\end{align}
	We bound $\|(\hA^k - \htA)^\T\|_{2, \infty}$ analogously. We have
	\begin{align}\label{a:18}
	\|(\hA^k - \htA)^\T\|_{2, \infty}
	&= \big\|\cbr{\hSigmax(U^k - \tU Q^k)}^\T\big\|_{2, \infty} \nonumber\\
	& \leq \big\|\cbr{(\hSigmax - \tSigmax)(U^k - \tU Q^k)}^\T\big\|_{2, \infty} + \big\|\cbr{\tSigmax(U^k - \tU Q^k)}^\T\big\|_{2, \infty} \nonumber\\
	& \leq \big\|\cbr{(\hSigmax - \tSigmax)(U^k - \tU Q^k)}^\T\big\|_{2, \infty} + \|\tSigmax\|_1\big(\|U^{k \T}\|_{2, \infty} + \|\tUT\|_{2, \infty}\big) \nonumber\\
	& \leq \big\|\cbr{(\hSigmax - \tSigmax)(U^k - \tU Q^k)}^\T\big\|_{2, \infty} + 4\|\tSigmax\|_1\rbr{\frac{\beta r\sigma_1^\tR}{d}}^{1/2},
	\end{align}
	where the second inequality is due to Lemma \ref{aux:lem:10}, the last inequality is due to the incoherence condition and assumption that $U^k\in\mU(9\beta\sigma_1^\tR)$. For the first term in \eqref{a:18}, from Lemma \ref{aux:lem:11}, with probability at least $1-2/d^2$,
	\begin{align*}
	\|\big((\hSigmax - \tSigmax)(U^k - \tU Q^k)\big)^\T\|_{2, \infty}
	& \leq 11\cbr{\|(U^k - \tU Q^k)^\T\tSigmax(U^k - \tU Q^k)\|_2\sigma_1^X}^{1/2}\rbr{\frac{r\log d}{n_X}}^{1/2}.
	\end{align*}
	Since $\Pi(U^k, \tU)\leq \surd{\sigma_1^\tR}/2$ and  $\Pi(U^k, \tU) \geq \|U^k - \tU Q^k\|_2$,
	\begin{align}\label{a:17}
	\frac{\surd{\sigma_1^\tR}}{2}\leq \surd{\sigma_1^\tR} - \Pi(U^k, \tU) \leq \|U^k\|_2\leq  \surd{\sigma_1^\tR} + \Pi(U^k, \tU)\leq \frac{3\surd{\sigma_1^\tR}}{2}.
	\end{align}
	Thus,
	\begin{align*}
	\|(U^k - \tU Q^k)^\T\tSigmax(U^k - \tU Q^k)\|_2\leq 2.5^2\sigma_1^\tR\sigma_1^X,
	\end{align*}
	and further
	\begin{align*}
	\|\big((\hSigmax - \tSigmax)(U^k - \tU Q^k)\big)^\T\|_{2, \infty}\leq 28\sigma_1^X\rbr{\frac{r\sigma_1^\tR\log d}{n_X}}^{1/2}.
	\end{align*}
	Combining with \eqref{a:18}, with probability at least $1 - 2/d^2$,
	\begin{align}\label{a:20}
	\|(\hA^k - \htA)^\T\|_{2, \infty}\leq 28\rbr{r\sigma_1^\tR}^{1/2}\cbr{\sigma_1^X\rbr{\frac{\log d}{n_X}}^{1/2} + \|\tSigmax\|_1\rbr{\frac{\beta}{d}}^{1/2}}.
	\end{align}
	Similar to \eqref{a:13}, \eqref{a:16}, \eqref{a:20}, we can bound $\|\hB^k - \htB\|_F^2$, $\|\htBT\|_{2, \infty}$, and $\|(\hB^k - \htB)^\T\|_{2,\infty}$. Putting everything together and combining with \eqref{a:12}, we know for some constant $C_2>0$, with probability at least $1 - {8}/{d^2}$,
	\begin{equation}\label{a:21}
	\|\P_{\barOmega^k}\cbr{\hSigmax(U^k\Lambda^0U^{k \T} - \tU\tLambda\tUT)\hSigmay}\|_F^2 \leq C_2 (1 + \gamma_1)\alpha r \sigma_1^\tR \Upsilon_1 \Pi^2(U^k, \tU),
	\end{equation}
	where $\Upsilon_1$ is defined \eqref{a:25}. Together with \eqref{a:9}, we have
	\begin{align}\label{a:19}
	\I_{13}\geq - \frac{C_{13}}{2}\|S^k - \tS\|_F^2  - \frac{C_2}{2C_{13}}(1 + \gamma_1)\alpha r \sigma_1^\tR \Upsilon_1 \Pi^2(U^k, \tU).
	\end{align}
	Setting $C_{12}$ and $C_{13}$ in \eqref{a:8} and \eqref{a:19} as
	\begin{align*}
	C_{12} = \frac{9}{8\cbr{(\gamma_2 + 1)s}^{1/2}} \frac{\Upsilon_3}{\Upsilon_2}, \quad
	C_{13} =  \frac{9}{8} \frac{\Upsilon_3}{\Upsilon_2},
	\end{align*}
	and combining with \eqref{a:6}, \eqref{a:7}, \eqref{a:8}, \eqref{a:19}, we have
	\begin{align}\label{a:22}
	\I_1
	& \geq \frac{9}{8} \frac{\Upsilon_3}{\Upsilon_2}\|S^k - \tS\|_F^2
	+ \frac{4}{\Upsilon_2}\|\nabla_S\mbL_n(S^k, U^k, \Lambda^0) - \nabla_S\mbL_n(\tS, U^k, \Lambda^0)\|_F^2 \\
	& \quad - \frac{4\Upsilon_2}{9\Upsilon_3}\cbr{C_2\cdot(1 + \gamma_1)\alpha r \sigma_1^\tR \Upsilon_1 \Pi^2(U^k, \tU)  + (1 +\gamma_2)s\|\nabla_S\mL_n(\tS, \tR)\|_{\infty, \infty}^2}
	\end{align}
	with probability at least $1 - C_3/d^2$ for some constant $C_2, C_3$ large enough. This completes the proof.
\end{proof}

\begin{lemma}\label{lem:lem_2_I2}
Under the conditions of Lemma~\ref{lem:2}, we have
\begin{align*}
\I_2&\leq 3\|\nabla_S\mbL_n(S^k, U^k, \Lambda^0) - \nabla_S\mbL_n(\tS, U^k, \Lambda^0)\|_F^2 + 3(1 + 2\gamma_2)s\|\nabla_S\mL_n(\tS, \tR)\|_{\infty, \infty}^2\nonumber\\
&\quad + \frac{27(\sigma_1^X\sigma_1^Y)^2}{4}\|U^k\Lambda^0U^{k \T} - \tU\tLambda \tUT\|_F^2
\end{align*}
with probability at least $1-1/d^2$.

\end{lemma}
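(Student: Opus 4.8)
The plan is to exploit that $\P_{\Omega^k}$ merely restricts a matrix to the index set $\Omega^k$, so it is non-expansive in the Frobenius norm, $\|\P_{\Omega^k}(A)\|_F\le\|A\|_F$, and that it retains very few entries. From the definitions in \eqref{a:2}, $\Omega^k=\barOmega^k\cup\SUPP(\barS^{k+1/2})$ with $\barOmega^k=\SUPP(\tS)\cup\SUPP(S^k)$; since $\|\tS\|_{0,1}\le s$ while both $S^k$ and $\barS^{k+1/2}=\mJ_{\gamma_2 s}(S^{k+1/2})$ are supported on at most $\gamma_2 s$ entries, a union bound on supports yields $|\Omega^k|\le(1+2\gamma_2)s$. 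First I would split the gradient, using the identity $\nabla_S\mbL_n(\tS,\tU,\tLambda)=\nabla_S\mL_n(\tS,\tR)$ already invoked for $\I_1$, as
\begin{align*}
\nabla_S\mbL_n(S^k,U^k,\Lambda^0) &= \underbrace{\big(\nabla_S\mbL_n(S^k,U^k,\Lambda^0)-\nabla_S\mbL_n(\tS,U^k,\Lambda^0)\big)}_{G_1} + \underbrace{\nabla_S\mL_n(\tS,\tR)}_{G_2}\\
&\quad + \underbrace{\tfrac12\big(\hSigmax M\hSigmay+\hSigmay M\hSigmax\big)}_{G_3},
\end{align*}
where $M=U^k\Lambda^0 U^{k\T}-\tU\tLambda\tUT$, and $G_3=\nabla_S\mbL_n(\tS,U^k,\Lambda^0)-\nabla_S\mbL_n(\tS,\tU,\tLambda)$ by the explicit formula \eqref{a:1}.

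Next I would apply $\|\P_{\Omega^k}(G_1+G_2+G_3)\|_F^2\le 3\sum_{i=1}^3\|\P_{\Omega^k}(G_i)\|_F^2$, reducing the task to three separate estimates. For $G_1$ I would simply discard the projection, $\|\P_{\Omega^k}(G_1)\|_F^2\le\|G_1\|_F^2$, which is exactly the first term of the claim. For $G_2$ I would pass to the entrywise bound, $\|\P_{\Omega^k}(G_2)\|_F^2\le|\Omega^k|\,\|\nabla_S\mL_n(\tS,\tR)\|_{\infty,\infty}^2\le(1+2\gamma_2)s\,\|\nabla_S\mL_n(\tS,\tR)\|_{\infty,\infty}^2$, invoking the cardinality count above. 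For $G_3$ I would again drop the projection and control the spectrum of the sample covariances on the high-probability event \eqref{a:5}: writing $G_3$ as the symmetric part of $\hSigmax M\hSigmay$ and using $\|\hSigmax\|_2\le\tfrac32\sigma_1^X$, $\|\hSigmay\|_2\le\tfrac32\sigma_1^Y$ with submultiplicativity gives $\|G_3\|_F\le\|\hSigmax\|_2\|M\|_F\|\hSigmay\|_2\le\tfrac94\sigma_1^X\sigma_1^Y\|M\|_F$. Summing the three contributions produces the advertised inequality, with the spectral term of order $(\sigma_1^X\sigma_1^Y)^2\|M\|_F^2$.

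The only genuinely probabilistic ingredient is the $G_3$ bound, so the $1-1/d^2$ failure probability is inherited entirely from the event \eqref{a:5} (valid since $n_X\gtrsim\kappa_X^2 d$, $n_Y\gtrsim\kappa_Y^2 d$); the estimates for $G_1$ and $G_2$ are deterministic. The point requiring care is the bookkeeping of the absolute constant in the spectral term: the chain above delivers $3\|G_3\|_F^2\le \tfrac{243}{16}(\sigma_1^X\sigma_1^Y)^2\|M\|_F^2$, and taking $\hSigmax,\hSigmay$ proportional to the identity shows this constant cannot be improved to the literally displayed $\tfrac{27}{4}$; the gap is a harmless arithmetic slip, because $G_3$ enters only through $\eta_1^2\I_2$ and the corresponding coefficient $M_3$ in \eqref{a:42} is subsequently forced to be nonpositive by the choice of step size $\eta_1$, so its precise value is immaterial downstream. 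The remaining subtlety, making $|\Omega^k|\le(1+2\gamma_2)s$ rigorous, hinges on the iterates $S^k$ staying $\gamma_2 s$-sparse, which is guaranteed by the truncation $\mJ_{\bars}$ in Algorithm~\ref{alg:2} (and by the initialization for $k=0$).
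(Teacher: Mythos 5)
Your proposal is correct and follows essentially the same route as the paper: the same three-way split of the gradient into the $S$-increment, the gradient at the truth, and the $U$-increment, the same $3(a^2+b^2+c^2)$ bound, the same cardinality count $|\Omega^k|\le(1+2\gamma_2)s$ for the middle term, and the same spectral bound on the event \eqref{a:5} for the last term. Your observation about the constant is also accurate — the paper's own display \eqref{a:24} writes $\tfrac{9}{4}(\sigma_1^X\sigma_1^Y)^2$ where the bound $\|\hSigma_i\|_2\le\tfrac{3}{2}\sigma_1^i$ actually yields $\tfrac{81}{16}(\sigma_1^X\sigma_1^Y)^2$, so the honest constant is $\tfrac{243}{16}$ rather than $\tfrac{27}{4}$, and, as you note, this is harmless since the term only feeds into $M_3$, which is driven nonpositive by the step-size choice.
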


\begin{proof}
Using the fact that $\nabla_S\mbL_n(\tS, \tU, \tLambda) = \nabla_S\mL_n(\tS, \tR)$ and $|\Omega^k|\leq (2\gamma_2 + 1)s$, by definition of $\I_2$,
\begin{equation}
\begin{aligned}\label{a:23}
\I_2 &= \big\|\P_{\Omega^k}\cbr{\nabla_S\mbL_n(S^k, U^k, \Lambda^0)} \big\|_F^2 \\
& \leq 3 \big\|\P_{\Omega^k}\cbr{\nabla_S\mbL_n(S^k, U^k, \Lambda^0) - \nabla_S\mbL_n(\tS, U^k, \Lambda^0)}\big\|_F^2 + 3\|\P_{\Omega^k}\cbr{\nabla_S\mL_n(\tS, \tR)}\|_F^2 \\
& \quad + 3\big\|\P_{\Omega^k}\cbr{\nabla_S\mbL_n(\tS, U^k, \Lambda^0) - \nabla_S\mbL_n(\tS, \tU, \tLambda)}\big\|_F^2 \\
& \leq 3\|\nabla_S\mbL_n(S^k, U^k, \Lambda^0) - \nabla_S\mbL_n(\tS, U^k, \Lambda^0)\|_F^2 + 3(1 + 2\gamma_2)s\|\nabla_S\mL_n(\tS, \tR)\|_{\infty, \infty}^2 \\
& \quad + \frac{3}{4}\big\|\P_{\Omega^k}\cbr{\hSigmax(U^k\Lambda^0U^{k \T} - \tU\tLambda \tUT)\hSigmay + \hSigmay(U^k\Lambda^0U^{k \T} - \tU\tLambda \tUT)\hSigmax}\big\|_F^2 \\
& \leq 3\|\nabla_S\mbL_n(S^k, U^k, \Lambda^0) - \nabla_S\mbL_n(\tS, U^k, \Lambda^0)\|_F^2 + 3(1 + 2\gamma_2)s\|\nabla_S\mL_n(\tS, \tR)\|_{\infty, \infty}^2 \\
& \quad + 3\|\hSigmax(U^k\Lambda^0U^{k \T} - \tU\tLambda \tUT)\hSigmay\|_F^2,
\end{aligned}
\end{equation}
where the second inequality is due to the fact that $|\Omega^k|\leq ( 1 + 2\gamma_2)s$. For the last term above, using H\"older's inequality and event $\cal E$ in \eqref{a:5},
\begin{equation}
\begin{aligned}\label{a:24}
\|\big(\hSigmax(U^k\Lambda^0U^{k \T} - \tU\tLambda \tUT)\hSigmay\big)\|_F^2 \leq \frac{9(\sigma_1^X\sigma_1^Y)^2}{4}\|U^k\Lambda^0U^{k \T} - \tU\tLambda \tUT\|_F^2,
\end{aligned}
\end{equation}
with probability at least $1-1/d^2$. The proof follows by combining the last two displays.
\end{proof}

\begin{lemma}\label{lem:lem_3_I3}

Under the conditions of Lemma \ref{lem:3}, for any $C_{31}, C_{32,1}, C_{32,2}, C_{33,1}, C_{33,2}>0$
\begin{align*}
\I_3\geq& \cbr{\frac{9\Upsilon_3}{4\Upsilon_2} - \frac{\rbr{2r}^{1/2}C_{33,1}}{2}}\|R^k - \tR\|_F^2 - \rbr{\frac{C_{32,1}}{2} + \frac{9\sigma_1^X\sigma_1^YC_{32,2}}{8}}\|S^k - \tS\|_F^2 \\
& + \rbr{\frac{4}{\Upsilon_2} - \frac{1}{2C_{31}}}\|\nabla_R\mL_n(S^k, R^k) - \nabla_R\mL_n(S^k, \tR)\|_F^2 \nonumber\\
&  - \frac{\rbr{2r}^{1/2}(C_{33,1} + C_{33,2})}{2C_{33,1}C_{33,2}}\|\nabla_R\mL_n(\tS, \tR)\|_2^2 - \frac{C_1}{2C_{32,1}}(1 + \gamma_1)\alpha r\sigma_1^\tR\Upsilon_1\Pi^2(U^k, \tU)\\
& - \rbr{\frac{C_{31} + \rbr{2r}^{1/2}C_{33,2}}{2} + \frac{9\sigma_1^X\sigma_1^Y}{8C_{32,2}}}\Pi^4(U^k, \tU),
\end{align*}
with probability at least $1-C_2/d^2$ for some fixed constants $C_1, C_2>0$ large enough.

\end{lemma}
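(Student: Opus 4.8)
The plan is to reduce $\I_3$ to the gradient $\nabla_R\mL_n$ taken with respect to the full low-rank matrix, exploiting the factorization $\nabla_U\mbL_n(S^k, U^k, \Lambda^0) = 2\nabla_R\mL_n(S^k, R^k)U^k\Lambda^0$ with $R^k = U^k\Lambda^0 U^{k \T}$, already used in \eqref{a:34}. Writing $G = \nabla_R\mL_n(S^k, R^k)$ and $E = U^k - \tU Q^k$ (so $\Pi(U^k, \tU) = \|E\|_F$), and using $\Lambda^0 = \tLambda$ together with $Q^k\tLambda Q^{k \T} = \tLambda$ (the defining property of $\mQ_{r_1}^{r\times r}$), I have $\tR = (\tU Q^k)\Lambda^0(\tU Q^k)^\T$ and hence the expansion $R^k - \tR = (\tU Q^k)\Lambda^0 E^\T + E\Lambda^0(\tU Q^k)^\T + E\Lambda^0 E^\T$. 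Since $G$ is symmetric, each of the first two summands contributes $\LD E, G\tU Q^k\Lambda^0\RD$ to $\LD R^k - \tR, G\RD$, while $\I_3 = 2\LD E, G\tU Q^k\Lambda^0\RD + 2\LD E, GE\Lambda^0\RD$; eliminating the mixed inner product yields the exact identity
\begin{align*}
\I_3 = \LD R^k - \tR, G\RD + \LD E, GE\Lambda^0\RD.
\end{align*}
Establishing this clean split into a term linear in $R^k - \tR$ and a genuinely quadratic-in-$E$ remainder is the crux; afterward everything mirrors the treatment of $\I_1$ in Lemma~\ref{lem:lem_2_I1}.

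For the linear term I would decompose $G$ in the same three-way fashion as $\I_1$: $G = \{\nabla_R\mL_n(S^k, R^k) - \nabla_R\mL_n(S^k, \tR)\} + \{\nabla_R\mL_n(S^k, \tR) - \nabla_R\mL_n(\tS, \tR)\} + \nabla_R\mL_n(\tS, \tR)$. The first bracket equals $\tfrac12\hSigmax(R^k - \tR)\hSigmay + \tfrac12\hSigmay(R^k - \tR)\hSigmax$, so pairing it with $R^k - \tR$ and invoking the strong-convexity estimate (Lemma~\ref{aux:lem:8}) used for $\I_{11}$ gives the gain $\tfrac{9\Upsilon_3}{4\Upsilon_2}\|R^k - \tR\|_F^2 + \tfrac{4}{\Upsilon_2}\|\nabla_R\mL_n(S^k, R^k) - \nabla_R\mL_n(S^k, \tR)\|_F^2$. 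The third bracket is the statistical term: I would bound $|\LD R^k - \tR, \nabla_R\mL_n(\tS, \tR)\RD| \le \|R^k - \tR\|_*\,\|\nabla_R\mL_n(\tS, \tR)\|_2 \le (2r)^{1/2}\|R^k - \tR\|_F\,\|\nabla_R\mL_n(\tS, \tR)\|_2$ using $\rank(R^k - \tR)\le 2r$, and apply Young's inequality at $C_{33,1}$, producing the $-\tfrac{(2r)^{1/2}C_{33,1}}{2}\|R^k - \tR\|_F^2$ correction and one statistical piece. The middle bracket equals $\tfrac12\hSigmax(S^k - \tS)\hSigmay + \tfrac12\hSigmay(S^k - \tS)\hSigmax$; since $S^k - \tS$ is supported on $\barOmega^k$, I would transfer the design matrices onto $R^k - \tR = U^k\Lambda^0 U^{k \T} - \tU\tLambda\tUT$, reuse the incoherence bound \eqref{a:21} on $\|\P_{\barOmega^k}\{\hSigmax(U^k\Lambda^0 U^{k \T} - \tU\tLambda\tUT)\hSigmay\}\|_F^2$, and split with Young's inequality at $C_{32,1}$ to produce the $\|S^k - \tS\|_F^2$ term and the $(1+\gamma_1)\alpha r\sigma_1^\tR\Upsilon_1\Pi^2(U^k, \tU)$ term.

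The quadratic remainder $\LD E, GE\Lambda^0\RD$ I would handle by the same three-way split of $G$, bounding each piece against $\|E\Lambda^0 E^\T\|$ in the appropriate norm. For the first bracket $G_1$, $|\LD E, G_1 E\Lambda^0\RD| = |\LD G_1, E\Lambda^0 E^\T\RD| \le \|G_1\|_F\|E\Lambda^0 E^\T\|_F \le \|G_1\|_F\Pi^2(U^k,\tU)$, and Young at $C_{31}$ trades it for $\tfrac{1}{2C_{31}}\|\nabla_R\mL_n(S^k, R^k) - \nabla_R\mL_n(S^k, \tR)\|_F^2 + \tfrac{C_{31}}{2}\Pi^4(U^k, \tU)$. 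For the middle bracket I would bound its spectral norm by $\tfrac94\sigma_1^X\sigma_1^Y\|S^k - \tS\|_F$ on the event $\mathcal{E}$ in \eqref{a:5} (Hölder and $\|E\Lambda^0 E^\T\|_*\le\Pi^2$), then split at $C_{32,2}$; for the statistical bracket I would use $|\LD\nabla_R\mL_n(\tS,\tR), E\Lambda^0 E^\T\RD| \le (2r)^{1/2}\|\nabla_R\mL_n(\tS,\tR)\|_2\,\Pi^2(U^k,\tU)$ and split at $C_{33,2}$. Collecting the two statistical contributions into the coefficient $\tfrac{(2r)^{1/2}(C_{33,1}+C_{33,2})}{2C_{33,1}C_{33,2}}$ and the three quartic contributions into $\tfrac{C_{31}+(2r)^{1/2}C_{33,2}}{2} + \tfrac{9\sigma_1^X\sigma_1^Y}{8C_{32,2}}$ completes the bound, the probability $1 - C_2/d^2$ being inherited from \eqref{a:21} and \eqref{a:5}. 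The main obstacle I anticipate is verifying the opening identity and, within the linear term, recognizing that the sparse cross term is governed by exactly the incoherence estimate \eqref{a:21} already established for $\I_1$, rather than by a fresh and more delicate argument.
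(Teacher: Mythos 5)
Your proposal is correct and follows essentially the same route as the paper: your opening identity $\I_3 = \LD R^k - \tR, G\RD + \LD E, GE\Lambda^0\RD$ is exactly the paper's $\I_3 = \LD R^k - \tR + \Theta^k\tLambda\Theta^{k\T}, \nabla_R\mL_n(S^k,R^k)\RD$ rewritten, and the subsequent three-way gradient decomposition, the use of Lemma~\ref{aux:lem:8}, the reuse of the incoherence bound \eqref{a:21} for the sparse cross term, and the Young/H\"older splits at $C_{31}, C_{32,1}, C_{32,2}, C_{33,1}, C_{33,2}$ all match the paper's argument and produce the same coefficients. The only cosmetic difference is that you separate the linear and quadratic parts of the test matrix before decomposing the gradient, whereas the paper decomposes the gradient first.
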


\begin{proof}

	Let $R^k = U^k\Lambda^0U^{k \T} = U^k\tLambda U^{k \T}$ and $\Theta^k = U^k - \tU Q^k$. Using formulas in \eqref{a:1},
	\begin{align}\label{a:29}
	\I_3 &= \LD U^k - \tU Q^k, \nabla_U\mbL_n(S^k, U^k, \Lambda^0)\RD = \LD U^k - \tU Q^k, 2\nabla_R\mL_n(S^k, R^k)U^k \Lambda^0\RD \nonumber\\
	& = 2\LD (U^k - \tU Q^k)\Lambda^0 U^{k \T}, \nabla_R\mL_n(S^k, R^k)\RD  = \LD R^k - \tR + \Theta^k\tLambda\Theta^{k \T},  \nabla_R\mL_n(S^k, R^k)\RD \nonumber\\
	& = \I_{31} + \I_{32} + \I_{33},
	\end{align}
	where
	\begin{align*}
	\I_{31} & = \LD R^k - \tR + \Theta^k\tLambda\Theta^{k \T},  \nabla_R\mL_n(S^k, R^k) - \nabla_R\mL_n(S^k, \tR)\RD,\\
	\I_{32} &= \LD R^k - \tR + \Theta^k\tLambda\Theta^{k \T},  \nabla_R\mL_n(S^k, \tR) - \nabla_R\mL_n(\tS, \tR)\RD,\\
	\I_{33} &= \LD R^k - \tR + \Theta^k\tLambda\Theta^{k \T},   \nabla_R\mL_n(\tS, \tR)\RD.
	\end{align*}
	We bound the three terms separately. First, using \eqref{a:1}
	\begin{align*}
	\I_{31}
	&= \LD R^k - \tR + \Theta^k\tLambda\Theta^{k \T},  \frac{1}{2}\hSigmax(R^k - \tR)\hSigmay + \frac{1}{2}\hSigmay(R^k - \tR)\hSigmax\RD\\
	&= \LD R^k - \tR, \hSigmax(R^k - \tR)\hSigmay\RD + \LD \Theta^k\tLambda\Theta^{k \T}, \frac{1}{2}\hSigmax(R^k - \tR)\hSigmay + \frac{1}{2}\hSigmay(R^k - \tR)\hSigmax\RD.
	\end{align*}
	By Lemma \ref{aux:lem:8},
	\begin{align*}
	\LD R^k - \tR, \hSigmax(R^k - &\tR)\hSigmay\RD\\
	& \geq \frac{9\Upsilon_3}{4\Upsilon_2}\|R^k - \tR\|_F^2 + \frac{1}{\Upsilon_2} \|\hSigmax(R^k - \tR)\hSigmay + \hSigmay (R^k - \tR)\hSigmax\|_F^2\\
	&= \frac{9\Upsilon_3}{4\Upsilon_2}\|R^k - \tR\|_F^2 + \frac{4}{\Upsilon_2}\|\nabla_R\mL_n(S^k, R^k) - \nabla_R\mL_n(S^k, \tR)\|_F^2.
	\end{align*}
	For any $C_{31} > 0$ to be determined later,
	\begin{align*}
	\LD \Theta^k\tLambda\Theta^{k \T}, \frac{1}{2}\hSigmax(R^k - \tR)\hSigmay &+ \frac{1}{2}\hSigmay(R^k - \tR)\hSigmax\RD\\
	\geq & -\|\Theta^k\tLambda \Theta^{k \T}\|_F\cdot\|\nabla_R\mL_n(S^k, R^k) - \nabla_R\mL_n(S^k, \tR)\|_F\\
	\geq &-\frac{C_{31}}{2}\|\Theta^k\|_F^4 - \frac{1}{2C_{31}}\|\nabla_R\mL_n(S^k, R^k) - \nabla_R\mL_n(S^k, \tR)\|_F^2.
	\end{align*}
	Combining the last two inequalities,
	\begin{equation}\label{a:30}
	\I_{31} \geq
	\frac{9\Upsilon_3}{4\Upsilon_2}\|R^k - \tR\|_F^2 - \frac{C_{31}}{2}\Pi^4(U^k, \tU)
	+ \rbr{\frac{4}{\Upsilon_2} - \frac{1}{2C_{31}}} \|\nabla_R\mL_n(S^k, R^k) - \nabla_R\mL_n(S^k, \tR)\|_F^2.
	\end{equation}
	Next, we bound the term $\I_{32}$ as (notation of $\barOmega^k$ is in \eqref{a:2})
	\begin{align*}
	\I_{32}
	&= \LD R^k - \tR + \Theta^k\tLambda\Theta^{k \T},  \frac{1}{2}\hSigmax(S^k - \tS)\hSigmay + \frac{1}{2}\hSigmay(S^k - \tS)\hSigmax\RD\\
	&= \frac{1}{2}\LD\hSigmax\big(R^k - \tR + \Theta^k\tLambda\Theta^{k \T}\big)\hSigmay + \hSigmay\big(R^k - \tR + \Theta^k\tLambda\Theta^{k \T}\big)\hSigmax, S^k - \tS\RD\\
	&= \frac{1}{2}\bLD \P_{\barOmega^k}\cbr{\hSigmax(R^k - \tR + \Theta^k\tLambda\Theta^{k \T})\hSigmay} + \P_{\barOmega^k}\cbr{\hSigmay(R^k - \tR + \Theta^k\tLambda\Theta^{k \T})\hSigmax}, S^k - \tS\bRD\\
	& \geq  -\|S^k - \tS\|_F\big\|\P_{\barOmega^k}\cbr{\hSigmax(R^k - \tR + \Theta^k\tLambda\Theta^{k \T})\hSigmay}\big\|_F\\
	& \geq  -\|S^k - \tS\|_F\big\|\P_{\barOmega^k}\cbr{\hSigmax(R^k - \tR )\hSigmay}\big\|_F -\|S^k - \tS\|_F\|\P_{\barOmega^k}\big(\hSigmax\Theta^k\tLambda\Theta^{k \T}\hSigmay\big)\|_F\\
	& \geq  -\|S^k - \tS\|_F\big\|\P_{\barOmega^k}\cbr{\hSigmax(R^k - \tR )\hSigmay}\big\|_F - \frac{9\sigma_1^X\sigma_1^Y}{4}\|S^k - \tS\|_F\|\Theta^k\|_F^2,
	\end{align*}
	where the first inequality is due to Lemma \ref{aux:lem:6}. Using the same derivation as in \eqref{a:9} to obtain \eqref{a:21}, for any $C_{32,1}>0$,
	\begin{multline*}
	-\|S^k - \tS\|_F\|\P_{\barOmega^k}\big(\hSigmax(R^k - \tR )\hSigmay\big)\|_F \\ \geq -\frac{C_{32,1}}{2}\|S^k - \tS\|_F^2 - \frac{C_1}{2C_{32,1}}(1 + \gamma_1)\alpha r\sigma_1^\tR\Upsilon_1\Pi^2(U^k, \tU).
	\end{multline*}
	with probability at least $1 - 8/d^2$ for some constant $C_1$. Also, for any $C_{32,2} > 0$,
	\begin{align*}
	- \frac{9\sigma_1^X\sigma_1^Y}{4}\|S^k - \tS\|_F\|\Theta^k\|_F^2\geq -\frac{9\sigma_1^X\sigma_1^YC_{32,2}}{8}\|S^k - \tS\|_F^2 - \frac{9\sigma_1^X\sigma_1^Y}{8C_{32,2}}\Pi^4(U^k, \tU).
	\end{align*}
	Therefore, with probability at least $1 - 8/d^2$,
	\begin{equation}\label{a:31}
	\begin{aligned}
	\I_{32}&\geq -\rbr{\frac{C_{32,1}}{2} + \frac{9\sigma_1^X\sigma_1^YC_{32,2}}{8}}\|S^k - \tS\|_F^2- \frac{C_1}{2C_{32,1}}(1 + \gamma_1)\alpha r\sigma_1^\tR\Upsilon_1\Pi^2(U^k, \tU) \\
	&\quad - \frac{9\sigma_1^X\sigma_1^Y}{8C_{32,2}}\Pi^4(U^k, \tU).
	\end{aligned}
	\end{equation}
	For the term $\I_{33}$, for any $C_{33,1}, C_{33,2}>0$,
	\begin{align}\label{a:32}
	&\I_{33}
	= \LD R^k - \tR + \Theta^k\tLambda\Theta^{k \T},   \nabla_R\mL_n(\tS, \tR)\RD\nonumber\\
	\geq & -\|\nabla_R\mL_n(\tS, \tR)\|_2\big(\|R^k - \tR\|_* + \|\Theta^k\tLambda\Theta^{k \T}\|_*\big)\nonumber\\
	\geq &-\rbr{2r}^{1/2}\|\nabla_R\mL_n(\tS, \tR)\|_2\big(\|R^k - \tR\|_F + \|\Theta^k\tLambda\Theta^{k \T}\|_F\big) \nonumber\\
	\geq& -\rbr{2r}^{1/2}\rbr{\frac{C_{33,1}+C_{33,2}}{2C_{33,1}C_{33,2}} \|\nabla_R\mL_n(\tS, \tR)\|_2^2 + \frac{C_{33,1}}{2}\|R^k - \tR\|_F^2 + \frac{C_{33,2}}{2}\Pi^4(U^k, \tU)}.
	\end{align}
	Combining \eqref{a:29}, \eqref{a:30}, \eqref{a:31}, and \eqref{a:32}, we complete the proof.
\end{proof}

\section{Proofs of Other Lemmas}\label{pf_com_lem}

\subsection{Proof of Lemma \ref{prop:1}}

Since $\tU$ has orthogonal columns, $\sigma_1^2$ ($\sigma_r^2$) is the largest (smallest) singular value of $\tU\tLambda\tUT$. Then, for any $Q\in\mQ^{r\times r}_{r_1}$,
\begin{align*}
\|U\tLambda U^\T &- \tU\tLambda\tUT\|_F^2 \\
&=\|U\tLambda U^\T - \tU Q\tLambda Q^\T\tUT\|_F^2\\
&= \|(U - \tU Q)\tLambda (U - \tU Q)^\T + (U - \tU Q)\tLambda Q^\T \tUT + \tU Q\tLambda (U - \tU Q)^\T\|_F^2\\
&\leq 3\|U - \tU Q\|_2^2\|U - \tU Q\|_F^2 + 6\sigma_1^2\|U - \tU Q\|_F^2\\
&\leq 3\|U - \tU Q\|_F^4 + 6\sigma_1^2\|U - \tU Q\|_F^2.
\end{align*}
We minimize the right hand side over $Q$ to obtain
\begin{align*}
\inf_{Q\in \mQ_{r_1}^{r\times r}}\big\{3\|U - \tU Q\|_F^4 + 6\sigma_1^2\|U - &\tU Q\|_F^2\big\}\\
&= 3\inf_{Q\in \mQ_{r_1}^{r\times r}}\|U - \tU Q\|_F^4 + 6\sigma_1^2\inf_{Q\in \mQ_{r_1}^{r\times r}}\|U - \tU Q\|_F^2\\
&= 3\Pi^4(U, \tU) + 6\sigma_1^2\Pi^2(U, \tU)\\
& \leq 9\sigma_1^2\Pi^2(U, \tU),
\end{align*}
which completes the proof of the first part. For the second part of the result, based on Lemma \ref{aux:lem:9},
\begin{equation}
\begin{aligned}\label{b:2}
\inf_{Q\in\Q^{r\times r}}\|U - \tU Q\|_F^2 + \|U\tLambda - \tU\tLambda Q\|_F^2
&= \inf_{Q\in\Q^{r\times r}} -2\TR(U^T\tU Q) - 2\TR(\tLambda U^T\tU\tLambda Q) \\
&= - 2\sup_{Q\in\Q^{r\times r}} \TR\big((U^\T\tU + \tLambda U^\T\tU\tLambda) Q\big)\\
& \leq \frac{2}{(\surd{2}-1)\sigma_r^2}\|U\tLambda U^\T - \tU\tLambda\tUT\|_F^2.
\end{aligned}
\end{equation}
Let $U = (U_1, U_2)$ with $U_1\in\mR^{d\times r_1}$ and $U_2 = \mR^{d\times (r-r_1)}$, and analogously $\tU = (\tU_1, \tU_2)$. Furthermore, suppose the following singular value decompositions $U_1^\T \tU_1 = A_1\Sigma_1B_1^\T$, $U_2^\T \tU_2 = A_2\Sigma_2B_2^\T$. Then,
\begin{align*}
U^\T\tU + \tLambda U^\T\tU\tLambda = 2\begin{pmatrix}
U_1^\T\tU_1 &\\
& U_2^\T\tU_2
\end{pmatrix} = 2 \begin{pmatrix}
A_1 & \\
& A_2
\end{pmatrix}\begin{pmatrix}
\Sigma_1 & \\
& \Sigma_2
\end{pmatrix}\begin{pmatrix}
B_1^\T & \\
& B_2^\T
\end{pmatrix}.
\end{align*}
The optimal $Q$ that achieves the supremum in \eqref{b:2} is
\begin{align*}
Q = \begin{pmatrix}
B_1A_1^\T & \\
& B_2A_2^\T
\end{pmatrix}.
\end{align*}
Since $Q\in\mQ^{r\times r}_{r_1}$,
\begin{equation}
\begin{aligned}\label{b:3}
\inf_{Q\in\Q^{r\times r}}\|U - \tU Q\|_F^2 + \|U\tLambda - \tU\tLambda Q\|_F^2
&=\inf_{Q\in \mQ_{r_1}^{r\times r}}\|U - \tU Q\|_F^2 + \|U\tLambda - \tU\tLambda Q\|_F^2 \\
&= \inf_{Q\in \mQ_{r_1}^{r\times r}}\|U - \tU Q\|_F^2 + \|U - \tU\underbrace{\tLambda Q\tLambda}_{Q}\|_F^2 \\
&= 2\inf_{Q\in \mQ_{r_1}^{r\times r}}\|U - \tU Q\|_F^2\\
&= 2\Pi^2(U, \tU).
\end{aligned}
\end{equation}
Combining \eqref{b:3} with \eqref{b:2} completes the proof.

\subsection{Proof of Lemma \ref{lem:1}}

By Sylvester's law of inertia \citep[cf. Theorem 4.5.8][]{Horn2013Matrix}, $\tR$ has $r_1$ positive eigenvalues and $r-r_1$ negative eigenvalues, denoted as $\tlambda_1\geq \cdots\geq \tlambda_{r_1}>0>\tlambda_{r_1+1}\geq \cdots\tlambda_r$. Therefore $\sigma_r^\tR = \min\{\tlambda_{r_1}, |\tlambda_{r_1+1}|\}$. Similarly, we denote the eigenvalue of $R$ as $\lambda_1\geq \cdots\geq \lambda_{\hat r_1} \geq  \epsilon_1 \geq \cdots \geq \epsilon_{d-r} \geq \lambda_{\hat r_1+1} \geq \cdots \lambda_r$, where $\{\lambda_i\}_{i = 1}^{r}$ denote the $r$ eigenvalues with the largest magnitude, $\hr_1$ of which are positive. In particular, $|\epsilon_i| \leq |\lambda_j|$,
$i = 1,\ldots, d-r$, $j = 1, \ldots, r$.

Suppose $\hr_1<r_1$. Then $\epsilon_1$ will correspond to
$\tlambda_\barj$ for some $\barj\in \{ \hr_1+1, \ldots, r_1 \}$.
Based on Weyl's inequality \citep[cf. Theorem 4.3.1][]{Horn2013Matrix},
\begin{align}\label{b:6}
\epsilon_1
\geq \tlambda_\barj - |\epsilon_1 - \tlambda_\barj|
\geq \tlambda_\barj - \|R - \tR\|_2
\geq \tlambda_{r_1} -  \|R - \tR\|_2
\geq 2\sigma_r^\tR/3.
\end{align}
On the other hand, $\lambda_\ttj$ for some $\ttj\in \{ \hr_1+1, \ldots, r\}$
corresponds to the zero eigenvalue  of $R^\star$.
Then, by Weyl's inequality,
\begin{align}\label{b:7}
|\lambda_\ttj|\leq \|R - \tR\|_2\leq \sigma_r^\tR/3.
\end{align}
Combining \eqref{b:6} and \eqref{b:7}, $\epsilon_1 > |\lambda_\ttj|$,
which is a contradiction. Using the same technique,
assuming that $\hr_1>r_1$ allows us to
show that $\epsilon_{d-r}$ will be one of the $r$ largest eigenvalues of $R$, leading to a contradiction. Therefore $\hr_1 = r_1$.

\section{Auxiliary Results}\label{appen:aux:lem}

\begin{lemma}[Lemma B.3 in \cite{Zhang2018Unified}]\label{aux:lem:1}

	Let $\tS\in \S^{d\times d}$ satisfy $\|\tS\|_{0, \infty}\leq \alpha d$. Then for any $S\in\S^{d\times d}$ and $\gamma>1$, we have $\mT_{\gamma\alpha}(S)\in\S^{d\times d}$ and
	\begin{align*}
	\|\mT_{\gamma\alpha}(S) - \tS\|_F^2\leq \cbr{1 + \rbr{\frac{2}{\gamma - 1}}^{1/2}}^2\|S - \tS\|_F^2.
	\end{align*}

\end{lemma}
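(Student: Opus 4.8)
The plan is to establish the two assertions --- membership $\mT_{\gamma\alpha}(S)\in\S^{d\times d}$ and the Frobenius bound --- separately, and to obtain the bound from an exact entrywise description of the error matrix rather than from a direct triangle inequality $\|\mT_{\gamma\alpha}(S)-\tS\|_F\le\|\mT_{\gamma\alpha}(S)-S\|_F+\|S-\tS\|_F$, which is too lossy (for $\tS=0$ it gives a constant near $2$, whereas the truth is $\le 1$ since $\mT_{\gamma\alpha}(S)$ is then a restriction of $S$). For the symmetry claim, I would use that $S=S^\T$ forces row $i$ of $S$ to coincide with column $i$; hence $|S_{i,j}|$ is among the largest $\gamma\alpha d$ entries of both row $i$ and column $j$ exactly when $|S_{j,i}|$ is among the largest $\gamma\alpha d$ entries of both row $j$ and column $i$. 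The retention rule defining $\mT_{\gamma\alpha}$ is therefore invariant under transposition, so $\mT_{\gamma\alpha}(S)\in\S^{d\times d}$.

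For the norm bound, write $\Delta=S-\tS$, let $\Omega$ denote the index set retained by $\mT_{\gamma\alpha}(S)$, and set $A=\SUPP(\tS)$. On $\Omega$ the error matrix $\mT_{\gamma\alpha}(S)-\tS$ equals $\Delta$, while on $\Omega^c$ it equals $-\tS$; using $\tS=S-\Delta$ on $\Omega^c\cap A$ gives the exact identity $\mT_{\gamma\alpha}(S)-\tS=\P_{\Omega\cup(\Omega^c\cap A)}(\Delta)-\P_{\Omega^c\cap A}(S)$, where $\P$ denotes coordinate restriction. Since the first term is a restriction of $\Delta$, the triangle inequality yields $\|\mT_{\gamma\alpha}(S)-\tS\|_F\le\|\Delta\|_F+\|\P_{\Omega^c\cap A}(S)\|_F$, so it suffices to prove $\|\P_{\Omega^c\cap A}(S)\|_F^2\le \tfrac{2}{\gamma-1}\|\Delta\|_F^2$; the claimed factor $\{1+(2/(\gamma-1))^{1/2}\}^2$ then follows by squaring.

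The crux is this last estimate, which bounds the energy of the true-support entries that get discarded. An index lies in $\Omega^c$ precisely when it is dropped by the row ranking or by the column ranking, so I would split $\Omega^c\cap A$ into a row part and a column part and bound each by $\tfrac{1}{\gamma-1}\|\Delta\|_F^2$. Working one row at a time: a discarded entry $(i,j)$ has magnitude at most that of each of the $\gamma\alpha d$ retained entries of row $i$, and since $\tS$ has at most $\alpha d$ nonzeros per row, at least $(\gamma-1)\alpha d$ of those retained entries lie outside $A$, where $S=\Delta$; hence each discarded true-support square is at most the average of these $\ge(\gamma-1)\alpha d$ pure-error squares, namely $\{(\gamma-1)\alpha d\}^{-1}\|\Delta_{i,\cdot}\|_2^2$. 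Multiplying by the at most $\alpha d$ discarded true-support entries in the row and summing over rows gives $\tfrac{1}{\gamma-1}\|\Delta\|_F^2$, and the column part is identical, producing the factor $2$. I expect this averaging and counting step to be the main obstacle: it is where one must simultaneously exploit that retained entries dominate discarded ones and that $\tS$ is row- and column-sparse, and it is the handling of both rankings that turns the constant $1$ into $2$.
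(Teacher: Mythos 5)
Your proof is correct. Note that the paper does not prove this lemma at all --- it is imported verbatim as Lemma B.3 of \cite{Zhang2018Unified} --- and your argument is essentially a faithful reconstruction of the standard proof from that source (originating in the robust PCA literature): symmetry of the retention rule under transposition, the exact decomposition of $\mT_{\gamma\alpha}(S)-\tS$ into a restriction of $\Delta$ plus the discarded true-support entries of $S$, and the row/column averaging argument showing each discarded true-support entry is dominated by the mean of at least $(\gamma-1)\alpha d$ retained off-support squares, yielding the $\tfrac{2}{\gamma-1}\|\Delta\|_F^2$ bound. The only cosmetic caveats are the usual ones: integrality of $\alpha d$ and $\gamma\alpha d$ (handled by floors without changing the constant) and the degenerate case $\gamma\alpha\geq 1$, where $\mT_{\gamma\alpha}(S)=S$ and the bound is trivial.
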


\begin{lemma}[Lemma 3.3 in \cite{Li2016Nonconvexa}]\label{aux:lem:2}

	Let $\tS\in \S^{d\times d}$ satisfy $\|\tS\|_{0, 1}\leq s$. Then for any  $S\in\S^{d\times d}$ and $\gamma>1$, we have $\mJ_{\gamma s}(S)\in\S^{d\times d}$ and
	\begin{align*}
	\|\mJ_{\gamma s}(S) - \tS\|_F^2\leq \cbr{1 + \frac{2}{\rbr{\gamma - 1}^{1/2}}}\|S - \tS\|_F^2.
	\end{align*}

\end{lemma}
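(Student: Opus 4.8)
The plan is to reduce the statement to an entrywise fact about hard thresholding and to track constants carefully through a support decomposition. Throughout write $W = S - \tS$, let $\hat\Omega$ be the set of $\gamma s$ coordinates retained by $\mJ_{\gamma s}$ (so $\mJ_{\gamma s}(S) = \P_{\hat\Omega}(S)$), and set $\Omega^\star = \SUPP(\tS)$, which satisfies $|\Omega^\star|\le s$ because $\|\tS\|_{0,1}\le s$. The membership $\mJ_{\gamma s}(S)\in\S^{d\times d}$ is handled first: since $S$ is symmetric, $|S_{ij}|=|S_{ji}|$, so off-diagonal magnitudes tie in symmetric pairs; choosing the retained coordinates with a tie-breaking rule that keeps each pair together (diagonal entries forming singletons) makes $\hat\Omega$ a symmetric index set, hence $\P_{\hat\Omega}(S)$ symmetric (this is the analogue of the symmetry claim for $\mT$ used in Lemma \ref{aux:lem:1}).

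Next I would establish the exact error decomposition. Since $\tS$ is supported on $\Omega^\star$ and $\mJ_{\gamma s}(S)$ on $\hat\Omega$, splitting the coordinates gives
\[
\|\mJ_{\gamma s}(S) - \tS\|_F^2 = \sum_{(i,j)\in\hat\Omega} W_{ij}^2 \;+\; \sum_{(i,j)\in A}\tS_{ij}^2, \qquad A := \Omega^\star\setminus\hat\Omega,
\]
because on $\hat\Omega$ the estimate equals $S$ (error $W$) while on the dropped true support $A$ the estimate is $0$. Writing $B := \hat\Omega\setminus\Omega^\star$ for the false coordinates that were kept, an elementary count yields $|A|/|B|\le 1/\gamma$: with $m=|\hat\Omega\cap\Omega^\star|$ one has $|A|\le s-m$ and $|B|=\gamma s-m$, and $(s-m)/(\gamma s-m)\le 1/\gamma$ for $\gamma>1$.

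The crux is bounding the ``missed support'' term $\sum_A\tS_{ij}^2$. The defining property of hard thresholding is domination: every retained coordinate has magnitude at least that of every dropped one, so $\max_{A}|S_{ij}|\le\min_{B}|S_{ij}|=\min_B|W_{ij}|$, the last equality because $\tS$ vanishes on $B$. Hence, with $p^2:=\sum_B W_{ij}^2$ and $q^2:=\sum_A W_{ij}^2$, one gets $\sum_A S_{ij}^2\le|A|(\min_B|W|)^2\le(|A|/|B|)\,p^2\le\gamma^{-1}p^2$, and then $\tS=S-W$ on $A$ with the triangle inequality gives $\sum_A\tS_{ij}^2\le(\gamma^{-1/2}p+q)^2$. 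Substituting into the decomposition and using $\sum_{\hat\Omega}W^2\le\|W\|_F^2-q^2$ (valid since $A\subseteq\hat\Omega^{c}$) leaves
\[
\|\mJ_{\gamma s}(S)-\tS\|_F^2 \le \|W\|_F^2 + \gamma^{-1}p^2 + 2\gamma^{-1/2}pq, \qquad p^2+q^2\le\|W\|_F^2 .
\]
A two-variable optimization (equivalently, bounding the top eigenvalue of the associated $2\times2$ form $\bigl(\begin{smallmatrix}\gamma^{-1}&\gamma^{-1/2}\\ \gamma^{-1/2}&0\end{smallmatrix}\bigr)$) shows $\gamma^{-1}p^2+2\gamma^{-1/2}pq\le\tfrac{2}{(\gamma-1)^{1/2}}\|W\|_F^2$ for every $\gamma>1$, which yields the claimed factor $1+2/(\gamma-1)^{1/2}$.

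The main obstacle is obtaining exactly this sharp constant in the last step. The naive route, bounding $|A|/|B|\le 1/(\gamma-1)$ from $|B|\ge(\gamma-1)s$ and then bounding $p,q$ separately, is too lossy and only produces $\{1+(\gamma-1)^{-1/2}\}^2$, which strictly exceeds the stated factor for $\gamma$ near $1$. The improvement rests on two refinements: the tighter count $|A|/|B|\le 1/\gamma$, and exploiting the joint budget $p^2+q^2\le\|W\|_F^2$ in the concluding optimization rather than treating $p$ and $q$ independently. These are precisely the steps worked out in Lemma~3.3 of \cite{Li2016Nonconvexa}, and they are what make the inequality hold for all $\gamma>1$.
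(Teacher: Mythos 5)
Your argument is correct, but note that the paper itself offers no proof of this statement: Lemma \ref{aux:lem:2} is imported verbatim as Lemma 3.3 of the cited reference, so there is no internal derivation to compare against. Your reconstruction is a valid, self-contained proof. The decomposition $\|\mJ_{\gamma s}(S)-\tS\|_F^2=\sum_{\hat\Omega}W_{ij}^2+\sum_A \tS_{ij}^2$ is exact, the count $|A|/|B|\le(s-m)/(\gamma s-m)\le 1/\gamma$ is right (the ratio is decreasing in $m$ for $\gamma>1$), the domination step correctly uses $\tS=0$ on $B$ to convert $\min_B|S_{ij}|$ into $\min_B|W_{ij}|$, and the final inequality reduces to $\lambda_{\max}\bigl(\begin{smallmatrix}\gamma^{-1}&\gamma^{-1/2}\\ \gamma^{-1/2}&0\end{smallmatrix}\bigr)=\{1+(1+4\gamma)^{1/2}\}/(2\gamma)\le 2/(\gamma-1)^{1/2}$, which holds for all $\gamma>1$ since $(\gamma-1)^{1/2}\{1+(4\gamma+1)^{1/2}\}\le(\gamma-1)^{1/2}+2\gamma\le 4\gamma$. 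Your two refinements (the $1/\gamma$ count rather than $1/(\gamma-1)$, and optimizing $p$ and $q$ jointly under $p^2+q^2\le\|W\|_F^2$) are indeed what produce the stated constant rather than the weaker squared factor that appears in Lemma \ref{aux:lem:1}; the only point worth stating more carefully is that the case $A=\emptyset$ is trivial and otherwise $B\neq\emptyset$ because $|\hat\Omega|=\gamma s>s\ge|\SUPP(\tS)|$, so the division by $|B|$ is legitimate.
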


\begin{lemma}[Proposition 1.1 in \cite{Hsu2012tail}]\label{aux:lem:4}

	Let $X\sim \mN(0, \Sigma)$. Then, for any $\delta>0$,
	\begin{align*}
	\Pr\rbr{\|X\|_2^2 \leq \|\Sigma\|_2\sbr{d + 2\cbr{d\log\rbr{1/\delta}}^{1/2} + 2\log\rbr{1/\delta}}} \geq 1-\delta.
	\end{align*}

\end{lemma}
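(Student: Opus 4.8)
The plan is to reduce the Gaussian quadratic form $\|X\|_2^2$ to a chi-squared random variable and then apply a sharp one-sided deviation bound. First I would write $X = \Sigma^{1/2}Z$ with $Z\sim\mN(0, I_d)$, so that $\|X\|_2^2 = Z^\T\Sigma Z$. Since $\Sigma\preceq\|\Sigma\|_2 I_d$, the quadratic form obeys $Z^\T\Sigma Z\leq\|\Sigma\|_2\|Z\|_2^2$, and it therefore suffices to control the upper tail of $W := \|Z\|_2^2 = \sum_{i=1}^d Z_i^2$, a chi-squared variable with $d$ degrees of freedom and mean $d$. Establishing $\Pr\rbr{W\geq d + 2\rbr{dt}^{1/2} + 2t}\leq e^{-t}$ for every $t>0$ and then setting $t = \log\rbr{1/\delta}$, so that $e^{-t} = \delta$, immediately yields the claim after multiplying through by $\|\Sigma\|_2$.

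For the chi-squared bound I would run a Chernoff argument. For $0\leq\lambda<1/2$ each summand satisfies $\mE[e^{\lambda Z_i^2}] = (1-2\lambda)^{-1/2}$, so by independence the centered cumulant generating function is
\begin{align*}
\log\mE\big[e^{\lambda(W-d)}\big] = -\lambda d - \frac{d}{2}\log(1-2\lambda).
\end{align*}
Using the elementary inequality $-\log(1-u) - u\leq u^2/\{2(1-u)\}$ with $u = 2\lambda$, this is bounded above by $d\lambda^2/(1-2\lambda)$, which is of the sub-gamma form $\lambda^2\nu/\{2(1-c\lambda)\}$ with variance proxy $\nu = 2d$ and scale $c = 2$.

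Then I would invoke the standard sub-gamma tail bound: whenever $\log\mE[e^{\lambda Y}]\leq\lambda^2\nu/\{2(1-c\lambda)\}$ for all $0\leq\lambda<1/c$, Markov's inequality followed by the Cram\'er--Chernoff optimization over $\lambda$ gives $\Pr\big(Y\geq\rbr{2\nu t}^{1/2} + ct\big)\leq e^{-t}$ for every $t>0$. With $\nu = 2d$, $c = 2$, and $Y = W - d$ this reads precisely $\Pr\big(W - d\geq 2\rbr{dt}^{1/2} + 2t\big)\leq e^{-t}$. Combining with the operator-norm reduction above and putting $t = \log(1/\delta)$ completes the argument.

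The main obstacle is obtaining the sharp constants $2\rbr{dt}^{1/2}$ and $2t$ rather than looser Bernstein-type constants; this rests entirely on the tight logarithmic inequality bounding the chi-squared cumulant generating function by a sub-gamma profile and on the exact optimization of $\lambda$ in the Chernoff bound. The remaining steps --- the reduction to a standard normal vector and the operator-norm domination $Z^\T\Sigma Z\leq\|\Sigma\|_2\|Z\|_2^2$ --- are routine, and this crude domination is precisely why only $\|\Sigma\|_2$ (and not the finer quantities $\TR(\Sigma)$, $\TR(\Sigma^2)$ of the general Hsu--Kakade--Zhang bound) enters the stated estimate.
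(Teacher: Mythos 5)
Your proposal is correct. Note that the paper does not actually prove this lemma: it is imported verbatim as Proposition 1.1 of Hsu, Kakade and Zhang (2012), so there is no internal proof to match. Your argument supplies a valid, self-contained derivation: the reduction $\|X\|_2^2 = Z^\T\Sigma Z\leq \|\Sigma\|_2\|Z\|_2^2$ with $Z\sim\mN(0,I_d)$ is sound, the centered cumulant generating function of $\chi^2_d$ is computed correctly, the inequality $-\log(1-u)-u\leq u^2/\{2(1-u)\}$ holds on $[0,1)$ (its derivative is $u^2/\{2(1-u)^2\}\geq 0$), and the sub-gamma Chernoff optimization with $\nu=2d$, $c=2$ gives exactly $\Pr(W-d\geq 2(dt)^{1/2}+2t)\leq e^{-t}$, i.e., the Laurent--Massart bound; setting $t=\log(1/\delta)$ finishes. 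The only substantive difference from the cited source is that Hsu--Kakade--Zhang work directly with the quadratic form and obtain the sharper bound $\TR(\Sigma)+2\{\TR(\Sigma^2)t\}^{1/2}+2\|\Sigma\|_2 t$, of which the paper's statement is the weakening obtained via $\TR(\Sigma)\leq d\|\Sigma\|_2$ and $\TR(\Sigma^2)\leq d\|\Sigma\|_2^2$ --- precisely the loss your operator-norm domination incurs at the outset. Since the paper only ever invokes the weakened form (in the proof of Lemma \ref{aux:lem:7}, to bound $\|\hmu-\mu\|_2^2$), your more elementary route is fully adequate.
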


\begin{lemma}[Lemma 5.14 in \cite{Tu2016Low}]\label{aux:lem:9}

	Let $M_1, M_2\in\mR^{d_1\times d_2}$ be rank-$r$ matrices with the reduced singular value decomposition $M_i = U_i\Sigma_iV_i^T$, $i = 1, 2$. Let $X_i = U_i\Sigma_i^{1/2}$ and $Y_i = V_i\Sigma_i^{1/2}$. If $\|M_2 - M_1\|_2\leq \sigma_r(M_1)/2$, then
	\begin{align*}
	\inf_{Q\in\Q^{r\times r}}\|X_2 - X_1 Q\|_F^2 + \|Y_2 - Y_1 Q\|_F^2\leq \frac{2}{\surd{2}-1}\frac{\|M_2 - M_1\|_F^2}{\sigma_r(M_1)},
	\end{align*}
	where $\sigma_r(M_1)$ denotes the $r$-th singular value of $M_1$.

\end{lemma}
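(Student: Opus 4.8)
This is Lemma 5.14 of \cite{Tu2016Low}, so the quickest route is simply to cite it; for completeness I sketch a self-contained argument. The plan is to work with the \emph{optimal} alignment and to exploit the balancedness of the factors. Let $Q^\star\in\Q^{r\times r}$ attain the infimum, and set $A = X_1Q^\star$, $B = Y_1 Q^\star$, $\Delta_X = X_2 - A$, $\Delta_Y = Y_2 - B$, so that the quantity to be bounded is exactly $\|\Delta_X\|_F^2 + \|\Delta_Y\|_F^2$. Because the factorization is balanced, $X_1^\T X_1 = Y_1^\T Y_1 = \Sigma_1$, hence $A^\T A = B^\T B = Q^{\star\T}\Sigma_1 Q^\star$ and in particular $\sigma_r(A) = \sigma_r(B) = \sigma_r(M_1)^{1/2}$. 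This is the only spectral information about the factors that the argument will use, and it is precisely what keeps the final bound free of any condition number. The first step is to record the first-order optimality condition: since $Q^\star$ minimizes $\|X_2 - X_1 Q\|_F^2 + \|Y_2 - Y_1Q\|_F^2$ over the orthogonal group, perturbing $Q^\star$ along skew-symmetric directions (equivalently, the SVD characterization of the orthogonal Procrustes optimizer) forces $A^\T\Delta_X + B^\T\Delta_Y$ to be a \emph{symmetric} matrix.

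Next I would invoke the factorization identity $M_i = X_iY_i^\T$. Since $X_1Y_1^\T = AB^\T$, substituting $X_2 = A + \Delta_X$ and $Y_2 = B + \Delta_Y$ yields the exact expansion
\[
M_2 - M_1 = A\Delta_Y^\T + \Delta_X B^\T + \Delta_X\Delta_Y^\T,
\]
the asymmetric analogue of the identity underlying the symmetric Procrustes bound (and of the computation behind Lemma~\ref{prop:1}(b)). Expanding $\|M_2 - M_1\|_F^2$ and isolating the two ``diagonal'' quadratic terms gives $\|A\Delta_Y^\T\|_F^2 = \TR(A^\T A\,\Delta_Y^\T\Delta_Y)\geq \sigma_r^2(A)\|\Delta_Y\|_F^2 = \sigma_r(M_1)\|\Delta_Y\|_F^2$ and, symmetrically, $\|\Delta_X B^\T\|_F^2 \geq \sigma_r(M_1)\|\Delta_X\|_F^2$. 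Together these already produce the desired leading term $\sigma_r(M_1)\bigl(\|\Delta_X\|_F^2 + \|\Delta_Y\|_F^2\bigr)$, which after rearrangement is what the target inequality asks for.

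The main obstacle is the remaining indefinite cross term $\LD A\Delta_Y^\T, \Delta_X B^\T\RD = \TR\{(B^\T\Delta_Y)(A^\T\Delta_X)\}$, together with the cubic and quartic contributions coming from $\Delta_X\Delta_Y^\T$. Writing $G = A^\T\Delta_X$ and $H = B^\T\Delta_Y$, the cross term is $\TR(HG)$ with $G + H$ symmetric, which is not sign-definite; this is the genuinely delicate point. I would control it by combining the symmetry of $A^\T\Delta_X + B^\T\Delta_Y$ from step one with a two-regime case split, according to whether $\|\Delta_X\|_F^2 + \|\Delta_Y\|_F^2$ is small or large relative to $\sigma_r(M_1)$. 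In the small regime the two leading quadratic terms dominate the (higher-order) cross contributions; in the large regime the quartic term $\|\Delta_X\Delta_Y^\T\|_F^2$ by itself suffices. Balancing the two thresholds is exactly what produces the constant $\surd2 - 1$, and the hypothesis $\|M_2 - M_1\|_2 \leq \sigma_r(M_1)/2$ is what certifies that we sit in the favorable regime (it also guarantees $\mathrm{rank}(M_2) = r$ via Weyl). Assembling the two cases then gives $\|\Delta_X\|_F^2 + \|\Delta_Y\|_F^2 \leq \tfrac{2}{\surd2 - 1}\,\|M_2 - M_1\|_F^2/\sigma_r(M_1)$, completing the proof. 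I expect this case analysis, rather than any single inequality, to be the technical heart of the argument.
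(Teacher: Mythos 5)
The paper does not actually prove this statement: it imports it verbatim as Lemma 5.14 of \cite{Tu2016Low}, whose proof lifts the asymmetric problem to a symmetric one. One forms $Z_i = (X_i^\T, Y_i^\T)^\T$, notes that balancedness of \emph{both} factor pairs makes $Z_iZ_i^\T$ the $2\times 2$ block matrix with diagonal blocks $(M_iM_i^\T)^{1/2}$, $(M_i^\T M_i)^{1/2}$ and off-diagonal blocks $M_i$, $M_i^\T$, bounds $\|Z_2Z_2^\T - Z_1Z_1^\T\|_F$ by a constant multiple of $\|M_2-M_1\|_F$ via a matrix absolute-value perturbation inequality, and then applies the symmetric positive semidefinite Procrustes bound (Lemma~\ref{aux:lem:15} here, with $\sigma_r^2(Z_1) = 2\sigma_r(M_1)$). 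Your plan works directly in the asymmetric factor space, and as sketched it has a gap that the listed ingredients cannot close.

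The gap is the cross term, and your diagnosis of it as ``higher-order'' is wrong: $2\LD A\Delta_Y^\T, \Delta_X B^\T\RD$ is quadratic in $(\Delta_X,\Delta_Y)$, the same order as the two diagonal terms, so no smallness regime makes it negligible. In fact the hypotheses you actually use --- symmetry of $A^\T\Delta_X + B^\T\Delta_Y$ from first-order optimality, plus balancedness of the \emph{first} pair only --- are provably insufficient. Take $K$ symmetric and small and set $\Delta_X = AK$, $\Delta_Y = -BK$, so that with $\tilde\Sigma = A^\T A = B^\T B$ one has $A^\T\Delta_X + B^\T\Delta_Y = \tilde\Sigma K - \tilde\Sigma K = 0$ (your optimality condition holds trivially). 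Then the cross term equals $-2\|AKB^\T\|_F^2$ and \emph{exactly cancels} the two diagonal terms $\|A\Delta_Y^\T\|_F^2 + \|\Delta_XB^\T\|_F^2 = 2\|AKB^\T\|_F^2$, leaving $M_2 - M_1 = -AK^2B^\T$, which is second order in $K$; meanwhile $\|\Delta_X\|_F^2 + \|\Delta_Y\|_F^2 \geq 2\sigma_r(M_1)\|K\|_F^2$, and no realignment helps, since for any orthogonal $Q$ the sum is at least $\sigma_r(M_1)\{\|I+K-Q^{\star\T}Q\|_F^2 + \|I-K-Q^{\star\T}Q\|_F^2\} \geq 2\sigma_r(M_1)\|K\|_F^2$. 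So the target inequality fails along this direction by an arbitrarily large factor as $K\to 0$. The lemma survives only because this $(X_2,Y_2)$ is not a balanced factorization of $M_2$: $X_2^\T X_2 - Y_2^\T Y_2 = 2(K\tilde\Sigma + \tilde\Sigma K) + O(\|K\|^2)\neq 0$, whereas the lemma assumes $X_2^\T X_2 = Y_2^\T Y_2 = \Sigma_2$ --- a condition your sketch never invokes. Any correct direct argument must use balancedness of both pairs to kill precisely this symmetric perturbation mode; the lifting above is the standard way to do so, which is why \cite{Tu2016Low} take that route.
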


\begin{lemma}[Lemma C.1 in \cite{Wang2017Unified}]\label{aux:lem:13}

	Let $U\in\mR^{d_1\times r}$, $V\in\mR^{d_2\times r}$,
	\begin{align*}
	Z = \begin{pmatrix}U \\ V \end{pmatrix}, \quad
	\barZ = \begin{pmatrix}U \\ -V \end{pmatrix}.
	\end{align*}
	For any $\tZ\in\mR^{(d_1 + d_2)\times r}$, we define $Q = \arg\inf_{Q\in\Q^{r\times r}}\|Z - \tZ Q\|_F$. Then
	\begin{align*}
	\LD \barZ\barZ^\T Z, Z - \tZ Q \RD \geq \frac{1}{4}\|\barZ^\T Z\|_F^2 - \frac{1}{4}\|Z - \tZ Q\|_F^4.
	\end{align*}

\end{lemma}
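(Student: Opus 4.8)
The plan is to recognize the vector $\barZ\barZ^\T Z$ as the (scaled) gradient of the balancing regularizer $\tfrac14\|\barZ^\T Z\|_F^2$, rewrite the inner product against the aligned error, and then reduce everything to an elementary completion of squares. Write $J=\begin{pmatrix} I_{d_1} & 0\\ 0 & -I_{d_2}\end{pmatrix}$, so that $\barZ=JZ$, $J=J^\T$ and $J^2=I$, and set $G:=\barZ^\T Z=U^\T U-V^\T V$, a symmetric $r\times r$ matrix. Since $Z^\T\barZ=(\barZ^\T Z)^\T=G$, for the aligned error $\Delta:=Z-\tZ Q$ one has
\[
\LD \barZ\barZ^\T Z,\, Z-\tZ Q\RD=\TR\rbr{Z^\T\barZ\barZ^\T\Delta}=\LD G,\barZ^\T\Delta\RD ,
\]
so it suffices to show $\LD G,\barZ^\T\Delta\RD\ge \tfrac14\|G\|_F^2-\tfrac14\|\Delta\|_F^4$.

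The decisive structural input is that the reference factor $\tZ$ has \emph{balanced} blocks, $(J\tZ)^\T\tZ=0$ (equivalently, the two blocks of $\tZ$ have equal Gram matrices), as holds for the ground-truth factorization in every application of this lemma. Since $\hat Z:=\tZ Q$ then obeys $(J\hat Z)^\T\hat Z=Q^\T(J\tZ)^\T\tZ Q=0$, expanding $0=(J(Z-\Delta))^\T(Z-\Delta)=(\barZ-J\Delta)^\T(Z-\Delta)$ and using $\barZ^\T Z=G$ together with $(J\Delta)^\T Z=\Delta^\T\barZ=(\barZ^\T\Delta)^\T$ yields the identity
\[
\barZ^\T\Delta+(\barZ^\T\Delta)^\T=G+(J\Delta)^\T\Delta .
\]
Pairing both sides with the symmetric matrix $G$ and using $\LD G,X\RD=\LD G,X^\T\RD$ (valid for symmetric $G$ by cyclicity of the trace) gives $2\LD G,\barZ^\T\Delta\RD=\|G\|_F^2+\LD G,(J\Delta)^\T\Delta\RD$.

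It then remains to control the cross term. As $J$ is orthogonal, $\|J\Delta\|_F=\|\Delta\|_F$, so $\|(J\Delta)^\T\Delta\|_F\le\|J\Delta\|_F\|\Delta\|_F=\|\Delta\|_F^2$, and Cauchy--Schwarz gives $\LD G,(J\Delta)^\T\Delta\RD\ge-\|G\|_F\|\Delta\|_F^2$. Hence $\LD G,\barZ^\T\Delta\RD\ge\tfrac12\|G\|_F^2-\tfrac12\|G\|_F\|\Delta\|_F^2$, and completing the square,
\[
\tfrac12\|G\|_F^2-\tfrac12\|G\|_F\|\Delta\|_F^2=\tfrac14\|G\|_F^2+\tfrac14\rbr{\|G\|_F-\|\Delta\|_F^2}^2-\tfrac14\|\Delta\|_F^4\ge\tfrac14\|G\|_F^2-\tfrac14\|\Delta\|_F^4 .
\]
Substituting back $G=\barZ^\T Z$ and $\Delta=Z-\tZ Q$ recovers the asserted inequality.

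The step needing genuine care --- and the real crux --- is the appeal to balancedness $(J\tZ)^\T\tZ=0$: the inequality is false without it, since taking $\tZ=Z$ with $U^\T U\neq V^\T V$ and $Q=I$ makes the left-hand side vanish while the right-hand side equals $\tfrac14\|\barZ^\T Z\|_F^2>0$. The Procrustes-optimal choice $Q=\arg\inf_{Q\in\Q^{r\times r}}\|Z-\tZ Q\|_F$ enters only to make $\Delta$ the minimal (aligned) error --- and, if one wishes, to guarantee that $Z^\T\tZ Q$ is symmetric positive semidefinite --- but is not required for the inequality itself; the remaining manipulations are routine Frobenius-norm bookkeeping.
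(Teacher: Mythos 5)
Your proof is correct, and the paper itself offers nothing to compare it to: Lemma \ref{aux:lem:13} is imported verbatim from Wang et al.\ (2017) without proof, so yours is the only argument on the table. The algebra checks out. Writing $J=\mathrm{diag}(I_{d_1},-I_{d_2})$, $G=\barZ^\T Z$ and $\Delta=Z-\tZ Q$, the identity $\barZ^\T\Delta+(\barZ^\T\Delta)^\T=G+(J\Delta)^\T\Delta$ follows exactly as you derive it from $(J\tZ)^\T\tZ=0$, and the Cauchy--Schwarz plus completion-of-squares steps give the stated constants. Your most valuable contribution is the observation that the lemma \emph{as transcribed} (``for any $\tZ$'') is false: the counterexample $\tZ=Z$ with $U^\T U\neq V^\T V$, for which the Procrustes optimum is $Q=I$ and hence $\Delta=0$, makes the left side vanish while the right side is strictly positive. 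The balancedness hypothesis $(J\tZ)^\T\tZ=0$ is indeed what the original source assumes implicitly (there $\tZ$ is built from a balanced SVD factorization), and it does hold in the one place this paper invokes the lemma, namely the proof of Lemma \ref{aux:lem:14}, where $\tZ=(\tU;\tU\tLambda)$ with $\tU$ having orthogonal columns, so that $\tU^\T\tU$ is diagonal and commutes with the sign matrix $\tLambda$. You are also right that Procrustes optimality of $Q$ is not needed for the inequality itself --- your argument works for any orthogonal $Q$ once $\tZ$ is balanced --- so if anything your version is slightly more general than the quoted statement. The only improvement I would ask for is to state the balancedness hypothesis explicitly as part of the lemma rather than as a remark, since the lemma is simply not true without it.
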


\begin{lemma}[Lemma 5.4 in \cite{Tu2016Low}]\label{aux:lem:15}

	Let $U, V\in\mR^{d\times r}$ and let $\sigma_r$ be the $r$-th singular value of $V$. Then
	\begin{align*}
	\inf_{Q\in\Q^{r\times r}}\|U - VQ\|_F^2\leq \frac{1}{2(\surd{2}-1)\sigma_r^2}\|UU^\T - VV^\T\|_F^2.
	\end{align*}

\end{lemma}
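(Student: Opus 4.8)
The plan is to prove the statement by the classical orthogonal Procrustes argument, reducing the factored distance to a scalar inequality whose tightness pins down the constant $2(\surd{2}-1)$. First I would dispose of the infimum over $Q$. Let $V^\T U = A\Sigma B^\T$ be a singular value decomposition and take the Procrustes optimum $Q^\star = \arg\max_{Q\in\Q^{r\times r}}\TR(Q^\T V^\T U) = AB^\T$; replacing $V$ by $VQ^\star$ leaves both $VV^\T$ and $\sigma_r$ unchanged and reduces the claim to $\|U - V\|_F^2 \leq \{2(\surd{2}-1)\sigma_r^2\}^{-1}\|UU^\T - VV^\T\|_F^2$. The purpose of this reduction is that after it $V^\T U = B\Sigma B^\T$ is \emph{symmetric and positive semidefinite}, and this structural fact is exactly what makes the inequality true.

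Writing $\Delta = U - V$, I would set $S = V^\T\Delta$, $W = V^\T V \succeq \sigma_r^2 I_r$ and $P = \Delta^\T\Delta \succeq 0$. Symmetry of $V^\T U = W + S$ forces $S$ to be symmetric, and its positive semidefiniteness yields the crucial constraint $S \succeq -W$. Expanding $UU^\T - VV^\T = \Delta V^\T + V\Delta^\T + \Delta\Delta^\T$ and collecting terms through the cyclic property of the trace (using $V^\T\Delta = \Delta^\T V = S$) reduces the right-hand side to $E := \|UU^\T - VV^\T\|_F^2 = 2\TR(WP) + 2\TR(S^2) + 4\TR(PS) + \TR(P^2)$, while the left-hand side is simply $\|\Delta\|_F^2 = \TR(P)$. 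Everything thus comes down to showing $E \geq 2(\surd{2}-1)\sigma_r^2\,\TR(P)$ subject only to $P\succeq 0$, $W\succeq \sigma_r^2 I_r$ and $S\succeq -W$.

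To finish I would minimize $E$ over the feasible symmetric $S$: completing the square identifies the minimizer as the Frobenius projection of $-P$ onto $\{S\succeq -W\}$, namely $S^\star = [W-P]_+ - W$, giving $\min_S E = 2\TR(WP) - \TR(P^2) + 2\|[P-W]_+\|_F^2$. In the model case $W = \sigma_r^2 I_r$ this decouples over the eigenvalues $p_i\geq 0$ of $P$: each summand is $p_i(2\sigma_r^2 - p_i)$ when $p_i\leq \sigma_r^2$, where the target bound holds automatically, and is $p_i^2 - 2\sigma_r^2 p_i + 2\sigma_r^4$ when $p_i > \sigma_r^2$, where the target bound is precisely equivalent to $(p_i/\sigma_r^2 - \surd{2})^2\geq 0$ and is tight at $p_i = \surd{2}\,\sigma_r^2$. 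This is the computation that manufactures the constant. The \textbf{main obstacle} is the passage from this transparent diagonal case to a general $W\succeq \sigma_r^2 I_r$, where $P$, $S$, $W$ need not be simultaneously diagonalizable; I would handle it either by a monotonicity argument reducing to $W=\sigma_r^2 I_r$, or by working directly from the projection identity together with $\TR(WP)\geq \sigma_r^2\TR(P)$, which is the single step demanding genuine care. The other ingredient I would state cleanly is the Procrustes optimality fact that $V^\T U\succeq 0$, since the whole bound, and in particular the sign control of the cross term $4\TR(PS)$, collapses without it.
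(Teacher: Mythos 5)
The paper does not prove this statement at all: it is imported verbatim as Lemma 5.4 of Tu et al.\ (2016), so there is no internal proof to compare against and your argument has to stand on its own. Everything you actually carry out is correct. The Procrustes reduction to $V^\T U\succeq 0$ is the standard first move and is valid; the expansion $\|UU^\T-VV^\T\|_F^2=2\TR(WP)+2\TR(S^2)+4\TR(PS)+\TR(P^2)$ with $\|U-V\|_F^2=\TR(P)$ checks out; discarding the Gram constraint linking $W$, $S$, $P$ and keeping only $S=S^\T$, $S\succeq -W$ is a legitimate relaxation (it can only make the inequality harder, and it turns out to remain true); the projection formula $\min_{S\succeq -W}E=2\|[P-W]_+\|_F^2+2\TR(WP)-\TR(P^2)$ is right; and the scalar computation producing $2(\surd{2}-1)$, tight at $p=\surd{2}\,\sigma_r^2$, is correct.

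The genuine gap is exactly the step you flagged, and one of your two proposed repairs is a dead end. The functional $W\mapsto\|[P-W]_+\|_F^2+\TR(WP)$ is \emph{not} Loewner-monotone on $\{W\succeq\sigma_r^2I_r\}$: its gradient in $W$ is $P-2[P-W]_+$, which for $P=10\sigma_r^2I_r$ and $W=\sigma_r^2I_r$ equals $-8\sigma_r^2I_r$, and indeed the value at $W=5\sigma_r^2I_r$ ($=75\sigma_r^4$ per coordinate) is strictly below the value at $W=\sigma_r^2I_r$ ($=91\sigma_r^4$); so you cannot reduce to $W=\sigma_r^2I_r$ by monotonicity. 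Your second route does close the proof, but it needs one more ingredient than $\TR(WP)\geq\sigma_r^2\TR(P)$: work in an eigenbasis $\{u_i\}$ of $P$ with eigenvalues $p_i\geq 0$ and set $w_i=u_i^\T Wu_i\geq\sigma_r^2$. By Schur--Horn the diagonal of $P-W$ in this basis is majorized by its spectrum, and $x\mapsto(\max(x,0))^2$ is convex, so $\|[P-W]_+\|_F^2\geq\sum_i\{(p_i-w_i)_+\}^2$, while $\TR(WP)=\sum_ip_iw_i$ and $\TR(P^2)=\sum_ip_i^2$. The problem then decouples into the scalar inequality $2\{(p-w)_+\}^2+2pw-p^2\geq 2(\surd{2}-1)\sigma_r^2p$ for all $p\geq0$, $w\geq\sigma_r^2$, which holds: for $p\leq w$ the left side is $p(2w-p)\geq\sigma_r^2p$; for $p>w$ it is $p^2-2pw+2w^2$, whose minimum over admissible $w$ sits either at $w=\sigma_r^2$ (giving your $(p-\surd{2}\sigma_r^2)^2\geq0$) or at $w=p/2\geq\sigma_r^2$ (giving $p^2/2\geq2(\surd{2}-1)\sigma_r^2p$ since then $p\geq2\sigma_r^2$). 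With that substitution in place of the unproved step, your argument is complete.
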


\begin{lemma}\label{aux:lem:6}

	Suppose $\Omega\subseteq[d]\times[d]$ is a symmetric index set. For any matrix $A\in\mR^{d\times d}$, we have $\big(\P_{\Omega}(A)\big)^\T = \P_{\Omega}(A^\T)$.

\end{lemma}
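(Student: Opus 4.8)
The plan is to verify the identity entrywise, since both $\P_{\Omega}(A)$ and $\P_{\Omega}(A^\T)$ are defined through their individual entries via the masking rule $[\P_{\Omega}(B)]_{i,j} = B_{i,j}\cdot\pmb{1}_{\{(i,j)\in\Omega\}}$. First I would fix an arbitrary index pair $(i,j)\in[d]\times[d]$ and compute the $(i,j)$ entry of the left-hand side. By the definition of the matrix transpose followed by the definition of the masking operator, $[(\P_{\Omega}(A))^\T]_{i,j} = [\P_{\Omega}(A)]_{j,i} = A_{j,i}\cdot\pmb{1}_{\{(j,i)\in\Omega\}}$. Next I would compute the $(i,j)$ entry of the right-hand side, $[\P_{\Omega}(A^\T)]_{i,j} = (A^\T)_{i,j}\cdot\pmb{1}_{\{(i,j)\in\Omega\}} = A_{j,i}\cdot\pmb{1}_{\{(i,j)\in\Omega\}}$, again using that $(A^\T)_{i,j}=A_{j,i}$.

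The two expressions now differ only in the indicator factors, $\pmb{1}_{\{(j,i)\in\Omega\}}$ on the left versus $\pmb{1}_{\{(i,j)\in\Omega\}}$ on the right; the scalar coefficient $A_{j,i}$ is identical. The concluding step invokes the single hypothesis of the lemma, namely that $\Omega$ is a symmetric index set, so $(i,j)\in\Omega$ if and only if $(j,i)\in\Omega$, and hence the two indicators coincide for every choice of $(i,j)$. Since the corresponding entries agree at every index, the two matrices are equal, which gives $(\P_{\Omega}(A))^\T = \P_{\Omega}(A^\T)$. There is no genuine obstacle in this argument—it is a direct consequence of the definitions—and the only point requiring care is to keep the transpose bookkeeping consistent, swapping row and column indices before appealing to the symmetry of $\Omega$, rather than applying symmetry prematurely.
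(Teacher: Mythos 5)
Your proof is correct and follows essentially the same route as the paper's: an entrywise computation of both sides, reducing the claim to the equivalence $(i,j)\in\Omega \Leftrightarrow (j,i)\in\Omega$ guaranteed by the symmetry of the index set. Nothing is missing.
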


\begin{proof}
	For any $(i, j)\in [d]\times[d]$, we have
	\begin{align*}
	[\P_{\Omega}(A^\T)]_{i, j} = \begin{cases}
	[A^\T]_{i, j} & \text{if\ } (i, j)\in\Omega\\
	0 & \text{otherwise}
	\end{cases} = \begin{cases}
	A_{j, i} & \text{if\ } (i, j)\in\Omega\\
	0 & \text{otherwise}
	\end{cases} = \begin{cases}
	A_{j, i} & \text{if\ } (j, i)\in\Omega\\
	0 & \text{otherwise}
	\end{cases} = [\P_{\Omega}(A)]_{j, i}.
	\end{align*}
	This completes the proof.
\end{proof}

\begin{lemma}\label{aux:lem:8}

	Suppose $S_1, S_2\in\S^{d\times d}$ are two symmetric matrices and $\Sigma_1, \Sigma_2\in\mR^{d\times d}$ satisfy $0\prec \sigma_{i, d}I_d\preceq\Sigma_i\preceq \sigma_{i, 1}I_d$ for $i = 1,2$. Then
	\begin{multline*}
	\LD \Sigma_1(S_1 - S_2)\Sigma_2, S_1 - S_2\RD \\
	\geq \frac{\sigma_{1, 1}\sigma_{2, 1}\sigma_{1, d}\sigma_{2, d}}{\sigma_{1, 1}\sigma_{2, 1} + \sigma_{1, d}\sigma_{2, d}}\|S_1 - S_2\|_F^2 + \frac{1}{4\big(\sigma_{1, 1}\sigma_{2, 1} + \sigma_{1, d}\sigma_{2, d}\big)}\|\Sigma_1(S_1 - S_2)\Sigma_2 + \Sigma_2(S_1 - S_2)\Sigma_1\|_F^2.
	\end{multline*}
\end{lemma}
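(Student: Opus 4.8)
The plan is to reduce the claim to a standard operator inequality via vectorization. Write $E = S_1 - S_2 \in \S^{d\times d}$ and set $G = \Sigma_1 E \Sigma_2 + \Sigma_2 E \Sigma_1$, which is symmetric since $(\Sigma_1 E\Sigma_2)^\T = \Sigma_2 E\Sigma_1$. First I would record the relevant trace identity: because $\LD A,B\RD = \TR(A^\T B)$ and all matrices involved are symmetric, $\LD \Sigma_1 E \Sigma_2, E\RD = \TR(\Sigma_2 E \Sigma_1 E)$, and by cyclicity of the trace this equals $\TR(\Sigma_1 E \Sigma_2 E) = \LD \Sigma_2 E \Sigma_1, E\RD$. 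Hence $\LD \Sigma_1 E \Sigma_2, E\RD = \tfrac12\LD G, E\RD$, so the left-hand side is symmetric in the roles of $\Sigma_1$ and $\Sigma_2$ and is exactly the quantity that couples naturally to $\|G\|_F$.

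Next I would vectorize. Writing $v = \VEC(E)$ and using $\VEC(AXB) = (B^\T\otimes A)\VEC(X)$ together with $\LD A, B\RD = \VEC(A)^\T\VEC(B)$, we obtain $\LD \Sigma_1 E \Sigma_2, E\RD = v^\T (\Sigma_2\otimes\Sigma_1) v$ and likewise $\LD\Sigma_2 E\Sigma_1, E\RD = v^\T(\Sigma_1\otimes\Sigma_2)v$; averaging gives $\LD \Sigma_1 E \Sigma_2, E\RD = v^\T M v$ with $M = \tfrac12(\Sigma_2\otimes\Sigma_1 + \Sigma_1\otimes\Sigma_2)$, a symmetric matrix on $\mR^{d^2}$. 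Moreover $\VEC(G) = 2Mv$, so $\|G\|_F^2 = 4\|Mv\|_2^2$ while $\|E\|_F^2 = \|v\|_2^2$. Thus, setting $\mu = \sigma_{1,d}\sigma_{2,d}$ and $L = \sigma_{1,1}\sigma_{2,1}$, the claimed bound is exactly
\[
v^\T M v \ge \frac{\mu L}{\mu + L}\|v\|_2^2 + \frac{1}{\mu+L}\|Mv\|_2^2 .
\]

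The two remaining ingredients are a spectral bound on $M$ and the scalar inequality. For the spectral bound, since $\Sigma_1,\Sigma_2$ are symmetric with eigenvalues in $[\sigma_{1,d},\sigma_{1,1}]$ and $[\sigma_{2,d},\sigma_{2,1}]$, the eigenvalues of $\Sigma_2\otimes\Sigma_1$ are the pairwise products of the eigenvalues and hence lie in $[\mu, L]$; the same holds for $\Sigma_1\otimes\Sigma_2$, and because the Löwner order is preserved under averaging we get $\mu I_{d^2}\preceq M\preceq L I_{d^2}$ — this holds even though $\Sigma_1$ and $\Sigma_2$ need not commute, which is the only point requiring care. For the scalar inequality, diagonalizing the symmetric $M$ as $M w_k = \lambda_k w_k$ with $\lambda_k\in[\mu,L]$ and expanding $v = \sum_k c_k w_k$, the difference of the two sides of the display equals $(\mu+L)^{-1}\sum_k c_k^2(\lambda_k-\mu)(L-\lambda_k)\ge 0$, since each factor is nonnegative. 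Matching constants ($\mu L = \sigma_{1,1}\sigma_{2,1}\sigma_{1,d}\sigma_{2,d}$, $\mu+L = \sigma_{1,1}\sigma_{2,1}+\sigma_{1,d}\sigma_{2,d}$, and $\|Mv\|_2^2 = \tfrac14\|G\|_F^2$) then yields the stated inequality. The main obstacle is conceptual rather than computational: recognizing that the correct symmetrization $M$ of the two Kronecker products is what simultaneously produces the $\|G\|_F$ term and admits the clean Löwner sandwich, after which the estimate is the textbook strong-convexity/co-coercivity bound for a positive operator with spectrum in $[\mu,L]$.
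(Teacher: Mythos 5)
Your proof is correct, and while it lands on the same underlying inequality as the paper, it gets there by a genuinely more self-contained route. The paper defines the quadratic $\mF(S) = \frac{1}{2}\TR(S\Sigma_1 S\Sigma_2)$ on $\S^{d\times d}$, checks via the second-order expansion that it is $\sigma_{1,1}\sigma_{2,1}$-smooth and $\sigma_{1,d}\sigma_{2,d}$-strongly convex, and then invokes the standard co-coercivity bound for such functions (Lemma 3.5 of Bubeck, 2015) applied to $\nabla\mF(S_1)-\nabla\mF(S_2) = \frac{1}{2}\{\Sigma_1(S_1-S_2)\Sigma_2+\Sigma_2(S_1-S_2)\Sigma_1\}$. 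Your $M = \frac{1}{2}(\Sigma_2\otimes\Sigma_1+\Sigma_1\otimes\Sigma_2)$ is exactly the vectorized Hessian of that $\mF$, so the two arguments are mathematically the same object viewed in different coordinates; the difference is that you replace the citation to the general smooth-plus-strongly-convex co-coercivity lemma with a direct spectral proof, diagonalizing $M$ and checking $(\lambda_k-\mu)(L-\lambda_k)\ge 0$ termwise. Your key steps all hold: the vectorization identities $\LD\Sigma_1E\Sigma_2,E\RD = v^\T Mv$ and $\VEC(G)=2Mv$ are standard, and the Löwner sandwich $\mu I_{d^2}\preceq M\preceq LI_{d^2}$ follows from the Kronecker eigenvalue structure without any commutativity assumption, as you correctly flag. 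What your version buys is a fully elementary, citation-free proof (which moreover does not use symmetry of $S_1-S_2$ at all); what the paper's version buys is brevity by outsourcing the co-coercivity step to a textbook convex-analysis result.
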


\begin{proof}
	For a symmetric matrix $S \in \S^{d\times d}$, let $\mF(S) = \frac{1}{2}\TR(S\Sigma_1S\Sigma_2)$. Then, for any $S_1, S_2 \in \S^{d\times d}$,
	\begin{align*}
	\mF(S_1) - \mF(S_2) - \LD \nabla\mF(S_2), S_1 - S_2\RD
	= & \frac{1}{2}\TR\sbr{\rbr{S_1-S_2}\Sigma_1\rbr{S_1 - S_2}\Sigma_2}.
	\end{align*}
	Therefore,
	\begin{align*}
	\frac{\sigma_{1, d}\sigma_{2, d}}{2}\|S_1 - S_2\|_F^2 \leq \mF(S_1) - \mF(S_2) - \LD \nabla\mF(S_2), S_1 - S_2\RD \leq \frac{\sigma_{1, 1}\sigma_{2, 1}}{2}\|S_1 - S_2\|_F^2,
	\end{align*}
	implying that $\mF(\cdot)$ is a $\sigma_{1, 1}\sigma_{2, 1}$-smooth and $\sigma_{1, d}\sigma_{2, d}$-strongly convex function. Furthermore,
	\begin{align*}
	\LD&\Sigma_1(S_1 - S_2)\Sigma_2, S_1 - S_2\RD \\
	& = \LD \nabla\mF(S_1) - \nabla\mF(S_2), S_1 - S_2\RD\\
	& \geq \frac{\sigma_{1, 1}\sigma_{2, 1}\sigma_{1, d}\sigma_{2, d}}{\sigma_{1, 1}\sigma_{2, 1} + \sigma_{1, d}\sigma_{2, d}}\|S_1 - S_2\|_F^2 + \frac{1}{\sigma_{1, 1}\sigma_{2, 1} + \sigma_{1, d}\sigma_{2, d}}\|\nabla\mF(S_1) - \nabla\mF(S_2)\|_F^2\\
	& = \frac{\sigma_{1, 1}\sigma_{2, 1}\sigma_{1, d}\sigma_{2, d}}{\sigma_{1, 1}\sigma_{2, 1} + \sigma_{1, d}\sigma_{2, d}}\|S_1 - S_2\|_F^2 + \frac{1}{4\big(\sigma_{1, 1}\sigma_{2, 1} + \sigma_{1, d}\sigma_{2, d}\big)}\|\Sigma_1(S_1 - S_2)\Sigma_2 + \Sigma_2(S_1 - S_2)\Sigma_1\|_F^2,
	\end{align*}
	where the inequality is due to Lemma 3.5 in \cite{Bubeck2015Convex}.
\end{proof}

\begin{lemma}\label{aux:lem:10}

	Let $A\in\mR^{d\times d}$, $U\in\mR^{d\times r}$, and $Q\in\Q^{r\times r}$. Then $\|(AUQ)^\T\|_{2, \infty}\leq \|A^\T\|_1\|U^\T\|_{2, \infty}$.

\end{lemma}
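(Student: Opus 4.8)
The plan is to reduce the statement to a simple row-wise estimate, exploiting two facts: that $\|M^\T\|_{2,\infty}$ is the largest $\ell_2$ norm among the \emph{rows} of $M$ (because the columns of $M^\T$ are the rows of $M$), and that the induced matrix $1$-norm $\|A^\T\|_1$ is the maximum absolute column sum of $A^\T$, hence the largest $\ell_1$ norm among the rows of $A$. First I would strip off the orthogonal factor $Q$. Since $Q\in\Q^{r\times r}$, right-multiplication preserves the Euclidean norm of each row: writing $(AUQ)_{i,\cdot} = (AU)_{i,\cdot}Q$ we get $\|(AU)_{i,\cdot}Q\|_2 = \|(AU)_{i,\cdot}\|_2$, so that $\|(AUQ)^\T\|_{2,\infty} = \|(AU)^\T\|_{2,\infty} = \max_{i\in[d]}\|(AU)_{i,\cdot}\|_2$. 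This removes $Q$ entirely from the remaining work.

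Next I would establish the core per-row bound. Expressing the $i$-th row of $AU$ as a weighted combination of the rows of $U$, namely $(AU)_{i,\cdot} = \sum_{k} A_{ik}\,U_{k,\cdot}$, the triangle inequality followed by the crude bound $\|U_{k,\cdot}\|_2 \leq \|U^\T\|_{2,\infty}$ gives
\begin{align*}
\|(AU)_{i,\cdot}\|_2 \leq \sum_{k} |A_{ik}|\,\|U_{k,\cdot}\|_2 \leq \Big(\sum_{k}|A_{ik}|\Big)\,\|U^\T\|_{2,\infty} = \|A_{i,\cdot}\|_1\,\|U^\T\|_{2,\infty}.
\end{align*}
Taking the maximum over $i\in[d]$ and identifying $\max_{i}\|A_{i,\cdot}\|_1 = \|A^\T\|_1$ then yields $\|(AUQ)^\T\|_{2,\infty} \leq \|A^\T\|_1\,\|U^\T\|_{2,\infty}$, which is the claim.

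This lemma is purely deterministic and involves no concentration or spectral analysis, so there is no genuine obstacle; the only point requiring care is bookkeeping of the transpose conventions, ensuring that $\|M^\T\|_{2,\infty}$ is consistently read as the maximum row-$\ell_2$ norm of $M$ and $\|A^\T\|_1$ as the maximum row-$\ell_1$ norm of $A$, so that the two endpoints of the chain of inequalities match the stated norms exactly.
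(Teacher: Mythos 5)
Your proof is correct and follows essentially the same route as the paper's: remove the orthogonal factor $Q$ because it preserves the relevant $\ell_2$ norms, write each row of $AU$ as $\sum_k A_{ik}U_{k,\cdot}$, and apply the triangle inequality before pulling out $\max_k\|U_{k,\cdot}\|_2$ and the maximum row $\ell_1$ norm of $A$. The only difference is cosmetic -- the paper works with the transposed product $Q^\T U^\T A^\T$ and its columns rather than with the rows of $AUQ$ directly.
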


\begin{proof}
	Note that $\|(AUQ)^\T\|_{2, \infty} = \|Q^\T U^\T A^\T\|_{2, \infty} = \|U^\T A^\T\|_{2, \infty}$. Suppose $U^\T = \big(U_1, \ldots, U_d\big)$, where $U_i\in\mR^r$ is the $i$-th row of $U$. Then
	\begin{align*}
	\|U^\T A^\T\|_{2, \infty}= &\max_{j\in[d]}\big\|\sum_{i=1}^d A_{j, i}U_i\big\|_2\leq \max_{j\in[d]} \sum_{i=1}^d |A_{j, i}| \|U_i\|_2
	\leq \max_{j\in[d]} \big(\max_{i\in[d]}\|U_i\|_2\big)\sum_{i=1}^d |A_{j, i}|\\
	=& \|U^\T\|_{2, \infty} \max_{j\in[d]} \|A_{j, \cdot}\|_1 = \|U^\T\|_{2, \infty} \|A^\T\|_{1, \infty}= \|U^\T\|_{2, \infty}\|A^\T\|_1.
	\end{align*}
\end{proof}

\begin{lemma}\label{aux:lem:14}

	Let $\tU\in\mR^{d\times r}$ have orthogonal columns with the $r$-th singular value being $\sigma_r$. For any $U\in\mR^{d\times r}$ and $r_1\in\{0,\ldots, r\}$, let $Q = \arg\inf_{Q\in\mQ^{r \times r}_{r_1}}\|U - \tU Q\|$ and $\Lambda = \diag(I_{r_1}, -I_{r-r_1})$. Then the following two inequalities hold
	\begin{align*}
	\LD U - \tU Q, U\big(U^\T U - \Lambda U^\T U\Lambda\big)\RD
	& \geq \frac{1}{8}\|U^\T U - \Lambda U^\T U\Lambda\|_F^2 - \frac{1}{2}\Pi^4(U, \tU), \\
	\|U^\T U  - \Lambda U^\T U\Lambda\|_F^2
	& \geq  8(\surd{2} - 1)\sigma_r^2\Pi^2(U, \tU) - 4\|U\Lambda U - \tU\Lambda\tUT\|_F^2.
	\end{align*}
	Furthermore, we have
	\begin{align*}
	\LD U - \tU Q, U(U^\T U - \Lambda U^\T U\Lambda)\RD
	&\geq (\surd{2} - 1)\sigma_r^2\Pi^2(U, \tU) - \frac{1}{2}\|U\Lambda U - \tU\Lambda\tUT\|_F^2 - \frac{1}{2}\Pi^4(U, \tU).
	\end{align*}

\end{lemma}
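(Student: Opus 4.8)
The plan is to establish the three inequalities in order, deriving the last from the first two. Throughout, write $\Theta = U - \tU Q$ with $Q\in\mQ_{r_1}^{r\times r}$ the block-diagonal orthogonal matrix attaining $\Pi(U,\tU)$, and abbreviate $M = U^\T U - \Lambda U^\T U\Lambda$. Two elementary facts drive everything. First, since $\Lambda^2 = I_r$ one has $\Lambda M\Lambda = \Lambda U^\T U\Lambda - U^\T U = -M$, and since $Q$ is block-diagonal it commutes with $\Lambda$, $\Lambda Q = Q\Lambda$. Second, writing $U = (U_1, U_2)$ with $U_1\in\mR^{d\times r_1}$, a direct block computation gives $M = \left(\begin{smallmatrix} 0 & 2U_1^\T U_2 \\ 2U_2^\T U_1 & 0\end{smallmatrix}\right)$, so $\|M\|_F^2 = 8\|U_1^\T U_2\|_F^2$. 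A consequence of $\Lambda M\Lambda = -M$ that I use repeatedly is $\TR(M\Lambda U^\T U\Lambda) = -\TR(MU^\T U)$; feeding this into $\|M\|_F^2 = \TR\{M(U^\T U - \Lambda U^\T U\Lambda)\}$ yields $\LD UM, U\RD = \TR(MU^\T U) = \tfrac12\|M\|_F^2$.

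For the first inequality I would invoke Lemma~\ref{aux:lem:13} after the lifting $Z = \left(\begin{smallmatrix}U\\ U\Lambda\end{smallmatrix}\right)$, $\barZ = \left(\begin{smallmatrix}U\\ -U\Lambda\end{smallmatrix}\right)$, $\tZ = \left(\begin{smallmatrix}\tU\\ \tU\Lambda\end{smallmatrix}\right)$. Then $\barZ^\T Z = U^\T U - \Lambda U^\T U\Lambda = M$, so the lemma's leading term is exactly $\tfrac14\|M\|_F^2$. The minimizer $Q = \arg\inf_{Q\in\Q^{r\times r}}\|Z - \tZ Q\|_F$ is, by the argument in the proof of Lemma~\ref{prop:1}, block-diagonal and coincides with the $Q$ of the statement, giving $\|Z - \tZ Q\|_F^2 = \|U - \tU Q\|_F^2 + \|U\Lambda - \tU\Lambda Q\|_F^2 = 2\Pi^2(U,\tU)$. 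Expanding $\LD\barZ\barZ^\T Z, Z - \tZ Q\RD$ over its two stacked blocks and simplifying each inner product with $\Lambda M\Lambda = -M$ and $\Lambda Q = Q\Lambda$ collapses it to $\|M\|_F^2 - 2\LD UM, \tU Q\RD$. Lemma~\ref{aux:lem:13} then reads $\|M\|_F^2 - 2\LD UM,\tU Q\RD \geq \tfrac14\|M\|_F^2 - \Pi^4(U,\tU)$; solving for $\LD UM,\tU Q\RD$ and substituting $\LD UM, U\RD = \tfrac12\|M\|_F^2$ into $\LD UM, U-\tU Q\RD = \LD UM, U\RD - \LD UM,\tU Q\RD$ produces the claimed $\tfrac18\|M\|_F^2 - \tfrac12\Pi^4(U,\tU)$.

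For the second inequality I would argue blockwise. Because $\mQ_{r_1}^{r\times r}$ consists of $Q = \diag(Q_1, Q_2)$, we have $\Pi^2(U,\tU) = \Pi^2(U_1,\tU_1) + \Pi^2(U_2,\tU_2)$, each an ordinary orthogonal-Procrustes distance. Applying Lemma~\ref{aux:lem:15} to each block and using $\sigma_{\min}(\tU_i)\geq \sigma_r$ (the columns of $\tU$ being orthogonal, each block's smallest singular value dominates $\sigma_r$) gives $2(\surd{2}-1)\sigma_r^2\,\Pi^2(U,\tU)\leq \|P_1\|_F^2 + \|P_2\|_F^2$ with $P_i = U_iU_i^\T - \tU_i\tU_i^\T$. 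Since $U\Lambda U^\T - \tU\Lambda\tUT = P_1 - P_2$, write $\|P_1\|_F^2 + \|P_2\|_F^2 = \|P_1 - P_2\|_F^2 + 2\LD P_1, P_2\RD$, and control the cross term via $\tU_1^\T\tU_2 = 0$: expanding $\LD P_1,P_2\RD = \|U_1^\T U_2\|_F^2 - \|U_1^\T\tU_2\|_F^2 - \|\tU_1^\T U_2\|_F^2 \leq \|U_1^\T U_2\|_F^2 = \tfrac18\|M\|_F^2$. Combining and multiplying by $4$ gives exactly $\|M\|_F^2 \geq 8(\surd{2}-1)\sigma_r^2\,\Pi^2(U,\tU) - 4\|U\Lambda U^\T - \tU\Lambda\tUT\|_F^2$. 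The third inequality then follows by inserting the lower bound $\tfrac18\|M\|_F^2 \geq (\surd{2}-1)\sigma_r^2\Pi^2(U,\tU) - \tfrac12\|U\Lambda U^\T - \tU\Lambda\tUT\|_F^2$, supplied by the second inequality, into the first.

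The main obstacle I anticipate is the bookkeeping in the first inequality: one must verify that the minimizer produced by Lemma~\ref{aux:lem:13} over all of $\Q^{r\times r}$ is the same block-diagonal $Q\in\mQ_{r_1}^{r\times r}$ defining $\Theta$ (so that $\Lambda Q = Q\Lambda$ is legitimate), and then correctly collapse the four stacked inner products using $\Lambda M\Lambda = -M$, where a single sign error would corrupt the final constant. The block identities and the cross-term estimate feeding the second inequality are routine by comparison.
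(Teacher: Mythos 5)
Your proof is correct, and it splits into a part that mirrors the paper and a part that genuinely departs from it. For the first inequality you use exactly the paper's device: lift to $Z = (U;U\Lambda)$, $\barZ = (U;-U\Lambda)$, $\tZ = (\tU;\tU\Lambda)$, identify $\barZ^\T Z$ with $M = U^\T U - \Lambda U^\T U\Lambda$, justify via the argument in Lemma~\ref{prop:1} that the unrestricted Procrustes minimizer is the block-diagonal $Q$ with $\|Z-\tZ Q\|_F^2 = 2\Pi^2(U,\tU)$, and feed Lemma~\ref{aux:lem:13} through the identities $\Lambda M\Lambda = -M$ and $\LD UM,U\RD = \tfrac12\|M\|_F^2$; your bookkeeping checks out and reproduces the paper's constants. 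For the second inequality you diverge: the paper stays in the lifted space, writes $\|\barZ^\T Z\|_F^2 = \LD ZZ^\T,\barZ\barZ^\T\RD$, discards two nonnegative inner products of positive semidefinite matrices to reach $\|ZZ^\T - \tZ\tZ^\T\|_F^2 - 4\|U\Lambda U^\T - \tU\Lambda\tUT\|_F^2$, and applies Lemma~\ref{aux:lem:15} once to $\tZ$ (whose $r$-th singular value is $\surd{2}\sigma_r$). You instead exploit the block structure of $\mQ^{r\times r}_{r_1}$ to split $\Pi^2(U,\tU) = \Pi^2(U_1,\tU_1)+\Pi^2(U_2,\tU_2)$, apply Lemma~\ref{aux:lem:15} to each block (using $\sigma_{\min}(\tU_i)\geq\sigma_r$, valid since the singular values of each block are a subset of those of $\tU$), and control the cross term $\LD P_1,P_2\RD \leq \|U_1^\T U_2\|_F^2 = \tfrac18\|M\|_F^2$ via $\tU_1^\T\tU_2 = 0$. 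Both arguments yield the identical bound; yours is more elementary and makes the role of the off-diagonal block $U_1^\T U_2$ explicit, while the paper's is shorter once the lifted objects are in place and avoids any per-block case analysis (e.g., when $r_1\in\{0,r\}$, which in your version is handled by an empty block). The derivation of the third inequality from the first two is the same trivial substitution in both.
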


\begin{proof}
	We only prove two inequalities. The last argument comes from two inequalities immediately. Let $V = U\Lambda$, $\tV = \tU\Lambda$, and
	\begin{align*}
	Z = \begin{pmatrix}U \\ V \end{pmatrix}, \quad
	\barZ = \begin{pmatrix}U \\ -V \end{pmatrix}, \quad
	\tZ = \begin{pmatrix}\tU \\ \tV \end{pmatrix}, \quad
	\barZ^\star = \begin{pmatrix}\tU \\ -\tV \end{pmatrix}.
	\end{align*}
	The $r$-th singular value of $\tZ$ is $\surd{2}\sigma_r$. From \eqref{b:2} and \eqref{b:3}, we know $Q = \arg\inf_{Q\in\Q^{r\times r}}\|Z - \tZ Q\|_F$. By Lemma \ref{aux:lem:13},
	\begin{align}\label{c:11}
	\LD \barZ\barZ^\T Z, Z - \tZ Q\RD \geq \frac{1}{4}\|\barZ^\T Z\|_F^2 - \frac{1}{4}\|Z - \tZ Q\|_F^4.
	\end{align}
	We can rewrite the left hand side of \eqref{c:11} as
	\begin{equation}\label{c:12}
	\begin{aligned}
	\LD\barZ\barZ^\T Z, Z - \tZ Q\RD
	&= \TR\big((U^\T U - V^\T V)\barZ^\T(Z - \tZ Q)\big) \\
	&= \TR\rbr{\big(U^\T U - V^\T V\big)\cbr{U^\T(U - \tU Q) - V^\T(V - \tV Q)}} \\
	&= \TR\big((U^\T U - V^\T V)U^\T(U - \tU Q)\big) - \TR\big((U^\T U - V^\T V)V^\T(V - \tV Q)\big).
	\end{aligned}
	\end{equation}
	Since $\Lambda Q\Lambda = Q$ and by the definition of $V$, the second term in above equation can be written as
	\begin{equation}
	\begin{aligned}
	- \TR\big((U^\T U - V^\T V)V^\T(V - \tV Q)\big)
	&= - \TR\big((U^\T U - V^\T V)\Lambda U^\T(U - \tU \Lambda Q\Lambda)\Lambda\big)\\
	&= - \TR\big((U^\T U - V^\T V)\Lambda U^\T(U - \tU Q)\Lambda\big)\\
	&= - \TR\big(\Lambda(U^\T U - V^\T V)\Lambda U^\T(U - \tU Q)\big)\\
	&= \TR\big((U^\T U - V^\T V)U^\T(U - \tU Q)\big),
	\end{aligned}
	\end{equation}
	which is the same as the first term. Using the definition of $V$ and plugging  the above display into \eqref{c:12}, we have
	\begin{align}\label{c:13}
	\LD U - \tU Q, U(U^\T U - \Lambda U^\T U\Lambda)\RD = \frac{1}{2}\LD\barZ\barZ^\T Z, Z - \tZ Q\RD.
	\end{align}
	For the right hand side of \eqref{c:11}, by the definition of $Z$, $\barZ$, and \eqref{b:3}, we have following relations
	\begin{align}\label{c:17}
	\|\barZ^\T Z\|_F^2 = \|U^\T U - \Lambda UU^\T \Lambda\|_F^2, \quad \quad \|Z - \tZ Q\|_F^2 = 2\Pi^2(U, \tU).
	\end{align}
	Combine \eqref{c:17} with \eqref{c:11}, \eqref{c:13} and we prove the first inequality in the argument. Moreover, since $\tU$ has orthogonal columns, $\barZ^{\star \T}\tZ = \tUT\tU - \tVT\tV = \tUT\tU - \Lambda\tUT\tU\Lambda = 0$. Therefore,
	%\begin{equation}
	\begin{align}\label{c:15}
	\|\barZ^\T Z\|_F^2
	&= \LD ZZ^\T, \barZ\barZ^\T\RD =  \LD ZZ^\T, \barZ\barZ^\T\RD + \LD \tZ\tZT, \barZ^{\star}\barZ^{\star \T}\RD \nonumber\\
	&= \LD ZZ^\T - \tZ\tZT, \barZ\barZ^\T - \barZ^\star\barZ^{\star \T}\RD + \LD ZZ^\T, \barZ^\star\barZ^{\star \T}\RD  + \LD \barZ\barZ^\star, \tZ\tZT\RD \nonumber\\
	&\geq \LD  ZZ^\T - \tZ\tZT, \barZ\barZ^\T - \barZ^\star\barZ^{\star \T}\RD \nonumber\\
	&= \|UU^\T - \tU\tUT\|_F^2 - 2\TR\big((UV^\T - \tU\tVT)(VU^\T - \tV\tUT)\big) + \|VV^\T - \tV\tVT\|_F^2 \nonumber\\
	&= 2\|UU^\T - \tU\tUT\|_F^2 - 2\|U\Lambda U^\T - \tU\Lambda\tUT\|_F^2 \nonumber\\
	&= \|ZZ^\T - \tZ\tZT\|_F^2 - 4\|U\Lambda U^\T - \tU\Lambda\tUT\|_F^2.
	\end{align}
	%\end{equation}
	By Lemma \ref{aux:lem:15} and \eqref{b:3},
	\begin{align}\label{c:16}
	\|ZZ^\T - \tZ\tZT\|_F^2\geq 2(\surd{2} - 1)(\surd{2}\sigma_r)^2\inf_{Q\in\Q^{r\times r}}\|Z - \tZ Q\|_F^2
	= 8(\surd{2} - 1)\sigma_r^2\Pi^2(U, \tU).
	\end{align}
	Combining with relations in \eqref{c:17} and \eqref{c:15}, we prove the second inequality.
\end{proof}

\begin{lemma}[Concentration of sample covariance]\label{aux:lem:7}

	Let $X_1, \ldots, X_n$ be independent realizations of $X$, which is $d$-dimensional random vector distributed as $\mN(\mu, \Sigma)$. Let
	\begin{align}\label{cov:1}
	\hSigma = \frac{1}{n}\sum_{i=1}^n(X_i - \hmu)(X_i - \hmu)^\T,\quad \hmu = \frac{1}{n}\sum_{i=1}^nX_i,
	\end{align}
	be the sample covariance and mean, respectively. Then there exists a constant $C_1 > 0$ such that
	\begin{align*}
	\Pr\rbr{\|\hSigma - \Sigma\|_2
		\leq C_1\|\Sigma\|_2
		\sbr{\cbr{\frac{d + \log(1/\delta)}{n}}^{1/2} \bigvee \frac{d + \log(1/\delta)}{n}}}\geq 1-\delta.
	\end{align*}
	Moreover, suppose $\sigma_d\leq \sigma_{\min}(\Sigma)\leq \sigma_{\max}(\Sigma)\leq \sigma_1$, then if $n\gtrsim \kappa^2d$ with $\kappa = \sigma_1/\sigma_d$ being the condition number,
	\begin{align*}
	\Pr\rbr{\frac{\sigma_d}{2}\leq\sigma_{\min}(\hSigma)\leq\sigma_{\max}(\hSigma)\leq \frac{3\sigma_1}{2}}\geq 1 - \frac{1}{d^2}.
	\end{align*}

\end{lemma}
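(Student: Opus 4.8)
The plan is to establish the operator-norm bound first and then read off the singular-value statement as a direct corollary via Weyl's inequality. For the operator-norm bound I would begin by removing the effect of the estimated mean. Writing $\hSigma_0 = n^{-1}\sum_{i=1}^n (X_i - \mu)(X_i - \mu)^\T$ for the oracle-centered sample covariance, the algebraic identity $\hSigma = \hSigma_0 - (\hmu - \mu)(\hmu - \mu)^\T$ gives, by the triangle inequality, $\|\hSigma - \Sigma\|_2 \leq \|\hSigma_0 - \Sigma\|_2 + \|\hmu - \mu\|_2^2$. The second term is lower order: since $\hmu - \mu \sim \mN(0, \Sigma/n)$, Lemma \ref{aux:lem:4} applied to $n^{1/2}(\hmu-\mu)$ yields $\|\hmu - \mu\|_2^2 \lesssim \|\Sigma\|_2\{d + \log(1/\delta)\}/n$ with probability at least $1 - \delta$, which is dominated by the leading branch of the claimed rate (and is of matching linear order in the other branch).

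The core is controlling $\|\hSigma_0 - \Sigma\|_2$. I would whiten: set $Z_i = \Sigma^{-1/2}(X_i - \mu) \sim \mN(0, I_d)$ i.i.d., so that $\hSigma_0 - \Sigma = \Sigma^{1/2}(\hW - I_d)\Sigma^{1/2}$ with $\hW = n^{-1}\sum_i Z_i Z_i^\T$, and hence $\|\hSigma_0 - \Sigma\|_2 \leq \|\Sigma\|_2 \|\hW - I_d\|_2$. It then suffices to prove the standard non-asymptotic Wishart bound $\|\hW - I_d\|_2 \lesssim \{(d+t)/n\}^{1/2} \vee \{(d+t)/n\}$ with probability at least $1 - e^{-t}$. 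This I would obtain by a covering argument: fix a $1/4$-net $\mathcal{V}$ of the unit sphere in $\mR^d$ with $|\mathcal{V}| \leq 9^d$, use the standard fact $\|\hW - I_d\|_2 \leq 2\max_{v \in \mathcal{V}} |v^\T(\hW - I_d)v|$, and for each fixed $v$ observe that $n\, v^\T \hW v = \sum_i \LD v, Z_i\RD^2$ is a sum of i.i.d. $\chi^2_1$ variables whose deviation from $n$ is controlled by the Gaussian quadratic-form tail (Lemma \ref{aux:lem:4} applied to the one-dimensional projection, equivalently a Bernstein bound for the sub-exponential variables $\LD v, Z_i\RD^2 - 1$). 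A union bound over $\mathcal{V}$, absorbing $\log|\mathcal{V}| \asymp d$ into $t$, yields the two-regime rate; combining with the mean-correction bound through a final union bound over the two events completes Part 1.

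The main obstacle is extracting the two-regime rate $\{(d+t)/n\}^{1/2} \vee \{(d+t)/n\}$ cleanly from the net argument. The sub-exponential deviation bound for $\sum_i\{\LD v, Z_i\RD^2 - 1\}$ is Gaussian in its small-deviation regime and exponential in its large-deviation regime, and one must track which regime the dimension term $d$ from $\log|\mathcal{V}|$ falls into; this is precisely what produces the maximum of the square-root and linear terms, and it is the one place where care is required rather than routine estimation.

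For Part 2, I would specialize Part 1 with $\delta = 1/d^2$, so that $\log(1/\delta) = 2\log d$ and $d + \log(1/\delta) \asymp d$; in the regime $n \gtrsim \kappa^2 d$ the rate is dominated by its square-root branch, giving $\|\hSigma - \Sigma\|_2 \lesssim \|\Sigma\|_2 (d/n)^{1/2} \leq \sigma_1 (d/n)^{1/2}$ on an event of probability at least $1 - 1/d^2$. Writing $\sigma_1 (d/n)^{1/2} = \sigma_d \cdot \kappa (d/n)^{1/2}$ shows that choosing the implicit constant in $n \gtrsim \kappa^2 d$ large enough forces $\|\hSigma - \Sigma\|_2 \leq \sigma_d/2$. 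Weyl's inequality then gives $|\sigma_{\min}(\hSigma) - \sigma_{\min}(\Sigma)| \vee |\sigma_{\max}(\hSigma) - \sigma_{\max}(\Sigma)| \leq \|\hSigma - \Sigma\|_2 \leq \sigma_d/2$, whence $\sigma_{\min}(\hSigma) \geq \sigma_d - \sigma_d/2 = \sigma_d/2$ and $\sigma_{\max}(\hSigma) \leq \sigma_1 + \sigma_d/2 \leq 3\sigma_1/2$, using $\sigma_d \leq \sigma_1$. This completes the proof.
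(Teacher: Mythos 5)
Your proof is correct and follows essentially the same route as the paper: the same decomposition $\hSigma = \hSigma_0 - (\hmu-\mu)(\hmu-\mu)^\T$, the same use of Lemma \ref{aux:lem:4} for the mean-correction term, and the same specialization $\delta = 1/d^2$ plus Weyl's inequality for the second part. The only difference is that where the paper simply cites equation (6.12) of Wainwright (2019) for the concentration of the oracle-centered covariance, you reprove that standard bound from scratch via whitening, a $1/4$-net, and Bernstein's inequality for the sub-exponential variables $\LD v, Z_i\RD^2 - 1$ — a self-contained but mathematically equivalent substitute.
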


\begin{proof}
	Since $\hSigma =  n^{-1}\sum_{i=1}^n(X_i - \mu)(X_i - \mu)^\T - (\hmu - \mu)(\hmu - \mu)^\T$, we have
	\begin{align}\label{c:1}
	\|\hSigma - \Sigma\|_2 \leq \bigg\|\frac{1}{n}\sum_{i=1}^n(X_i - \mu)(X_i - \mu)^\T - \Sigma\bigg\|_2 + \|\hmu - \mu\|_2^2.
	\end{align}
	Using equation (6.12) in \citet[][]{Wainwright2019High}, there exists a constant $C > 0$, so that
	\begin{align}\label{c:2}
	\left\| \frac{1}{n}\sum_{i=1}^n(X_i - \mu)(X_i - \mu)^\T - \Sigma \right\|_2
	\leq C\|\Sigma\|_2
	\sbr{\cbr{\frac{d + \log(1/\delta)}{n}}^{1/2} \bigvee \frac{d + \log(1/\delta)}{n}}
	\end{align}
	with probability at least $1- \delta$. For the second term, we have $\hmu - \mu\sim \mN(0, \Sigma/n)$. By Lemma~\ref{aux:lem:4},
	\begin{align}\label{c:3}
	\|\hmu - \mu\|_2^2 \leq \frac{\|\Sigma\|_2}{n}\sbr{d + 2\cbr{d\log(1/\delta)}^{1/2} + 2\log(1/\delta)}
	\end{align}
	with probability at least $1-\delta$. The proof of the first part follows by combining the last two displays. Moreover, let $\delta = 1/d^2$. We see if $n\gtrsim \kappa^2d$ then $\|\hSigma - \Sigma\|_2\leq \sigma_d/2$ with probability at least $1-1/d^2$. By Weyl's inequality \citep[cf. Theorem 4.3.1][]{Horn2013Matrix}, we can further get
	\begin{align*}
	\sigma_{\min}(\hSigma)\geq \sigma_{\min}(\Sigma) - \|\hSigma - \Sigma\|_2\geq \frac{\sigma_d}{2}.
	\end{align*}
	Similarly, the upper bound satisfies
	\begin{align*}
	\sigma_{\max}(\hSigma)\leq \sigma_{\max}(\Sigma) + \|\hSigma - \Sigma\|_2\leq \frac{3\sigma_1}{2}.
	\end{align*}
	This completes the second part of proof.
\end{proof}

\begin{lemma}\label{aux:lem:12}

	Let $X_1, \ldots, X_n$ be independent copies of $X\sim\mN(\mu, \Sigma) \in \mR^d$ and let $\ttSigma = n/(n-d-2)\hSigma$ with $\hSigma$ defined in \eqref{cov:1} be the scaled sample covariance. Suppose $d \leq cn$ for $c\in(0, 1/2)$,
	\begin{align*}
	\Pr\cbr{\|\ttSigma^{-1} - \Sigma^{-1}\|_{\infty, \infty}>\|\Sigma^{-1/2}\|_1^2\rbr{\frac{8\log d}{n}}^{1/2}}\leq \frac{4}{d^2}.
	\end{align*}
\end{lemma}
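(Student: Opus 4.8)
The plan is to exploit the exact Wishart structure of the scaled sample covariance, which makes $\ttSigma^{-1}$ an \emph{unbiased} estimator of $\Sigma^{-1}$, and then to concentrate each entry of the standardized inverse around its mean. Since the rows are i.i.d.\ $\mN(\mu,\Sigma)$, the centered scatter matrix satisfies $n\hSigma\sim W_d(\Sigma,n-1)$, and the inverse-Wishart moment identity $\mE[A^{-1}]=(m-d-1)^{-1}\Sigma^{-1}$ for $A\sim W_d(\Sigma,m)$ gives $\mE[(n\hSigma)^{-1}]=(n-d-2)^{-1}\Sigma^{-1}$, hence $\mE[\ttSigma^{-1}]=\Sigma^{-1}$ exactly (the Kaufman--Hartlap correction). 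Writing $B=\Sigma^{-1/2}(n\hSigma)\Sigma^{-1/2}\sim W_d(I_d,n-1)$ and $W=(n-d-2)B^{-1}$, I would factor
\[
\ttSigma^{-1}-\Sigma^{-1}=\Sigma^{-1/2}\rbr{W-I_d}\Sigma^{-1/2},\qquad \mE[W]=I_d,
\]
and bound entrywise by $\|\ttSigma^{-1}-\Sigma^{-1}\|_{\infty,\infty}\le \|\Sigma^{-1/2}\|_1^2\,\|W-I_d\|_{\infty,\infty}$, since each absolute row or column sum of the symmetric matrix $\Sigma^{-1/2}$ is at most its induced $1$-norm $\|\Sigma^{-1/2}\|_1$. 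This reduces the claim to the dimension-free statement $\Pr\{\|W-I_d\|_{\infty,\infty}>(8\log d/n)^{1/2}\}\le 4/d^2$.

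Next I would obtain the exact marginal laws of the entries of $B^{-1}$ from Wishart Schur-complement theory. For a pair $\{i,j\}$, the corresponding $2\times 2$ principal block of $B^{-1}$ equals the inverse of the Schur complement $B_{\{i,j\}\{i,j\}\cdot}\sim W_2(I_2,n-d+1)$. Applying the Bartlett (Cholesky) decomposition to this $W_2(I_2,n-d+1)$ matrix yields $(B^{-1})_{ii}=1/\chi^2_{n-d}$ on the diagonal and $(B^{-1})_{ij}=-\ell_{21}/(\ell_{11}\ell_{22}^2)$ off the diagonal, where $\ell_{11}^2\sim\chi^2_{n-d+1}$, $\ell_{22}^2\sim\chi^2_{n-d}$ and $\ell_{21}\sim\mN(0,1)$ are independent. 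Consequently $W_{ii}=(n-d-2)/\chi^2_{n-d}$ is a centered inverse-chi-square (mean $1$), while conditionally on $(\ell_{11},\ell_{22})$ the off-diagonal $W_{ij}$ is a centered Gaussian with variance $(n-d-2)^2/(\ell_{11}^2\ell_{22}^4)$.

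With these exact laws in hand I would concentrate each entry and union bound over the $d^2$ entries. For the off-diagonal entries I would first restrict to the high-probability event on which $\ell_{11}^2$ and $\ell_{22}^2$ exceed a constant fraction of their means (a Laurent--Massart chi-square lower tail), on which the conditional variance is $\lesssim 1/(n-d)$; a conditional Gaussian tail then controls $|W_{ij}|$. For the diagonal entries I would apply a chi-square deviation bound to $\chi^2_{n-d}$ and propagate it through $W_{ii}-1=\{(n-d-2)-\chi^2_{n-d}\}/\chi^2_{n-d}$. The main obstacle is the constant bookkeeping: each entry has fluctuation of order $(n-d)^{-1/2}$, so the union bound over $d^2$ entries is essentially tight, and reaching exactly the threshold $(8\log d/n)^{1/2}$ with total failure probability $4/d^2$ requires using the hypothesis $d\le cn$ with $c<1/2$ (so that $n-d\ge n/2$ and the per-entry variance proxy is controlled by $O(1/n)$) and tracking the numerical constants in the Laurent--Massart and conditional-Gaussian tails carefully enough that the tail exponent stays above the level demanded by the union bound.
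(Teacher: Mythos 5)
Your proposal is correct and shares the essential skeleton of the paper's proof: both arguments pull out the factor $\|\Sigma^{-1/2}\|_1^2$ via $\ttSigma^{-1}-\Sigma^{-1}=\Sigma^{-1/2}(W-I_d)\Sigma^{-1/2}$, and both exploit the key structural fact that each entry of the inverse of a $d\times d$ standard Wishart is determined by the inverse of a $2\times 2$ Schur complement which is itself Wishart with $n-d+O(1)$ degrees of freedom (the paper cites Eaton's Proposition 8.7 for exactly this), which is what yields the $(\log d/n)^{1/2}$ rate with no additive $d/n$ term. Where you diverge is the endgame for the $2\times 2$ block: the paper uses a Neumann-series perturbation bound $\|S_{n,2}^{-1}-I_2\|_{\infty,\infty}\leq 2\|S_{n,2}-I_2\|_{\infty,\infty}$ (valid on the event $\|S_{n,2}-I_2\|_1\leq 1/2$) and then invokes the sub-exponential concentration of sample-covariance entries from Rothman et al., whereas you use the Bartlett factorization to obtain the exact marginal laws $(n-d-2)/\chi^2_{n-d}$ and $-(n-d-2)\ell_{21}/(\ell_{11}\ell_{22}^2)$ and concentrate those directly. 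Your route is somewhat cleaner: it makes the entrywise unbiasedness of the Kaufman--Hartlap correction explicit (so the deviations are exactly mean zero, and you avoid the paper's residual $4/(n-d)$ correction term), at the cost of slightly more distribution-theoretic bookkeeping. The constant-tracking issue you flag is real but not a defect relative to the paper: the paper's asserted tail $4\exp(-nt^2)$ already suppresses the unspecified constants of Rothman's lemma, and the lemma is only consumed downstream up to multiplicative constants, so establishing the bound with $C(\log d/n)^{1/2}$ in place of $(8\log d/n)^{1/2}$ suffices for both arguments.
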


\begin{proof}
	We only prove the result for $\mu = 0$. The same technique can be applied for a general $\mu$.  Let $Y_i = \Sigma^{-1/2}X_i\sim \mN(0, I_d)$ for $i=1,\ldots,n$. Then
	\begin{align}
	\label{c:7}
	\|\ttSigma^{-1} - \Sigma^{-1}\|_{\infty, \infty}
	&= \|\Sigma^{-1/2}\big(\Sigma^{1/2}\ttSigma^{-1}\Sigma^{1/2} - I_d\big)\Sigma^{-1/2}\|_{\infty, \infty} \nonumber\\
	&\leq \|\Sigma^{-1/2}\|_1^2 \|(\frac{1}{n-d-2}\sum_{i=1}^n Y_iY_i^\T)^{-1} - I_d\|_{\infty, \infty}.
	\end{align}
	Let $S_{n, d} = \frac{1}{n-d-2}\sum_{i=1}^nY_iY_i^\T$, $\Delta_{n, d} = S_{n, d} - I_d$, and $\omega_{j, k}^{(n, d)} =  \rbr{S_{n, d}^{-1}}_{j, k}$ for $j, k = 1,\ldots,d$. We first consider the case when $d = 2$. For a large enough sample size $n$, we have $\|\Delta_{n, 2}\|_1 \leq {1}/{2}$ and
	\begin{align*}
	(I_2 + \Delta_{n, 2})^{-1} = \sum_{k=0}^{\infty}(-1)^k\Delta_{n, 2}^k = I_2 - \Delta_{n, 2} + \Delta_{n, 2}(I_2 + \Delta_{n, 2})^{-1} \Delta_{n, 2}.
	\end{align*}
	Then
	\begin{align}\label{c:9}
	\|S_{n,2}^{-1} - I_2\|_{\infty, \infty} = &\|(I_2 + \Delta_{n, 2})^{-1} - I_2\|_{\infty, \infty}  \nonumber\\
	\leq &\|\Delta_{n, 2}\|_{\infty, \infty} + \|\Delta_{n, 2}(I_2 + \Delta_{n, 2})^{-1} \Delta_{n, 2}\|_{\infty, \infty}.
	\end{align}
	Since $\|\Delta_{n, 2}\|_1 \leq {1}/{2}$, we have
	\begin{multline*}
	\|\Delta_{n, 2}(I_2 + \Delta_{n, 2})^{-1} \Delta_{n, 2}\|_{\infty, \infty}
	\leq  \|\Delta_{n, 2}\|_{\infty, \infty} \|\Delta_{n, 2}\|_1\sum_{k=0}^\infty\|\Delta_{n, 2}\|_1^k\\
	\leq  \frac{\|\Delta_{n, 2}\|_{\infty, \infty} \|\Delta_{n, 2}\|_1}{1 - \|\Delta_{n, 2}\|_1}
	\leq  \|\Delta_{n, 2}\|_{\infty, \infty}.
	\end{multline*}
	Combining with \eqref{c:9}, we have $\|S_{n,2}^{-1} - I_2\|_{\infty, \infty}\leq 2\|\Delta_{n, 2}\|_{\infty, \infty}$. By Lemma 1 in \cite{Rothman2008Sparse}, there exists a constant $\kappa>0$, such that
	\begin{align}\label{c:10}
	\Pr\bigg(\|S_{n,2}^{-1} - I_2\|_{\infty, \infty}>t\bigg)\leq \Pr\bigg(\|\Delta_{n, 2}\|_{\infty, \infty}>\frac{t}{2}\bigg)\leq 4\exp(-nt^2), \qquad  |t|\leq \kappa.
	\end{align}
	Next, we consider a general $d$. We divide $S_{n, d}$ into $2 \times 2$ block matrix as
	\begin{align*}
	S_{n, d} = \begin{pmatrix}
	S_{n, d}^{1, 1} & S_{n, d}^{1, 2}\\
	S_{n, d}^{2, 1} & S_{n, d}^{2, 2}
	\end{pmatrix}
	\end{align*}
	where $S_{n, d}^{1, 1}\in\mR^{2\times 2}$ and $S_{n, d}^{2, 2}\in\mR^{(d-2)\times (d-2)}$. Let $S_{n, d}^{11\cdot2} = S_{n, d}^{1, 1} - S_{n, d}^{1, 2}(S_{n, d}^{2, 2})^{-1}S_{n, d}^{2, 1}$. Due to the structure of $S_{n, d}^{-1}$, it suffices to show concentration of two representative entries: $(1,1)$-entry $\omega_{1,1}^{n, d}$ and $(1,2)$-entry $\omega_{1,2}^{n, d}$. By the block matrix inversion formula \citep[cf. Theorem 2.1 in][]{Lu2002Inverses}, $\omega_{1, 1}^{n, d} = [(S_{n, d}^{11\cdot 2})^{-1}]_{1, 1}$ and $\omega_{1, 2}^{n, d} = [(S_{n, d}^{11\cdot 2})^{-1}]_{1, 2}$. By Proposition 8.7 in \cite{Eaton2007Multivariate}, we have that $S_{n, d}^{11\cdot 2}$ is equal in distribution to $(n-d-2)^{-1}\sum_{i=1}^{n-d+2}Z_iZ_i^\T$ where $Z_i$, $i=1,\ldots,n-d+2$, are independently drawn from $\mN(0, I_2)$. In particular, we have that $S_{n, d}^{11\cdot 2}$ is equal in distribution to $(n-d+2)/(n-d-2)\cdot S_{n-d+2, 2}$. Therefore,
	\begin{align*}
	|\omega_{1, 1}^{n, d} - 1| \vee |\omega_{1, 2}^{n, d} |\leq \|\frac{n-d-2}{n-d+2}S_{n-d+2, 2}^{-1} - I_2\|_{\infty, \infty}\leq \|S_{n-d+2, 2}^{-1} - I_2\|_{\infty, \infty} + \frac{4}{n-d}.
	\end{align*}
	Further, we have
	\begin{align*}
	\Pr\rbr{ |\omega_{1, 1}^{n, d} - 1| > t } \vee
	\Pr\rbr{ |\omega_{1, 2}^{n, d} | > t }\leq &\Pr\rbr{\|S_{n-d+2, 2}^{-1} - I_2\|_{\infty, \infty}>t - \frac{4}{n-d}}\\
	\leq & 4\exp\cbr{- (n-d+2)\rbr{t - \frac{4}{n-d}}^2},
	\end{align*}
	for $|t - 4/(n-d)|\leq \kappa$, using \eqref{c:10}. Setting $t = \rbr{{8\log d}/{n}}^{1/2}$, taking union bound over all entries, ignoring smaller order term $4/(n-d)$, and combining with \eqref{c:7}, we finally complete the proof.
\end{proof}

\begin{lemma}\label{aux:lem:11}

	Let $X_1, \ldots, X_n$ be independent copies of $X\sim\mN(\mu, \Sigma) \in \mR^d$ and let $\hSigma$ be the sample covariance defined in \eqref{cov:1}. For any $U\in\mR^{d\times r}$ and $Q\in\Q^{r\times r}$,
	\begin{align*}
	\Pr\cbr{\|\big((\hSigma - \Sigma)UQ\big)^\T\|_{2, \infty}>11\rbr{\|U^\T \Sigma U\|_2\|\Sigma\|_2}^{1/2}\rbr{\frac{r\log d}{n}}^{1/2}}\leq \frac{2}{d^2}.
	\end{align*}

\end{lemma}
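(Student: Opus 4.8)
The plan is to reduce the row-wise bound to a scalar concentration inequality, uniformly over the unit sphere in $\mR^r$, via a covering argument, and then to apply Bernstein's inequality to products of jointly Gaussian scalars. First I would remove the orthogonal factor $Q$. For any matrix $A$, the $j$-th column of $(AUQ)^\T = Q^\T U^\T A^\T$ is $Q^\T U^\T(A_{j,\cdot})^\T$, whose Euclidean norm equals $\|e_j^\T A U\|_2$ and is therefore independent of $Q$. Taking $A = \hSigma - \Sigma$,
\begin{align*}
\|((\hSigma-\Sigma)UQ)^\T\|_{2,\infty} = \max_{j\in[d]}\|e_j^\T(\hSigma-\Sigma)U\|_2 = \max_{j\in[d]}\sup_{\|v\|_2=1} e_j^\T(\hSigma-\Sigma)Uv,
\end{align*}
so it suffices to control the scalar $e_j^\T(\hSigma-\Sigma)Uv$ uniformly over $j\in[d]$ and over unit vectors $v$.

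Next I would discard the centering. Writing $W_i = X_i-\mu\sim\mN(0,\Sigma)$ and $\tilde\Sigma = n^{-1}\sum_i W_iW_i^\T$, we have $\hSigma = \tilde\Sigma - (\hmu-\mu)(\hmu-\mu)^\T$, so the rank-one correction contributes at most $|(\hmu-\mu)_j|\,\|U^\T(\hmu-\mu)\|_2$ to the $j$-th row. Since $\hmu-\mu\sim\mN(0,\Sigma/n)$, Lemma~\ref{aux:lem:4} and a standard Gaussian maximal bound give $\max_j|(\hmu-\mu)_j|\lesssim(\|\Sigma\|_2\log d/n)^{1/2}$ and $\|U^\T(\hmu-\mu)\|_2\lesssim(r\|U^\T\Sigma U\|_2/n)^{1/2}$ with probability at least $1-O(d^{-2})$, so this term is of order $K\,(r\log d)^{1/2}/n$ with $K = (\|\Sigma\|_2\|U^\T\Sigma U\|_2)^{1/2}$, a factor $n^{-1/2}$ smaller than the target threshold, hence absorbed into the constant. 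For the core estimate I fix $j$ and a unit $v$ and set $\xi_i = (e_j^\T W_i)(W_i^\T Uv) = W_{i,j}\,(Uv)^\T W_i$, a product of two mean-zero jointly Gaussian scalars with variances $\Sigma_{jj}\le\|\Sigma\|_2$ and $v^\T U^\T\Sigma Uv\le\|U^\T\Sigma U\|_2$. Thus $\xi_i-\mE\xi_i$ is sub-exponential with parameter $\asymp K$, and since $e_j^\T(\tilde\Sigma-\Sigma)Uv = n^{-1}\sum_i(\xi_i-\mE\xi_i)$, Bernstein's inequality yields, at level $t\asymp K(r\log d/n)^{1/2}$ and in the regime $n\gtrsim r\log d$ where the quadratic branch binds, the sub-Gaussian tail $2\exp(-c\,r\log d)$.

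Finally I would pass from fixed $v$ to the supremum over the sphere and the maximum over $j$. Writing $e_j^\T(\tilde\Sigma-\Sigma)Uv = \langle g_j, v\rangle$ with $g_j = U^\T(\tilde\Sigma-\Sigma)e_j$, a $1/4$-net $\mathcal{V}$ of the unit sphere, of cardinality at most $9^r$, gives $\sup_{\|v\|_2=1}\langle g_j,v\rangle\le\tfrac{4}{3}\max_{v\in\mathcal{V}}\langle g_j,v\rangle$. A union bound over the $9^r$ net points and the $d$ indices bounds the total failure probability by $d\cdot 9^r\cdot 2\exp(-c\,r\log d)$; since $\log(d\,9^r)\lesssim r\log d$, taking the constant in $t$ large enough makes this at most $2/d^2$.

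The main obstacle is the constant bookkeeping needed to land exactly on the threshold $11(\|U^\T\Sigma U\|_2\|\Sigma\|_2)^{1/2}(r\log d/n)^{1/2}$: the sub-exponential norm of the correlated Gaussian product, the $4/3$ blow-up from the net, the Bernstein constant, and the exponent required to dominate $\log(d\,9^r)$ must all be tracked simultaneously, and one must verify throughout that the sample size keeps us in the quadratic (sub-Gaussian) branch of Bernstein, so that the rate is $(r\log d/n)^{1/2}$ rather than $r\log d/n$.
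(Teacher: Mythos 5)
Your proposal is correct and follows essentially the same route as the paper's proof: the same split of $\hSigma-\Sigma$ into the centered empirical covariance plus the rank-one mean correction, a covering-number argument over the unit ball in $\mR^r$ combined with a union bound over the $d$ rows, and Bernstein's inequality applied to the sub-exponential products $e_j^\T W_i\, W_i^\T Uv$ with $\psi_1$-norm of order $(\|U^\T\Sigma U\|_2\|\Sigma\|_2)^{1/2}$. The only differences are cosmetic (a $1/4$-net of size $9^r$ versus the paper's $1/2$-net of size $6^r$, and bounding the mean-correction term by Cauchy--Schwarz on two separate Gaussian tail bounds rather than a second net argument), and your explicit caveat about staying in the quadratic branch of Bernstein is a point the paper glosses over.
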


\begin{proof}
	Let $Y_i = X_i - \mu$, $i=1,\ldots,n$. Then
	\begin{align}\label{c:4}
	\|\big((\hSigma - \Sigma)UQ\big)^\T\|_{2, \infty} =& \|\big((\hSigma - \Sigma)U\big)^\T\|_{2, \infty} \nonumber\\
	\leq  &\|U^\T\big(\frac{1}{n}\sum_{i=1}^nY_iY_i^\T - \Sigma\big)\|_{2, \infty} + \|U^\T(\hmu - \mu)(\hmu - \mu)^\T\|_{2, \infty}.
	\end{align}
	For the first term in \eqref{c:4}, we have
	\begin{align*}
	\|U^\T\big(\frac{1}{n}\sum_{i=1}^n Y_iY_i^\T - \Sigma\big)\|_{2, \infty}
	= \max_{j\in[d]}\sup_{\substack{v\in\mR^r, \\ \|v\|_2\leq 1}} \frac{1}{n}\sum_{i=1}^n v^\T U^\T Y_i Y_i^\T e_j - \mE\rbr{ v^\T U^\T YY^\T e_j},
	\end{align*}
	where $e_j\in\mR^d$ denotes the $j$-th canonical basis of $\mR^d$. Let $\N$ be a $1/2$-net of $\{v\in\mR^r: \|v\|_2\leq 1\}$. Then $|\N| \leq 6^r$ and
	\begin{multline*}
	\max_{j\in[d]}\sup_{\substack{v\in\mR^r, \\ \|v\|_2\leq 1}} \frac{1}{n}\sum_{i=1}^n v^\T U^\T Y_i Y_i^\T e_j - \mE\rbr{ v^\T U^\T YY^\T e_j } \\
	\leq 2\max_{j\in[d]}\sup_{v\in\N} \frac{1}{n}\sum_{i=1}^n v^\T U^\T Y_i Y_i^\T e_j - \mE\rbr{ v^\T U^\T YY^\T e_j }
	\end{multline*}
	using (4.10) and Lemma 4.4.1 of \citet{Vershynin2018High}. Note that $v^\T U^\T Y_i Y_i^\T e_j $ is a sub-Exponential random	variable with the Orlitz $\psi_1$-norm \citep[c.f. Definition 2.7.5][]{Vershynin2018High} bounded as $\|v^\T U^\T Y_i Y_i^\T e_j\|_{\psi_1}\leq \|U^\T \Sigma U\|_2^{1/2}\|\Sigma\|_2^{1/2}$.  Using Bernstein's inequality \citep[cf. Theorem 2.8.1 in][]{Vershynin2018High} with $t = 5\|U^\T \Sigma U\|_2^{1/2}\|\Sigma\|_2^{1/2}\rbr{{r\log d}/{n}}^{1/2}$ and the union bound over $j\in[d]$ and $v\in\N$, we have
	\begin{align}\label{c:5}
	\Pr\cbr{\|U^\T\big(\frac{1}{n}\sum_{i=1}^n Y_iY_i^\T - \Sigma\big)\|_{2, \infty}>5\|U^\T \Sigma U\|_2^{1/2}\|\Sigma\|_2^{1/2}\rbr{{r\log d}/{n}}^{1/2} } \leq \frac{1}{d^2}.
	\end{align}
	For the second term in \eqref{c:4}, we proceed similarly. With $w = \hmu - \mu$, for any $t > 0$,
	\begin{align*}
	\Pr \rbr{\|U^\T ww^\T\|_{2, \infty}>t}\leq  d6^r\Pr \rbr{v^\T U^\T ww^Te_j>\frac{t}{2}}\leq  d6^r\exp\rbr{-\frac{nt}{2 \|U^\T \Sigma U\|_2^{1/2}\|\Sigma\|_2^{1/2}}}.
	\end{align*}
	Setting $t = 6\|U^\T \Sigma U\|_2^{1/2}\|\Sigma\|_2^{1/2}{r\log d}/{n}$, we get
	\begin{align}\label{c:6}
	\Pr\cbr{\|U^\T(\hmu - \mu)(\hmu - \mu)^\T\|_{2, \infty}>6\|U^\T \Sigma U\|_2^{1/2}\|\Sigma\|_2^{1/2}{r\log d}/{n} } \leq \frac{1}{d^2}.
	\end{align}
	Combining \eqref{c:5} and \eqref{c:6} with \eqref{c:4} completes the proof.
\end{proof}

\section{Convex approaches}\label{disc}

We propose two convex relaxation approaches for estimating $\tDelta$ defined in \eqref{dec:3}. First, given the empirical loss in \eqref{loss:popu:SR}, we solve the following regularized convex problem
\begin{align}\label{pro:1}
\min_{S, R}\text{\ \ } \mL_n(S, R) + \lambda_1\|S\|_{1,1} + \lambda_2 \|R\|_{*},
\end{align}
where the sparse and low-rank constraints are removed, and the $\ell_1$ and nuclear norm penalties are used to encourage sparse and low-rank solutions. The tuning parameters $\lambda_1$ and $\lambda_2$ are user specified and control the sparsity of $S$ and the rank of $R$, respectively. Our second convex proposal is to solve the following optimization problem
\begin{equation}\label{pro:2}
\begin{aligned}
\min_{S, R}\text{\ \ } &\|S\|_{1,1} + \lambda\|R\|_*,\\
\text{s.t.} \text{\ \ } & \big\|\hSigmax(S+R)\hSigmay - (\hSigmay - \hSigmax)\big\|_{\infty, \infty}\leq \eta/\lambda,\\
& \big\|\hSigmax(S+R)\hSigmay - (\hSigmay - \hSigmax)\big\|_2\leq \eta.
\end{aligned}
\end{equation}
The constraints in \eqref{pro:2} are obtained by restricting the gradient $\nabla_\Delta \mL_n(\Delta)$ by the dual norms of regularizers in \eqref{pro:1}. Both of them are typically solved by alternating direction method of multipliers, which suffers from high computational cost since each iteration requires an eigenvalue decomposition of $R$ to compute the proximal update corresponding the nuclear norm penalty. \cite{Ma2013Alternating} and \cite{Wang2013Large} proposed accelerated algorithms targeting the above mentioned drawback.  In comparison, nonconvex procedures are widely used to speed up estimation problems involving low-rank matrices \citep{Tu2016Low, Park2018Finding, Chi2018Nonconvex, Yu2018Provable}.

We will compare the estimator in \eqref{pro:1} with the proposed nonconvex approach in the next section. We detail an alternating direction method of multipliers for solving \eqref{pro:1}, which can be rewritten as
\begin{equation}\label{pro:4}
\begin{aligned}
\min_{S, R, \Delta}\ \  &\frac{1}{2}\TR(\Delta\hSigmax\Delta\hSigmay) - \TR\{\Delta(\hSigmay - \hSigmax)\} + \lambda_1\|S\|_{1,1} + \lambda_2 \|R\|_{*},\\
\text{subject to}\ \ &\Delta = S + R,
\end{aligned}
\end{equation}
with the augmented Lagrange function
\begin{multline*}
\hat{\mL}_{\nu}(S, R, \Delta; \Phi) = \frac{1}{2}\TR(\Delta\hSigmax\Delta\hSigmay) - \TR\{\Delta(\hSigmay - \hSigmax)\} + \lambda_1\|S\|_{1,1} + \lambda_2 \|R\|_{*} \\
  - \LD \Phi, \Delta - S - R\RD + \frac{1}{2\nu}\|\Delta - S - R\|_F^2.
\end{multline*}
A natural alternating direction method of multipliers has the following iterations
\begin{align}
\label{eq:admm:iter}
\begin{cases*}
\Delta^{k+1} = \arg\min \hat{\mL}_{\nu}(S^k, R^k, \Delta; \Phi^k),\\
S^{k+1} = \arg\min \hat{\mL}_{\nu}(S, R^k, \Delta^{k+1}; \Phi^k),\\
R^{k+1} = \arg\min \hat{\mL}_{\nu}(S^{k+1}, R, \Delta^{k+1}; \Phi^k),\\
\Phi^{k+1} = \Phi^k - (\Delta^{k+1} - S^{k+1} - R^{k+1})/\nu.
\end{cases*}
\end{align}
For the first iteration, $\Delta^{k+1}$ is obtained by solving the following linear system
\begin{align*}
\cbr{\frac{1}{2}(\hSigmax\otimes\hSigmay + \hSigmay\otimes\hSigmax) + \frac{1}{\nu}I_{d^2}} \text{vec}(\Delta) = \text{vec}\cbr{\frac{1}{\nu}(S^k + R^k) +\Phi^k + \hSigmay - \hSigmax},
\end{align*}
where $\text{vec}(\Delta)\in\mR^{d^2}$ is the vector obtained by vectorizing $\Delta$. Solving the above system, however, requires $O(d^6)$ computations per iteration and is prohibitive even for a small dimensional problem. In comparison, the proposed nonconvex approach requires only $O(d^2r)$ computations per iteration. We further remark on the computational cost of the alternating direction method of multipliers used to estimate a single group Gaussian graphical model with latent variables \citep{Chandrasekaran2012Latent, Ma2013Alternating}. There, each iteration of $\Omega = S + R$ (which denotes the precision matrix of a single group instead) requires two eigenvalue decompositions, which are also computationally expensive, albeit cheaper than solving the linear system above. The other iterates in \eqref{eq:admm:iter} can be obtained explicitly as
\begin{align*}
\begin{cases*}
S^{k+1} = \arg\min \frac{1}{2\nu}\|S + R^k + \nu\Phi^k - \Delta^{k+1}\|_F^2 + \lambda_1\|S\|_{1,1} = \text{Soft}(\Delta^{k+1} - R^k - \nu\Phi^k, \nu\lambda_1),\\
R^{k+1} = \arg\min \frac{1}{2\nu}\|R + S^{k+1} + \nu\Phi^k - \Delta^{k+1}\|_F^2 + \lambda_2\|R\|_* = \overline{\text{Soft}}(\Delta^{k+1} - S^{k+1} - \nu\Phi^k, \nu\lambda_2),\\
\Phi^{k+1} = \Phi^k - (\Delta^{k+1} - S^{k+1} - R^{k+1})/\nu,
\end{cases*}
\end{align*}
where, suppose $U = Q\Lambda Q^\T$ is the eigenvalue decomposition of $U$ and $\Lambda$ is the diagonal matrix,
\begin{equation*}
\cbr{\text{Soft}(U, \xi)}_{i, j} =
\begin{cases}
U_{i, j} - \xi & \text{if } U_{i, j}>\xi, \\
U_{i, j} + \xi & \text{if } U_{i, j}<\xi, \\
0              & \text{otherwise},
\end{cases}
\quad\quad
\overline{\text{Soft}}(U, \xi) = Q\text{Soft}(\Lambda, \xi)Q^\T.
\end{equation*}
From the above iteration regime, we note that an eigenvalue decomposition for $d$-dimensional matrix is required in each iteration.

\section{Additional experiments}\label{add:simu}

\subsection{Recovery of rank and positive index of inertia}

We provide the empirical evidence of the recovery of the rank, $r$, and the positive index of inertia, $r_1$, via cross-validation. For $(d,r) = (50, 1), (100, 1)$ and varying $n$, we check whether the rank chosen by cross-validation is consistent with the true rank and how often we can consistently estimate the positive index of inertia. From the result shown in Figure \ref{ComR}, we see that $r$ and $r_1$ are consistently selected in both cases when $\rbr{d\log d/n}^{1/2}\leq 0.25$.

\begin{figure}[htp!]
\centering     %%% not \center
\includegraphics[width=120mm]{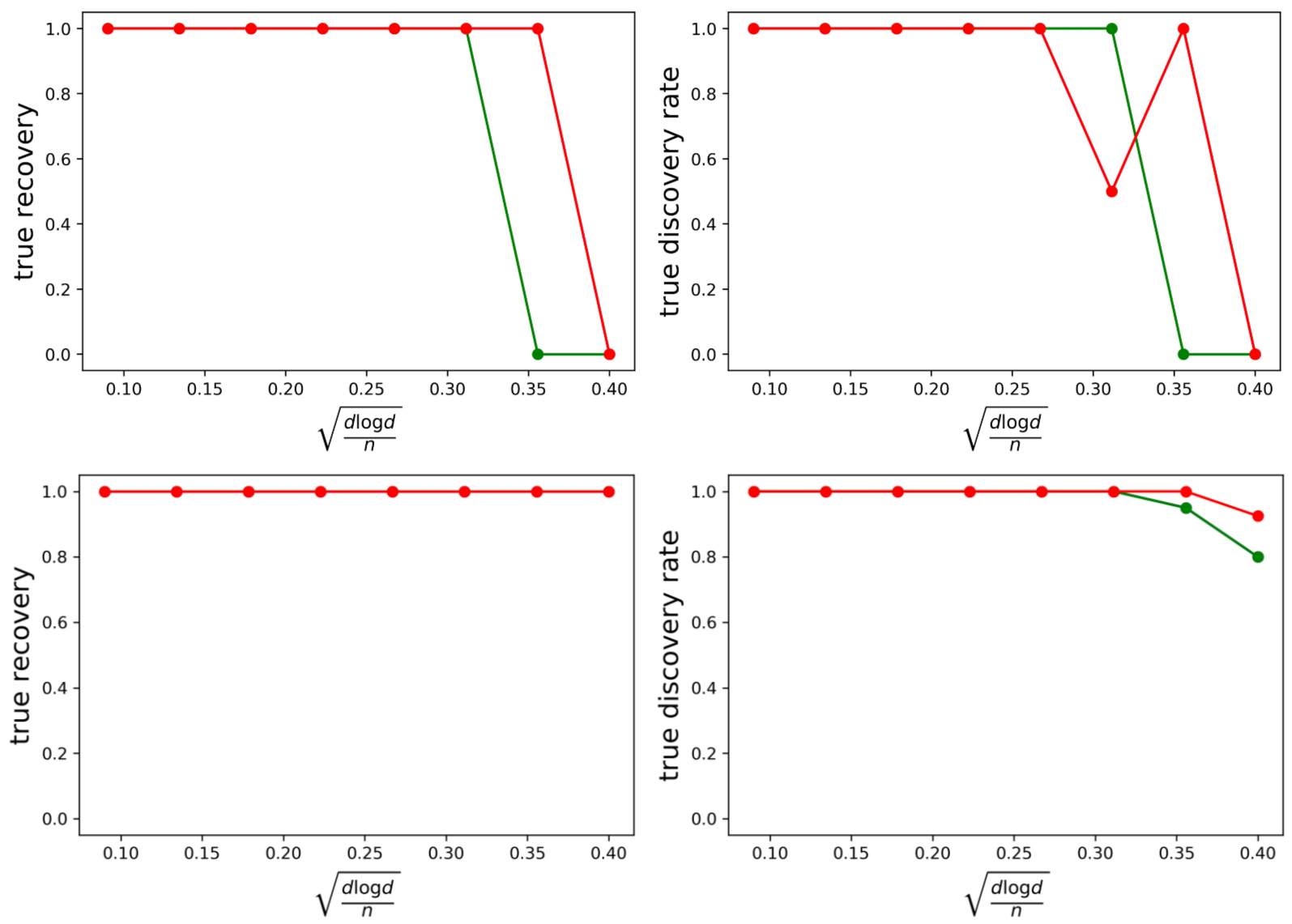}

\includegraphics[width=5.5cm,height=0.3cm]{art1/simu/Errbarleg}

\caption{Recovery of rank (left) and positive index of inertia (right) of $\tR$. The above two panels correspond to $(d,r) = (50,1)$, while the bottom two panels correspond to $(d,r) = (100,1)$. The left figures take value either $0$ or $1$, where $1$ represents the rank chosen by cross-validation is consistent with the true rank. The right figures are the proportion over 40 independent runs that have correct estimate of the positive index of inertia. In both figures, the blue line is covered by the green line. Both of them are correctly recovered when sample size is large enough. }
\label{ComR}

\label{fig:3}
\end{figure}

\subsection{Comparison of convex and nonconvex estimators}

We compare our nonconvex approach with the alternating direction method of multipliers described in the previous section on both the estimation precision and the time required to compute the estimator. We let $n = 10000$, $d = 100$, and $r = 2$ and generate the covariance matrices as in \textsection\ref{sec:5}. In particular, the control group is generated from $\Sigma_1^\star$, while the test $i$ group is generated from $\Sigma_{i+1}^\star$ for $i = 1,2,3$.  For the alternating direction method of multipliers, we initialize the iterates and $\nu$ as in the implementation in \cite{Ma2013Alternating}, while $\lambda_1\in\{0.01, 0.05, 0.1, 0.15\}$ and $\lambda_2\in\{0.15, 0.25, 0.35,0.45\}$ are chosen to minimize the objective function in \eqref{pro:4}  on the validation data. The results are summarized in Table \ref{tab:4}.  Comparing with Table \ref{tab:3}, we see that the convex approach for joint estimation outperforms than the convex approach for separate estimation \citep{Chandrasekaran2012Latent}. However, we also see that the proposed nonconvex approach is preferred both from estimation accuracy and computational efficiency perspectives.

\begin{table}[t]
\centering
\caption{Simulation results for the convex and nonconvex comparison. The estimation errors of the differential network and its sparse component are averaged over 40 independent runs, with standard error given in parentheses. Data are generated following the steps in \textsection\ref{sec:5} with $n = 10000$, $d = 100$, $r=2$. The smallest error under the same setup is highlighted.}\label{tab:4}
\begin{tabular}{*{5}{c}}
\Xhline{4\arrayrulewidth}
\hline
& & Control - Test 1 & Control - Test 2 & Control - Test 3\\
\hline
\multirow{2}{*}{$\|\hS - S^\star\|_F$} & M*&\textbf{12.55}(0.35) & \textbf{11.10}(0.38) & \textbf{10.61}(0.38)\\
& convex & 29.29(0.27) & 32.58(0.29) & 28.31(0.23)\\
\cline{2-5}
\multirow{2}{*}{$\frac{1}{\surd{\sigma_{\max}(R^\star)}}\|\hDelta - \Delta^\star\|_F$} & M* & \textbf{4.87}(0.13) & \textbf{4.52}(0.14) & \textbf{4.37}(0.15)\\
& convex & 11.31(0.10) & 12.84(0.11) & 11.33(0.09)\\
\cline{2-5}
\multirow{2}{*}{time in seconds} & M* & \textbf{3.25}(0.65) & \textbf{3.91}(0.51) & \textbf{6.01}(0.43)\\
& convex & 109.36(5.27) & 107.95(6.70) & 96.25(3.25)\\
\Xhline{4\arrayrulewidth}
\hline
\end{tabular}
\end{table}

\subsection{Loss functions in cross-validation}\label{sec:alter:loss}

We discuss some alternative loss functions that can be used to tuning parameters for other methods. Recall that there are no competing methods designed to directly estimate the differential network from observations comping from Gaussian  graphical models with latent variables. Therefore, throughout the implementation we use the loss function in \eqref{loss:popu:SR} to tune parameters for other methods. This is a reasonable approach since the loss function in \eqref{loss:popu:SR} is simply a quadratic function of the differential network, which in the limit of infinite samples is minimized at the true differential network $\tDelta$. Due to the quadratic curvature, the smaller the loss value the estimator has, the closer it is to the true network.

We provide a comprehensive evaluation of alternative approaches and consider different loss functions to tune parameters for each method. We consider two settings. In the first, we estimate sparse differential networks with $(n, d, r) = (200, 50, 0)$, while in the second the differential network has latent variables with $(n, d, r) = (10000, 100, 2)$. Data generating process follows the description in \textsection\ref{sec:5}. Results are averaged over 40 independent simulation runs.

The method in \cite{Zhao2014Direct} has a tuning parameter $\lambda$, which controls the sparsity of the differential network. See (2) in \cite{Zhao2014Direct}. They proposed two alternative loss functions, $2nL_\infty(\Delta) + \|\Delta\|_{0,1}$ and $2nL_F(\Delta) + \|\Delta\|_{0,1}$, where
\begin{equation}\label{loss:alter}
\begin{aligned}
L_\infty(\Delta) =& \|\hSigmax\Delta\hSigmay - (\hSigmay-\hSigmax)\|_{\infty, \infty},\\
L_F(\Delta) =& \|\hSigmax\Delta\hSigmay - (\hSigmay-\hSigmax)\|_F,
\end{aligned}
\end{equation}
to chose the tuning parameter $\lambda$. These two losses are motivated by the Akaike information criterion. Table \ref{tab:5} summarizes estimation results when different loss functions are used to tune the parameter $\lambda$ in \cite{Zhao2014Direct}. When $r=0$, we report $\|\hS - \tS\|_F$ only, as it is the same as $\|\hDelta - \tDelta\|_F$. We see that using our loss function to tune $\lambda$ results in a better estimate of the differential network when $\tDelta$ is sparse. Furthermore, results are comparable to using the loss $2nL_F(\Delta) + \|\Delta\|_{0,1}$.

\begin{table}[t]
\centering
\caption{Estimation of the differential network using the method of \cite{Zhao2014Direct} with tuning parameter $\lambda$ chosen to minimize different loss functions on the  validation data.}\label{tab:5}
\begin{tabular}{*{5}{c}}
\Xhline{4\arrayrulewidth}
\hline
& \multicolumn{4}{c}{$n = 200$, $d = 50$, $r = 0$}\\
\Xhline{4\arrayrulewidth}
\hline
& & Control - Test 1 & Control - Test 2 & Control - Test 3\\
\hline
\multirow{3}{*}{$\|\hS - S^\star\|_F$} & L1 & \textbf{10.88} & \textbf{11.92} & \textbf{10.64}\\
%\cline{2-5}
& L2 & 14.72 & 15.12 & 13.90\\
%\cline{2-5}
& L3 & 14.47 & 18.20 & 16.04\\
\Xhline{4\arrayrulewidth}
\hline
& \multicolumn{4}{c}{$n = 10000$, $d = 100$, $r = 2$}\\
\Xhline{4\arrayrulewidth}
\hline
\multirow{3}{*}{$\|\hS - S^\star\|_F$} & L1 & \textbf{39.50} & 50.09 & \textbf{37.64} \\
%\cline{2-5}
& L2 & 44.94 & 57.83 & 41.05\\
%\cline{2-5}
& L3 & 39.54 & \textbf{49.88} & 38.62\\
\hline
\hline
\multirow{3}{*}{$\frac{1}{\surd{\sigma_{\max}(R^\star)}}\|\hDelta - \Delta^\star\|_F$} & L1 & \textbf{14.91} & 19.49 & \textbf{14.82}\\
%\cline{2-5}
& L2 & 17.11 & 22.58 & 16.30\\
%\cline{2-5}
& L3 & 14.92 & \textbf{19.40} & 15.81\\
\Xhline{4\arrayrulewidth}
\hline
\multicolumn{5}{l}{
  \multirow{2}{*}{
	\parbox{33pc}{L1, the loss in \eqref{loss:popu:SR};
	L2, the loss $2nL_\infty(\Delta) + \|\Delta\|_{0,1}$;
	L3, the loss $2nL_F(\Delta) + \|\Delta\|_{0,1}$;
	$L_\infty$ and $L_F$ are defined in \eqref{loss:alter}.}
	}
	}
\end{tabular}
\end{table}

The method in \cite{Yuan2017Differential} also has a tuning parameter $\lambda$, which controls the sparsity of the differential network. See (2) in \cite{Yuan2017Differential}. In addition to the loss functions used above, we also consider the following two loss functions proposed in \cite{Yuan2017Differential} based on Bayesian information criterion, $2n\bar{L}_\infty(\Delta) + \log(2n)\|\Delta\|_{0,1}$ and $2n\bar{L}_F(\Delta) + \log(2n)\|\Delta\|_{0,1}$, where
\begin{equation}\label{loss:alter:1}
\begin{aligned}
\bar L_\infty(\Delta) =& \|(\hSigmax\Delta\hSigmay + \hSigmay\Delta\hSigmax)/2 - (\hSigmay - \hSigmax)\|_{\infty, \infty},\\
\bar L_F(\Delta) =& \|(\hSigmax\Delta\hSigmay + \hSigmay\Delta\hSigmax)/2 - (\hSigmay - \hSigmax)\|_F.
\end{aligned}
\end{equation}
Table \ref{tab:6} summarizes estimation results when different loss functions are used to tune the parameter $\lambda$ in \cite{Yuan2017Differential}. We see that when $(n, d, r) = (200, 50, 0)$ and the loss function \eqref{loss:popu:SR} is used to choose the tuning parameter the estimation error is smallest. When $(n,d,r) = (10000, 100, 2)$, the loss function \eqref{loss:popu:SR} and $2nL_F(\Delta) + \|\Delta\|_{0,1}$ give comparable results. Therefore, using our loss function to tune parameters in \cite{Yuan2017Differential} is reasonable.

\begin{table}[t]
\centering \caption{Estimation of the differential network using the method of \cite{Yuan2017Differential} with tuning parameter $\lambda$ chosen to minimize different loss functions on the  validation data.}\label{tab:6}
\begin{tabular}{*{5}{c}}
\Xhline{4\arrayrulewidth}
\hline
& \multicolumn{4}{c}{$n = 200$, $d = 50$, $r = 0$}\\
\Xhline{4\arrayrulewidth}
\hline
& & Control - Test 1 & Control - Test 2 & Control - Test 3\\
\hline
\multirow{5}{*}{$\|\hS - S^\star\|_F$} & L1 & \textbf{11.37} & \textbf{10.81} & \textbf{10.37}\\
%\cline{2-5}
& L2 & 14.75 & 11.80 & 14.24\\
%\cline{2-5}
& L3 & 11.47 & 11.80 & 13.97\\
%\cline{2-5}
& L4 & 14.78 & 15.25 & 13.60\\
%\cline{2-5}
& L5 & 14.57 & 11.89 & 13.99\\
\Xhline{4\arrayrulewidth}
\hline
& \multicolumn{4}{c}{$n = 10000$, $d = 100$, $r = 2$}\\
\Xhline{4\arrayrulewidth}
\hline
\multirow{5}{*}{$\|\hS - S^\star\|_F$} & L1 & 27.86 & 32.99 & \textbf{29.58}\\
%\cline{2-5}
& L2 & 33.28 & 39.15 & 35.14 \\
%\cline{2-5}
& L3 & \textbf{27.80} & \textbf{32.56} & 29.76\\
%\cline{2-5}
& L4 & 41.44 & 39.23 & 35.79\\
%\cline{2-5}
& L5 & 33.21 & 38.79 & 35.60\\
\hline
\hline
\multirow{5}{*}{$\frac{1}{\surd{\sigma_{\max}(R^\star)}}\|\hDelta - \Delta^\star\|_F$} & L1 & 10.41 & 12.77 & \textbf{11.56}\\
%\cline{2-5}
& L2 & 12.72 & 15.35 & 13.94\\
%\cline{2-5}
& L3 & \textbf{10.38} & \textbf{12.58} & 11.65\\
%\cline{2-5}
& L4 & 15.76 & 15.39 & 14.20\\
%\cline{2-5}
& L5 & 12.69 & 15.22 & 14.13\\
\Xhline{4\arrayrulewidth}
\hline
\multicolumn{5}{l}{
  \multirow{3}{*}{
	\parbox{33pc}{L1, the loss in \eqref{loss:popu:SR};
	L2, the loss $2nL_\infty(\Delta) + \|\Delta\|_{0,1}$;
	L3, the loss $2nL_F(\Delta) + \|\Delta\|_{0,1}$;
	L4, the loss $2n\bar{L}_\infty(\Delta) + \log(2n)\|\Delta\|_{0,1}$;
	L5, the loss $2n\bar{L}_F(\Delta) + \log(2n)\|\Delta\|_{0,1}$;
	$L_\infty$ and $L_F$ are defined in \eqref{loss:alter}, while
	$\bar L_\infty$ and $\bar L_F$ are defined in \eqref{loss:alter:1}.}
	}
	}\\[1em]
\end{tabular}
\end{table}

The method used to estimate single latent variable Gaussian graphical model \cite{Chandrasekaran2012Latent, Ma2013Alternating} requires selecting the tuning parameters $\alpha$ and $\beta$. See (2.1) in \cite{Ma2013Alternating}. We consider three alternative loss functions: $L_\infty(\Delta)$, $L_F(\Delta)$, and the penalized Gaussian likelihood, which was  used in \cite{Chandrasekaran2012Latent, Ma2013Alternating} and is given as
\begin{align}\label{Loss:pen:Gau}
L_p(\Omega) = \TR(\Omega\hSigma) - \log\det(\Omega) + \alpha \|S\|_{1,1} + \beta \|R\|_*,
\end{align}
where $\Omega = S+R$ is the precision matrix of a single group and $\hSigma$ is either $\hSigmax$ or $\hSigmay$. For the penalized Gaussian likelihood, it is designed for estimating a single group, hence we have to tune parameters separately for each group when using such loss function. The differential network is then obtained by computing the difference between the estimated precision matrices. The results are summarized in Table \ref{tab:7} and show that all the loss functions are comparable, with no loss function dominating others.

\begin{table}[t]
\centering
\caption{
Estimation of the differential network using the method of \cite{Ma2013Alternating} with tuning parameter $\alpha$ and $\beta$ chosen to minimize different loss functions on the  validation data.}\label{tab:7}
\begin{tabular}{*{5}{c}}
\Xhline{4\arrayrulewidth}
\hline
& \multicolumn{4}{c}{$n = 200$, $d = 50$, $r = 0$}\\	\Xhline{4\arrayrulewidth}
\hline
& & Control - Test 1 & Control - Test 2 & Control - Test 3\\
\hline
\multirow{4}{*}{$\|\hS - S^\star\|_F$} & L1 & \textbf{11.04} & \textbf{11.23} & \textbf{10.48}\\
%\cline{2-5}
& L2 & 11.90 & 11.75 & 12.71\\
%\cline{2-5}
& L3 & 12.99 & 12.20 & 12.41\\
%\cline{2-5}
& L4 & 11.23 & 11.37 & 10.76\\
\hline\hline
\multirow{4}{*}{$\frac{1}{\surd{\sigma_{\max}(R^\star)}}\|\hDelta - \Delta^\star\|_F$} & L1 & \textbf{10.85} & 10.86 & \textbf{10.37}\\
%\cline{2-5}
& L2 & 11.57 & \textbf{10.70} & 12.31\\
%\cline{2-5}
& L3 &  12.77 & 12.00 & 12.05\\
%\cline{2-5}
& L4 & 11.24 & 11.37 & 10.76\\
\Xhline{4\arrayrulewidth}
\hline
& \multicolumn{4}{c}{$n = 10000$, $d = 100$, $r = 2$}\\
\Xhline{4\arrayrulewidth}
\hline
\multirow{5}{*}{$\|\hS - S^\star\|_F$} & L1 & 30.54 & 34.11 & 31.80\\
%\cline{2-5}
& L2 &  \textbf{30.27} & 34.32 & \textbf{31.69}\\
%\cline{2-5}
& L3 & 30.50 & \textbf{34.08} & 31.81\\
%\cline{2-5}
& L4 & 30.52 & 34.09 & 31.74\\
\hline\hline
\multirow{4}{*}{$\frac{1}{\surd{\sigma_{\max}(R^\star)}}\|\hDelta - \Delta^\star\|_F$} & L1 & 11.51 & 13.27 & \textbf{12.47}\\
%\cline{2-5}
& L2 & 11.63 & 13.44 & 12.64\\
%\cline{2-5}
& L3 & \textbf{11.49} & \textbf{13.25} & 12.48\\
%\cline{2-5}
& L4 & 11.50 & 13.26 & 12.49\\
\Xhline{4\arrayrulewidth}
\hline
\multicolumn{5}{l}{
  \multirow{2}{*}{
	\parbox{33pc}{L1, the loss in \eqref{loss:popu:SR};
	L2, the loss $L_\infty(\Delta)$;
	L3, the loss $L_F(\Delta)$;
	L4, the penalized Gaussian likelihood defined in \eqref{Loss:pen:Gau}.}
	}
	}
\end{tabular}
\end{table}

\cite{Xu2017Speeding} estimates a single latent variable Gaussian graphical model using a nonconvex estimator. We tune parameters $\hat{s}$ and $r$,  while other tuning parameters are kept as in their implementation. We consider three loss functions: $L_\infty(\Delta)$, $L_F(\delta)$, and the Gaussian likelihood. Estimation results are summarized in Table \ref{tab:8}. When $r= 0$, we report only $\|\hS - \tS\|_F$, as the correct rank is selected using any of the loss functions. We observe that all loss functions lead to similar choice of tuning parameters and results.

\begin{table}[htp]
\centering
\caption{Estimation of the differential network using the method of \cite{Xu2017Speeding} with tuning parameter $\hat{s}$ and $r$ chosen to minimize different loss functions on the  validation data.}\label{tab:8}
\begin{tabular}{*{5}{c}}
\Xhline{4\arrayrulewidth}
\hline
& \multicolumn{4}{c}{$n = 200$, $d = 50$, $r = 0$}\\	\Xhline{4\arrayrulewidth}
\hline
& & Control - Test 1 & Control - Test 2 & Control - Test 3\\
\hline
\multirow{4}{*}{$\|\hS - S^\star\|_F$} & L1 & \textbf{13.51} & \textbf{14.79} & \textbf{12.71}\\
%\cline{2-5}
& L2 & 13.59 & 19.23 & 15.38 \\
%\cline{2-5}
& L3 & 13.51 & 15.12 & 12.79\\
%\cline{2-5}
& L4 & 13.51 & 15.11 & 12.78\\
\Xhline{4\arrayrulewidth}
\hline
& \multicolumn{4}{c}{$n = 10000$, $d = 100$, $r = 2$}\\
\Xhline{4\arrayrulewidth}
\hline
\multirow{5}{*}{$\|\hS - S^\star\|_F$} & L1 & 18.88 & 17.44 & \textbf{14.63}\\
%\cline{2-5}
& L2 & 16.62 & \textbf{15.27} & 19.92\\
%\cline{2-5}
& L3 & \textbf{16.55} & 17.41 & 14.69\\
%\cline{2-5}
& L4 & 18.96 & 17.41 & 14.71\\
\hline\hline
\multirow{4}{*}{$\frac{1}{\surd{\sigma_{\max}(R^\star)}}\|\hDelta - \Delta^\star\|_F$} & L1 & 6.58 & 6.44 & \textbf{5.60}\\
%\cline{2-5}
& L2 & 6.03 &  \textbf{5.78} & 7.29\\
%\cline{2-5}
& L3 & \textbf{5.99} & 6.45 & 5.69\\
%\cline{2-5}
& L4 & 6.61 & 6.45 & 5.64\\
\Xhline{4\arrayrulewidth}
\hline
\multicolumn{5}{l}{
  L1, the loss in \eqref{loss:popu:SR};
	L2, the loss $L_\infty(\Delta)$;
	L3, the loss $L_F(\Delta)$;
	L4, the Gaussian likelihood.
	}
\end{tabular}
\end{table}

Based on the additional simulation results in this section, we find that using the loss function \eqref{loss:popu:SR} to select the tuning parameters of all the procedures leads to comparable estimates to those that would be obtained by alternative loss functions. In particular, our approach to tuning parameter selection does not favor our estimation procedure.

\subsection{Algorithmic convergence and initialization}

We study how the initialization in Algorithm~\ref{alg:1} affects the convergence rate. The data generating procedure is as in \textsection\ref{sec:5} with $(n,d,r) = (150, 100, 1)$ and $(150,100,2)$. We compare with random initialization, where $S^0 = 0_{d\times d}$, $(U^0)_{i, j}\sim \mN(0, 3)$ for $1\leq i\leq d$ and $1\leq j\leq r$, and $L^0 = U^0\tLambda U^{0\T}$. In particular, we assume the rank and the positive index of inertia of $\tR$ are both correctly specified.

Figure \ref{fig:4} shows the error in estimation, $2\log\{\|\hDelta - \tDelta\|_F/\surd{\sigma_{\max}(R^\star)}\}$, against the iteration number. We plot the first 300 iterations for random initialization, as, typically, around 3000 iterations are needed for the algorithm to converge when randomly initialized. Furthermore, we observe that the iterates diverge in the first few steps for random initialization. We resolve this problem by adjusting the learning rates to $\eta_1/k$ and $\eta_2/k$, where $k$ is the iteration number, whenever we observe an increase in the objective value. From the results, we observe that our method converges smoothly to a stationary point with a nearly linear rate of convergence, as suggests by Theorem \ref{thm:1}. For random initialization, even after reducing the step size to fix the divergence problem, we observe that the accuracy of the estimator is not comparable to our estimator, as summarized in Table \ref{tab:12}. This illustrates the necessity for having a good initialization in the proposed two-stage algorithm.

\begin{figure}[thp!]
\centering     %%% not \center

\subfigure[Algorithmic convergence when estimating $\tDelta$ with $(n,d,r) = (150,100,1)$.]
{\label{cov1}\includegraphics[width=130mm]{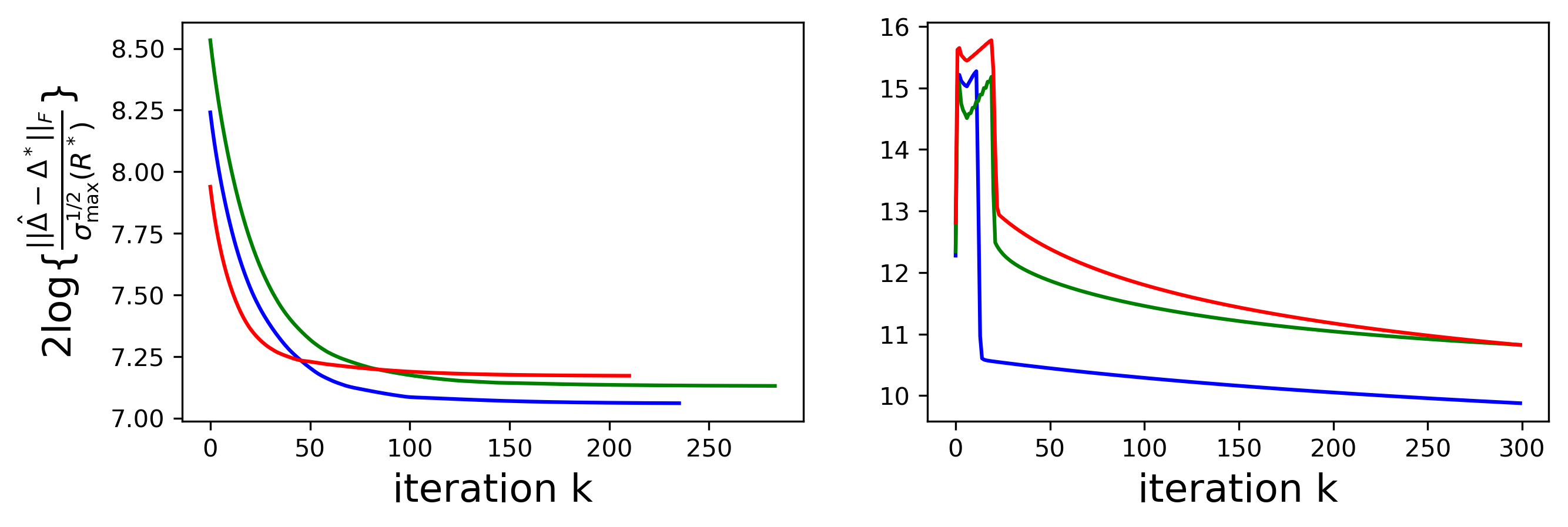}}

\subfigure[Algorithmic convergence when estimating $\tDelta$ with $(n,d,r) = (150,100,2)$.]
{\label{cov2}\includegraphics[width=130mm]{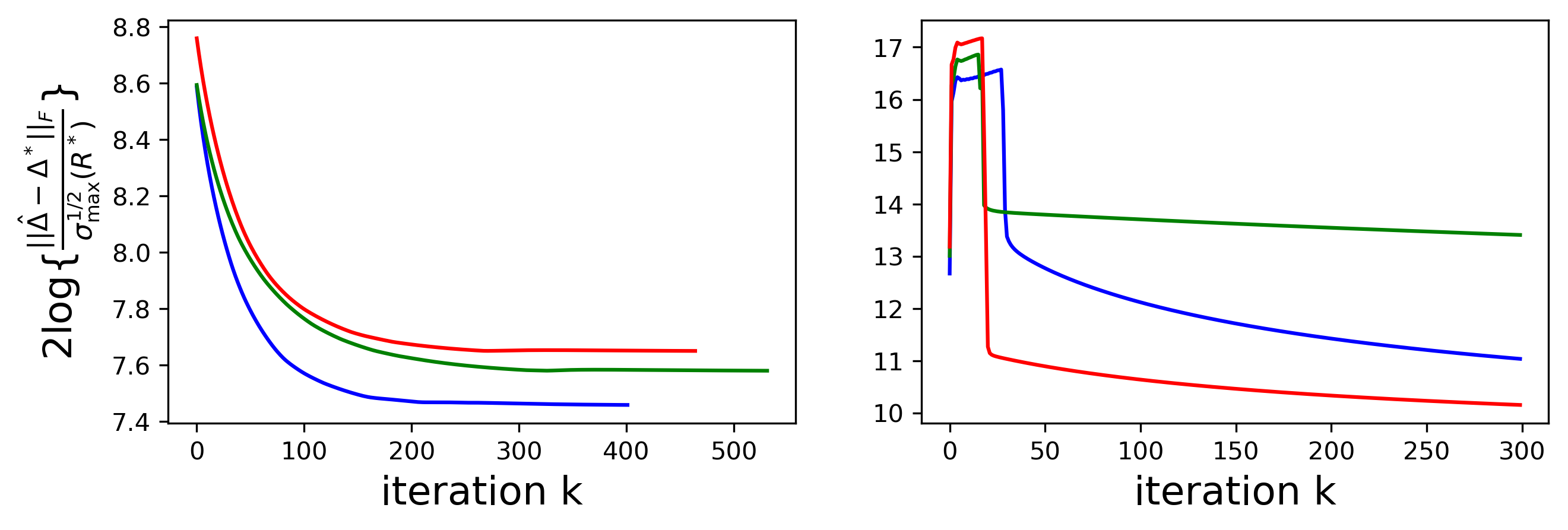}}

\includegraphics[width=5.5cm,height=0.3cm]{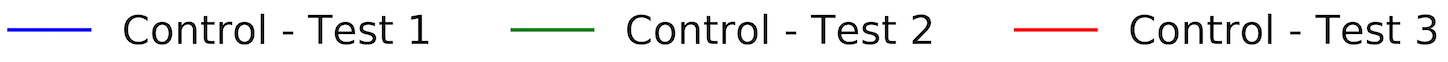}

\caption{Algorithmic convergence. The left plots illustrate convergence of Algorithm~\ref{alg:2} when initialized with Algorithm \ref{alg:1}, while the right plots correspond to random initialization.}

\label{fig:4}
\end{figure}

\begin{table}[thp!]
	\centering
	\caption{Comparison of random initialization with Algorithm~\ref{alg:2}. The error in estimation
		is measured by $\|\hDelta - \Delta^\star\|_F/\surd{\sigma_{\max}(R^\star)}$.}\label{tab:12}
	\begin{tabular}{ccccc}
		%\hline
		Setup & Initialization & Control - Test 1  & Control - Test 2 & Control - Test 3 \\
		\hline
		\multirow{2}{*}{$(150, 100, 1)$} & Algorithm \ref{alg:1} & \textbf{34.14} & \textbf{35.36} & \textbf{36.09}\\
		%\cline{2-5}
		& Random & 79.09 & 169.29 & 114.25\\
		\hline
		\multirow{2}{*}{$(150, 100, 2)$} & Algorithm \ref{alg:1} & \textbf{41.65} & \textbf{44.25} & \textbf{45.83}\\
		%\cline{2-5}
		& Random & 114.81 & 381.35 & 109.12\\
		\hline
	\end{tabular}
\end{table}

\subsection{Advantages of direct estimation}\label{simu:advantage}

We run additional simulations to better illustrate benefits of our direct estimator of the differential network. Here we consider more general structures on single group precision matrices. Compared to direct estimators in \cite{Zhao2014Direct} and \cite{Yuan2017Differential}, we allow the differential network to have a low-rank component. Compared to separate estimation procedures in \cite{Chandrasekaran2012Latent} and \cite{Xu2017Speeding}, we do not rely on group specific precision matrices to have structure.

We consider two additional data generating settings: (i) only the differential network has sparse plus low-rank structure, while each group has no structure; (ii) one group has a sparse precision matrix, while the other has  a sparse plus low-rank precision matrix. For the setting (i), we first set the precision matrices $\Omega_i^\star$, $i=1,\ldots,4$, as described in \textsection\ref{sec:5}. We then modify the top left $d\times d$ submatrix of each precision matrix by adding a symmetric positive definite matrix  $P = d\cdot I_d + (P_1 + P_1^\T)/2$, where $(P_1)_{i, j}$ is generated independently from $\text{Uniform}(0, 1)$. Let  $\bar{\Omega}_i^\star$ denote the modified precision matrices and $\bar{\Sigma}_i^\star = (\bar{\Omega}_i^\star)^{-1}$. The control group $X$ is generated based on $\bar{\Sigma}_1^\star$, while $\bar{\Sigma}_i^\star$ are used to generate test $i-1$ groups for $i = 2,3,4$. Note that under this data generating process, the differential network still has the same sparse plus low rank structure as in \textsection\ref{sec:5}. However, the marginal precision matrices of the observed variables do not have the sparse plus low-rank structure, since they are  blurred by a positive definite matrix $P$. For the setting (ii), the precision matrix for the control group is sparse, while the precision matrix for the test group is generated as in \textsection\ref{sec:5}. The sparse precision is generated as follows: we let $\Omega_{OH}^\star$ and $\Omega_{HO}^\star$ be zero matrices, and $\Omega_{HH}^\star$ be the identity matrix. The diagonal entries of $\Omega_{OO}$ are all equal to $1$, and $(\Omega_{OO})_{i, d+1-i} = 1$ for $\leq i\leq d$, $(\Omega_{OO})_{i, d-i} = (\Omega_{OO})_{i+1, d+1-i} = 0.6$ for $1\leq i \leq d-1$, and $(\Omega_{OO})_{i, d-1-i} = (\Omega_{OO})_{i+2, d+1-i} = 0.3$ for $1\leq i \leq d-2$. Then $\Omega_{OO}^\star = \Omega_{OO} + (\iota + 1) I_d$ where $\iota = |\min\{\eig(\Omega_{OO})\}|$. Combining all blocks together, the covariance matrix for the control group is $(\Omega_{OO}^\star)^{-1}$.

Using the two models described above, we generate data with $(n,d,r) = (5000, 30, 0)$, $(10000, 30, 1)$, and $(20000,30,2)$. The implementation details for the proposed and competing methods are described in \textsection\ref{sec:5}. We report the average error over 40 independent runs. Results are summarized in Tables \ref{tab:9}-\ref{tab:11}.  Unsurprisingly, our method outperforms other methods in almost all cases. Specifically, we observe that when each network has no structure, but the differential network is decomposable as sparse plus low-rank matrix, our method performs strictly better than all other methods. This illustrates that our direct estimation procedure does not rely on the structure of a single group and results in a wider applicability.

\newpage
\begin{rotatepage}
\begin{landscape}
\begin{table}[p]
\centering
\caption{Simulation results for five algorithms corresponding to the data generating process in Appendix \ref{simu:advantage}.}\label{tab:9}
\begin{tabular}{*{7}{c}}
\Xhline{4\arrayrulewidth}
\hline
& \multicolumn{6}{c}{$n = 5000$, $d = 30$, $r = 0$}\\
\Xhline{4\arrayrulewidth}
\hline
& \multicolumn{2}{c}{Control - Test 1} & \multicolumn{2}{c}{Control - Test 2} & \multicolumn{2}{c}{Control - Test 3}\\

\hline
\\[-0.9em]
Method & $\|\hS - S^\star\|_F$& $\frac{1}{\surd{\sigma_{\max}(R^\star)}}\|\hDelta - \Delta^\star\|_F$ & $\|\hS - S^\star\|_F$&  $\frac{1}{\surd{\sigma_{\max}(R^\star)}}\|\hDelta - \Delta^\star\|_F$ & $\|\hS - S^\star\|_F$ &  $\frac{1}{\surd{\sigma_{\max}(R^\star)}}\|\hDelta - \Delta^\star\|_F$\\
\Xhline{4\arrayrulewidth}

\hline
& \multicolumn{6}{c}{Case (i)}\\
\Xhline{1\arrayrulewidth}

\hline
M* & \textbf{10.23} & \textbf{10.23} &\textbf{ 9.42} & \textbf{9.42} & \textbf{8.12} & \textbf{8.12}\\
\hline
M1 &  14.01 & 14.01 & 12.03 & 12.03 & 10.32 & 10.32\\
\hline
M2 & 13.40 & 13.40 & 16.26 & 16.26 & 12.36 & 12.36\\
\hline
M3 & 13.21 & 13.21 & 11.32 & 11.32 & 9.67 & 9.67\\
\hline
M4 & 20.70 & 20.70 & 21.00 & 21.00 & 20.58 & 20.58\\
\Xhline{2\arrayrulewidth}
\hline
\hline
\Xhline{2\arrayrulewidth}
& \multicolumn{6}{c}{Case (ii)}\\
\Xhline{1\arrayrulewidth}

\hline
M* &  1.95 & 1.95 & \textbf{1.38} & \textbf{1.45} & \textbf{1.69} & \textbf{1.69}\\
\hline
M1 &  \textbf{1.93} & \textbf{1.93} & 2.16 & 2.16 & 1.95 & 1.95\\
\hline
M2 &  1.94 & 1.94 & 2.07 & 2.07 & 1.99 & 1.99\\
\hline
M3 &  2.37 & 2.37 & 2.06 & 2.06 & 2.23 & 2.23\\
\hline
M4 & 2.15 & 2.15 & 2.00 & 2.00 & 2.24 & 2.24\\
\Xhline{4\arrayrulewidth}
\hline
\multicolumn{7}{l}{
\multirow{3}{*}{
\parbox{47pc}{
M*, the proposed method; M1, $\ell_1$-minimization in \cite{Zhao2014Direct}; M2, $\ell_1$-penalized quadratic loss in \cite{Yuan2017Differential};
M3, penalized Gaussian likelihood in \cite{Chandrasekaran2012Latent}; M4, constrained Gaussian likelihood in \cite{Xu2017Speeding};
detailed descriptions of each method are given in Table \ref{tab:1} and the choice of tuning parameters is discussed
in \textsection\ref{sec:5.1}.
}
}
}
\end{tabular}
\end{table}
\end{landscape}

\end{rotatepage}

\begin{rotatepage}
\begin{landscape}
\begin{table}[p]
\centering
\caption{Simulation results for five algorithms corresponding to the data generating process in Appendix \ref{simu:advantage}.}\label{tab:10}
\begin{tabular}{*{7}{c}}
\Xhline{4\arrayrulewidth}
\hline
& \multicolumn{6}{c}{$n = 10000$, $d = 30$, $r = 1$}\\
\Xhline{4\arrayrulewidth}

\hline
& \multicolumn{2}{c}{Control - Test 1} & \multicolumn{2}{c}{Control - Test 2} & \multicolumn{2}{c}{Control - Test 3}\\

\hline
\\[-0.9em]
Method & $\|\hS - S^\star\|_F$& $\frac{1}{\surd{\sigma_{\max}(R^\star)}}\|\hDelta - \Delta^\star\|_F$ & $\|\hS - S^\star\|_F$&  $\frac{1}{\surd{\sigma_{\max}(R^\star)}}\|\hDelta - \Delta^\star\|_F$ & $\|\hS - S^\star\|_F$ &  $\frac{1}{\surd{\sigma_{\max}(R^\star)}}\|\hDelta - \Delta^\star\|_F$\\
\Xhline{4\arrayrulewidth}

\hline
& \multicolumn{6}{c}{Case (i)}\\
\Xhline{1\arrayrulewidth}
\hline
M* & \textbf{9.01} & \textbf{5.76} & \textbf{8.11} & \textbf{5.39} & \textbf{7.99} & \textbf{5.27}\\
\hline
M1 &  19.80 & 12.07 & 21.11& 12.63 & 23.08 & 13.62\\
\hline
M2 & 19.04 & 11.54 & 17.51 & 10.37 & 20.08 & 11.86\\
\hline
M3 & 18.39 & 11.22 & 19.56 & 11.72 & 21.28 & 12.58\\
\hline
M4 & 16.18 & 9.70 & 16.38 & 9.55 & 16.26 & 9.36\\
\Xhline{2\arrayrulewidth}
\hline\hline
\Xhline{2\arrayrulewidth}
& \multicolumn{6}{c}{Case (ii)}\\
\Xhline{1\arrayrulewidth}
\hline
M* &  \textbf{3.20} & \textbf{1.06} & \textbf{2.53} & \textbf{1.05} & \textbf{3.22} & \textbf{1.00}\\
\hline
M1 &  4.72 & 1.41 & 4.53 & 1.30 & 4.87 & 1.50\\
\hline
M2 &  5.08 & 1.53 & 4.86 & 1.46 & 5.14 & 1.54\\
\hline
M3 &  7.03 & 3.18 & 6.56 & 3.20 & 6.43 & 2.89\\
\hline
M4 & 3.77 & 1.91 & 5.36 & 1.08 & 5.72 & 1.04\\
\Xhline{4\arrayrulewidth}
\hline
\multicolumn{7}{l}{
\multirow{3}{*}{
\parbox{47pc}{
M*, the proposed method; M1, $\ell_1$-minimization in \cite{Zhao2014Direct}; M2, $\ell_1$-penalized quadratic loss in \cite{Yuan2017Differential};
M3, penalized Gaussian likelihood in \cite{Chandrasekaran2012Latent}; M4, constrained Gaussian likelihood in \cite{Xu2017Speeding};
detailed descriptions of each method are given in Table \ref{tab:1} and the choice of tuning parameters is discussed
in \textsection\ref{sec:5.1}.
}
}
}
\end{tabular}
\end{table}

\end{landscape}

\end{rotatepage}

\begin{rotatepage}
\begin{landscape}
\begin{table}[p]
\centering
\caption{Simulation results for five algorithms corresponding to the data generating process in Appendix \ref{simu:advantage}.}\label{tab:11}
\begin{tabular}{*{7}{c}}
\Xhline{4\arrayrulewidth}
\hline
& \multicolumn{6}{c}{$n = 20000$, $d = 30$, $r = 2$}\\
\Xhline{4\arrayrulewidth}
\hline
& \multicolumn{2}{c}{Control - Test 1} & \multicolumn{2}{c}{Control - Test 2} & \multicolumn{2}{c}{Control - Test 3}\\

\hline
\\[-0.9em]
Method & $\|\hS - S^\star\|_F$& $\frac{1}{\surd{\sigma_{\max}(R^\star)}}\|\hDelta - \Delta^\star\|_F$ & $\|\hS - S^\star\|_F$&  $\frac{1}{\surd{\sigma_{\max}(R^\star)}}\|\hDelta - \Delta^\star\|_F$ & $\|\hS - S^\star\|_F$ &  $\frac{1}{\surd{\sigma_{\max}(R^\star)}}\|\hDelta - \Delta^\star\|_F$\\
\Xhline{4\arrayrulewidth}

\hline
& \multicolumn{6}{c}{Case (i)}\\
\Xhline{1\arrayrulewidth}

\hline
M* & \textbf{9.18} & \textbf{4.77} & \textbf{8.65} & \textbf{4.59} & \textbf{8.31} & \textbf{4.46}\\
\hline
M1 & 26.79 & 13.13 & 31.90 & 15.53 & 31.80 & 14.91 \\
\hline
M2 & 22.00 & 10.30 & 25.73 & 12.11 & 24.66 & 11.14\\
\hline
M3 & 24.95 & 12.21 & 29.64 & 14.41 & 29.52 & 13.82\\
\hline
M4 & 12.91 & 5.98 & 13.03 & 5.99 & 13.32 & 5.83\\
\Xhline{2\arrayrulewidth}
\hline
\hline
\Xhline{2\arrayrulewidth}
& \multicolumn{6}{c}{Case (ii)}\\
\Xhline{1\arrayrulewidth}

\hline
M* &\textbf{3.83} & \textbf{0.97} & \textbf{2.86} & 0.83 & \textbf{4.63} & 1.03 \\
\hline
M1 &  6.12 & 1.49 & 6.27 & 1.58 & 7.42 & 1.84\\
\hline
M2 &  6.82 & 1.83 & 6.63 & 1.68 & 7.65 & 1.98\\
\hline
M3 &  10.44 & 4.02 & 10.01 & 3.91 & 11.62 & 4.31\\
\hline
M4 & 6.99 & 1.83 & 6.73 & \textbf{0.77} & 7.68 & \textbf{0.78}\\
\Xhline{4\arrayrulewidth}
\hline
\multicolumn{7}{l}{
\multirow{3}{*}{
\parbox{47pc}{
M*, the proposed method; M1, $\ell_1$-minimization in \cite{Zhao2014Direct}; M2, $\ell_1$-penalized quadratic loss in \cite{Yuan2017Differential};
M3, penalized Gaussian likelihood in \cite{Chandrasekaran2012Latent}; M4, constrained Gaussian likelihood in \cite{Xu2017Speeding};
detailed descriptions of each method are given in Table \ref{tab:1} and the choice of tuning parameters is discussed
in \textsection\ref{sec:5.1}.
}
}
}
\end{tabular}
\end{table}

\end{landscape}

\end{rotatepage}

\putbib[paper]
\end{bibunit}


\begin{thebibliography}{46}
\providecommand{\natexlab}[1]{#1}
\providecommand{\url}[1]{\texttt{#1}}
\expandafter\ifx\csname urlstyle\endcsname\relax
  \providecommand{\doi}[1]{doi: #1}\else
  \providecommand{\doi}{doi: \begingroup \urlstyle{rm}\Url}\fi

\bibitem[Abraham et~al.(2014)Abraham, Pedregosa, Eickenberg, Gervais, Mueller,
  Kossaifi, Gramfort, Thirion, and Varoquaux]{Abraham2014Machine}
A.~Abraham, F.~Pedregosa, M.~Eickenberg, P.~Gervais, A.~Mueller, J.~Kossaifi,
  A.~Gramfort, B.~Thirion, and G.~Varoquaux.
\newblock Machine learning for neuroimaging with scikit-learn.
\newblock \emph{Frontiers in Neuroinformatics}, 8:\penalty0 14, 2014.

\bibitem[Barab{\'{a}}si and Oltvai(2004)]{Barabasi2004Network}
A.-L. Barab{\'{a}}si and Z.~N. Oltvai.
\newblock Network biology: understanding the cell's functional organization.
\newblock \emph{Nature Reviews Genetics}, 5\penalty0 (2):\penalty0 101--113,
  2004.

\bibitem[Bielza and Larra\~naga(2014)]{Bielza2014Bayesian}
C.~Bielza and P.~Larra\~naga.
\newblock Bayesian networks in neuroscience: a survey.
\newblock \emph{Frontiers in Computational Neuroscience}, 8:\penalty0 131,
  2014.

\bibitem[Bullmore and Sporns(2009)]{Bullmore2009Complex}
E.~Bullmore and O.~Sporns.
\newblock Complex brain networks: graph theoretical analysis of structural and
  functional systems.
\newblock \emph{Nature Reviews Neuroscience}, 10\penalty0 (3):\penalty0
  186--198, 2009.

\bibitem[Cai et~al.(2019)Cai, Li, Ma, and Xia]{Cai2019Differential}
T.~T. Cai, H.~Li, J.~Ma, and Y.~Xia.
\newblock Differential {M}arkov random field analysis with an application to
  detecting differential microbial community networks.
\newblock \emph{Biometrika}, 106\penalty0 (2):\penalty0 401--416, 2019.

\bibitem[Cand\`es and Romberg(2007)]{Candes2007Sparsity}
E.~J. Cand\`es and J.~Romberg.
\newblock Sparsity and incoherence in compressive sampling.
\newblock \emph{Inverse Problems}, 23\penalty0 (3):\penalty0 969--985, 2007.

\bibitem[Cand\`es et~al.(2011)Cand\`es, Li, Ma, and Wright]{Candes2011Robust}
E.~J. Cand\`es, X.~Li, Y.~Ma, and J.~Wright.
\newblock Robust principal component analysis?
\newblock \emph{J. ACM}, 58\penalty0 (3), 2011.

\bibitem[Castellanos et~al.(2013)Castellanos, {Di Martino}, Craddock, Mehta,
  and Milham]{Castellanos2013Clinical}
F.~X. Castellanos, A.~{Di Martino}, R.~C. Craddock, A.~D. Mehta, and M.~P.
  Milham.
\newblock Clinical applications of the functional connectome.
\newblock \emph{{NeuroImage}}, 80:\penalty0 527--540, 2013.

\bibitem[Chandrasekaran et~al.(2011)Chandrasekaran, Sanghavi, Parrilo, and
  Willsky]{Chandrasekaran2011Rank}
V.~Chandrasekaran, S.~Sanghavi, P.~A. Parrilo, and A.~S. Willsky.
\newblock Rank-sparsity incoherence for matrix decomposition.
\newblock \emph{SIAM J. Optim.}, 21\penalty0 (2):\penalty0 572--596, 2011.

\bibitem[Chandrasekaran et~al.(2012)Chandrasekaran, Parrilo, and
  Willsky]{Chandrasekaran2012Latent}
V.~Chandrasekaran, P.~A. Parrilo, and A.~S. Willsky.
\newblock Latent variable graphical model selection via convex optimization.
\newblock \emph{Ann. Statist.}, 40\penalty0 (4):\penalty0 1935--1967, 2012.

\bibitem[Chen et~al.(2011)Chen, Zhou, and Ye]{Chen2011Integrating}
J.~Chen, J.~Zhou, and J.~Ye.
\newblock Integrating low-rank and group-sparse structures for robust
  multi-task learning.
\newblock In \emph{International conference on knowledge discovery and data
  mining}, pages 42--50, 2011.

\bibitem[Chen(2015)]{Chen2015Incoherence}
Y.~Chen.
\newblock Incoherence-optimal matrix completion.
\newblock \emph{IEEE Trans. Inform. Theory}, 61\penalty0 (5):\penalty0
  2909--2923, 2015.

\bibitem[Chen et~al.(2014)Chen, Bhojanapalli, Sanghavi, and
  Ward]{Chen2014Coherent}
Y.~Chen, S.~Bhojanapalli, S.~Sanghavi, and R.~Ward.
\newblock Coherent matrix completion.
\newblock In \emph{International Conference on Machine Learning}, volume~32,
  pages 674--682, 2014.

\bibitem[{de la Fuente}(2010)]{DelaFuente2010differential}
A.~{de la Fuente}.
\newblock From `differential expression' to `differential networking'
  {\textendash} identification of dysfunctional regulatory networks in
  diseases.
\newblock \emph{Trends in Genetics}, 26\penalty0 (7):\penalty0 326--333, 2010.

\bibitem[Drton and Maathuis(2017)]{Drton2017Structure}
M.~Drton and M.~H. Maathuis.
\newblock Structure learning in graphical modeling.
\newblock \emph{Annual Review of Statistics and Its Application}, 4\penalty0
  (1):\penalty0 365--393, 2017.

\bibitem[Durkee et~al.(2012)Durkee, Kaess, Carli, Parzer, Wasserman, Floderus,
  Apter, Balazs, Barzilay, Bobes, et~al.]{Durkee2012Prevalence}
T.~Durkee, M.~Kaess, V.~Carli, P.~Parzer, C.~Wasserman, B.~Floderus, A.~Apter,
  J.~Balazs, S.~Barzilay, J.~Bobes, et~al.
\newblock Prevalence of pathological internet use among adolescents in e urope:
  demographic and social factors.
\newblock \emph{Addiction}, 107\penalty0 (12):\penalty0 2210--2222, 2012.

\bibitem[Fazel et~al.(2008)Fazel, Cand\`{e}s, Recht, and
  Parrilo]{Fazel2008Compressed}
M.~Fazel, E.~J. Cand\`{e}s, B.~Recht, and P.~Parrilo.
\newblock Compressed sensing and robust recovery of low rank matrices.
\newblock In \emph{2008 42nd Asilomar Conference on Signals, Systems and
  Computers}. {IEEE}, 2008.

\bibitem[Foygel and Drton(2010)]{Foygel2010Extended}
R.~Foygel and M.~Drton.
\newblock Extended bayesian information criteria for gaussian graphical models.
\newblock In \emph{NIPS 2010}, pages 604--612. Curran Associates, Inc., 2010.

\bibitem[Friedman(2004)]{Friedman2004Inferring}
N.~Friedman.
\newblock Inferring cellular networks using probabilistic graphical models.
\newblock \emph{Science}, 303\penalty0 (5659):\penalty0 799--805, 2004.

\bibitem[Gaggiotti et~al.(2009)Gaggiotti, Bekkevold, J{\o}rgensen, Foll,
  Carvalho, Andre, and Ruzzante]{Gaggiotti2009Disentangling}
O.~E. Gaggiotti, D.~Bekkevold, H.~B. J{\o}rgensen, M.~Foll, G.~R. Carvalho,
  C.~Andre, and D.~E. Ruzzante.
\newblock Disentangling the effects of evolutionary, demographic, and
  environmental factors influencing genetic structure of natural populations:
  Atlantic herring as a case study.
\newblock \emph{Evolution: International Journal of Organic Evolution},
  63\penalty0 (11):\penalty0 2939--2951, 2009.

\bibitem[Geng et~al.(2019)Geng, Yan, Kolar, and Koyejo]{Geng2019Partially}
S.~Geng, M.~Yan, M.~Kolar, and S.~Koyejo.
\newblock Partially linear additive {G}aussian graphical models.
\newblock In K.~Chaudhuri and R.~Salakhutdinov, editors, \emph{International
  Conference on Machine Learning}, volume~97, pages 2180--2190, 2019.

\bibitem[Greve et~al.(2013)Greve, Brown, Mueller, Glover, Liu,
  et~al.]{Greve2013survey}
D.~N. Greve, G.~G. Brown, B.~A. Mueller, G.~Glover, T.~T. Liu, et~al.
\newblock A survey of the sources of noise in fmri.
\newblock \emph{Psychometrika}, 78\penalty0 (3):\penalty0 396--416, 2013.

\bibitem[Horn and Johnson(2013)]{Horn2013Matrix}
R.~A. Horn and C.~R. Johnson.
\newblock \emph{Matrix analysis}.
\newblock Cambridge University Press, Cambridge, second edition, 2013.

\bibitem[Hudson et~al.(2009)Hudson, Reverter, and
  Dalrymple]{Hudson2009differential}
N.~J. Hudson, A.~Reverter, and B.~P. Dalrymple.
\newblock A differential wiring analysis of expression data correctly
  identifies the gene containing the causal mutation.
\newblock \emph{{PLoS} Computational Biology}, 5\penalty0 (5):\penalty0
  e1000382, 2009.

\bibitem[Ideker and Krogan(2012)]{Ideker2012Differential}
T.~Ideker and N.~J. Krogan.
\newblock Differential network biology.
\newblock \emph{Molecular Systems Biology}, 8, 2012.

\bibitem[Kim et~al.(2019)Kim, Liu, and Kolar]{Kim2019Two}
B.~Kim, S.~Liu, and M.~Kolar.
\newblock Two-sample inference for high-dimensional markov networks.
\newblock \emph{arXiv 1905.00466}, 2019, \href
  {http://arxiv.org/abs/http://arxiv.org/abs/1905.00466v1} {\ttfamily
  arXiv:http://arxiv.org/abs/1905.00466v1}.

\bibitem[Lauritzen(1996)]{Lauritzen1996Graphical}
S.~L. Lauritzen.
\newblock \emph{Graphical Models}, volume~17 of \emph{Oxford Statistical
  Science Series}.
\newblock The Clarendon Press Oxford University Press, New York, 1996.
\newblock Oxford Science Publications.

\bibitem[Lauritzen and Sheehan(2003)]{Lauritzen2003Graphical}
S.~L. Lauritzen and N.~A. Sheehan.
\newblock Graphical models for genetic analyses.
\newblock \emph{Statist. Sci.}, 18\penalty0 (4):\penalty0 489--514, 2003.

\bibitem[Liu et~al.(2014)Liu, Quinn, Gutmann, Suzuki, and
  Sugiyama]{Liu2014Direct}
S.~Liu, J.~A. Quinn, M.~U. Gutmann, T.~Suzuki, and M.~Sugiyama.
\newblock Direct learning of sparse changes in {M}arkov networks by density
  ratio estimation.
\newblock \emph{Neural Comput.}, 26\penalty0 (6):\penalty0 1169--1197, 2014.

\bibitem[Ma et~al.(2013)Ma, Xue, and Zou]{Ma2013Alternating}
S.~Ma, L.~Xue, and H.~Zou.
\newblock Alternating direction methods for latent variable gaussian graphical
  model selection.
\newblock \emph{Neural Computation}, 25\penalty0 (8):\penalty0 2172--2198,
  2013.

\bibitem[Meng et~al.(2014)Meng, Eriksson, and III]{Meng2014Learning}
Z.~Meng, B.~Eriksson, and A.~O.~H. III.
\newblock Learning latent variable gaussian graphical models.
\newblock In \emph{International Conference on Machine Learning}, volume~32,
  pages 1269--1277, 2014.

\bibitem[Paz and S{\'a}nchez(2015)]{Paz2015Improving}
D.~J. Paz and A.~G. S{\'a}nchez.
\newblock Improving the precision matrix for precision cosmology.
\newblock \emph{Monthly Notices of the Royal Astronomical Society},
  454\penalty0 (4):\penalty0 4326--4334, 2015.

\bibitem[Ren et~al.(2015)Ren, Sun, Zhang, and Zhou]{Ren2015Asymptotic}
Z.~Ren, T.~Sun, C.-H. Zhang, and H.~H. Zhou.
\newblock Asymptotic normality and optimalities in estimation of large
  {G}aussian graphical models.
\newblock \emph{Ann. Stat.}, 43\penalty0 (3):\penalty0 991--1026, 2015.

\bibitem[Sheffield et~al.(2015)Sheffield, Repovs, Harms, Carter, Gold, III,
  {Daniel Ragland}, Silverstein, Godwin, and Barch]{Sheffield2015Fronto}
J.~M. Sheffield, G.~Repovs, M.~P. Harms, C.~S. Carter, J.~M. Gold, A.~W.~M.
  III, J.~{Daniel Ragland}, S.~M. Silverstein, D.~Godwin, and D.~M. Barch.
\newblock Fronto-parietal and cingulo-opercular network integrity and cognition
  in health and schizophrenia.
\newblock \emph{Neuropsychologia}, 73:\penalty0 82--93, 2015.

\bibitem[Shojaie(2020)]{Shojaie2020Differential}
A.~Shojaie.
\newblock Differential network analysis: A statistical perspective.
\newblock \emph{To appear in WIREs Computational Statistics}, 2020, \href
  {http://arxiv.org/abs/2003.04235v1} {\ttfamily arXiv:2003.04235v1}.

\bibitem[Smith et~al.(2011)Smith, Miller, Salimi-Khorshidi, Webster, Beckmann,
  Nichols, Ramsey, and Woolrich]{Smith2011Network}
S.~M. Smith, K.~L. Miller, G.~Salimi-Khorshidi, M.~Webster, C.~F. Beckmann,
  T.~E. Nichols, J.~D. Ramsey, and M.~W. Woolrich.
\newblock Network modelling methods for {FMRI}.
\newblock \emph{{NeuroImage}}, 54\penalty0 (2):\penalty0 875--891, 2011.

\bibitem[Vinyes and Obozinski(2018)]{Vinyes2018Learning}
M.~Vinyes and G.~Obozinski.
\newblock Learning the effect of latent variables in gaussian graphical models
  with unobserved variables.
\newblock \emph{arXiv preprint arXiv:1807.07754}, 2018.

\bibitem[Willi and Hoffmann(2009)]{Willi2009Demographic}
Y.~Willi and A.~A. Hoffmann.
\newblock Demographic factors and genetic variation influence population
  persistence under environmental change.
\newblock \emph{Journal of evolutionary biology}, 22\penalty0 (1):\penalty0
  124--133, 2009.

\bibitem[Xia et~al.(2015)Xia, Cai, and Cai]{Xia2015Testing}
Y.~Xia, T.~Cai, and T.~T. Cai.
\newblock Testing differential networks with applications to the detection of
  gene-gene interactions.
\newblock \emph{Biometrika}, 102\penalty0 (2):\penalty0 247--266, 2015.

\bibitem[Xu and Gu(2016)]{Xu2016Semiparametric}
P.~Xu and Q.~Gu.
\newblock Semiparametric differential graph models.
\newblock In \emph{Advances in Neural Information Processing Systems 29}, pages
  1064--1072. 2016.

\bibitem[Xu et~al.(2017)Xu, Ma, and Gu]{Xu2017Speeding}
P.~Xu, J.~Ma, and Q.~Gu.
\newblock Speeding up latent variable gaussian graphical model estimation via
  nonconvex optimization.
\newblock In \emph{Advances in Neural Information Processing Systems 30}, pages
  1933--1944, 2017.

\bibitem[Yi et~al.(2016)Yi, Park, Chen, and Caramanis]{Yi2016Fast}
X.~Yi, D.~Park, Y.~Chen, and C.~Caramanis.
\newblock Fast algorithms for robust {PCA} via gradient descent.
\newblock In \emph{Advances in Neural Information Processing Systems 29}, pages
  4152--4160. Curran Associates, Inc., 2016.

\bibitem[Yu et~al.(2018)Yu, Gupta, and Kolar]{Yu2018Recovery}
M.~Yu, V.~Gupta, and M.~Kolar.
\newblock Recovery of simultaneous low rank and two-way sparse coefficient
  matrices, a nonconvex approach.
\newblock \emph{arxiv:1802.06967}, 2018, \href
  {http://arxiv.org/abs/http://arxiv.org/abs/1802.06967v2} {\ttfamily
  arXiv:http://arxiv.org/abs/1802.06967v2}.

\bibitem[Yuan et~al.(2017)Yuan, Xi, Chen, and Deng]{Yuan2017Differential}
H.~Yuan, R.~Xi, C.~Chen, and M.~Deng.
\newblock Differential network analysis via lasso penalized {D}-trace loss.
\newblock \emph{Biometrika}, 104\penalty0 (4):\penalty0 755--770, 2017.

\bibitem[Zhang et~al.(2018)Zhang, Wang, and Gu]{Zhang2018Unified}
X.~Zhang, L.~Wang, and Q.~Gu.
\newblock A unified framework for nonconvex low-rank plus sparse matrix
  recovery.
\newblock In \emph{International Conference on Artificial Intelligence and
  Statistics}, volume~84, pages 1097--1107, 2018.

\bibitem[Zhao et~al.(2014)Zhao, Cai, and Li]{Zhao2014Direct}
S.~D. Zhao, T.~T. Cai, and H.~Li.
\newblock Direct estimation of differential networks.
\newblock \emph{Biometrika}, 101\penalty0 (2):\penalty0 253--268, 2014.

\end{thebibliography}


\begin{thebibliography}{21}
\providecommand{\natexlab}[1]{#1}
\providecommand{\url}[1]{\texttt{#1}}
\expandafter\ifx\csname urlstyle\endcsname\relax
  \providecommand{\doi}[1]{doi: #1}\else
  \providecommand{\doi}{doi: \begingroup \urlstyle{rm}\Url}\fi

\bibitem[Bubeck(2015)]{Bubeck2015Convex}
S.~Bubeck.
\newblock Convex optimization: Algorithms and complexity.
\newblock \emph{Foundations and Trends{\textregistered} in Machine Learning},
  8\penalty0 (3-4):\penalty0 231--357, 2015.

\bibitem[Chandrasekaran et~al.(2012)Chandrasekaran, Parrilo, and
  Willsky]{Chandrasekaran2012Latent}
V.~Chandrasekaran, P.~A. Parrilo, and A.~S. Willsky.
\newblock Latent variable graphical model selection via convex optimization.
\newblock \emph{Ann. Statist.}, 40\penalty0 (4):\penalty0 1935--1967, 2012.

\bibitem[Chi et~al.(2018)Chi, Lu, and Chen]{Chi2018Nonconvex}
Y.~Chi, Y.~M. Lu, and Y.~Chen.
\newblock Nonconvex optimization meets low-rank matrix factorization: An
  overview.
\newblock \emph{arXiv preprint arXiv:1809.09573}, 2018.

\bibitem[Eaton(2007)]{Eaton2007Multivariate}
M.~L. Eaton.
\newblock \emph{Multivariate statistics}, volume~53 of \emph{Institute of
  Mathematical Statistics Lecture Notes---Monograph Series}.
\newblock Institute of Mathematical Statistics, Beachwood, OH, 2007.
\newblock A vector space approach, Reprint of the 1983 original [MR0716321].

\bibitem[Horn and Johnson(2013)]{Horn2013Matrix}
R.~A. Horn and C.~R. Johnson.
\newblock \emph{Matrix analysis}.
\newblock Cambridge University Press, Cambridge, second edition, 2013.

\bibitem[Hsu et~al.(2012)Hsu, Kakade, and Zhang]{Hsu2012tail}
D.~Hsu, S.~Kakade, and T.~Zhang.
\newblock A tail inequality for quadratic forms of subgaussian random vectors.
\newblock \emph{Electron. Commun. Probab.}, 17\penalty0 (52):\penalty0 1--6,
  2012.

\bibitem[Li et~al.(2016)Li, Arora, Liu, Haupt, and Zhao]{Li2016Nonconvexa}
X.~Li, R.~Arora, H.~Liu, J.~Haupt, and T.~Zhao.
\newblock Nonconvex sparse learning via stochastic optimization with
  progressive variance reduction.
\newblock \emph{arXiv 1605.02711}, 2016.

\bibitem[Lu and Shiou(2002)]{Lu2002Inverses}
T.-T. Lu and S.-H. Shiou.
\newblock Inverses of {$2\times 2$} block matrices.
\newblock \emph{Comput. Math. Appl.}, 43\penalty0 (1-2):\penalty0 119--129,
  2002.

\bibitem[Ma et~al.(2013)Ma, Xue, and Zou]{Ma2013Alternating}
S.~Ma, L.~Xue, and H.~Zou.
\newblock Alternating direction methods for latent variable gaussian graphical
  model selection.
\newblock \emph{Neural Computation}, 25\penalty0 (8):\penalty0 2172--2198,
  2013.

\bibitem[Park et~al.(2018)Park, Kyrillidis, Caramanis, and
  Sanghavi]{Park2018Finding}
D.~Park, A.~Kyrillidis, C.~Caramanis, and S.~Sanghavi.
\newblock Finding low-rank solutions via nonconvex matrix factorization,
  efficiently and provably.
\newblock \emph{{SIAM} J. Imaging Sciences}, 11\penalty0 (4):\penalty0
  2165--2204, 2018.

\bibitem[Rothman et~al.(2008)Rothman, Bickel, Levina, and
  Zhu]{Rothman2008Sparse}
A.~J. Rothman, P.~J. Bickel, E.~Levina, and J.~Zhu.
\newblock Sparse permutation invariant covariance estimation.
\newblock \emph{Electron. J. Stat.}, 2:\penalty0 494--515, 2008.

\bibitem[Tu et~al.(2016)Tu, Boczar, Simchowitz, Soltanolkotabi, and
  Recht]{Tu2016Low}
S.~Tu, R.~Boczar, M.~Simchowitz, M.~Soltanolkotabi, and B.~Recht.
\newblock Low-rank solutions of linear matrix equations via procrustes flow.
\newblock In M.~F. Balcan and K.~Q. Weinberger, editors, \emph{Proceedings of
  The 33rd International Conference on Machine Learning}, volume~48 of
  \emph{Proceedings of Machine Learning Research}, pages 964--973, New York,
  New York, USA, 2016. PMLR.

\bibitem[Vershynin(2018)]{Vershynin2018High}
R.~Vershynin.
\newblock \emph{High-dimensional probability}, volume~47 of \emph{Cambridge
  Series in Statistical and Probabilistic Mathematics}.
\newblock Cambridge University Press, Cambridge, 2018.
\newblock An introduction with applications in data science, With a foreword by
  Sara van de Geer.

\bibitem[Wainwright(2019)]{Wainwright2019High}
M.~J. Wainwright.
\newblock \emph{High-Dimensional Statistics}.
\newblock Cambridge University Press, 2019.

\bibitem[Wang et~al.(2013)Wang, Banerjee, Hsieh, Ravikumar, and
  Dhillon]{Wang2013Large}
H.~Wang, A.~Banerjee, C.-J. Hsieh, P.~K. Ravikumar, and I.~S. Dhillon.
\newblock Large scale distributed sparse precision estimation.
\newblock In \emph{Advances in Neural Information Processing Systems}, pages
  584--592, 2013.

\bibitem[Wang et~al.(2017)Wang, Zhang, and Gu]{Wang2017Unified}
L.~Wang, X.~Zhang, and Q.~Gu.
\newblock A unified computational and statistical framework for nonconvex
  low-rank matrix estimation.
\newblock In A.~Singh and X.~J. Zhu, editors, \emph{Proceedings of the 20th
  International Conference on Artificial Intelligence and Statistics, {AISTATS}
  2017, 20-22 April 2017, Fort Lauderdale, FL, {USA}}, volume~54 of
  \emph{Proceedings of Machine Learning Research}, pages 981--990. {PMLR},
  2017.

\bibitem[Xu et~al.(2017)Xu, Ma, and Gu]{Xu2017Speeding}
P.~Xu, J.~Ma, and Q.~Gu.
\newblock Speeding up latent variable gaussian graphical model estimation via
  nonconvex optimization.
\newblock In \emph{Advances in Neural Information Processing Systems 30}, pages
  1933--1944, 2017.

\bibitem[Yu et~al.(2018)Yu, Yang, Zhao, Kolar, and Wang]{Yu2018Provable}
M.~Yu, Z.~Yang, T.~Zhao, M.~Kolar, and Z.~Wang.
\newblock Provable gaussian embedding with one observation.
\newblock In S.~Bengio, H.~M. Wallach, H.~Larochelle, K.~Grauman,
  N.~Cesa{-}Bianchi, and R.~Garnett, editors, \emph{Advances in Neural
  Information Processing Systems 31: Annual Conference on Neural Information
  Processing Systems 2018, NeurIPS 2018, 3-8 December 2018, Montr{\'{e}}al,
  Canada.}, pages 6765--6775, 2018.

\bibitem[Yuan et~al.(2017)Yuan, Xi, Chen, and Deng]{Yuan2017Differential}
H.~Yuan, R.~Xi, C.~Chen, and M.~Deng.
\newblock Differential network analysis via lasso penalized {D}-trace loss.
\newblock \emph{Biometrika}, 104\penalty0 (4):\penalty0 755--770, 2017.

\bibitem[Zhang et~al.(2018)Zhang, Wang, and Gu]{Zhang2018Unified}
X.~Zhang, L.~Wang, and Q.~Gu.
\newblock A unified framework for nonconvex low-rank plus sparse matrix
  recovery.
\newblock In \emph{International Conference on Artificial Intelligence and
  Statistics}, volume~84, pages 1097--1107, 2018.

\bibitem[Zhao et~al.(2014)Zhao, Cai, and Li]{Zhao2014Direct}
S.~D. Zhao, T.~T. Cai, and H.~Li.
\newblock Direct estimation of differential networks.
\newblock \emph{Biometrika}, 101\penalty0 (2):\penalty0 253--268, 2014.

\end{thebibliography}
\end{document}